\newtheorem{Thm}{Theorem}[subsection]
\newtheorem{Lem}[Thm]{Lemma}
\newtheorem{Prop}[Thm]{Proposition}
\newtheorem{Cor}[Thm]{Corollary}
\newtheorem{Quest}[Thm]{Question}
\newtheorem{Eg}[Thm]{Example}
\newtheorem{Rem}[Thm]{Remark}
\newtheorem{Def}[Thm]{Definition}
\newtheorem*{Def*}{Definition}
\newtheorem*{Thm*}{Theorem}
\newcommand{\Z}{\mathbb{Z}}
\newcommand{\N}{\mathbb{N}}
\newcommand{\C}{\mathbb{C}}
\newcommand{\R}{\mathbb{R}}
\newcommand{\ie}{{\em i.e.}\ }
\newcommand{\cf}{{\em cf.}\ }
\newcommand{\eg}{{\em e.g.}\ }
\newcommand{\st}{{such that}\ }
\renewcommand{\hat}[1]{\widehat{#1}}
\renewcommand{\tilde}[1]{\widetilde{#1}}
\newcommand{\opname}[1]{\operatorname{\mathsf{#1}}}
\renewcommand{\mod}{\opname{mod}}
\newcommand{\rep}{\opname{rep}}
\newcommand{\Rep}{\opname{Rep}}
\newcommand{\pr}{\opname{pr}}
\newcommand{\ind}{\opname{ind}}
\newcommand{\add}{\opname{add}}
\newcommand{\op}{^{op}}
\newcommand{\ra}{\rightarrow}
\newcommand{\xra}{\xrightarrow}
\newcommand{\Gr}{\opname{Gr}}
\newcommand{\dimv}{\underline{\dim}\,}
\newcommand{\rank}{\opname{rank}}
\newcommand{\Ker}{\opname{Ker}}
\renewcommand{\Im}{\opname{Im}}
\newcommand{\codim}{\opname{codim}}
\newcommand{\id}{\mathbf{1}}
\newcommand{\Hom}{\opname{Hom}}
\newcommand{\Ext}{\opname{Ext}}
\newcommand{\Hf}{{\frac{1}{2}}}
\newcommand{\Rm}[1]{{\longmapsto}}
\newcommand{\Lm}[1]{{\longmapsfrom}}
\newcommand{\cA}{{\mathcal A}}
\newcommand{\cC}{{\mathcal C}}
\newcommand{\cD}{{\mathcal D}}
\newcommand{\cE}{{\mathcal E}}
\newcommand{\cF}{{\mathcal F}}
\newcommand{\cK}{{\mathcal K}}
\newcommand{\cL}{{\mathcal L}}
\newcommand{\cM}{{\mathcal M}}
\newcommand{\cP}{{\mathcal P}}
\newcommand{\cQ}{{\mathcal Q}}
\newcommand{\cR}{{\mathcal R}}
\newcommand{\cT}{{\mathcal T}}
\newcommand{\cU}{{\mathcal U}}
\newcommand{\cY}{{\mathcal Y}}
\newcommand{\sT}{{\mathbb T}}
\newcommand{\tB}{{\widetilde{B}}}
\newcommand{\tQ}{{\widetilde{Q}}}
\newcommand{\tW}{{\widetilde{W}}}
\newcommand{\td}{{\widetilde{d}}}
\newcommand{\tg}{{\tilde{g}}}
\newcommand{\grRep}{{\Rep^\bullet}}
\newcommand{\pbw}{M}
\newcommand{\can}{L}
\newcommand{\gen}{\mathbb{L}}
\newcommand{\qChar}{\chi_{q,t}}
\newcommand{\tChar}{\chi_{q,t}}
\newcommand{\tqChar}{\hat{\qChar}}
\newcommand{\qtChar}{\hat{\tChar}}
\newcommand{\pbwDTorus}{{\pbw^\dTorus}}
\newcommand{\pbwTorus}{{\pbw^\torus}}
\newcommand{\canTorus}{{\can^\torus}}
\newcommand{\genTorus}{{\gen^\torus}}
\newcommand{\pbwCl}{{\pbw^\clAlg}}
\newcommand{\canCl}{{\can^\clAlg}}
\newcommand{\genCl}{{\gen^\clAlg}}
\newcommand{\pbwBasis}{{\{\pbw^\torus(w)\}}}
\newcommand{\canBasis}{{\{\can^\torus(w)\}}}%
\newcommand{\pbwClBasis}{{\{\pbwCl(w)\}}}
\newcommand{\canClBasis}{{\{\canCl(w)\}}}%
\newcommand{\genClBasis}{{\{\genCl(w)\}}}%
\newcommand{\redWSet}{{\mathcal{J}}}
\newcommand{\redPbwBasis}{{\{\pbw^\torus(w),w\in\redWSet\}}}
\newcommand{\redCanBasis}{{\{\can^\torus(w),w\in\redWSet\}}}%
\newcommand{\redGenBasis}{{\{\gen^\torus(w),w\in\redWSet\}}}%
\newcommand{\clPbwBasis}{{\{\pbwCl(w),w\in\redWSet\}}}%
\newcommand{\clGenBasis}{{\{\genCl(w),w\in\redWSet\}}}%
\newcommand{\clCanBasis}{{\{\canCl(w),w\in\redWSet\}}}%
\newcommand{\pbwTarg}{{\pbw^\targSpace}}
\newcommand{\genRedTarg}{{\gen^\redTargSpace}}
\newcommand{\KGp}{{\cK}}
\newcommand{\dualKGp}{{\cK^*}}
\newcommand{\quotKGp}{{\cR_t}}
\newcommand{\tBase}{{\Z[t^\pm]}}
\newcommand{\vBase}{{\Z[v^\pm]}}
\newcommand{\qBase}{{\Z[q^{\pm\Hf}]}}
\newcommand{\qBaseCoeff}{{\Z P[q^{\pm\Hf}]}}
\newcommand{\ZCoeff}{{\Z P}}
\newcommand{\redTargSpace}{{\cY}}
\newcommand{\targSpace}{{\hat{\cY}}} 
\newcommand{\torus}{\cT}
\newcommand{\dTorus}{{D(\torus)}} 
\newcommand{\projQuot}{{\cM}}
\newcommand{\affQuot}{{\projQuot_0}}
\newcommand{\lag}{{\cL}}
\newcommand{\grProjQuot}{{\projQuot^\bullet}}
\newcommand{\grAffQuot}{{\affQuot^\bullet}}
\newcommand{\grLag}{{\lag^\bullet}}
\newcommand{\grFib}{{\mathfrak{m}^\bullet}}
\newcommand{\grVb}{{\tilde{{\mathcal{Z}}^\bullet}}}
\newcommand{\regStratum}{{\affQuot^\mathrm{reg}}}
\newcommand{\grRegStratum}{{\grAffQuot^\mathrm{reg}}}
\newcommand{\cor}{{\opname{cor}}}
\newcommand{\contr}{{\hat{\Pi}}}
\newcommand{\ztB}{{\tB^z}}
\newcommand{\zInd}{{\ind^z}}
\newcommand{\zLambda}{{\Lambda^z}}
\newcommand{\clAlg}{{\cA}}
\newcommand{\qClAlg}{{\cA^q}}
\newcommand{\subQClAlg}{{\cA^q_{sub}}}
\newcommand{\coeffFree}{{^\phi}}
\newcommand{\pureCoeff}{{^f}}
\newcommand{\kerMod}{{^\sigma}}
\newcommand{\vtx}{{\opname{I}}}
\newcommand{\wtLess}{{<_w}}
\newcommand{\oOmega}{{\overline{\Omega}}}
\newcommand{\oh}{{\overline{h}}}
\newcommand{\lSp}{{\opname{L}}}
\newcommand{\grEndSp}{{\lSp^\bullet}}
\newcommand{\diag}{{\delta}}
\newcommand{\canStr}{{b}}
\newcommand{\genStr}{{c}}
\newcommand{\redCanStr}{{\coeffFree \canStr}}
\newcommand{\redGenStr}{{\coeffFree \genStr}}
\newcommand{\gr}{{\mathrm{gr}}}
\newcommand{\RHS}{{\mathrm{RHS}}} 
\newcommand{\LHS}{{\mathrm{LHS}}}
\newcommand{\tRes}{{\tilde{\mathrm{Res}}}}
\newcommand{\res}{{\mathrm{Res}}}
\newcommand{\eMatrix}{{{\cE'}}}
\newcommand{\dT}{{\td'}}
\newcommand{\dTW}{{\dT_W}}
\newcommand{\trunc}{{^{\leq 0}}}
\newcommand{\one}{^{(1)}}
\newcommand{\two}{^{(2)}}
\newcommand{\typeI}{{^{(i)}}}
\newcommand{\basis}{{\textbf{B}}}
\tikzstyle{none}=[inner sep=0pt]
\tikzstyle{black box}=[draw=black, fill=black!25]
\tikzstyle{white box}=[draw=black, fill=white]
\tikzstyle{black circle}=[circle,draw=black!50, fill=black!25]
\tikzstyle{red circle}=[circle,draw=red!50, fill=red!25]
\tikzstyle{blue circle}=[circle,draw=blue!50, fill=blue!25]
\tikzstyle{green circle}=[circle,draw=green!50, fill=green!25]
\tikzstyle{yellow circle}=[circle,draw=yellow!50, fill=yellow!25]
\begin{document}
\title[$qt$-characters, bases, and correction technique]{$t$-Analog of $q$-Characters, Bases of Quantum Cluster Algebras, and a Correction Technique}

\author{Fan QIN}
\address[]{Fan Qin, Mathematical Sciences Center, Tsinghua University,
  Beijing, China}
\email{qin.fan.math@gmail.com}

\begin{abstract}
We first study a new family of graded quiver varieties
together with a new $t$-deformation of the associated Grothendieck
rings. This provides the geometric foundations for a joint paper by Yoshiyuki
Kimura and the author. 

We further generalize the result of that paper to any acyclic quantum cluster algebra with arbitrary
nondegenerate coefficients. In particular, we obtain the generic basis,
the dual PBW basis,
and the dual canonical basis. The method consists in a correction
technique, which works for general
quantum cluster algebras.
\end{abstract}

\maketitle

\tableofcontents

\section{Introduction}
\label{sec:intro}

\subsection{Motivation}
\label{sec:motivation}

The theory of cluster algebras, which was invented by Fomin and
Zelevinsky \cite{FominZelevinsky02}, has its origin in Lie theory and
combinatorics. Since its very beginning, the theory
of (quantum) cluster algebras has been related to many areas, such as
Poisson Geometry, discrete dynamical systems, higher Techm\"{u}ller
spaces, combinatorics, commutative and non-commutative algebraic
geometries, and representation theory, \cf \cite{Keller12}.

One of the main motivations of the study of
quantum cluster algebras \cite{BerensteinZelevinsky05} is to provide an algebraic framework for the dual canonical bases of quantum groups. Therefore, bases of cluster algebra deserve to be put under
scrutiny. The following two questions arise naturally.

\begin{Quest}\label{quest:basis}
1) How to construct bases of (quantum) cluster algebras? 

2) What are the corresponding structure constants and transition matrices?
\end{Quest}

\begin{Quest}\label{quest:canonicalBasis}
1) Can we construct a ``dual canonical basis'' of the quantum cluster algebra, such that its
structure constants are positive and it contains all the quantum cluster
monomials?

2) Can we identify this basis with a subset of the dual canonical basis of certain quantum groups?
\end{Quest}

\begin{Rem}
  We expect this basis for acyclic quantum cluster algebra to be identified
  with a proper subset of the dual canonical basis, because the acyclic
  cluster
  algebras are ``smaller'' than general quantum groups, \cf \cite{GeissLeclercSchroeer10}.
\end{Rem}

Furthermore, by \cite{FominZelevinsky07} \cite{Tran09}, (quantum)
cluster monomials vary
little when the choice of the coefficient pattern changes. If such a
result holds for the bases as well, our research on the bases will be
largely simplified. Surprisingly, to best of the author's knowledge,
this problem seems has not been studied in literature yet.

\begin{Quest}\label{quest:coefficients}
  How does the bases data depend on the choice of the coefficients and
  quantization? Are they controllable?
\end{Quest}

The final motivation of this paper comes from the study of quantum
cluster characters. In \cite{Qin10}, the author defined quantum
cluster characters of the rigid objects of certain cluster categories,
and showed that these characters describe the quantum cluster
monomials of acyclic quantum cluster algebras. Nagao proposed a more
general formula to describe the quantum cluster variables of general
(quantum) cluster algebras based on the theory of non-commutative
Donaldson-Thomas invariants developed by Kontsevich and Soibelmann (up to some conjectures), \cf
\cite{KontsevichSoibelman08} \cite{Nagao10}. The following question is
natural from the representation theoretic point of view.

\begin{Quest}\label{quest:genericChar}
Can we extend the quantum cluster character to generic objects, such
  that we obtain a generic basis of the quantum
  cluster algebra containing all the quantum cluster monomials?
\end{Quest}

\subsection{Previous context, strategies, and results}

\subsubsection*{Previous context}
Despite the many successful applications of (quantum) cluster algebras to other
areas, the basis construction problems which motivate their
introduction remain largely open. 

We have limited knowledge of the constructions of bases of the classical cluster algebras. In the approach via preprojective algebra, \cf
\cite{GeissLeclercSchroeer10}, Gei\ss,
Leclerc, and Schro\"{e}r have shown that if $G$ is a semi-simple complex
algebraic group and $N\subset G$ a maximal nilpotent subgroup, then
the coordinate algebra $\C[N]$ admits a canonical classical cluster
structure whose coefficient type is specific. They further constructed
the generic basis of $\C[N]$, which contains the cluster monomials, and
identified it with Lusztig's dual semicanonical basis of $\C[N]$
\cite{Lusztig00}. As another approach, recently, Musiker, Schiffler,
and Williams constructed bases for classical cluster algebras arising from
unpunctured surfaces with coefficients whose exchange matrix is of full rank, \cite{MusikerSchifflerWilliams11} .

Our knowledge of the bases of the quantum cluster algebras is even
more limited. \cite{Lamp10} \cite{Lamp11} \cite{DingXu11}
\cite{HernandezLeclerc11} obtained partial results of Question \ref{quest:canonicalBasis} for quivers of
finite and affine type. Standard bases and partial results of
triangular bases have been obtained for acyclic quantum cluster,
\cf \cite{BerensteinFominZelevinsky05} \cite{Zelevinsky11talk} \cite{BerensteinZelevinsky2012}.

\subsubsection*{Deformed monoidal pseudo-categorification}
\label{sec:monoidalCategorification}
In this paper, we use (deformed) monoidal pseudo-categorifications to give a new
approach to bases of (quantum) cluster algebras.

Monoidal categorification was used by Hernandez and Leclerc as a
new approach to the positivity conjecture of cluster
  algebras, \cf \cite{HernandezLeclerc09}. For a given
cluster algebra $\clAlg$, we want to find
a tensor category $\cC$ together with a well-behaved algebra isomorphism from
$\clAlg$ to the Grothendieck ring $R$ of $\cC$. In particular, each
cluster monomial should be sent to the class of a simple module. In
the original work \cite{HernandezLeclerc09}, the tensor category $\cC$
is the tensor category of certain finite-dimensional modules
of certain quantum affine algebra, and the isomorphism is obtained by
comparing the (truncated) $q$-characters of these modules with cluster characters.

Therefore, we easily arrive at the following naive idea: in order to
construct the bases of (quantum) cluster algebras, it suffices to study bases of the (deformed) Grothendieck ring, and then apply the algebra isomorphism.

In this paper, we use a similar category $\cC$ (closely related to quantum loop
algebras) and construct a linear map identifying the
(truncated) $qt$-characters with quantum cluster characters studied by
the author in \cite{Qin10}. Our construction holds for general coefficients and
quantizations, where the linear map fails to be algebraic, thus the name
``pseudo-categorification''. A key ingredient is the observation that
the failures are mild and controllable. Therefore, in practice, we can still
follow the above
naive idea to study bases.

\subsubsection*{Graded quiver varieties}

In order to construct this monoidal pseudo-categorification, we need
to understand the deformed Grothendieck ring and the
$qt$-characters. A second key ingredient of the paper is that
we can understand them geometrically by using Nakajima's quiver
varieties, \cf \cite{Nakajima01} \cite{Nakajima04}.

Recall that Nakajima's quiver variety is a natural
generalization of the ADHM-construction of instantons. His graded
quiver variety is the fixed point subvariety with respect to a
$\C^*$-action. In \cite{Nakajima09}, he required the quiver to be
bipartite when constructing the graded quiver
varieties.

In Section \ref{sec:quiverVarieties}, we introduce a new family of graded
quiver varieties for acyclic quivers which are not necessarily
bipartite. In order to remove the bipartite restriction, we carefully change the definition of the
$\C^*$-action, \st the resulting new graded quiver varieties (fixed
point sets) still have good properties. Then we establish the
geometric foundations of our discussion, and proceed to study the deformed Grothendieck rings and
$qt$-characters following the arguments of Nakajima in
\cite{Nakajima01} \cite{Nakajima04} \cite{Nakajima09}.

Furthermore, as an important application, these constructions lead to an
affirmative answer of Question \ref{quest:canonicalBasis} for acyclic
quantum cluster algebras with specific coefficients and quantization, which will
be presented in the joint work \cite{KimuraQin11} by Yoshiyuki Kimura
and the author. In particular, by \cite[Theorem 3.3.7, Corollary 3.3.9]{KimuraQin11}, the result of Nakajima
\cite{Nakajima09} can be generalized as the following:

\textit{When the quantum cluster algebra contains an acyclic seed, the
  positivity conjecture is true. Namely, the coefficients of the
  cluster expansions of the quantum cluster
  variables are positive with respect to any chosen seed.}

Its proof depends on the Fourier-Deligne-Sato transform, \cf
the recent work by Efimov \cite{Efimov11} for an independent different
approach. We shall use
some useful results obtained in the proof.

\subsubsection*{Main results}

Via the pseudo-categorification approach introduced above, we use
$t$-analogue of $q$-characters to realize three bases of the acyclic quantum
cluster algebras for any choice of coefficients and quantizations: the
generic basis, the dual PBW basis, and the canonical basis, \cf
Theorem \ref{thm:genBasis} \ref{thm:otherBases}. For such
(quantum) cluster algebras, we obtain affirmative answers to Question \ref{quest:basis},
\ref{quest:canonicalBasis}(1), \ref{quest:genericChar}, \cf sections
\ref{sec:otherBases} \ref{sec:structureConstant}.

In the last section of the paper, we develop the correction technique
for general quantum cluster algebras, \cf Theorem
\ref{thm:correction}, which has been
implicitly used in previous sections when we measure the failures of
the monoidal categorification in \cite{KimuraQin11}. In the acyclic case, we obtain an affirmative answer to Question
\ref{quest:coefficients}, \cf sections
\ref{sec:otherBases} \ref{sec:structureConstant}.

\begin{Rem}
By the joint work of Kimura with the author \cite{KimuraQin11}, \ref{quest:canonicalBasis}(2) has an
  affirmative answer for acyclic quantum cluster algebra of a specific choice
  of coefficients and quantization. For a general choice, it seems unlikely that we can find a reasonable identification which preserves the multiplication. First, the number of the generators of the center of the quantum group restricts the possible
  number of the frozen vertices. Second, if we fix the initial seed of
  the given quantum cluster algebra and identify the initial variables with
  a collection of generators of the quantum group, the identification
  will fail to preserve the multiplication if we make any change to the
  quantization ($\Lambda$-matrix) of the initial seed. However, our
  observations do not rule out the possibility of establishing a new
  identification by choosing a different quantum group or different generators (of both algebras).
\end{Rem}

It should be mentioned that, despite that the results on the generic
quantum cluster characters and the quantum generic basis seem very
natural, their proof needs the full power of the graded quiver varieties
and the existence of the monoidal categorification. For the moment, the author does not see any alternative approach.

\subsection{Plan of the paper}
\label{sec:content}

In section \ref{sec:preliminaries}, we recall notations of
quantum cluster algebras arising from ice quivers.

In section \ref{sec:normalization}, we show how the cluster
expansions of quantum cluster variables (monomials) of quantum cluster
algebras with arbitrary compatible pairs can be deduced from those
with unitally compatible pairs.

In section \ref{sec:quiverVarieties}, we study a family of
new graded quiver varieties by choosing a new torus action. Then we
follow the statements of \cite{Nakajima09} to construct the
deformed Grothendieck ring $R_t$. 

In section \ref{sec:qtCharacters}, we define the $t$-analogue of
$q$-characters for these graded quiver varieties.

In section \ref{sec:pseudoModules}, we collect useful results from
\cite{KimuraQin11} on generic characters.

In section \ref{sec:failure}, we measure the failure of these
characters being isomorphism. Our constructions allow to quantize \cite{Nakajima09} by comparing the $qt$-characters with
  the quantum cluster characters in \cite{Qin10}. 

In section \ref{sec:bases}, we construct bases of acyclic quantum
cluster algebra for any choice of coefficients and quantizations. We
also study their transition matrices and structure constants.

Finally, in section \ref{sec:correction}, we establish the correction
technique for general quantum cluster algebras.

\section{Preliminaries}\label{sec:preliminaries}

We refer the readers to \cite{QinThesis} or \cite[Section 2]{KimuraQin11} for
details, whose conventions will be briefly recalled here.

\subsection{Quantum cluster algebras}

\label{sec:quantumClusterAlgebras}

We first recall the definition of a quantum cluster algebra, \cf \cite{BerensteinZelevinsky05}. Let $(\Lambda,\tB)$ denote a \emph{compatible pair}, namely we
have
\begin{align}
  \Lambda(-\tB)= \begin{bmatrix}
  D  \\
  0 
 \end{bmatrix},
\end{align}
where the \emph{$B$-matrix} $\tB$ is an
  $m\times n$ integer matrix and the \emph{$\Lambda$-matrix} $\Lambda$ is an
  $m\times m$ integer matrix for some integers $m\geq n$, and $D$ is a
  diagonal matrix with strictly positive integers on the diagonal. The \emph{principal
    part} $B$ of $\tB$ is defined to be its upper $n\times n$
  submatrix.

We use $v$ to denote the formal parameter $q^\Hf$, while $v^2$
is sometimes denoted by $q$. The \emph{quantum torus}
$\cT=\cT(\Lambda)$ associated with the $\Lambda$-matrix $\Lambda$ is the Laurent polynomial ring
$\vBase[x_1^\pm,\ldots,x_m^\pm]$, whose usual product $\cdot$ is often
omitted. Let $\qBaseCoeff$ denotes its subring
$\vBase[x_{n+1}^\pm,\ldots,x_m^\pm]$, which we call the\emph{
  coefficient ring}.

The matrix product $g^T \Lambda h$, $g,h\in \Z^m$, is denoted
by $\Lambda(g,h)$, where we use $g^T$ to denote the matrix
transposition of $g$. Then we endow $\cT$ with the twisted product $*$
\st we have
\[
x^{g}*x^{h}=v^{\Lambda(g,h)}x^{g+h}
\]
for any degrees $g$ and $h$ in $\Z^m$. 
The natural involution of $\cT$ which sends $v$ to $v^{-1}$ is
  denoted by $\overline{(\ )}$.

We fix an $n$-regular tree $\sT_n$ with root
$t_0$. Recursively, we can associate a \emph{quantum seed} $(\Lambda(t),\tB(t),x(t))$ with each vertex $t$ of the $n$-regular
tree \st we have
\begin{enumerate}
\item $(\Lambda(t_0),\tB(t_0),x(t_0))=(\Lambda,\tB,x)$, where
  $x=(x_1,\cdots,x_m)$, and
\item if there exists an edge labeled $k$ connecting two vertices $t$
  and $t'$, then the two quantum seeds $(\Lambda(t'),\tB(t'),X(t'))$ and
  $(\Lambda(t),\tB(t),X(t))$ are related by the \emph{mutation} at
  $k$, \cf \cite{KimuraQin11}.
\end{enumerate}

We define the $x$-variables to be $x_i(t)$, $1\leq i\leq m$,
$t\in\sT_n$. Those $x$-variables $x_i(t)$ with $1\leq i\leq n$ are
called the \emph{quantum cluster variables}. The \emph{quantum cluster
  monomials} are the monomials of the quantum cluster variables contained in any common
quantum seed.

Let $\cF$ denote the skew-field of fractions of the quantum torus
$\cT$. The \emph{quantum cluster algebra} $\qClAlg$ over $(R,v)$ is the
$(+,*)$-subalgebra of the skew-field generated by the $x$-variables
and the elements $x_{j}^{-1}$ for all $j>n$. According to the
\emph{Quantum Laurent phenomenon}, \cf \cite[Section
5]{BerensteinZelevinsky05}\label{thm:Laurent}, $\qClAlg$ is
contained in the quantum torus $\torus$.

When we specialize $v$ to
$1$, we obtain the \emph{classical
cluster algebra} $\clAlg=\qClAlg|_{v\mapsto 1}$, which we also denote by $\cA^\Z$.

\subsection{Coefficient types (frozen patterns)}
\label{sec:frozenQuiver}
We use $Q$ to denote a \emph{quiver }whose set of vertices is labeled by
$I=\{1, \ldots, n\}$. Any arrow $h$ of $Q$ points from its source $s(h)$ to its target $t(h)$. By
reversing the direction of $h$, we obtain a new arrow $\oh$. By
reversing all the arrow directions, we obtain the quiver $Q^{op}$.

Let $\tQ$ be a quiver with vertices vertices $\{ 1,\ldots,m\}$ which
contains $Q$ as a full sub-quiver. $Q$ is called the \emph{principal part} of
$\tQ$ and $\tQ$ is called an \emph{ice quiver}. The set of the \emph{frozen vertices}
$\{n+1,\ldots, m\}$ and all the arrows incident to them is
called the \emph{coefficient type} (or \emph{frozen
  pattern}) of the ice quiver $\tQ$, which we denote by $\tQ-Q$.

We associate with $\tQ$ an $m\times n$ integer matrix\footnote{Notice that this convention is opposite to that of \cite{Nakajima09}.} 
$\tB=(b_{ij})$ such that we have
\begin{align*}
  b_{ij}=\sharp\{h\in\tQ|s(h)=i,t(h)=j\}-\sharp\{h\in\tQ|s(h)=j,t(h)=i\}.
\end{align*}
Similarly, we associate with $Q$ an $n\times n$-matrix $B=B_Q$.

$Q$ is
called acyclic if it has no oriented cycles. We refer the reader to
\cite{KimuraQin11} or \cite{QinThesis} for the definition of
\emph{bipartite quivers} and \emph{level $l$ ice quivers with
$z$-pattern}. 

\begin{Eg}

(1) Figure \ref{fig:acyclicQuiver} is an example of acyclic quiver (which
is not bipartite). 

(2) Figure \ref{fig:levelOneQuiver} is an example of a level $1$ ice
quiver with z-pattern whose principal part is given by Figure \ref{fig:acyclicQuiver}. Notice that since $B_\tQ$ is invertible, its inverse provides
a canonical choice of $\Lambda$ \st $(\Lambda,\tB)$ is a compatible
pair.
\end{Eg}

\begin{figure}[htb!]
 \centering
\beginpgfgraphicnamed{fig:acyclicQuiver}
  \begin{tikzpicture}
    \node [] (1) at (0,-4) {1}; 
    \node  [] (2) at (3,-2) {2}; 
    \node [] (3) at (4,1) {3};
    \draw[-stealth] (2) edge (3); 
    \draw[-stealth] (1) edge (3);
    \draw[-stealth] (1) edge (2);
  \end{tikzpicture}
\endpgfgraphicnamed
\caption{An acyclic quiver which is not bipartite}
\label{fig:acyclicQuiver}
\end{figure}

\begin{figure}[htb!]
 \centering
\beginpgfgraphicnamed{fig:levelOneQuiver}
         \begin{tikzpicture}
    \node [] (1) at (0,-4) {1}; 
    \node  [] (2) at (3,-2) {2}; 
    \node [] (3) at (4,1) {3};

\node [] (4) at (-6,-4) {4}; 
    \node  [] (5) at (-3,-2) {5}; 
    \node [] (6) at (-2,1) {6};

    \draw[-stealth] (1) edge (2); 
    \draw[-stealth] (2) edge (3); 
    \draw[-stealth] (1) edge (3);
    
    \draw[-stealth] (5) edge (1); 
    \draw[-stealth] (6) edge (1); 
    \draw[-stealth] (6) edge (2); 
    
    \draw[-stealth] (1) edge (4); 
    \draw[-stealth] (2) edge (5); 
    \draw[-stealth] (3) edge (6); 
       
  \end{tikzpicture}
\endpgfgraphicnamed
\caption{A level $1$ ice
quiver with z-pattern}
\label{fig:levelOneQuiver}
\end{figure}

\section{Quantum cluster variables via normalization}
\label{sec:normalization}

  Let $\tQ$ be an ice quiver with principal part $Q$ and $\tB$
  its associated $m\times n$ matrix. Let $\qClAlg$ be the quantum
  cluster algebra associated with a compatible pair $(\Lambda,\tB)$
  \st $\Lambda(-\tB)=\begin{bmatrix}D\\0 \end{bmatrix}$.

Following \cite{Amiot09} \cite{Plamondon10a}, we choose a
\emph{generic potential} $\tW$ associated with the ice quiver $\tQ$
and construct the corresponding \emph{cluster category} $\cC_{(\tQ,\tW)}$,
which will be denoted by $\cC_\tQ$ for simplicity. Then
to each $1\leq i\leq m$ and $t\in \cT_n$, we can associate an object
$M_i(t)$ in the cluster category. Furthermore, we
have a map $\ind(\ )$ sending each object in $\cC_\tQ$ to its index
$\ind(M_i(t))$ in $\Z^m$ such that $\ind(M_i(t_0))=e_i$. We refer the
reader to \cite[Section 2.2.2]{QinThesis} for details.

 First choose $\tQ-Q$ and $\Lambda$ \st $D$ is the identity matrix
 $\id_n$. Because the quantum $F$-polynomials exist, \cf
 \cite[Theorem 5.3]{Tran09}, the quantum cluster
  variables take the form
  \begin{align}\label{eq:qCCFormula}
    x_{i}(t)=\sum_{v\in\N^n}c_{i,v}(q^\Hf)x^{\ind(M_i(t))+\tB v},
  \end{align}
  for some bar-invariant Laurent polynomials
  $c_{i,v}\in\Z[t^\pm]$. For general $\tQ-Q$ and $\Lambda$, we have
  the following result.

\begin{Thm}\label{thm:normalizedQCCFormula}
Assume $D$ is a matrix whose diagonal entries are equal to
$\diag\in\Z_{>0}$. Then in $\qClAlg$, we have
\begin{align}
  x_{i}(t)=\sum_{v\in\N^n}c_{i,v}(q^{\frac{\diag }{2}})x^{\ind(M_i(t))+\tB v}.
\label{eq:normalizedQCCFormula}
\end{align}
\end{Thm}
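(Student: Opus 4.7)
The strategy is to reduce the case $D = \delta \id_n$ to the unital case $D = \id_n$ (already given by formula \eqref{eq:qCCFormula}) via a rescaling of the quantum parameter. The rough idea is that replacing the quantization matrix $\Lambda$ by $\Lambda/\delta$ turns the compatibility relation $\Lambda(-\tB) = \delta \id_n$ into the unital one $(\Lambda/\delta)(-\tB) = \id_n$, and this rescaling is compensated by substituting the parameter $v = q^{1/2}$ by a $\delta$-th power $v^\delta = q^{\delta/2}$.

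First, I would introduce the rescaled parameter $v'$ characterized by $v = (v')^{1/\delta}$, i.e.\ $v' = v^\delta$, working over an appropriate extension of the base ring when needed. Under this rescaling the twisted product in $\cT(\Lambda)$ becomes
\[
x^g * x^h \;=\; v^{\Lambda(g,h)} x^{g+h} \;=\; (v')^{\Lambda(g,h)/\delta}\, x^{g+h},
\]
so that $\cT(\Lambda)$ equipped with the parameter $v$ can be reinterpreted as the quantum torus of the compatible pair $(\Lambda/\delta, \tB)$ equipped with the parameter $v'$. The latter pair is ``unital'' in the sense $(\Lambda/\delta)(-\tB) = \id_n$, so formula \eqref{eq:qCCFormula} applies. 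Since the indices $\ind(M_i(t)) \in \Z^m$ are defined in the classical cluster category $\cC_\tQ$ and depend only on $\tQ$, they are unaffected by the rescaling of the quantum parameter. Thus the unital formula rewritten for the pair $(\Lambda/\delta, \tB)$ with parameter $v'$ reads $x_i(t) = \sum_{v\in\N^n} c_{i,v}(v')\, x^{\ind(M_i(t)) + \tB v}$, and substituting $v' = q^{\delta/2}$ yields exactly \eqref{eq:normalizedQCCFormula}.

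The main technical obstacle is that the rescaled matrix $\Lambda/\delta$ need not have integer entries, so the intermediate ``unital'' quantum torus is not literally one of the algebras considered up to this point. To circumvent this, one should verify that the quantum mutation calculus depends on $\Lambda$ only through the well-defined Laurent monomials $v^{\Lambda(g,h)}$, which makes the substitution $v \leftrightarrow v'$ legitimate at the level of Laurent expansions even though $\Lambda/\delta$ is only rational. Alternatively, and perhaps more cleanly, one may bypass the rescaling and prove \eqref{eq:normalizedQCCFormula} by induction on the distance $d(t_0,t)$ in the regular tree $\sT_n$: the base case $t = t_0$ is trivial, and the inductive step uses the quantum exchange relations, in which the factor of $\delta$ from $D = \delta\id_n$ enters only through $v^{\delta}$. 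Bar-invariance of the $c_{i,v}$ ensures the coefficients are well-defined Laurent polynomials in an abstract variable, which may be specialized at $q^{1/2}$ in the unital case or at $q^{\delta/2}$ in the general case.
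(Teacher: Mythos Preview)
Your rescaling idea captures half of the argument: replacing $v$ by $v^\delta$ does convert a $D=\delta\id_n$ compatibility into a unital one. But you have misread what needs to be proved. Formula \eqref{eq:qCCFormula} was established for \emph{one specific} choice of frozen pattern $\tQ-Q$ and quantization $\Lambda$ with $D=\id_n$; the polynomials $c_{i,v}$ are defined from that particular seed. The theorem asserts that these \emph{same} $c_{i,v}$ govern the cluster expansion for an \emph{arbitrary} frozen pattern and arbitrary $\Lambda$ with $D=\delta\id_n$. Your rescaling of the general $(\Lambda,\tB)$ produces a unital pair $(\Lambda/\delta,\tB)$, but with the general $\tB$, not the one from \eqref{eq:qCCFormula}. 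Nothing in your argument explains why the coefficients for this new unital pair coincide with the original $c_{i,v}$; that is the substantive content of the theorem, and it is independent of the integrality issue you focus on.

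The paper handles both this gap and your integrality worry in a single move: it passes to \emph{principal coefficients} $\tB'=\begin{pmatrix}B\\ \id_n\end{pmatrix}$ with the canonical quantization $\Lambda_1$. For this seed the rescaled matrix $\Lambda_2=\delta\Lambda_1$ is genuinely integer, so the substitution $q^{1/2}\mapsto q^{\delta/2}$ is literal and identifies $\qClAlg_1$ with $\qClAlg_2$. Then Tran's existence theorem for quantum $F$-polynomials \cite[Theorem~5.3]{Tran09} transfers the expansion from principal coefficients to the arbitrary $(\Lambda,\tB)$; this is precisely the mechanism that makes the $c_{i,v}$ independent of the frozen pattern. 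The transfer introduces a priori an extra factor $q^{\frac{1}{2}f_{i,v}}$, which the paper kills using bar-invariance of $x_i(t)$ and of the $c_{i,v}$, together with the $\qBase$-linear independence of the monomials $x^{\ind(M_i(t))+\tB v}$ (this uses that $\tB$ has full rank). Your inductive alternative would amount to reproving Tran's theorem and is not a shortcut.
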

\begin{proof}
Consider the $2n\times 2n$-matrix
\begin{align*}
  \begin{pmatrix}
B & -\id_n\\
\id_n & 0
\end{pmatrix}.
\end{align*} Denote its inverse by $-\Lambda_1$. Its submatrix $\tB'=\begin{pmatrix}
B \\
\id_n 
\end{pmatrix}$ is the $2n\times n$-matrix associated with the \emph{ice
quiver with principal coefficient} $\tQ'$ and principal part $Q$. We
have the objects
$M'_i(t)$ in the cluster category $\cC_{\tQ'}$, and the map $\ind'$.

Notice that $(\Lambda_1,\tB')$ is a unitally
compatible pair, and we have the associated quantum cluster algebra $\qClAlg_1$.

Inspired by \cite[Proposition 5.1]{Tran09}, we consider the $2n\times 2n$-matrix
\begin{align*}
  \Lambda_2=\begin{pmatrix}
0 & -D\\
D & BD
\end{pmatrix}.
\end{align*}
The pair $(\Lambda_2,\tB')$ is again compatible. In fact, one
  can check that $\Lambda_2=\diag\cdot\Lambda_1$. Denote the
associated quantum cluster algebra by $\qClAlg_2$.

Since $\Lambda_2$ is divisible by $\diag$, we can view $\qClAlg_2$ as a
quantum cluster algebra over $(\Z[v^\pm],v)$ with
$v=q^{\frac{\diag }{2}}$ and the initial compatible pair is $(\Lambda_2/\diag,\tB')$. Then by sending $q^\Hf$ to $q^{\frac{\diag }{2}}$,
$\qClAlg_1$ is identified with $\qClAlg_2$. In $\qClAlg_2$, we have
\begin{align*}
  x_{i}(t)=&\sum_{v\in\N^n}c_{i,v}(q^{\frac{\diag
    }{2}})x^{\ind'(M'_i(t))+\tB' v}\\
=&(\sum_vc_{i,v}(q^{\frac{\diag }{2}})q^{-\Hf\Lambda_2(\tB' v,
    \tg_{i,1})}x^{\tB' v})*x^{\ind'(M'_i(t))}
\end{align*}
for the vectors $\tg_{i,1}=\ind'(M'_i(t))$.

Finally, by \cite[Theorem 5.3]{Tran09} quantum $F$-polynomials
exist. Therefore, the quantum cluster variables in $\qClAlg$ can be
written as
\begin{align*}
  x_{i}(t)=&(\sum_vc_{i,v}(q^{\frac{\diag }{2}})q^{-\Hf\Lambda_2(\tB' v,
    \tg_{i,1})}x^{\tB v} )*x^{\ind(M_i(t))}\\
=&\sum_{v\in\N^n} q^{\Hf f_{i,v}}c_{i,v}(q^{\frac{\diag}{2}})x^{\ind(M_i(t))+\tB v}.
\end{align*}
for some integers $f_{i,v}\in\Z$. Notice that $\{X^{\tg_i+\tB
  v}|v\in\N^n\}$ is linearly independent over $\qBase$ because we can show that $\tB$ is full rank. Since $X_i(t)$ and $c_{i,v}$ are
bar-invariant, all $f_{i,v}$ must vanish.
\end{proof}

\section{Graded quiver varieties}\label{sec:quiverVarieties}
This section contains proofs of properties of our new graded quiver varieties,
which are needed by \cite{KimuraQin11} and later sections of this paper.

We begin with constructing some graded quiver
varieties associated with acyclic quivers. Our graded quiver varieties
are slightly different from the original ones in
\cite{Nakajima01}. Thanks to our modified definition, we can show the existence of suitable stratifications for arbitrary acyclic quivers
(Proposition \ref{prop:grStratification}). To prove this and other
geometric properties, we adapt Nakajima's original arguments in \cite{Nakajima98},
\cite{Nakajima01}.

\begin{Rem}
It seems likely that our definition should still enjoy the link with quantum
loop algebras established in \cite{Nakajima01}. However, since Lie theory is
not the major focus in this paper, we do not recheck the
link. Therefore, when we follow any useful arguments in
\cite{Nakajima01}, the Lie theoretic expressions should be replaced with
their geometric counterparts.
\end{Rem}
\subsection{Graded quiver varieties}
\label{sec:gradedQuiverVarieties}

We first give the definitions of the graded quiver varieties.

Fix a quiver $Q$ which is acyclic, \cf Section
\ref{sec:frozenQuiver}. We label its vertices such that there is no
arrows from $i$ to $j$ if $i\geq j$. The associated Cartan matrix
$C=(c_{ij})$ is given by
\begin{align}
c_{ij}=\left\{
\begin{array}{cc}
2 & \text{if $i= j$}\\
-|b_{ij}| & \text{if $i\neq j$}
\end{array} \right. .
\end{align}

Let $C_q$ denote the linear map from $\Z^{I\times (\Hf+\Z)}$ to
$\Z^{I\times \Z}$ such that for any $\xi\in \Z^{I\times (\Hf+\Z)}$,
the image $C_q\xi$ is given by
\begin{align}
(C_q \xi)_k(a)=\xi_k(a-\Hf)+\xi_k(a+\Hf)-\sum_{j:k< j\leq n}b_{kj}\xi_j(a-\Hf)-\sum_{i:1\leq i< k}b_{ik} \xi_i(a+\Hf).
\end{align}
 
\begin{Def}[$q$-Cartan matrix]\label{def:cartanMatrix}
The map $C_q$ is the \emph{$q$-analogue} of the Cartan matrix $C$.
\end{Def}

The map $C_q$ induces a map from $\Z^{I\times \R}$
to $\Z^{I\times \R}$, which we denote by $C_q$ by abuse of notation. It follows that
$C_q[d]$ equals $[d]C_q$ for any $d\in \R$, where $[d]$ is the natural
degree shift.

\begin{Lem}\label{lem:symmetricCartan}
Let $\xi^1,\xi^2\in \Z^{I\times \R}$ be any two given vectors, such
that at least one of them has finite support, then we have
  \begin{align}
    \label{eq:symmetricForm}
    \xi^2 \cdot C_q\xi^1[-\Hf]=C_q\xi^2\cdot \xi^1[-\Hf].
  \end{align}
\end{Lem}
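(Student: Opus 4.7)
The identity asserts that the operator $C_q$, when suitably shifted, is self-adjoint with respect to the natural pairing $\langle \xi^1, \xi^2\rangle = \sum_{k,a}\xi^1_k(a)\xi^2_k(a)$ (well-defined under the finite-support hypothesis). My plan is to reduce the claim to a cleaner statement: \emph{$C_q$ itself is self-adjoint}, and then derive the shifted form using the already-recorded compatibility $C_q[d]=[d]C_q$.

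First I would repackage the definition of $C_q$ in block form. Introduce two $I\times I$ integer matrices
\[
M_{kj}=\delta_{kj}-b_{kj}\cdot[j>k],\qquad N_{kj}=\delta_{kj}-b_{jk}\cdot[j<k],
\]
so that $(C_q\xi)_k(a)=\sum_j M_{kj}\,\xi_j(a-\Hf)+\sum_j N_{kj}\,\xi_j(a+\Hf)$. A direct inspection gives $N_{kj}=M_{jk}$, i.e.\ $N=M^T$. In other words, $C_q$ splits as $M\circ U^{-1}+M^T\circ U$, where $U$ is the shift $(U\xi)_k(a)=\xi_k(a+\Hf)$. Since $U$ acts only on the $\R$-index and $M,M^T$ act only on the $I$-index, they commute.

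Second I would verify that $C_q$ is symmetric under $\langle\cdot,\cdot\rangle$. Using the elementary adjunctions $U^T=U^{-1}$ (change of variable $a\mapsto a-\Hf$) and $M^T$ being the transpose of $M$ on $I$, one computes
\[
C_q^T=(MU^{-1})^T+(M^TU)^T=UM^T+U^{-1}M=M^TU+MU^{-1}=C_q.
\]
The only substantive step is the swap between the ``$j>k$'' sum and the ``$i<k$'' sum, which is exactly the identity $N=M^T$ traced through the pairing.

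Third, I would combine these two facts. Since $C_q$ commutes with the shift $[-\Hf]$, we can move the shift inside:
\[
\xi^2\cdot C_q\xi^1[-\Hf]=\langle\xi^2,\,C_q(\xi^1[-\Hf])\rangle=\langle C_q\xi^2,\,\xi^1[-\Hf]\rangle=C_q\xi^2\cdot\xi^1[-\Hf],
\]
where the middle equality is self-adjointness of $C_q$. The finite-support hypothesis on one of the $\xi^i$ ensures all sums involved are absolutely finite and the reindexings (both the transpose of $M$ and the shift $U^T=U^{-1}$) are legitimate. I do not foresee any real obstacle: this is essentially bookkeeping, and the only place one must be careful is tracking the role of $U^{-1}$ versus $U$ and confirming that the asymmetric choice between $j>k$ (with shift $-\Hf$) and $i<k$ (with shift $+\Hf$) in the definition of $C_q$ is exactly what makes $C_q$ self-adjoint, rather than requiring an additional shift correction.
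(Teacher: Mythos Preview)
Your proof is correct and is essentially the paper's direct expansion, repackaged in operator language. The paper simply writes out both sides as double sums over $(k,a)$ and observes they agree after reindexing; your decomposition $C_q=MU^{-1}+M^TU$ with $N=M^T$ isolates exactly the same index swap (the $j>k$ term paired with shift $-\Hf$ becoming the $i<k$ term paired with shift $+\Hf$) that makes the paper's two displayed sums coincide. Your formulation has the minor advantage of making explicit that $C_q$ itself is self-adjoint and that the $[-\Hf]$ in the statement is incidental, but the mathematical content is identical.
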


\begin{proof}
  To prove the equation, it suffices to expand both sides:
  \begin{align*}
    \LHS&=\sum_{k,a}\xi_k^2(a)\cdot(\xi_k^1(a)+\xi_k^1(a-1)-\sum_{i:i<k}b_{ik}\xi_i^1(a)-\sum_{j:k<j}b_{kj}\xi_j^1(a-1))\\
    &=\sum_{k,a}(\xi_k^2(a-\Hf)+\xi_k^2(a+\Hf)-\sum_{j:k<j}b_{kj}\xi_j^2(a-\Hf)\\
&\qquad-\sum_{i:i<k}b_{ik}\xi_i^2(a+\Hf))\cdot\xi_k^1(a-\Hf)\\
    &=\RHS.
  \end{align*}
\end{proof}

Notice that there is a lexicographical order on the set $I\times\R$,
which is also called the \emph{weight order} $\wtLess$ in \cite{QinThesis}.

Define $E$ to be $\{\xi\in \Z^{I\times \Z}|\xi_k(a)=0,\ \forall k\in
I,\ a\ll 0\}$.
\begin{Lem}
  \begin{enumerate}
  \item $C_q$ induces a bijection between $E[\Hf]$ and $E$.
 
    \item $C_q$ induces a bijection between $E$ and $E[-\Hf]$.
  \end{enumerate}
\end{Lem}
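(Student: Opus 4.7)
The plan is to prove both claims by solving $C_q \xi = \eta$ via a well-founded recursion driven by the weight order $\wtLess$ on $I \times \R$. The key algebraic step is to rewrite the defining formula of $C_q$ as
\[
\xi_k(a + \tfrac{1}{2}) = \eta_k(a) - \xi_k(a - \tfrac{1}{2}) + \sum_{k < j \leq n} b_{kj}\, \xi_j(a - \tfrac{1}{2}) + \sum_{1 \leq i < k} b_{ik}\, \xi_i(a + \tfrac{1}{2}),
\]
which expresses the value $\xi_k(a+\Hf)$ entirely in terms of values at positions strictly smaller in the weight order: either at the lower degree $a - \Hf$ (any vertex), or at the same degree $a + \Hf$ but at a strictly smaller vertex index. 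The acyclic labeling on $Q$ is exactly what makes this possible: since arrows point from smaller to larger vertex indices, the summands indexed by $j > k$ only involve $\xi_j$ at the lower degree $a - \Hf$, while those indexed by $i < k$ only involve $\xi_i$ at the same degree $a + \Hf$.

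For claim (i), given $\eta \in E$ with $\eta_k(a) = 0$ for $a < a_0$, I would set $\xi_k(b) = 0$ for all $b \in \Hf + \Z$ with $b < a_0 - \Hf$ and then iteratively define $\xi_k(b)$ for larger $b$ via the recursion above, at each fixed degree $b = a + \Hf$ sweeping through $k = 1, 2, \ldots, n$ in increasing order. A straightforward induction on $\wtLess$ simultaneously shows that $\xi$ is uniquely determined and that the support bound propagates, so $\xi \in E[\Hf]$. Applying the same recursion to $\eta = 0$ yields $\xi = 0$, giving injectivity. That $C_q$ sends $E[\Hf]$ into $E$ is immediate from the defining formula, since the right-hand side vanishes once both $a \pm \Hf$ lie below the support bound of $\xi$. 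Claim (ii) follows by the same argument with every degree shifted by $\Hf$, exploiting the shift-equivariance $C_q[d] = [d] C_q$ already noted.

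The main obstacle I anticipate is really only notational bookkeeping: tracking half-integer versus integer degrees between the two claims, and confirming that the support bound on $\eta$ propagates to an honest support bound on $\xi$, so that $\xi$ lies in the shifted copy of $E$ rather than some larger completion. Both points are routine once the recursion is written down, and no input beyond the acyclicity of $Q$ is required.
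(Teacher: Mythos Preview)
Your proof is correct and follows essentially the same approach as the paper. The paper's version is more terse: it notes that $C_q[\Hf]$, viewed as an endomorphism of $E$, has a block matrix that is lower unitriangular with respect to the lexicographical order on $I\times\R$, and invokes the fact that $E$ is bounded below to conclude invertibility. Your explicit recursion is precisely what one unwinds when inverting a lower unitriangular operator on a space of sequences bounded below, so the two arguments coincide in substance.
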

\begin{proof}
Since $C_q$ commutes with $[\Hf]$, it suffices to show 1).  Consider
the block matrix of the restricted map $C_q[\Hf]$ from $E$ to $E$,
whose components are indexed by $(i,a)\times(j,b)$, $i,j\in I$,
$a,b\in \Z$. It is lower unitriangular with respect to the lexicographical
order on $I\times \R$. Notice that $E$ is bounded below. The claim follows.
\end{proof}
Therefore, it makes sense to talk about the inverses of the above
bijections, which are both denoted by $C_q^{-1}$.

Next, we generalize the graded quiver varieties of \cite{Nakajima09}
from the bipartite to the acyclic case, modifying the original construction via the lexicographical order. We follow the convention of
\cite{QinThesis} and consider bigraded dimension vectors $w=(w_i(a))_{i\in I,
  a\in \Z}$ and $v=(v_i(b))_{i\in I,b\in {\Z+\Hf}}$. We always assume
that they have non-negative components and finite supports. Let
$W=\C^w=\oplus_{i,a}W_i(a)$ and $V=\C^v=\oplus_{i,b} V_i(b)$ be the
associated bigraded vector spaces. If two $I\times \R$-graded vectors are given, such that at least
one of them has finite support, their scalar product $\cdot$
is well defined.

We say a pair $(v,w)$ is \emph{$l$-dominant} if the difference $w-C_q
v$ is contained in $\N^{I\times \Z}$. We say $(v,w)$ dominates
$(v',w')$, which we denote by $(v,w)\geq
(v',w')$ (\emph{dominance order}), if there exists some $v''\in
\N^{I\times(\Z+\Hf)}$ such that $w'-C_q v'=w-C_q (v+v'')$. We denote $w'\leq w$ if $(0,w')\leq (0,w)$.

For any $v$, $v'\in\N^{I\times(\Z+\Hf)}$, $w\in\N^{I\times \Z}$, which have finite
supports, we define
\begin{align*}
\opname{L}^\bullet(v,v')&=\oplus_{(i,b)} \opname{Hom}(V_i(b),V'_i(b)),\\
\opname{L}(w,v)&=\oplus_{(i,a)} \opname{Hom}(W_i(a),V_i(a-\Hf)),\\
\opname{L}(v,w)&=\oplus_{(i,b)}  \opname{Hom}(V_i(b),W_i(b-\Hf)),\\
 \opname{E}(v,v')&=(\oplus_{h\in\Omega,b} \opname{Hom}(V_{s(h)}(b),V'_{t(h)}(b)))\oplus (\oplus_{\oh\in\oOmega,b} \Hom(V_{s(\oh)}(b),V'_{t(\oh)}(b-1))),
\end{align*}
where $\Omega$ and $\oOmega$ are the sets of arrows of the quivers
$Q\op$ and $Q$ respectively. Let $H$ be the union of $\Omega$ and
$\oOmega$, and $\epsilon$ the function on $H$ such that it sends
$\Omega$ to $1$ and $\oOmega$ to $-1$ respectively. The vector space
$\opname{E}(v,v')$ consists of the elements $(B_h)_{h\in H}$, where
$B_h=\oplus_{b\in\Z+\Hf}B_{h,b}$ and $B_{h,b}$ is any linear map from $V_{s(h)}(b)$ to $V'_{t(h)}(b+\frac{\epsilon(h)-1}{2})$. The function $\epsilon$ acts on $B_h$
by $\epsilon B_h=\epsilon(h)B_h$. 

Define the vector space
\begin{equation}
\grRep(Q\op,v,w)=\opname{E}(v,v)\oplus\opname{L}(w,v) \oplus \opname{L}(v,w),
\end{equation}
whose points are given by 
\begin{equation}
  \begin{split}
(B,\alpha,\beta)=&((B_{h})_{h\in H},\alpha,\beta)\\=&((b_h)_{h\in\Omega}, (b_{\oh})_{\oh\in\oOmega},(\alpha_i)_i,(\beta_i)_i)\\=&((\oplus_b b_{h,b})_{h\in\Omega},(\oplus_b b_{\oh,b})_{\oh\in\oOmega},(\oplus_a\alpha_{i,a})_i,(\oplus_b\beta_{i,b})_i).
  \end{split}
\end{equation}
The restriction of the moment map for ungraded quiver varieties
becomes the map $$\mu: \grRep(Q\op, v,w)\ra \grEndSp(v,v[-1])$$ such
that we have
\begin{align}
    \mu(B,\alpha,\beta)=\oplus_{i,b}(\sum_{h\in\Omega}(b_{h,b} b_{\oh,b+1}-b_{\oh,b+1}
    b_{h,b+1}) +\alpha_{i,b+\Hf} \beta_{i,b+1}).
\end{align}

\begin{Eg}
Figure
\ref{fig:repSpace} provides an example of $\grRep(Q\op,v,w)$, whose
rows and columns are indexed by $I$-degrees and $\R$-degrees respectively.
\end{Eg}

\begin{figure}[htb!]
 \centering
\beginpgfgraphicnamed{repSpace}
\begin{tikzpicture}[scale=0.75]

\node [] (deg0) at (5,4) {$\mathrm{deg}=-\frac{1}{2}$};
\node [] (v1) at (4,-5) {$V_1(-\frac{1}{2})$};
\node [] (v2) at (6,-2) {$V_2(-\frac{1}{2})$};
\node [] (v3) at (6,2) {$V_3(-\frac{1}{2})$};

\draw[-triangle 60] (v2) edge node[above] {$h$} (v1);
\draw[-triangle 60] (v3) edge (v2);
\draw[-triangle 60] (v3) edge (v1);

\node [] (deg1) at (-3,4) {$\mathrm{deg}=-\frac{3}{2}$};
\node [] (v4) at (-4,-5) {$V_1(-\frac{3}{2})$};
\node [] (v5) at (-2,-2) {$V_2(-\frac{3}{2})$};
\node [] (v6) at (-2,2) {$V_3(-\frac{3}{2})$};
\node [] (dotL) at (-4,-2) {$\ldots$};

\draw[-triangle 60] (v5) edge (v4);
\draw[-triangle 60] (v6) edge (v5);
\draw[-triangle 60] (v6) edge (v4);

\node [color=blue] (deg2) at (9,4) {$\mathrm{deg}=0$};
\node [color=blue] (w1) at (7,-5) {$W_1(0)$};
\node [color=blue] (w2) at (9,-2) {$W_2(0)$};
\node [color=blue] (w3) at (9,2) {$W_3(0)$};
\node [] (dotR) at (10.5,-2) {$\ldots$};

\node [color=blue] (deg3) at (1,4) {$\mathrm{deg}=-1$} ;
\node [color=blue] (w4) at (0,-5) {$W_1(-1)$};
\node [color=blue] (w5) at (2,-2) {$W_2(-1)$};
\node [color=blue] (w6) at (2,2) {$W_3(-1)$};

\draw[-triangle 60] (w1) edge  node[above] {$\alpha_1$} (v1);
\draw[-triangle 60] (w2) edge (v2);
\draw[-triangle 60] (w3) edge (v3);

\draw[-triangle 60] (v1) edge node[above] {$\beta_1$} (w4);
\draw[-triangle 60] (v2) edge (w5);
\draw[-triangle 60] (v3) edge (w6);

\draw[-triangle 60] (w4) edge (v4);
\draw[-triangle 60] (w5) edge (v5);
\draw[-triangle 60] (w6) edge (v6);

\draw[-triangle 60] (v1) edge node[above] {$\overline{h}$}  (v5);
\draw[-triangle 60] (v1) edge (v6);
\draw[-triangle 60] (v2) edge (v6);

\end{tikzpicture}
\endpgfgraphicnamed
\caption{Vector space $\grRep(Q\op,v,w)$}
\label{fig:repSpace}
\end{figure}

The base change group $G_v=\prod_{i,a} GL(V_{i,a})$ naturally acts on
$\mu^{-1}(0)$. Define $\chi$ to be the character which sends any group
element $g$ to $\prod_{i,a}(\det g_{i,a})^{-1}$.

Let $\mu^{-1}(0)^s$ denote the set of $\chi$-stable points in
$\mu^{-1}(0)$ and $\grProjQuot(v,w)$ the free quotient $\mu^{-1}(0)^s/G_v$. This is a quasi-projective variety. Define $\grAffQuot(v,w)$ to be the affine variety
$\opname{Spec}(\C[\mu^{-1}(0)]^{G_v})$. Let $\pi$ denote the canonical
projective morphism from $\grProjQuot(v,w)$ to $\grAffQuot(v,w)$. For
any point $x$ in $\grAffQuot(v,w)$, denote $\pi^{-1}(x)$ by
$\grFib_x(v,w)$. We also denote $\pi^{-1}(0)=\grLag=\grLag(v,w)$. The varieties
$\grProjQuot(v,w)$, $\grAffQuot(v,w)$, $\grLag(v,w)$ are called \emph{graded
quiver varieties}.

In the rest of this section, we verify important properties of graded quiver varieties.

\subsection{Ungraded quiver varieties}
\label{sec:ungradedQuiverVarieties}
We first recall important properties of ungraded quiver varieties.

Let $V$ and $W$ be finite-dimensional $I$-graded complex vector spaces
(without $\Z$-grading).  In analogy with the previous subsection, we
have vector spaces
\begin{align*}
L(V,V)=\oplus_i\Hom(V_i,V_i)\\
L(W,V)=\oplus_i\Hom(W_i,V_i)\\
L(V,W)=\oplus_i\Hom(V_i,W_i)\\
E(V,V)=\oplus_{h\in H}\Hom(V_{s(h)},V_{t(h)}).
\end{align*}
Consider the symplectic vector space
$\Rep(Q\op,V,W)=L(W,V)\oplus L(V,W)\oplus E(V,V)$. The associated moment map
$\mu:\Rep(Q\op,V,W)\ra L(V,V)$ takes a point $(B,\alpha,\beta)$ of
$\Rep(Q\op,V,W)$ to
\begin{align*}
 \mu(B,\alpha,\beta)= (\epsilon B)B +\alpha \beta.
\end{align*}
Following the arguments of \cite{Nakajima98}, we consider the $GL(V)$-variety $\mu^{-1}(0)$, and fix the character $\chi$ of $GL(V)$ \st
$\chi(g)=\prod_i (\det g_i)^{-1}$. Then we can construct the geometric
invariant theory quotient (GIT quotient for short) $\projQuot(V,W)$ with respect to $\chi$ and the
categorical quotient $\affQuot(V,W)$ by the action of $GL(V)$ together with the projective morphism $\pi:\projQuot(V,W)\ra \affQuot(V,W)$.

The points in
$\affQuot(V,W)$ are in bijection with the closed orbits in
$\mu^{-1}(0)$. A point $(B,\alpha,\beta)$ in such a closed orbit is
called a representative of the corresponding point in $\affQuot(V,W)$,
which is denoted by $[B,\alpha,\beta]$. Let $\mu^{-1}(0)^s$ be the open subset
of $\mu^{-1}(0)$ consisting of the $\chi$-stable points. It is well
known that $GL(V)$ acts freely on $\mu^{-1}(0)^s$. Therefore, the
points of $\projQuot(V,W)$ are in bijection with the orbits of $\mu^{-1}(0)^s$. Again, a point $(B,\alpha,\beta)$ of such a free orbit is called a representative of the corresponding point in  $\projQuot(V,W)$, denoted by $[B,\alpha,\beta]$.

\begin{Prop}[{\cite[Corollary 3.12]{Nakajima98}}]\label{prop:smoothQuot}
    The variety $\projQuot(V,W)$ is smooth.
\end{Prop}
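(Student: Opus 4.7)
The plan is to follow the standard smoothness argument for Nakajima quiver varieties, which rests on two reductions: (i) $\mu^{-1}(0)^s$ is smooth, because the differential $d\mu$ is surjective at every $\chi$-stable point; (ii) the free action of $\GL(V)$ on $\mu^{-1}(0)^s$ then descends smoothness to the quotient $\projQuot(V,W) = \mu^{-1}(0)^s/\GL(V)$.

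To establish (i) I would work with the three-term complex
\[
\opname{L}(V,V) \xrightarrow{\sigma_p} T_p\Rep(Q\op,V,W) \xrightarrow{d\mu_p} \opname{L}(V,V),
\]
where $p = (B,\alpha,\beta) \in \mu^{-1}(0)^s$ and $\sigma_p$ is the derivative of the orbit map $g \mapsto g \cdot p$. A direct computation using the natural symplectic pairing on $\Rep(Q\op,V,W)$ together with the trace pairing on $\opname{L}(V,V)$ shows that $d\mu_p$ is (up to sign) the transpose of $\sigma_p$. Consequently
\[
\Cok(d\mu_p) \cong \Ker(\sigma_p)^{*} = \mathrm{Lie}\,\mathrm{Stab}_{\GL(V)}(p)^{*}.
\]
At a $\chi$-stable point the stabilizer is trivial: stability forces $(B,\alpha)$ to generate $V$ as a representation of the path algebra, so any $g \in \GL(V)$ fixing $(B,\alpha,\beta)$ must be the identity. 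Hence $d\mu_p$ is surjective, $0$ is a regular value of $\mu$ on the stable locus, and $\mu^{-1}(0)^s$ is a smooth subvariety of $\Rep(Q\op,V,W)$ of the expected dimension.

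For (ii), since $\GL(V)$ acts freely on the smooth variety $\mu^{-1}(0)^s$ (freeness being another direct consequence of the trivial-stabilizer statement), standard descent for principal bundles, or equivalently Luna's \'etale slice theorem, implies that the geometric quotient $\projQuot(V,W)$ is smooth.

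The main obstacle is the trivial-stabilizer assertion at $\chi$-stable points; this is where the GIT stability condition enters substantively, and it relies on unpacking the definition of $\chi$-stability for $\chi(g) = \prod_i(\det g_i)^{-1}$ in terms of the non-existence of proper $B$-invariant subspaces containing $\mathrm{Im}\,\alpha$. Once this is in hand, the identification of $\Cok(d\mu_p)$ with the stabilizer Lie algebra and the descent of smoothness are formal, so the whole argument rests on verifying these two classical inputs in our acyclic setting.
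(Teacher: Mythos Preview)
The paper does not supply its own proof of this proposition; it merely records the statement with a citation to \cite[Corollary~3.12]{Nakajima98}. Your argument is correct and is essentially the one given in Nakajima's original reference: the three-term complex you write down is precisely the complex \eqref{eq:tangentComplex} in the paper (with $\iota=\sigma_p$), the duality between $d\mu_p$ and $\sigma_p$ comes from the symplectic form on $\Rep(Q\op,V,W)$, and the characterisation of $\chi$-stability as ``$V$ is generated from $\Im\alpha$ by $B$'' is exactly Nakajima's criterion for the character $\chi(g)=\prod_i(\det g_i)^{-1}$. So there is nothing to compare---you have reproduced the cited proof.
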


Given any two vectors $v$, $v'$ \st $v\leq v'$ (with respect to the coordinate-wise order), there is a natural embedding of
$\affQuot(V,W)$ into $\affQuot(V',W)$ given by extending the coordinates of
the representatives by zero. Take all possible $v$ and define
$\affQuot(W)=\cup_V \affQuot(V,W)$ to be the direct limit of all the embeddings. It is possibly infinite-dimensional, \cf
\cite[2.5]{Nakajima01}.

Let $[B,\alpha,\beta]$ be a point in $\projQuot(V,W)$ and let
$x=(B,\alpha,\beta)$ be its representative. Suppose that we have a $B$-invariant filtration of $V$
\begin{align*}
  0\subset F^0\subset F^1\subset \ldots\subset F^t=V,
\end{align*}
where $\Im \alpha\subset F^0$. Let $\gr_0\alpha$ denote
the morphism from $W$ to $F^0$ \st its composition with the inclusion
$F^0\ra V$ is $\alpha$. Let $\gr_0\beta$ denote the restriction of
$\beta$ to $F^0$. For $1\leq s\leq t$, let $\gr_s B$ denote the
endomorphism which $B$ induces on $F^s/F^{s-1}$ and $\gr
B=\oplus_{1\leq s\leq t}\gr_s B$ the endomorphism on $\oplus_{1\leq
  s\leq t}F^s/F^{s-1}$. The induced representative $\gr x$ is defined
to be $(\gr B,\gr_0 \alpha,\gr_0 \beta)$, \cf \cite[Definition 3.19]{Nakajima98}.

\begin{Prop}\cite[Proposition 3.20]{Nakajima98}
  \label{prop:grRepresentative}
Let $[B,\alpha,\beta]$ be a point in $\projQuot(V,W)$ and let
$x=(B,\alpha,\beta)$ be its representative. Then there exists a $B$-invariant filtration of $V$
\begin{align*}
  0\subset F^0\subset F^1\subset \ldots\subset F^t=V,
\end{align*}
\st $\Im \alpha\subset F^0$ and the induced triple $\gr x=(\gr
B,\gr_0\alpha,\gr_0\beta)$ is a representative of $\pi([B,\alpha,\beta])$.
\end{Prop}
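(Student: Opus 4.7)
The plan is to realise the degeneration from $[B,\alpha,\beta]$ to the closed orbit parametrising $\pi([B,\alpha,\beta])$ as the limit of a one-parameter subgroup of $GL(V)$, and then to read off the filtration $F^\bullet$ from the weight decomposition of $V$ under this subgroup.

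First, I would recall that $\affQuot(V,W)=\Spec\C[\mu^{-1}(0)]^{GL(V)}$ parametrises the closed $GL(V)$-orbits in $\mu^{-1}(0)$, and that $\pi([x])$ corresponds to the unique closed orbit contained in $\oline{GL(V)\cdot x}\cap\mu^{-1}(0)$. Applying the Hilbert--Mumford / Kempf theorem to the reductive $GL(V)$-action on the affine variety $\mu^{-1}(0)$, there exists a one-parameter subgroup $\lambda\colon\C^\ast\to GL(V)$ such that
\begin{equation*}
x_\infty:=\lim_{z\to 0}\lambda(z)\cdot x
\end{equation*}
exists in $\mu^{-1}(0)$ and lies in the distinguished closed orbit. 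In particular $x_\infty$ is already a representative of $\pi([B,\alpha,\beta])$, and the task reduces to identifying it with the associated graded of a filtration on $V$.

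Next, I would decompose $V=\bigoplus_{s\in\Z}V(s)$ into weight spaces of $\lambda$, normalised (by a central twist $\lambda\mapsto\lambda\cdot(z^c\id_V)$, which merely rescales $\alpha$ and $\beta$) so that $\lambda(z)$ acts on $V(s)$ by $z^{-s}$ with $s$ ranging over $\{0,1,\ldots,N\}$ for some $N\geq 0$. Setting $F^s:=\bigoplus_{s'\leq s}V(s')$ yields the required filtration $0\subset F^0\subset F^1\subset\cdots\subset F^N=V$. A direct computation of the $\lambda$-weights on each kind of component of $(B,\alpha,\beta)$ shows that existence of $x_\infty$ is equivalent to: (a) each component $B_h^{s',s}\colon V_{s(h)}(s)\to V_{t(h)}(s')$ vanishes unless $s'\leq s$, so $B$ preserves $F^\bullet$; (b) the component of $\alpha$ into $V(s)$ vanishes for $s>0$, so $\Im\alpha\subseteq V(0)=F^0$; and (c) the component of $\beta$ on $V(s)$ vanishes for $s<0$, which is automatic in the normalised weight range. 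Taking the limit then identifies $x_\infty$ with $(\gr B,\gr_0\alpha,\gr_0\beta)$: the strictly off-diagonal pieces of $B$ die while the block-diagonal part survives as $\gr B$, $\alpha$ survives as a map $W\to V(0)=F^0$, namely $\gr_0\alpha$, and $\beta$ survives as $\beta|_{F^0}=\gr_0\beta$.

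The main obstacle is the normalisation step. The one-parameter subgroup produced by Hilbert--Mumford has a priori arbitrary integer weights, whereas the filtration interpretation demands weights in $\{0,1,\ldots,N\}$ together with $\Im\alpha\subseteq F^0$. A central twist $\lambda\mapsto\lambda\cdot(z^c\id_V)$ rescales $\alpha$ by $z^c$ and $\beta$ by $z^{-c}$, so such a shift interacts nontrivially with the existence of the limit; one has to verify, as in \cite{Nakajima98}, that the $\chi$-stability of $x$ forces the appropriate sign compatibility, so that after a suitable shift the limit still lies in the closed orbit while all $V$-weights become nonnegative and $\Im\alpha$ falls into the weight-zero part. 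Once this is arranged, the remainder of the argument is the routine weight-by-weight inspection outlined above, and no further geometric input is needed.
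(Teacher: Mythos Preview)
The paper does not supply its own proof of this statement; it simply records the result and cites \cite[Proposition~3.20]{Nakajima98}. Your Hilbert--Mumford approach is the standard route and is essentially how Nakajima argues.

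Your ``main obstacle'' is, however, slightly misdiagnosed: no central twist is needed, and in fact a nontrivial twist would change the limit point (possibly moving it off the closed orbit), so it is not the right tool. Stability does the work directly, with no normalisation. With your convention $\lambda(z)|_{V(s)}=z^{-s}$, existence of the limit forces (a) $B$ to weakly lower the $s$-index, (b) $\Im\alpha\subseteq\bigoplus_{s\le 0}V(s)$, and (c) $\beta$ to vanish on $\bigoplus_{s<0}V(s)$. Thus $S:=\bigoplus_{s<0}V(s)$ is a $B$-invariant subspace contained in $\Ker\beta$. For $\chi(g)=\prod_i(\det g_i)^{-1}$ the $\chi$-stability of $(B,\alpha,\beta)$ is exactly the condition that no nonzero $B$-invariant subspace of $V$ lies in $\Ker\beta$ (this is \cite[Lemma~3.8]{Nakajima98}), so $S=0$. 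Hence all $s$-weights are already in $\{0,1,\ldots,N\}$, and $\Im\alpha\subseteq\bigoplus_{s\le 0}V(s)=V(0)=F^0$ holds automatically. After this observation the rest of your weight-by-weight inspection goes through verbatim and yields $x_\infty=(\gr B,\gr_0\alpha,\gr_0\beta)$.
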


If $\hat{G}$ is a subgroup of $G_v$, we denote by $(\hat{G})$ the
conjugacy class of $\hat{G}$. There is a natural stratification $
\affQuot(V,W)=\sqcup_{(\hat{G})} \affQuot(V,W)_{(\hat{G})}$, \st each
stratum is the set of the points $[B,\alpha,\beta]$ which have representatives $(B,\alpha,\beta)$ with the stabilizers in the conjugacy class $(\hat{G})$. 

\begin{Prop}[3.27,\cite{Nakajima98}]\label{prop:stratum}
Let $[B,\alpha,\beta]$ be a point in $\affQuot(V,W)_{(\hat{G})}$ for some nontrivial $\hat{G}$. Then there is a representative $(B,\alpha,\beta)$ and a decomposition 
\[
V=V^0\oplus (V^1)^{\oplus\hat{v_1}}\oplus \cdots \oplus (V^r)^{\oplus\hat{v_r}},
\]
\st we have
\begin{enumerate}
\item  $B(V^s)\subset V^s$ for each summand $V^s$, $0\leq s\leq r$;
\item if $s\neq s'$, there is no isomorphism from $V^s$ to $V^{s'}$ that commutes
  with $B$;
\item $\Im \alpha$ is contained in $V^0$, and $V^s$ is contained in $\Ker \beta$ for all $s>0$;
\item the restriction of $(B,\alpha,\beta)$ to $V^0$ has the trivial stabilizer in $\prod_{i\in I} GL(V_i ^0)$;
\item the subgroup $\prod_{i\in I} GL(V_i^s)$ meets $\hat{G}$ only in the scalar subgroup $\C^*\subset GL(V)$;
\end{enumerate}
\end{Prop}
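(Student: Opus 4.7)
The plan is to invoke the standard GIT correspondence between points of $\affQuot(V,W)$ and closed $GL(V)$-orbits in $\mu^{-1}(0)$. A representative of such a closed orbit is polystable in the GIT sense, and for framed quiver data with the moment-map constraint this polystability forces a semisimplicity property. The decomposition claimed in the statement is then essentially the isotypic decomposition of this ``semisimple'' module, with the framed summand isolated as $V^0$.

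First I would take any representative $(B,\alpha,\beta)$ of the closed orbit and set $V^0$ to be the smallest $B$-invariant subspace of $V$ containing $\Im \alpha$. By construction $V^0$ is $B$-stable and $\alpha$ factors through it. The key step is then to produce a $B$-invariant complement $V^{>0}$ of $V^0$ on which $\beta$ vanishes. If no such complement existed, one could exhibit a one-parameter subgroup $\lambda(t)\subset GL(V)$ that fixes $V^0$ pointwise and rescales a chosen complement, \st $\lim_{t\to 0}\lambda(t)\cdot(B,\alpha,\beta)$ lies outside the orbit but still inside $\mu^{-1}(0)$, violating closedness. This Hilbert--Mumford style argument yields both the complement and conclusion (3).

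Next I would decompose the unframed part $(V^{>0},B|_{V^{>0}})$ by Krull--Schmidt. Closedness of the $GL(V)$-orbit descends to closedness of the $GL(V^{>0})$-orbit on $V^{>0}$, and combined with the induced equation $\mu=0$ on this piece it forces each indecomposable summand to be simple. Collecting isomorphic summands yields $V^{>0}=\bigoplus_{s=1}^{r}(V^s)^{\oplus\hat{v_s}}$ with pairwise non-isomorphic simples $V^s$, establishing (1) and (2). For (4), the restricted data $(V^0,B|_{V^0},\alpha,\beta|_{V^0})$ satisfies the $\chi$-stability criterion since by definition $V^0$ admits no proper $B$-invariant subspace containing $\Im\alpha$, with the dual condition on $\beta$ following from $\mu=0$; stability then implies trivial stabilizer in $\prod_i GL(V_i^0)$. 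For (5), Schur's lemma applied to each isotypic component $(V^s)^{\oplus\hat{v_s}}$ identifies the $B$-commuting automorphisms with $GL(\hat{v_s},\C)$ embedded diagonally across $\prod_i GL(V_i^s)$, and the intersection of $\hat{G}\cong\prod_s GL(\hat{v_s},\C)$ with a single factor $\prod_i GL(V_i^s)$ collapses to the scalar subgroup $\C^*$.

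The main obstacle I expect is the Hilbert--Mumford step producing the complement $V^{>0}$ compatible with both $B$-invariance and $\beta|_{V^{>0}}=0$: this is the point where polystability and the moment-map constraint $\mu=0$ must cooperate to exclude spurious degenerations, and the one-parameter subgroup has to be chosen carefully so that the limit remains in $\mu^{-1}(0)$ while leaving the orbit. Once this is in place, properties (1)--(3) are immediate from the decomposition, and (4)--(5) reduce to the stability criterion and Schur's lemma as sketched above.
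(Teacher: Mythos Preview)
The paper does not supply a proof of this proposition: it is quoted directly from \cite[Proposition~3.27]{Nakajima98} and used only as input (in Remark~\ref{rem:stratum} and the proof of Proposition~\ref{prop:grStratification}). There is therefore no in-paper argument to compare your proposal against.

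That said, your outline follows the standard route---closed orbit forces polystability, hence an isotypic decomposition---but the order of operations is slightly off and creates a circularity you yourself flag. You define $V^0$ as the smallest $B$-invariant subspace containing $\Im\alpha$ and then look for a $B$-invariant complement annihilated by $\beta$; the Hilbert--Mumford step is meant to supply this complement. But to run that limit inside $\mu^{-1}(0)$ you already need the complement to be $B$-invariant and $\beta$-null, which is what you are trying to prove. Likewise, your remark that ``the dual condition on $\beta$ follows from $\mu=0$'' is not automatic: the moment-map equation on $V$ restricts cleanly to $V^0$ only once a compatible splitting is in hand. In Nakajima's argument (and in King's general framework) one first invokes the equivalence between closed orbits and semisimple objects in the category of framed representations satisfying $\mu=0$, obtains the full isotypic decomposition in one stroke, and then \emph{reads off} $V^0$ as the (unique) summand carrying nontrivial framing. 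Building $V^0$ first and searching for its complement afterwards is harder to make rigorous. Once the decomposition is granted, your arguments for (4) and (5) via the stability criterion and Schur's lemma are correct.
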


\begin{Rem}\label{rem:stratum}
Restricting the equation $\mu(B,\alpha,\beta)=0$ to each summand
$V^s$, $s>0$, we see that $V^s$ is a module over the preprojective
algebra associated with $Q\op$ with respect to the restriction of the $B$-action. It has the minimal possible stabilizer $\C^*$ and a closed orbit under the $\prod_i GL(V_i^s)$ action. Therefore, it is a representative of a point in the categorical quotient $\affQuot(V^s,0)$.
\end{Rem}

We call $\affQuot(V,W)_{(\{1\})}$ the regular stratum and denote it by
$\regStratum(V,W)$. It is known that the restriction of $\pi$ gives an
isomorphism from $\pi^{-1}(\regStratum(V,W))$ to $\regStratum(V,W)$.

Assume $x$ is a point in $\regStratum(V^0,W)$, which is naturally embedded into a quotient $\affQuot(V,W)$. Let $T$ be the tangent space of $\regStratum(V,W)$ at $x$. Since $\regStratum(V^0,W)$ is non-empty, $(V^0,W)$ is $l$-dominant. Define $W^\bot=\C^{\dimv W-C \dimv V^0}$, $V^\bot=\C^{\dimv V-\dimv V^0}$. We have the following theorem.

\begin{Thm}[Theorem 3.3.2, \cite{Nakajima04}]\label{thm:transversalSlice}
There exist neighborhoods $\cU$, $\cU_\cT$, $\cU^\bot $ of $x\in \affQuot(V,W)$, $0\in \cT$, $0\in \affQuot(V^\bot ,W^\bot )$ respectively, and biholomorphic maps $\cU\ra \cU_T\times \cU^\bot $, $\pi^{-1}(\cU)\ra \cU_\cT\times \pi^{-1}(\cU^\bot )$, \st the following diagram commutes:
\begin{equation*}
\begin{CD}
  \projQuot(V,W) \;@. \supset \; @. \pi^{-1}(\cU) @>>\cong> \cU_T \times \pi^{-1}(\cU^\bot ) \;@.\subset \;@. T\times \projQuot(V^\bot ,W^\bot )\\
   @. @. @V{\pi}VV @VV{\id\times\pi}V @.@. \\
   \affQuot(V,W)\;@.\supset \; @. \cU @>>\cong> \cU_\cT\times \cU^\bot  \;@. \subset \;@. \cT\times\affQuot(V^\bot ,W^\bot )
\end{CD}
\end{equation*}
\end{Thm}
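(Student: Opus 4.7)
The plan is to establish the transversal slice by a Luna-type local slice argument adapted to the hyperkähler (quiver variety) setting, as in \cite{Nakajima98,Nakajima01}. Let me sketch the main steps.

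First, I would fix a concrete representative of the point $x$. Since $x\in \regStratum(V^0,W)$ and $x$ is viewed in $\affQuot(V,W)$ via the zero extension, choose a splitting $V=V^0\oplus V'$ (with $\dimv V' = \dimv V - \dimv V^0$) and a representative of the form $\tilde x=(B^0\oplus 0,\alpha^0,\beta^0\oplus 0)\in \mu^{-1}(0)\subset \grRep(Q\op,V,W)$, where $(B^0,\alpha^0,\beta^0)$ represents $x$ in $\regStratum(V^0,W)$ and everything vanishes on $V'$. By Proposition \ref{prop:stratum} (or rather its direct analogue in the embedded situation), the stabilizer of $\tilde x$ inside $GL(V)$ is exactly $GL(V')$ up to the diagonal $\C^*$ factor, while the restriction to $V^0$ has trivial stabilizer.

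Next, I would apply Luna's étale slice theorem to the action of $GL(V)$ on $\mu^{-1}(0)$ at the point $\tilde x$, stabilized by $GL(V')$. The normal space to the $GL(V)$-orbit inside $\mu^{-1}(0)$ decomposes, as a $GL(V')$-module, into the sum of a trivial part (corresponding to deformations along the stratum through $x$) and a ``transverse'' part. Standard symplectic/hyperkähler reduction identifies the trivial summand with the tangent space $\cT=T_x\regStratum(V,W)$, and the transverse summand with $\grRep(Q\op,V^\bot,W^\bot)$, where the choice $W^\bot=\C^{\dimv W-C\dimv V^0}$ and $V^\bot=\C^{\dimv V-\dimv V^0}$ is dictated by matching the dimensions forced by the moment map equation and the $l$-dominance of $(V^0,W)$. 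Passing to the $GL(V')$-invariant quotient produces the local biholomorphism
\begin{equation*}
\cU \xrightarrow{\cong}\cU_\cT\times \cU^\bot\subset \cT\times \affQuot(V^\bot,W^\bot),
\end{equation*}
sending $x$ to $(0,0)$.

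Finally, I would lift this slice to the GIT quotient $\projQuot(V,W)$. Using Proposition \ref{prop:smoothQuot} and the fact that $\pi$ is projective and $GL(V)$-equivariant, the same Luna argument applied to the stable locus $\mu^{-1}(0)^s$ produces a compatible biholomorphism $\pi^{-1}(\cU)\xrightarrow{\cong}\cU_\cT\times \pi^{-1}(\cU^\bot)$ making the square in the statement commute; the factor $\cU_\cT$ appears because the deformations tangent to $\regStratum(V,W)$ simply translate the base point and lift trivially to $\projQuot(V,W)$, while the second factor inherits the projective map $\pi:\projQuot(V^\bot,W^\bot)\to \affQuot(V^\bot,W^\bot)$.

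The main obstacle is the identification of the transverse normal slice with $\grRep(Q\op,V^\bot,W^\bot)$: one has to verify that the symplectic reduction of the $GL(V')$-action on the naive normal space produces exactly the moment-map equation for the smaller quiver variety, with the prescribed dimension vectors. This is where the $l$-dominance hypothesis enters, guaranteeing $\dimv W-C\dimv V^0\in \N^I$, so that $W^\bot$ is well defined and the resulting equations define the genuine Nakajima variety $\affQuot(V^\bot,W^\bot)$.
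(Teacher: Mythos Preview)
The paper does not give its own proof of this theorem: it is simply quoted from \cite{Nakajima04} (Theorem~3.3.2 there), and the surrounding text only uses the statement together with its $GL(W)\times\C^*$-equivariance to deduce the graded version (Theorem~\ref{thm:grTransversalSlice}). So there is no in-paper argument to compare your proposal against.

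That said, your sketch is the correct one and matches the approach in the cited sources: the transversal slice is obtained via a Luna-type \'etale slice at a representative with stabilizer $GL(V')$, the normal slice is identified (via the complex \eqref{eq:tangentComplex} and its analogue with two different data) with the representation space for the pair $(V^\bot,W^\bot)$, and the compatible lift to $\projQuot(V,W)$ follows because the slice maps are equivariant and $\pi$ is the GIT morphism. This is exactly the line of argument in \cite[\S3]{Nakajima98} and \cite[\S3.3]{Nakajima04}; the only point worth sharpening in your write-up is that the identification of the transverse piece with $\Rep(Q\op,V^\bot,W^\bot)$ comes from computing the $GL(V')$-isotypic decomposition of the middle cohomology of \eqref{eq:tangentComplex} at $\tilde x$, not merely from a dimension count.
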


For any two $I$-graded vector spaces $V$, $V'$, define $L(V,V')=\oplus_i \Hom(V_i,V_i')$, $E(V,V')=\oplus_h \Hom(V_{s(h)}, V_{t(h)}')$.

For a point $(B,\alpha,\beta)\in \mu^{-1}(0)$, we consider the following complex
\begin{align}
  \label{eq:tangentComplex}
  L(V,V)\xra{\iota} E(V,V)\oplus L(W,V)\oplus L(V,W)\xra{d\mu} L(V,V),
\end{align}
where $d\mu$ is the differential map of $\mu$, and for $\xi\in L(V,V)$, we have
\begin{align}
  \label{eq:iota}
  \iota(\xi)=(\oplus_{h\in \Omega}(B_h\xi-\xi B_h))\oplus (-\xi \alpha) \oplus (\beta \xi).
\end{align}
Then the tangent space of $\projQuot(V,W)$ at $[B,\alpha,\beta]$ is isomorphic to the middle cohomology of this complex.

\subsection{Graded quiver varieties as fixed point sets}

In this subsection, we check the properties of our new graded quiver
varieties by using the technique developed by Nakajima in his previous works. The
results obtained in this subsection provide the geometric foundations
for \cite{KimuraQin11}.

Let $V$ and $W$ be as in Section \ref{sec:ungradedQuiverVarieties}. Using the fixed point technique developed in \cite{Nakajima01}, we
can deduce the properties of our graded quiver varieties from their
ungraded version in Section \ref{sec:ungradedQuiverVarieties}.

Choose a torus action\footnote{The choice is not unique. In fact,
  different choices might give isomorphic graded quiver varieties.} of
$\C^*$ on $\Rep(Q\op,V,W)$, \st for any $\varepsilon\in \C^*$, we have
\[
\varepsilon (\alpha,\beta,b_h,b_{\oh})=(\varepsilon^{n}\alpha, \varepsilon^{n}\beta,\varepsilon ^{2(s(h)-t(h))} b_h, \varepsilon ^{2(n-s(h)+t(h))} b_{\oh}).
\]

Taking into account of the natural action of $GL(W)$ on $\Rep(Q\op, V,W)$, we get a group action of $GL(W)\times \C^*$ on this space.

Since the actions of $GL(W)\times \C^*$ and $GL(V)$ commute, $GL(W)\times \C^*$ acts on the quiver varieties $\projQuot(V,W)$ and $\affQuot(V,W)$ and it commutes with the projective morphism $\pi$.

Take a pair $(s,\varepsilon)$, \st $s\in GL(W)$ is semisimple, and
$\varepsilon$ is not a root of unity. It generates a cyclic subgroup,
whose closure with respect to the Zariski topology is denoted by
$A$. As in \cite{Nakajima01}, let $[B,\alpha,\beta]$ be a point in $\projQuot(V,W)^A$ and $(B,\alpha,\beta)$ be any representative of
it. There exists a group homomorphism $\rho_{(B,\alpha,\beta)}$ from $A$ to $GL(V)$ \st for any element $a\in A$, we have
$a(B,\alpha,\beta)=\rho_{(B,\alpha,\beta)}(a)^{-1}(B,\alpha,\beta)$. The
conjugacy class of $\rho_{(B,\alpha,\beta)}$ is independent of the
choice of the representative $(B,\alpha,\beta)$ because the actions of $GL(V)$ and $A$
commute. So we can denote it by $[\rho_{[B,\alpha,\beta]}]$.

\begin{Lem}[{\cite[Section 4.1]{Nakajima01}}]
The map from $\projQuot(V,W)^A$ to the conjugacy classes of the group
homomorphisms from $A$ to $GL(V)$, sending $[B,\alpha,\beta]$ to
$[\rho_{[B,\alpha,\beta]}]$, is locally constant.
\end{Lem}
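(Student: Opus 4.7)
\medskip

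\noindent\textbf{Proof proposal.} The plan is to reduce the lemma to the fact that conjugacy classes of representations of the diagonalizable group $A$ form a discrete set, so that any continuous family of such representations must have locally constant conjugacy class. The real work lies in making sense of continuity of $p\mapsto \rho_{p}$.

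First I would record the following structural observation about $A$. Since $A$ is the Zariski closure of the cyclic group generated by a commuting pair $(s,\varepsilon)$ with $s$ semisimple and $\varepsilon$ not a root of unity, $A$ is a diagonalizable algebraic group (a product of an algebraic torus and a finite abelian group). Hence every finite-dimensional representation $\rho\colon A\to GL(V)$ decomposes canonically into character weight spaces, and the conjugacy class of $\rho$ is determined by the finite multiset $\{(\chi,\dim V_{\chi})\}_{\chi\in\hat A}$, which takes values in a discrete set.

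Next I would construct a local lift. Fix a point $p_{0}=[B_{0},\alpha_{0},\beta_{0}]\in\projQuot(V,W)^{A}$ and a representative $(B_{0},\alpha_{0},\beta_{0})\in\mu^{-1}(0)^{s}$. By Proposition~\ref{prop:smoothQuot} the variety $\projQuot(V,W)$ is smooth, and the quotient map $\mu^{-1}(0)^{s}\to \projQuot(V,W)$ is a principal $GL(V)$-bundle because $GL(V)$ acts freely on $\mu^{-1}(0)^{s}$. Thus there exist an analytic (or étale) neighborhood $U$ of $p_{0}$ in $\projQuot(V,W)$ and a section $\sigma\colon U\to\mu^{-1}(0)^{s}$ with $\sigma(p_{0})=(B_{0},\alpha_{0},\beta_{0})$. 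For $p\in U\cap\projQuot(V,W)^{A}$ and $a\in A$, the point $a\cdot\sigma(p)$ lies in the same free $GL(V)$-orbit as $\sigma(p)$, so there is a unique element $\rho_{p}(a)\in GL(V)$ with
\[
a\cdot\sigma(p)=\rho_{p}(a)^{-1}\cdot\sigma(p).
\]
Uniqueness together with the commutativity of $A$ and the fact that $A$ and $GL(V)$ act commutatively force $\rho_{p}\colon A\to GL(V)$ to be a group homomorphism whose conjugacy class equals $[\rho_{[B,\alpha,\beta]}]$. Moreover $\rho_{p}(a)$ depends continuously on $(a,p)$.

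Finally I would combine the two steps. The continuous family $p\mapsto \rho_{p}$ produces continuous families of character weight spaces, and in particular the multiplicities $\dim(V_{\chi})_{p}$ are continuous functions of $p$ taking values in $\N$; hence they are locally constant on $U\cap\projQuot(V,W)^{A}$. Since these multiplicities completely determine $[\rho_{p}]$, the map $[B,\alpha,\beta]\mapsto[\rho_{[B,\alpha,\beta]}]$ is locally constant. The only non-formal input is the existence of the local section $\sigma$, which is the main obstacle and is supplied by smoothness of $\projQuot(V,W)$; everything else is representation-theoretic book-keeping.
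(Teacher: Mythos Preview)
Your proposal is correct and follows essentially the same approach as the paper: both arguments use the principal $GL(V)$-bundle structure of $\mu^{-1}(0)^{s}\to\projQuot(V,W)$ to produce a continuous family of homomorphisms $\rho_{p}$ (the paper via a local trivialization and lifted curves, you via a local section), and then conclude by observing that the target set of conjugacy classes is discrete. Your discreteness argument, phrased in terms of character multiplicities of the diagonalizable group $A$, is a slightly cleaner packaging of what the paper states more tersely as ``the Jordan form of $\rho_{x(t)}(a)$ is a discrete subset in the set of the conjugacy classes of $GL(V)$''; note, incidentally, that the existence of the local section needs only that the quotient map is a principal bundle (free action), not the smoothness of the base.
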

\begin{proof}
Since $A$ is generated by the element $a=(s,\varepsilon)$, it suffices
to study the conjugacy class $[\rho_{[B,\alpha,\beta]}(a)]$.

First, we show that $[\rho_{[B,\alpha,\beta]}(a)]$ is continuous in $[B,\alpha,\beta]$. Recall that $\mu^{-1}(0)^s$ is a principal $GL(V)$-bundle over
$\projQuot(V,W)$. Denote the fibre map by $p$. Take a
trivialization. Let $U$ be any chart. For any continuous curve $[x(t)]$,
$0\leq t\leq 1$, the curve $[x^U(t)]=U\cap [x(t)]$ in $U\cap\projQuot(V,W)^A$ can be lifted
to a continuous curve $x^U(t)= [x^U(t)]\times \{e\}$ in $(U\times GL(V)) \cap
p^{-1}(\projQuot(V,W)^A)$, where $e$ denotes the identity of
$GL(V)$. Then the fibre coordinates of the
continuous curve $a^{-1}x^U(t)$ are described by $\rho_{x(t)}(a)$. Recall that the transition between
different charts is given by conjugating. Therefore, the conjugacy
class $[\rho_{[x(t)]}(a)]$ is continuous on the
curve $[x(t)]$.

Since $s$ is semi-simple and $\rho:A\ra GL(V)$ is a group
homomorphism, the Jordan form of $\rho_{x(t)}(a)$ is a discrete subset in
the set of the conjugacy classes of $GL(V)$. Therefore,
$[\rho_{[x(t)]}(a)]$ is locally constant.
\end{proof}

We denote the collection of
the points $[B,\alpha,\beta]$ with the common conjugacy class $[\rho]$
by $\projQuot([\rho])$. Then $\projQuot([\rho])$ is a union of connected
components of $\projQuot(V,W)^A$. It follows that we have $\pi\projQuot(V,W)^A=\cup_{[\rho]}\pi\projQuot([\rho])=\sqcup_{[\rho]}\pi\projQuot([\rho])$. Denote
each stratum $\pi\projQuot([\rho])$ by $\affQuot([\rho])$.

Fix the conjugacy class $[\rho]$. Using the eigenvalues and
the eigenspaces of $s$ and $\rho(s,\varepsilon)$, we can endow $W$
and $V$ with gradings. Assume $W$
and $V$ have eigenspace decompositions $W=\oplus_i
W_i=\oplus_{i;a\in\Z}W_i(a)$,
$V=\oplus_iV_i=\oplus_{i;a\in\Z}V_i(a+\Hf)$, where $W_i(a)$ and
$V_i(a+\Hf)$ have eigenvalues $\varepsilon^{2(an+i-1)}$,
$\varepsilon^{2((a+\Hf)n+i-1)}$ respectively. Associate to
$[\rho(s,\varepsilon)]$ the bigraded vectors
$w=(\dim W_i(a))$, $v=(\dim V_i(a))$. We can identify
$\projQuot([\rho])$ with the graded quiver variety $\grProjQuot(v,w)$. Similarly, the graded categorical quotient
$\grAffQuot(v,w)$ is identified with the subvariety
$\affQuot([\rho])$ of $\affQuot(V,W)$.

\begin{Rem} 
If we take the $\C^*$-action in \cite{Nakajima09}, the representatives
of the fixed points in the sub-varieties
$\projQuot([\rho])$ do not take the form of
representations of the quiver in Figure \ref{fig:repSpace}. For example, let the quiver $Q$ be given by Figure
\ref{fig:acyclicQuiver}. For simplicity, let us assume $w_2=0$. Then these representatives are
representations of the quiver in Figure \ref{fig:mixedRepSpace}, where
the black arrows arise from those of $Q\op$, the green arrows
arise from those of $Q$, and the orange arrows correspond to the
linear maps $\alpha_i(a)$, $\beta_i(a)$, $i\in I$, $a\in \Z$.

Such representations do not suit our purpose.
\end{Rem}

\begin{figure}[htb!]
\centering
\begin{tikzpicture}
\node (v21) at (3,0) {$\cdots$};
\node (v24) at (13,0) {$\cdots$};
\node (v11) at (5,-2) {$V_1(1)$};
\node (v21) at (5,0) {$V_2(1)$};
\node (v31) at (5,2) {$V_3(1)$};
\node (v12) at (7,-2) {$V_1(2)$};
\node (v22) at (7,0) {$V_2(2)$};
\node (v32) at (7,2) {$V_3(2)$};
\node (v13) at (9,-2) {$V_1(3)$};
\node (v23) at (9,0) {$V_2(3)$};
\node (v33) at (9,2) {$V_3(3)$};
\node (v14) at (11,-2) {$V_1(4)$};
\node (v24) at (11,0) {$V_2(4)$};
\node (v34) at (11,2) {$V_3(4)$};

\node (w34) at (11,3) {$W_3(4)$};
\node (w14) at (11,-3) {$W_1(4)$};

\node (w33) at (9,3) {$W_3(3)$};
\node (w13) at (9,-3) {$W_1(3)$};

\node (w32) at (7,3) {$W_3(2)$};
\node (w12) at (7,-3) {$W_1(2)$};

\node (w31) at (5,3) {$W_3(1)$};
\node (w11) at (5,-3) {$W_1(1)$};

\tikzstyle{green_arrow_edge} = [->, draw=green!55, line width=1]

\draw[green_arrow_edge] (v24) edge (v33);
\draw[green_arrow_edge] (v14) edge (v33);
\draw[green_arrow_edge] (v14) edge (v23);

\draw[-triangle 60] (v34) edge (v23);
\draw[-triangle 60] (v34) edge (v13);
\draw[-triangle 60] (v24) edge (v13);
\draw[green_arrow_edge] (v23) edge (v32);
\draw[green_arrow_edge] (v13) edge (v32);
\draw[green_arrow_edge] (v13) edge (v22);

\draw[-triangle 60] (v33) edge (v22);
\draw[-triangle 60] (v33) edge (v12);
\draw[-triangle 60] (v23) edge (v12);
\draw[green_arrow_edge] (v22) edge (v31);
\draw[green_arrow_edge] (v12) edge (v31);
\draw[green_arrow_edge] (v12) edge (v21);

\draw[-triangle 60] (v32) edge (v21);
\draw[-triangle 60] (v32) edge (v11);
\draw[-triangle 60] (v22) edge (v11);

\tikzstyle{orange_edge} = [->, draw=orange!55, line width=1]

\draw[orange_edge] (v34) edge (w33);
\draw[orange_edge] (v33) edge (w32);
\draw[orange_edge] (v32) edge (w31);
\draw[orange_edge] (w34) edge (v33);
\draw[orange_edge] (w33) edge (v32);
\draw[orange_edge] (w32) edge (v31);

\draw[orange_edge] (v14) edge (w13);
\draw[orange_edge] (v13) edge (w12);
\draw[orange_edge] (v12) edge (w11);
\draw[orange_edge] (w14) edge (v13);
\draw[orange_edge] (w13) edge (v12);
\draw[orange_edge] (w12) edge (v11);
\end{tikzpicture}
\caption{The vector space $\grRep(Q\op,v,w)$ for $Q$ acyclic and the
  $\C^*$-action of \cite{Nakajima09}}
\label{fig:mixedRepSpace}
\end{figure}

The graded version of Proposition \ref{prop:smoothQuot} implies that the graded quiver variety $\grProjQuot(V,W)$ is smooth.

\begin{Prop}[{\cite[Corollary 5.5]{Nakajima94} \cite[Proposition 4.1.2]{Nakajima01}}]
$\projQuot([\rho])$ is homotopic to $\lag(V,W)\cap \projQuot([\rho])$.
\end{Prop}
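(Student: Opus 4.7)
The plan is to adapt Nakajima's argument from \cite[Corollary 5.5]{Nakajima94} and \cite[Proposition 4.1.2]{Nakajima01}: construct an auxiliary $\C^*$-action on the ungraded variety $\projQuot(V,W)$ which commutes with the $A$-action, so that a Bialynicki-Birula--type flow furnishes a strong deformation retraction of $\projQuot(V,W)$ onto $\lag$, and which restricts to the fixed-point component $\projQuot([\rho])$.

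First, I would introduce the uniform scaling $\tau\cdot(B_h,\alpha,\beta)=(\tau B_h,\tau\alpha,\tau\beta)$, $\tau\in\C^*$, on $\Rep(Q\op,V,W)$. Since
\[
\mu(\tau B,\tau\alpha,\tau\beta)=\tau^{2}\mu(B,\alpha,\beta),
\]
the subset $\mu^{-1}(0)$ is preserved; the scaling commutes with $GL(V)$ and preserves $\chi$-stability, so it descends to $\projQuot(V,W)$ compatibly with $\pi$ and with an analogous action on $\affQuot(V,W)$. The $A$-action is built out of the scalar action of $s\in GL(W)$ on $W$ and of $\rho(s,\varepsilon)$ on $V$, so it commutes with uniform scaling; in particular, the scaling preserves every $A$-fixed component $\projQuot([\rho])$ and the subvariety $\lag$.

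Second, on $\affQuot(V,W)$ the scaling acts with strictly positive weights on every homogeneous generator of the invariant ring (traces of products of $B_h$'s, and expressions of the form $\beta B_{h_1}\cdots B_{h_k}\alpha$), so $0\in\affQuot(V,W)$ is the unique attracting fixed point as $\tau\to 0$. Since $\pi\colon\projQuot(V,W)\to\affQuot(V,W)$ is projective, the orbit $\tau\mapsto \tau\cdot x$ admits a limit at $\tau=0$ in $\projQuot(V,W)$, and this limit lies in $\pi^{-1}(0)=\lag$. Restricting to $\tau\in(0,1]\subset\R$ and setting the $\tau=0$ value to this limit yields a strong deformation retraction of $\projQuot(V,W)$ onto the scaling-fixed locus $F\subset\lag$. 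By $A$-equivariance, the same construction retracts $\projQuot([\rho])$ onto $F\cap\projQuot([\rho])$.

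Third, note that $\lag\cap\projQuot([\rho])$ is itself scaling-invariant; applying the identical retraction inside it gives $\lag\cap\projQuot([\rho])\simeq F\cap\projQuot([\rho])$. Composing the two homotopy equivalences yields
\[
\projQuot([\rho])\;\simeq\;F\cap\projQuot([\rho])\;\simeq\;\lag\cap\projQuot([\rho]),
\]
which is the claim. The technical heart of the argument is the existence of the limit $\lim_{\tau\to 0}\tau\cdot x$ inside $\projQuot(V,W)$, which rests on properness of $\pi$ and the fact that the induced scaling on the affine base contracts globally to $0$; one also needs the limit to stay in the same component $\projQuot([\rho])$, which follows from the commutation of the scaling and $A$-actions. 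The modified $\C^*$-action used to define our graded quiver varieties does not interfere here, precisely because it acts on independent weight data and hence commutes with the auxiliary uniform scaling.
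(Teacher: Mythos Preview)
Your overall strategy—introduce an auxiliary $\C^*$-action commuting with $A$ and use the flow to retract—is exactly the paper's, and your choice of the uniform scaling is a legitimate alternative to the paper's action $\varepsilon\cdot(b_h,b_{\oh},\alpha,\beta)=(b_h,\varepsilon b_{\oh},\alpha,\varepsilon\beta)$. Both make the induced action on $\affQuot(V,W)$ contract to $0$ and both commute with the $A$-action, which is all Slodowy's technique requires.

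The gap is in your second paragraph. You assert that setting $H(x,0)=\lim_{\tau\to 0}\tau\cdot x$ gives a \emph{continuous} strong deformation retraction onto the scaling-fixed locus $F$. This is exactly what fails in general Bialynicki--Birula theory: the limit map $x\mapsto\lim_{\tau\to 0}\tau\cdot x$ can jump between components of $F$ as $x$ crosses the boundary of a plus-cell. Already on $\P^1$ with $t\cdot[x:y]=[tx:y]$ all $\tau\to 0$ limits exist, yet the limit map is discontinuous at $[1:0]$. In your situation $F$ is typically disconnected (fixed points correspond to auxiliary gradings $V=\bigoplus_k V(k)$ with $B,\alpha,\beta$ of degree $1$, and different dimension vectors $(\dim V(k))_k$ give different components), so you cannot expect a globally continuous retraction onto $F$; your third step, which composes through $F$, inherits this problem.

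The fix is to bypass $F$ entirely. Slodowy's technique \cite[4.3]{Slodowy1980}, as invoked in \cite[Corollary~5.5]{Nakajima94}, takes as input an equivariant projective morphism $\pi\colon Z\to U$ with the $\C^*$-action on $U$ contracting to a point, and produces a strong deformation retraction of $Z$ onto the central fibre $\pi^{-1}(0)$—not onto the fixed locus. Applied with your uniform scaling this yields directly a retraction of $\projQuot(V,W)$ onto $\lag(V,W)$; since the retraction is built from the $\C^*$-flow it is $A$-equivariant and restricts to each component $\projQuot([\rho])$, giving the claim. This is precisely how the paper proceeds, only with the other $\C^*$-action; the characterisation $\lag=\{[x]:\lim_{\varepsilon\to\infty}\varepsilon[x]\ \text{exists}\}$ in the paper is used to identify $\lag$ intrinsically, not to perform the retraction via limits.
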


\begin{proof}
By using Slodowy's technique \cite[Section 4.3]{Slodowy1980}, Nakajima has shown that $\projQuot(V,W)$ is homotopic to $\lag(V,W)$
 in \cite{Nakajima94}, and he has also proved this proposition with a different $GL(W)\times \C^*$-action in
\cite{Nakajima01}. The technique remains valid here. We shall briefly
recall it.

Equip the space $\rep(Q\op,V,W)$ with the $\C^*$-action \st we have $\varepsilon(B_h,B_{\oh},\alpha,\beta)=
(B_h,\varepsilon B_{\oh},\alpha,\varepsilon\beta)$ for any
$\varepsilon\in\C^*$. It commutes with the $GL(V)$-action. Furthermore,
the set of the stable points $\mu^{-1}(0)^s$ is invariant under this action. Therefore, we obtain a $\C^*$-action on $\projQuot(V,W)$. We have
\begin{align}
\lag(V,W)=\{[x]\in\projQuot(V,W)|\lim_{\varepsilon\ra \infty}\varepsilon[x]\ \mathrm{exists}\}.
\end{align}
Now the technique of Slodowy \cite[4.3]{Slodowy1980} implies that,
via this $\C^*$-action, $\projQuot(V,W)$ retracts to a
neighborhood of $\lag(V,W)$, \st $\lag(V,W)$ is a strong deformation
retract of this neighborhood. 

Because our $A$-action commutes with this $\C^*$-action, we can apply the above constructions to the $A$-fixed subsets $\grRep(Q\op,v,w)$, $\grProjQuot(v,w)$, $\grLag(v,w)$. Then the proposition is verified.
\end{proof}

Let us define $\grRegStratum(v,w)=\grRegStratum([\rho])=\pi(\pi^{-1}(\regStratum(V,W))\cap
\projQuot([\rho]))$. Then the morphism $\pi$ is an isomorphism from
$\pi^{-1}(\grRegStratum(v,w))$ to $\grRegStratum(v,w)$.

The maps in Theorem \ref{thm:transversalSlice} commute with the $GL(W)\times \C^*$ action. Restrict the maps to the subvarieties $\grProjQuot(v,w)$, $\grProjQuot(v^\bot ,w^\bot )$ of $\projQuot(V,W)$, $\projQuot(V^\bot ,W^\bot )$ respectively. We obtain a transversal slice theorem for graded quiver varieties.

For any $x\in\grRegStratum(v^0,w)\subset\grAffQuot(v,w)$, let $T$
denote the its tangent space in $\grRegStratum(v,w)$. Define $w^\bot$
and $v^\bot$ to be $w-C_qv^0$ and $v^\bot=v-v^0$ respectively.
\begin{Thm}[Transversal slice] \label{thm:grTransversalSlice}
There exist neighborhoods $\cU$, $\cU_\cT$, $\cU^\bot$ of $x$ in
$\grAffQuot(V,W)$ and the origins in $\cT$, $\grAffQuot(v^\bot,w^\bot )$ respectively, and biholomorphic maps $\cU\ra \cU_\cT\times \cU^\bot $, $\pi^{-1}(\cU)\ra \cU_\cT\times \pi^{-1}(\cU^\bot )$, \st the following diagram commutes:
\begin{equation*}
\begin{CD}
  \grProjQuot(v,w) \;@. \supset \; @. \pi^{-1}(\cU) @>>\cong> \cU_\cT \times \pi^{-1}(\cU^\bot ) \;@.\subset \;@. \cT\times \projQuot(v^\bot ,w^\bot )\\
   @. @. @V{\pi}VV @VV{\id\times\pi}V @.@. \\
   \grAffQuot(v,w)\;@.\supset \; @. \cU @>>\cong> \cU_\cT\times \cU^\bot  \;@. \subset \;@. \cT\times\grAffQuot(v^\bot ,w^\bot )
\end{CD}
\end{equation*}

Notice that the fibre $\pi^{-1}(x)$ is biholomorphic to the fibre
$\grLag(v^\bot ,w^\bot )$ over the origin.
\end{Thm}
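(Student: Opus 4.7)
The plan is to deduce Theorem \ref{thm:grTransversalSlice} from its ungraded counterpart (Theorem \ref{thm:transversalSlice}) by restricting to the fixed-point loci of the Zariski closure $A$ of the cyclic subgroup generated by $(s,\varepsilon) \in GL(W) \times \C^*$. Recall from the discussion preceding the statement that the graded variety $\grProjQuot(v,w)$ is identified with a union of connected components $\projQuot([\rho])$ of $\projQuot(V,W)^A$, and similarly $\grAffQuot(v,w)$ is identified with $\affQuot([\rho]) \subset \affQuot(V,W)$, with the bigrading on $v$, $w$ encoded by the eigenvalue decomposition of the $A$-action.

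First, I would revisit the construction of the ungraded slice in \cite{Nakajima04}: the biholomorphic maps $\cU \to \cU_T \times \cU^\bot$ and $\pi^{-1}(\cU) \to \cU_T \times \pi^{-1}(\cU^\bot)$ are produced via an étale slice argument applied to a representative of the point $x \in \regStratum(V^0,W) \subset \affQuot(V,W)$, using the decomposition $V = V^0 \oplus V^\bot$ of Proposition \ref{prop:stratum} with $V^\bot = \C^{\dimv V - \dimv V^0}$ and $W^\bot = \C^{\dimv W - C\dimv V^0}$. Since $x \in \grRegStratum(v^0,w)$ is $A$-fixed, one can choose a representative $(B,\alpha,\beta)$ and an associated decomposition that is $A$-equivariant (the eigenspaces of the $A$-action give a compatible bigrading on $V^0$ and $V^\bot$). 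The crucial observation is that the entire slice construction — the tangent space $T$, the transverse affine quotient $\affQuot(V^\bot,W^\bot)$, and the two biholomorphisms — is canonically built from $GL(W)\times\C^*$-equivariant data, and in particular is $A$-equivariant.

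Next, I would take $A$-fixed points throughout the ungraded commutative diagram. On the base, this transforms $\affQuot(V,W)$ into $\grAffQuot(v,w) = \affQuot([\rho])$ near $x$; on the total space, $\projQuot(V,W)^A$ breaks into components among which we single out $\grProjQuot(v,w) = \projQuot([\rho])$; the factor $T$ becomes the $A$-fixed subspace $\cT$, which is precisely the tangent space to $\grRegStratum(v,w)$ at $x$; and the transverse factor $\affQuot(V^\bot,W^\bot)$ restricts to $\grAffQuot(v^\bot,w^\bot)$ with $w^\bot = w - C_q v^0$ and $v^\bot = v - v^0$, which are the bigraded versions of the ungraded dimensions. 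Taking $\cU$, $\cU_\cT$, $\cU^\bot$ to be the intersections of the ungraded neighborhoods with the respective $A$-fixed loci and restricting the ungraded biholomorphisms gives the required maps, and the commutativity of the new diagram is inherited from the ungraded one.

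Finally, the fiber statement $\pi^{-1}(x) \cong \grLag(v^\bot,w^\bot)$ follows by taking the preimage of $x$ under the restricted base biholomorphism: on the transverse side, $x$ corresponds to $(0,0)$, and $\pi^{-1}$ of the origin of $\grAffQuot(v^\bot,w^\bot)$ is by definition $\grLag(v^\bot,w^\bot)$. The main technical obstacle is the first step, namely verifying that Nakajima's étale slice construction can indeed be made $A$-equivariant; this reduces to the fact that the reductive group $A$ acts on $\mu^{-1}(0)$ commuting with the $GL(V)$-action, so one can apply an equivariant version of Luna's slice theorem to produce the $A$-equivariant transversal slice. Once this equivariance is in place, everything else is a formal restriction to fixed loci.
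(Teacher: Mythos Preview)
Your proposal is correct and follows essentially the same approach as the paper: the paper's argument (stated in the paragraph immediately preceding the theorem) is simply that the maps in Theorem~\ref{thm:transversalSlice} commute with the $GL(W)\times\C^*$-action, so restricting them to the $A$-fixed subvarieties $\grProjQuot(v,w)$ and $\grProjQuot(v^\bot,w^\bot)$ yields the graded transversal slice. Your write-up elaborates on the equivariance of the \'etale slice construction more carefully than the paper does, but the underlying strategy is identical.
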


\begin{Prop}
  \label{prop:grStratification}
The affine graded quiver variety $\grAffQuot(v,w)$ admits a stratification
\begin{align}
  \label{eq:grStratification}
 \sqcup_{(v',w)\geq (v,w)}\grRegStratum(v',w).
\end{align}
\end{Prop}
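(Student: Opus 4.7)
The plan is to reduce the graded case to the ungraded stratification (Proposition \ref{prop:stratum}) via the fixed-point realization $\grAffQuot(v,w) = \affQuot([\rho]) \subset \affQuot(V,W)^A$, and then show that the decompositions furnished by the ungraded result can be chosen $A$-equivariantly so that they descend to the graded setting.

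Concretely, I would fix a point $x \in \grAffQuot(v,w)$ and regard it as an $A$-fixed point of $\affQuot(V,W)$. Applying Proposition \ref{prop:stratum} yields a representative $(B,\alpha,\beta)$ of $x$ and a decomposition
\[
V = V^0 \oplus (V^1)^{\oplus \hat{v}_1} \oplus \cdots \oplus (V^r)^{\oplus \hat{v}_r},
\]
on which the restriction to $V^0$ has trivial stabilizer, hence lies in $\regStratum(V^0,W)$. The first key step is to upgrade this decomposition to the graded category: since $x$ is $A$-fixed and $A$ acts via a homomorphism $\rho$ into $GL(V)$ commuting with $(B,\alpha,\beta)$, each summand $V^s$ can be replaced by its isotypic component under $\rho$, which is still $B$-invariant and still carries the property described in Proposition \ref{prop:stratum}. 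This equips $V^0$ with a bigrading by some $v^0 \in \N^{I\times(\Z+\Hf)}$, and equips $V^\perp := \bigoplus_{s\geq 1} (V^s)^{\oplus \hat{v}_s}$ with a bigrading $v^\perp$, so that $v = v^0 + v^\perp$. The restriction of $(B,\alpha,\beta)$ to $V^0$ is then a $\chi$-stable graded representative, and its class lies in $\grRegStratum(v^0,w)$.

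Next I would check the dominance condition: since $v^\perp \in \N^{I\times(\Z+\Hf)}$, the identity $w - C_q v = w - C_q(v^0 + v^\perp)$ immediately realises $(v^0, w) \geq (v, w)$ in the dominance order, so $x$ is indeed assigned to a stratum of the required index. The embedding $\grRegStratum(v^0, w) \hookrightarrow \grAffQuot(v, w)$ comes from the graded analog of the inclusion $\affQuot(V^0, W) \hookrightarrow \affQuot(V, W)$ (extending representatives by zero on $V^\perp$), which is $A$-equivariant and so restricts to the fixed-point level. For disjointness, note that if $x$ lay in both $\grRegStratum(v^0, w)$ and $\grRegStratum(\tilde v^0, w)$, the ungraded uniqueness of the regular part of the decomposition (Remark \ref{rem:stratum}) would force $V^0 \cong \tilde V^0$ as $A$-modules, hence $v^0 = \tilde v^0$.

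The main obstacle I expect is the $A$-equivariance of the decomposition in Proposition \ref{prop:stratum}. One needs to know that the isotypic pieces of $V$ under $\rho$ are compatible with the combinatorial decomposition into $V^0 \oplus \bigoplus_s (V^s)^{\oplus \hat{v}_s}$, so that restricting to these pieces preserves both the $B$-invariance and the closedness of the orbit. This is where the commutation of $\rho(A)$ with $GL(V) \cdot (B,\alpha,\beta)$ and the semisimplicity of $\rho$ have to be exploited carefully; the remaining arguments (grading compatibility, dominance inequality, disjointness) are then essentially formal transcriptions of the ungraded statement to the fixed-point locus.
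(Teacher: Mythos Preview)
Your argument has a genuine gap at the step where you conclude that $x$ lies in $\grRegStratum(v^0,w)$. You correctly show that the restriction of $(B,\alpha,\beta)$ to $V^0$ represents a point of $\grRegStratum(v^0,w)$, and you correctly describe the embedding $\grRegStratum(v^0,w)\hookrightarrow\grAffQuot(v,w)$ via extension by zero on $V^\perp$. But you never verify that $x$ itself equals the image of this restriction under the embedding. For that you would need the closed $GL(V)$-orbit of $(B,\alpha,\beta)$ to coincide with that of $(B|_{V^0}\oplus 0,\alpha,\beta|_{V^0})$, which amounts to showing that each $B|_{V^s}$, $s>0$, represents the origin in $\affQuot(V^s,0)$. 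Since $V^s$ is a simple preprojective module with closed orbit (Remark \ref{rem:stratum}), this fails whenever $B|_{V^s}\neq 0$. Indeed, the Remark immediately after the proposition in the paper points out that the ungraded analogue is \emph{false} for non-Dynkin $Q$; your reduction-to-ungraded strategy therefore cannot succeed without an extra ingredient specific to the graded situation.

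That missing ingredient is the heart of the paper's proof. After making the decomposition $A$-equivariant (which you rightly flag as needing care, and which the paper also addresses), each $V^s$ with $s>0$ acquires a bigrading $v^s$, and $B|_{V^s}$ then represents a point of $\grAffQuot(v^s,0)$. With the specific $\C^*$-action chosen here, every arrow in the graded doubled quiver strictly decreases the lexicographic order on $I\times\R$ (arrows in $\Omega$ lower the $I$-index within a fixed $b$-degree, arrows in $\oOmega$ lower the $b$-degree), so the graded representation space $\grRep(Q^{op},v^s,0)$ is that of an acyclic quiver. Hence there are no nonconstant $G_{v^s}$-invariants and $\grAffQuot(v^s,0)=\{0\}$. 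This forces each $B|_{V^s}$ to be trivial, and only then does $x$ actually lie in $\grRegStratum(v^0,w)$. The $A$-equivariance you singled out as the ``main obstacle'' is handled essentially as you suggest; the real obstacle is this acyclicity argument, which your proposal omits.
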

\begin{proof}
It suffices to show 
\begin{align}
\label{eq:stratification}
\pi(\projQuot(V,W)^A)=\sqcup_{V'}\pi(\pi^{-1}(\regStratum(V',W))\cap \projQuot(V,W)^A)
\end{align}
for each pair of vector spaces $(V,W)$. Then we obtain \eqref{eq:grStratification} by restricting \eqref{eq:stratification} to the
subvariety $\grAffQuot(v,w)=\affQuot([\rho])$ for the conjugacy class $[\rho]$
associated with $(v,w)$.

We claim that every point $[B,\alpha,\beta]$ of $\pi(\projQuot(V,W)^A)$ is contained in the
right hand side of \eqref{eq:stratification}. Using Proposition
\ref{prop:stratum} and Remark \ref{rem:stratum}, we see that
$[B,\alpha,\beta]$ belongs to some $\affQuot(V,W)_{(\hat{G})}$. If
$\hat{G}$ equals $\{e\}$, the claim is true. Otherwise, choose the
representative $(B,\alpha,\beta)$ in Proposition \ref{prop:stratum} and consider the $B$ actions on all the summand $V^s$,
$s>0$. If the actions are trivial, $[B,\alpha,\beta]$ lies
in the regular stratum $\regStratum(V^0,W)$, and the claim follows easily. If the action is
nontrivial for some $V^s$, $s>0$, we obtain a point other than $0$ in the categorical quotient
$\affQuot(V^s,0)$. Because the $GL(W)\times \C^*$-action is
compatible with the decomposition of $V$ in Proposition \ref{prop:stratum},
$\rho_{(B,\alpha,\beta)}$ stabilizes the decomposition. Let $v^s$ be the bigraded vector
associated with the $\rho_{(B,\alpha,\beta)}$-action on $V^s$. As in
Remark \ref{rem:stratum}, $V^s$ is a
representative of a nonzero point in $\grAffQuot(v^s,0)$. However in
our setting $\grAffQuot(v^s,0)$ is the categorical quotient of the
$v^s$-dimensional representations of some acyclic quiver, which is
always equal to $\{0\}$. This contradiction implies that the
$B$-action on $V^s$ must be trivial.
\end{proof}

\begin{Rem}
When $Q$ is of Dynkin
  type, the ungraded version of the proposition holds, \cf {\cite[Proposition 2.6.3]{Nakajima01}}. However, it does not necessarily hold
  when $Q$ is not of Dynkin type, \cf Example 10.10 in
  \cite{Nakajima98}. In general, whether the proposition is true or
  not depends on the choice of the $GL(W)\times \C^*$-action.
\end{Rem}

Let $m$ be any integer. Following \cite[Section 2.8]{Nakajima01}, we define a $\C^*$-module structure $L(m)$ on $\C$ by
\begin{align}
  \varepsilon\cdot v=\varepsilon^m v,
\end{align}
where $\varepsilon \in \C^*$, $v\in \C$.

For a $\C^*$-module M, we denote the $\C^*$-module $L(m)\otimes_{\C}M$ by $q^m M$.

As in [2.9, \cite{Nakajima01}], $\mu^{-1}(0)^s$ is a principal
$GL(V)$-bundle over $\projQuot(V,W)$. Therefore, for any $i\in I$, we can view the vector space
$V_i$ as an associated vector bundle by using the natural $GL(V_i)$
action. Also, it is naturally a $GL(W)\times \C^*$-equivariant
vector bundle \st $GL(W)$ acts trivially. Similarly, we view $W_i$ as a $GL(W)\times
\C^*$-equivariant vector bundle over $\projQuot(V,W)$ by using the
$GL(W_i)\times \C^*$ action \st $\C^*$ acts trivially. 

For each $k\in I$, we have the following complex $C^*_k=(\sigma_k,\tau_k)$ of $GL(W)\times
\C^*$-equivariant vector bundles:
\begin{align}\label{eq:dimComplex}
  C^*_k:q^{-2n}V_k\xra{\sigma_k}((\oplus_{j:i< k}q^{2(k-i-n)}V_i^{\oplus b_{ik}})\oplus(\oplus_{j:j> k}q^{2(j-k)}V_j^{\oplus b_{kj}})\oplus q^{-n}W_k)\xra{\tau_k}V_k,
\end{align}
where $\sigma_k=(\oplus_{h\in H, s(h)=k} B_{h})\oplus \beta_k$,
$\tau_k=\sum_{h\in \Omega, t(h)=k}B_h-\sum_{\oh\in
  \oOmega,t(\oh)=k}B_{\oh}+\alpha_k$ are $GL(W)\times
\C^*$-equivariant morphisms. This complex is just the complex in \cite[4.2]{Nakajima98} with a modification of $GL(W)\times \C^*$-action. Let the middle term be the degree $0$ component.

\begin{Prop}[{\cite[Lemma 2.9.2, Lemma 2.9.4]{Nakajima01}}]\label{prop:dominantComplex}
Fix a point $[B,\alpha,\beta]\in\projQuot(V,W)$ and consider $C_k^*$ as a complex of vector spaces, $k\in I$. 

1) The cohomology $H^{-1}(C_k^*)$ vanishes.

2) If the image $\pi([B,\alpha,\beta])\in \affQuot(V,W)$ is
contained in some regular stratum $\regStratum(V',W)\subset \affQuot(V,W)$, then $V'$
equals $V$ if and only if $H^1 (C_k^*)$ vanishes for all $k\in I$.
\end{Prop}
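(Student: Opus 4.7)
The plan is to adapt Nakajima's arguments from the bipartite case (\cite[Lemma 2.9.2, Lemma 2.9.4]{Nakajima01}) to the graded acyclic setting, using the $\chi$-stability of any representative together with the acyclic-specific stratification established in Proposition~\ref{prop:grStratification}. Since the complex $C_k^*$ is built $GL(W)\times\mathbb{C}^*$-equivariantly, everything restricts compatibly from $\projQuot(V,W)$ to $\projQuot([\rho]) = \grProjQuot(v,w)$, so I may argue on a chosen representative $(B,\alpha,\beta)$.

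For part (1), I show $\sigma_k$ is injective directly. Any $u \in \ker \sigma_k$ satisfies $\beta_k(u) = 0$ and $B_h(u) = 0$ for every $h \in H$ with $s(h) = k$. The $I$-graded subspace $S \subset V$ concentrated at vertex $k$ with $S_k = \mathbb{C}u$ is then $B$-invariant (acyclicity of $Q$ forces every arrow out of $S$ to have source $k$, and these annihilate $u$) and contained in $\ker \beta$. The $\chi$-stability forbids nonzero such subspaces, so $u = 0$.

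For part (2), the contrapositive of ``$V' = V \Rightarrow \tau_k$ surjective for all $k$'' I prove by construction. If some $\tau_k$ is not surjective, set $F_k = \opname{Im}\tau_k$ and $F_j = V_j$ for $j \neq k$. Acyclicity of $Q$ (forcing $s(h) \neq k$ whenever $t(h) = k$) makes $F$ a proper $B$-invariant subspace of $V$ containing $\opname{Im}\alpha$. Feeding the filtration $0 \subset F \subsetneq V$ into the graded analogue of Proposition~\ref{prop:grRepresentative} produces $\gr x$ representing $\pi([B,\alpha,\beta])$; its complementary summand $V/F$ is concentrated at vertex $k$ with trivial $B$-action, so it represents the origin of $\grAffQuot(v-\dimv F,0) = \{0\}$ by Proposition~\ref{prop:grStratification}. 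Hence $\pi([B,\alpha,\beta]) \in \grAffQuot(\dimv F, w)$ and the regular stratum containing it has dimension vector strictly smaller than $v$, so $V' \neq V$. For the reverse direction, $H^1(C_k^*) = 0$ for all $k$ is equivalent (by dualizing the stability argument of part (1)) to the \emph{costability} of $(B,\alpha,\beta)$ --- no proper $B$-invariant subspace contains $\opname{Im}\alpha$ --- and stability together with costability places the orbit in the regular stratum $\grRegStratum(v,w)$, giving $V' = V$.

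The main delicate point is the transport of Proposition~\ref{prop:grRepresentative} to our graded setting, in particular verifying that the associated graded of the $B$-invariant filtration still represents $\pi([B,\alpha,\beta])$ at the graded level. This is automatic from the $A$-equivariance of the whole construction: our $\mathbb{C}^*$-action commutes with $GL(V)$ and preserves every filtration-type decomposition, so the ungraded results restrict to the fixed subvariety $\grProjQuot(v,w)$ unchanged.
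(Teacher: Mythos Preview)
The paper does not prove this proposition at all: it is stated as a direct citation of Nakajima's Lemmas~2.9.2 and~2.9.4 from \cite{Nakajima01}, valid for arbitrary (loop-free) quivers on the \emph{ungraded} varieties $\projQuot(V,W)$. The graded analogue is the \emph{next} proposition (Proposition~\ref{prop:grDominantComplex}), whose proof in the paper is simply ``restrict Proposition~\ref{prop:dominantComplex} and use Proposition~\ref{prop:grStratification}''. You have essentially written a proof of the cited result and of its graded consequence at once, which is fine, but the framing conflates the two and imports graded machinery into an ungraded statement.

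Concretely: your appeals to acyclicity in part~(1) are superfluous (the subspace $S$ with $S_k=\C u$ is $B$-invariant for any quiver without edge-loops, since every $B_h$ with $s(h)=k$ kills $u$ and every other $S_j$ is zero); your invocation of Proposition~\ref{prop:grStratification} to see that the $V/F$ summand ``represents the origin'' is a category error---in the ungraded setting the $V/F$ part of $\gr x$ is literally the zero triple $(0,0,0)$, which trivially sits in the closed orbit $\{0\}$, no graded stratification needed. The only place acyclicity (really just absence of loops) enters is to ensure $B|_{V/F}=0$, since $(V/F)$ is concentrated at the single vertex $k$.

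There is also a small gap in part~(2): Proposition~\ref{prop:grRepresentative} asserts the \emph{existence} of a suitable filtration, not that your particular filtration $0\subset F\subsetneq V$ produces a closed-orbit representative. What you actually need is that $\gr x$ lies in the orbit closure $\overline{G_V\cdot x}$ (take the one-parameter subgroup $\lambda(t)=\mathbf{1}_F\oplus t^{-1}\mathbf{1}_{V/F}$ after choosing a splitting), hence has the same image in $\affQuot(V,W)$; since $\gr x$ has the nontrivial stabilizer $GL(V_k/F_k)$, the unique closed orbit in $\overline{G_V\cdot x}$ has dimension strictly less than $\dim G_V$, so $\pi([B,\alpha,\beta])\notin\regStratum(V,W)$. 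With this adjustment your argument is correct, and it \emph{is} Nakajima's argument---just overlaid with graded language that the ungraded statement neither needs nor licenses.
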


Fix a graded quiver variety $\grProjQuot(v,w)=\projQuot([\rho])$. Let $(C^*_k)^\bullet$ be the restriction of $C^*_k$ to $\grProjQuot(v,w)$. Then the complex $(C_k^*)^\bullet$ decomposes as
\begin{align}
  \label{eq:decomposeComplex}
  (C_k^*)^\bullet=\oplus_{b\in \Z+\Hf} C_k^*(b),
\end{align}
where the complexes $C_k^*(b)=(\sigma_k(b),\tau_k(b))$ are given by
\begin{align}\label{eq:grDimComplex}
  \begin{split}
    q^{-2n}V_k(b)\xra{\sigma_k(b)}((\oplus_{j:i<
      k}q^{2(k-i-n)}V_i(b)^{\oplus b_{ik}})&\oplus(\oplus_{j:j>
      k}q^{2(j-k)}V_j(b-1)^{\oplus b_{kj}}) \oplus
    q^{-n}W_k(b-\Hf))\\
&\xra{\tau_k(b)}V_k(b-1).
  \end{split}
\end{align}
The decomposition commutes with the $A$-action.

We have the following analogue of Proposition \ref{prop:dominantComplex} in graded cases.
\begin{Prop}
  \label{prop:grDominantComplex}
1) The cohomology $H^{-1}((C_k^*)^\bullet)$ vanishes.

2) The image $\pi([B,\alpha,\beta])\in \grAffQuot(v,w)$ is contained in the regular stratum $\grRegStratum(v,w)$ if and only if $H^1 ((C_k^*)^\bullet)$ vanishes, $\forall k\in I$.
\end{Prop}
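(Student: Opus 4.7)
The plan is to deduce both statements from the ungraded analogue, Proposition \ref{prop:dominantComplex}, by exploiting the realization of $\grProjQuot(v,w) = \projQuot([\rho])$ as (a union of components of) the fixed locus $\projQuot(V,W)^A$. Since the $A$-action commutes with the $GL(V)$-action and the structure maps of $C_k^*$ are built from $\alpha, \beta, B_h, B_{\oh}$ twisted by the specific characters $q^m$ appearing in \eqref{eq:dimComplex}, the complex $C_k^*$ is $A$-equivariant. At any point $[B,\alpha,\beta] \in \grProjQuot(v,w)$, the fibers of the bundles $V_i$ and $W_i$ split into $A$-eigenspaces $V_i(b)$, $W_i(a)$, and $A$-equivariance of the differentials forces the restriction $(C_k^*)^\bullet$ to decompose as a direct sum of subcomplexes $\bigoplus_{b \in \Z+\Hf} C_k^*(b)$ as displayed in \eqref{eq:decomposeComplex}--\eqref{eq:grDimComplex}. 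Consequently $H^i((C_k^*)^\bullet) = \bigoplus_b H^i(C_k^*(b))$, and since this direct sum is precisely the fiber of $H^i(C_k^*)$ at $[B,\alpha,\beta]$ (both complexes agree fiberwise at points of the fixed locus), we obtain the identification $H^i((C_k^*)^\bullet) = H^i(C_k^*)|_{[B,\alpha,\beta]}$ for every $i$.

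Part 1 then follows immediately from the identification together with Proposition \ref{prop:dominantComplex}(1). For Part 2, I would use the definition $\grRegStratum(v,w) = \pi(\pi^{-1}(\regStratum(V,W)) \cap \projQuot([\rho]))$, so that $\pi([B,\alpha,\beta]) \in \grRegStratum(v,w)$ if and only if $\pi([B,\alpha,\beta]) \in \regStratum(V,W)$ (one uses here that $\pi([B,\alpha,\beta]) \in \grAffQuot(v,w) = \affQuot([\rho])$ already by hypothesis). By Proposition \ref{prop:dominantComplex}(2), the latter condition is equivalent to the vanishing of $H^1(C_k^*)$ for all $k \in I$, which by the fiberwise identification of the preceding paragraph is equivalent to the vanishing of $H^1((C_k^*)^\bullet)$ for all $k$.

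The one point that requires some care is the first one: verifying that the differentials $\sigma_k, \tau_k$ really do respect the $A$-grading with the precise shifts displayed in \eqref{eq:grDimComplex}. This is a bookkeeping check matching the $\C^*$-weights chosen in \S\ref{sec:gradedQuiverVarieties} on the maps $\alpha, \beta, B_h, B_{\oh}$ against the eigenvalues $\varepsilon^{2(bn+i-1)}$ and $\varepsilon^{2(an+i-1)}$ of the $A$-action on $V_i(b)$ and $W_i(a)$. The twists $q^{-2n}, q^{2(k-i-n)}, q^{2(j-k)}, q^{-n}$ in \eqref{eq:dimComplex} are in fact designed precisely so that this check produces the shifts in \eqref{eq:grDimComplex} and no further geometric input is required; this is where our modified choice of $\C^*$-action (tailored to acyclic, not necessarily bipartite, quivers) enters in a crucial way.
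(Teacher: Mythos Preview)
Your argument for Part 1 is correct and matches the paper's proof by restriction. For Part 2, however, there is a genuine gap. Proposition \ref{prop:dominantComplex}(2) is a \emph{conditional} statement: it asserts that $V'=V$ is equivalent to the vanishing of $H^1(C_k^*)$ \emph{only under the hypothesis} that $\pi([B,\alpha,\beta])$ already lies in some regular stratum $\regStratum(V',W)$. You invoke it as if it said unconditionally that $\pi([B,\alpha,\beta])\in\regStratum(V,W)$ is equivalent to the vanishing of $H^1$, but this is not what it says. The ``only if'' direction of your Part 2 is fine (if the image is in $\regStratum(V,W)$, take $V'=V$ and conclude $H^1=0$), but the ``if'' direction fails without further input: given $H^1=0$, you need to know there exists some $V'$ with $\pi([B,\alpha,\beta])\in\regStratum(V',W)$ before Proposition \ref{prop:dominantComplex}(2) lets you conclude $V'=V$.

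This is exactly why the paper's proof says ``we additionally use Proposition \ref{prop:grStratification}.'' That proposition guarantees $\grAffQuot(v,w)=\sqcup_{v'}\grRegStratum(v',w)$, so the image lies in some $\grRegStratum(v',w)\subset\regStratum(V',W)$, supplying the missing hypothesis. Note that this is genuinely nontrivial: the Remark following Proposition \ref{prop:grStratification} emphasizes that the analogous stratification can fail in the ungraded setting for non-Dynkin quivers, and it is precisely the tailored $\C^*$-action that makes it hold here. So the stratification is not a formality you can bypass; it is the substantive ingredient your argument is missing.
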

\begin{proof}
Part 1) follows from Proposition \ref{prop:dominantComplex} by
restriction. For part 2), we additionally use Proposition \ref{prop:grStratification}.
\end{proof}

\begin{Thm}\label{thm:connected}
  $\grProjQuot(v,w)$ is connected.
\end{Thm}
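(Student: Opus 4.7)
The plan is to combine the retract proposition proved just above with a standard connected-fibers argument, via induction on the total dimension $|v| = \sum_{i,a} v_i(a)$. The two key inputs are Theorem~\ref{thm:grTransversalSlice} (transversal slice) and Proposition~\ref{prop:grStratification} (stratification); the projective morphism $\pi : \grProjQuot(v,w) \to \grAffQuot(v,w)$ will tie everything together.

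First I would show that $\grAffQuot(v,w)$ (or at least $\pi(\grProjQuot(v,w))$) is connected. For this, introduce the auxiliary scaling $\C^*$-action
\begin{align*}
\lambda \cdot (B_h, B_{\oh}, \alpha, \beta) = (\lambda B_h, \lambda B_{\oh}, \lambda \alpha, \lambda \beta),
\end{align*}
which commutes with both the $A$-action and the gauge action of $G_v$, and hence descends to a $\C^*$-action on $\grAffQuot(v,w)$. As $\lambda \to 0$ every point is contracted to the image of the origin, so $\grAffQuot(v,w)$ and its closed subvariety $\pi(\grProjQuot(v,w))$ are connected.

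Second, I claim by induction on $|v|$ that every fiber $\pi^{-1}(x)$ is connected. The base case $|v|=0$ is immediate, since $\grProjQuot(0,w)$ is a point. For the inductive step, suppose $x$ lies in a stratum $\grRegStratum(v^0, w) \subseteq \grAffQuot(v,w)$ from Proposition~\ref{prop:grStratification}. Theorem~\ref{thm:grTransversalSlice} identifies the fiber with $\grLag(v^\bot, w^\bot)$, where $v^\bot = v - v^0$ and $w^\bot = w - C_q v^0$. If $v^0 = v$, then $v^\bot = 0$ and the fiber is a single point. Otherwise $|v^\bot| < |v|$; the retract proposition shows that $\grLag(v^\bot, w^\bot)$ is homotopy equivalent to $\grProjQuot(v^\bot, w^\bot)$, which is connected by the inductive hypothesis.

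Finally, with $\pi : \grProjQuot(v,w) \to \pi(\grProjQuot(v,w))$ proper, surjective onto a connected base, and having connected geometric fibers, connectedness of the total space follows by the usual argument: any nontrivial clopen decomposition $\grProjQuot(v,w) = X_1 \sqcup X_2$ yields closed disjoint images $\pi(X_1)$ and $\pi(X_2)$ (disjoint because any point in the intersection would decompose its fiber nontrivially), contradicting connectedness of $\pi(\grProjQuot(v,w))$. The main obstacle I anticipate is bookkeeping for the induction — specifically, verifying that $(v^\bot, w^\bot)$ ranges over genuine $l$-dominant pairs of the type produced in Section~\ref{sec:gradedQuiverVarieties}, so that the inductive hypothesis truly applies to $\grProjQuot(v^\bot, w^\bot)$. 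This is precisely what Proposition~\ref{prop:grStratification} ensures, so the argument closes.
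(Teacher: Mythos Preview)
Your induction breaks at the central fibre. When $x = 0 \in \grAffQuot(v,w)$, the stratum containing $x$ is $\grRegStratum(0, w) = \{0\}$, so $v^0 = 0$ and $v^\bot = v$. Your dichotomy ``either $v^0 = v$ or $|v^\bot| < |v|$'' omits this case: here $|v^\bot| = |v|$, and the inductive hypothesis does not apply. Worse, the fibre $\pi^{-1}(0) = \grLag(v, w)$ is, by the very retract proposition you invoke, homotopy equivalent to $\grProjQuot(v, w)$ itself --- the variety whose connectedness you are trying to establish. Since $0$ lies in $\pi(\grProjQuot(v, w))$ whenever the latter is nonempty, this is not a bookkeeping issue but a genuine circularity, and the transversal-slice approach cannot close on its own.

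The paper follows Nakajima's original argument in \cite[Theorem~5.5.6]{Nakajima01}, which inducts on $|v|$ by a different mechanism that never touches the central fibre of $\pi$. One chooses $(k, b)$ with $v_k(b) > 0$ and, using Proposition~\ref{prop:codim}, finds a dense open subset of $\grProjQuot(v, w)$ on which $\codim \Im \tau_k(b)$ takes the expected value; a Hecke-type modification (replacing $V_k(b-1)$ by $\Im\tau_k(b)$) then maps this open set to some $\grProjQuot(v', w)$ with $|v'| < |v|$, with Grassmannian (hence connected) fibres. The paper's remark that the hypothesis $|b_{ij}| \le 1$ from \cite{Nakajima01} becomes unnecessary here reflects that, with the present $\C^*$-action, the complexes $C_k^*(b)$ in \eqref{eq:grDimComplex} already have the shape that argument requires.
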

\begin{proof}
The statement follows from the arguments in the proof of \cite[Theorem 5.5.6]{Nakajima01}. Notice
that, since our $GL(W)\times \C^*$-action is different from that of
\cite{Nakajima01}, we don't need the condition $|b_{ij}|\leq 1$, $1\leq i,j\leq n$, in \cite[Theorem 5.5.6]{Nakajima01}.
\end{proof}
As a consequence, the smooth variety $\grProjQuot(v,w)$ is irreducible.

\begin{Prop}[{\cite[Corollary 5.5.5]{Nakajima01}}]\label{prop:codim}
  On a nonempty open subset of $\grProjQuot(v,w)$, we have
  \begin{align}
    \label{eq:codim}
    \codim \Im\tau_{k}(b)=\max(0,-\rank (C_k^*(b))).
  \end{align}
\end{Prop}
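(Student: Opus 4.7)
The plan is to prove the equality by sandwiching $\codim\Im\tau_k(b)$ between an easy lower bound and an upper bound coming from a carefully chosen witness point, then invoking upper semi-continuity on the irreducible variety $\grProjQuot(v,w)$. Since $\grProjQuot(v,w)$ is smooth (the graded analogue of Proposition~\ref{prop:smoothQuot}) and connected (Theorem~\ref{thm:connected}), it is irreducible; because $\rank\tau_k(b)$ is lower semi-continuous on $\grProjQuot(v,w)$, the function $\codim\Im\tau_k(b)$ attains its minimum on a nonempty open subset, and it suffices to identify this minimum with $\max(0,-\rank C_k^*(b))$.

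For the lower bound, Proposition~\ref{prop:grDominantComplex}(1) gives $H^{-1}(C_k^*(b))=0$, so $\sigma_k(b)$ is pointwise injective. The relation $\tau_k(b)\circ\sigma_k(b)=0$ forces $\dim\Ker\tau_k(b)\geq \dim V_k(b)$, and a direct dimension count then yields
\begin{align*}
  \codim\Im\tau_k(b) \;=\; \dim V_k(b-1)-\rank\tau_k(b) \;\geq\; \dim V_k(b-1)+\dim V_k(b)-\dim(\text{middle term}) \;=\; -\rank C_k^*(b).
\end{align*}
Combined with the trivial bound $\codim\Im\tau_k(b)\geq 0$, this gives $\codim\Im\tau_k(b)\geq\max(0,-\rank C_k^*(b))$ everywhere.

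For the matching upper bound I exhibit a single point of $\grProjQuot(v,w)$ at which equality holds. Using Proposition~\ref{prop:grStratification}, choose $v^0\leq v$ so that $(v^0,w)$ is the ``most $l$-dominant'' companion whose regular stratum $\grRegStratum(v^0,w)$ appears in the stratification of $\grAffQuot(v,w)$, and pick $x\in\grRegStratum(v^0,w)$. The transversal slice Theorem~\ref{thm:grTransversalSlice} identifies a neighborhood of $\pi^{-1}(x)$ with $\cU_\cT\times\pi^{-1}(\cU^\bot)$, where the fiber $\grLag(v^\bot,w^\bot)$ encodes the extra representation on $V^\bot$ and $w^\bot=w-C_qv^0$. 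Taking a representative $[B,\alpha,\beta]$ lifting $x$ with the zero representation on $V^\bot$, the complex $(C_k^*(b))^\bullet$ splits compatibly with $V=V^0\oplus V^\bot$; on the $V^0$-part, Proposition~\ref{prop:grDominantComplex}(2) applied to $(v^0,w)$ gives $H^1=0$, while on the $V^\bot$-part the zero action places the entire $V^\bot$-contribution inside $\Im\sigma_k(b)$, so $H^0$ is forced to vanish as well. A direct bookkeeping of the resulting $\tau_k(b)$ on the whole of $V$ then yields $\dim\Cok\tau_k(b)=\max(0,-\rank C_k^*(b))$ at this witness point.

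The main obstacle will be the bookkeeping on the transversal slice: one has to check that the splitting of $(C_k^*(b))^\bullet$ along the decomposition $V=V^0\oplus V^\bot$ is exactly compatible with the arithmetic of Euler characteristics, so that the local contributions from the two factors combine to saturate the lower bound rather than merely bound it. Once this is in place, the argument is the graded analogue of the proof of Corollary~5.5.5 in \cite{Nakajima01}; the modified $\C^*$-action introduced in this section was specifically engineered so that the same formal manipulations go through without the bipartite hypothesis, using Proposition~\ref{prop:grStratification} in place of its original counterpart.
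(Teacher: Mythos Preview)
The paper does not give a proof of this proposition; it simply quotes \cite[Corollary 5.5.5]{Nakajima01}.  So there is no ``paper's own proof'' to compare against, but your attempted direct argument has a genuine gap.

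Your lower bound is fine.  The problem is the witness construction for the upper bound.  You propose to lift a regular point $x\in\grRegStratum(v^0,w)$ to a point of $\grProjQuot(v,w)$ by taking the representative to be the direct sum of a representative of $x$ on $V^0$ and the \emph{zero representation} on $V^\bot$.  But such a direct sum is never $\chi$-stable when $V^\bot\neq 0$: the subspace $V^0$ is $B$-invariant and contains $\Im\alpha$, so stability fails.  Concretely, on the $V^\bot$-summand all of $B,\alpha,\beta$ vanish, so $\sigma_k(b)$ restricted to $V_k^\bot(b)$ is the zero map; this would force $H^{-1}(C_k^*(b))\supset V_k^\bot(b)\neq 0$, contradicting Proposition~\ref{prop:grDominantComplex}(1).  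In other words, your ``witness point'' does not lie in $\grProjQuot(v,w)$ at all, and the subsequent splitting analysis (including the claim that ``the entire $V^\bot$-contribution lies inside $\Im\sigma_k(b)$'') is vacuous.  The transversal slice theorem identifies $\pi^{-1}(x)$ with $\grLag(v^\bot,w^\bot)$, but $\grLag(v^\bot,w^\bot)$ consists of genuinely stable points of $\grProjQuot(v^\bot,w^\bot)$; it never contains the zero representation.

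Nakajima's actual argument in \cite[\S5.5]{Nakajima01} is of a different nature: it proceeds by induction on $v$ using the Grassmannian-bundle (Hecke-type) correspondence that relates $\grProjQuot(v,w)$ to $\grProjQuot(v-e_{k,b},w)$, rather than by exhibiting an explicit split witness.  If you want a self-contained proof here, that inductive mechanism is what you need to adapt to the present $\C^*$-action; the point of the modifications in this section is precisely that those arguments carry over verbatim once Proposition~\ref{prop:grStratification} is in place.
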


\begin{Lem} \label{lem:rankDominantComplex}
$(\rank (C_k^*)^\bullet)_{k\in I}$ equals $w-C_qv$.
\end{Lem}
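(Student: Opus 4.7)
The plan is to prove this lemma by a direct dimension count, using the explicit decomposition $(C_k^*)^\bullet = \bigoplus_{b \in \Z + \Hf} C_k^*(b)$ from \eqref{eq:decomposeComplex} and comparing with the defining formula for $C_q$ in Definition~\ref{def:cartanMatrix}. The statement should be read as identifying the tuple of graded ranks $(\rank C_k^*(b))_{k, b}$, indexed by $k \in I$ and $b \in \Z + \Hf$, with the bigraded vector $w - C_q v$ evaluated at $(k, b - \Hf)$.

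First, for each half-integer $b$, I would read off the dimensions of the three terms of $C_k^*(b)$ from \eqref{eq:grDimComplex}: the leftmost term has dimension $v_k(b)$, the middle term has dimension $\sum_{i<k} b_{ik} v_i(b) + \sum_{j>k} b_{kj} v_j(b-1) + w_k(b-\Hf)$, and the rightmost has dimension $v_k(b-1)$. Using the sign convention for $\rank$ dictated by Proposition~\ref{prop:codim} (so that $\codim\Im\tau_k(b) \geq -\rank C_k^*(b)$, with equality generically), this yields
\[
\rank C_k^*(b) = \sum_{i<k} b_{ik} v_i(b) + \sum_{j>k} b_{kj} v_j(b-1) + w_k(b-\Hf) - v_k(b) - v_k(b-1).
\]

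Next, I substitute $a = b - \Hf$ and expand $(C_q v)_k(a)$ using Definition~\ref{def:cartanMatrix}. A direct rearrangement shows that the right-hand side above equals $w_k(a) - (C_q v)_k(a) = (w - C_q v)_k(a)$. Assembling over all $b$ (equivalently, over $a \in \Z$) then gives the claimed identity of bigraded vectors.

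The computation is purely bookkeeping, so there is no serious technical obstacle. The one point requiring care is fixing the sign convention for $\rank$ on a three-term complex; I would cross-check it against Proposition~\ref{prop:codim} using the $l$-dominant case, where $w - C_q v \in \N^{I\times \Z}$, so $\rank C_k^*(b) \geq 0$, forcing $\codim\Im\tau_k(b) = 0$ generically, in agreement with Proposition~\ref{prop:grDominantComplex}(2) (the second map of $C_k^*(b)$ is surjective on the regular stratum).
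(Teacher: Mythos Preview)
Your proposal is correct and is precisely the computation the paper has in mind; indeed, the paper states this lemma without proof, treating it as an immediate consequence of the explicit form of $C_k^*(b)$ in \eqref{eq:grDimComplex} and the definition of $C_q$. Your careful check of the sign convention against Proposition~\ref{prop:codim} and the $l$-dominant case is a nice sanity check, but not strictly needed once you adopt the standard Euler-characteristic convention for the rank of a three-term complex placed in degrees $-1,0,1$.
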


\begin{Rem}
  In their ongoing work \cite{KellerScherotzke2013}, Bernhard Keller and Sarah Scherotzke use
  representation theory to give explicit constructions of the points
  in $\grRegStratum(v,w)$, where $(v,w)$ is $l$-dominant. The special
  case where $Q$ is of Dynkin type has also been studied in \cite{LeclercPlamondon13}.
\end{Rem}

\begin{Prop}
  \label{prop:nonemptyStratum}
$\grRegStratum(v,w)$ is non-empty if and only if $(v,w)$ is $l$-dominant.
\end{Prop}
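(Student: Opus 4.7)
The plan is to combine three earlier results: Proposition \ref{prop:grDominantComplex}(2), which characterizes membership in the regular stratum by the vanishing of $H^1((C^*_k)^\bullet)$; Proposition \ref{prop:codim}, which on a nonempty open subset of $\grProjQuot(v,w)$ computes $\codim \Im \tau_k(b) = \max(0, -\rank C^*_k(b))$; and Lemma \ref{lem:rankDominantComplex}, which identifies $\rank (C^*_k)^\bullet$ with $w - C_q v$. Together these translate the cohomological criterion of Proposition \ref{prop:grDominantComplex}(2) into the numerical $l$-dominance condition.

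For the forward implication, suppose $\grRegStratum(v,w) \neq \emptyset$. Since $\grProjQuot(v,w)$ is smooth and, by Theorem \ref{thm:connected}, connected, it is irreducible. Hence the two open subsets $\pi^{-1}(\grRegStratum(v,w))$ and the locus $U$ from Proposition \ref{prop:codim} are both dense and must intersect. At any point of the intersection, Proposition \ref{prop:grDominantComplex}(2) gives $H^1(C^*_k(b)) = 0$, so $\tau_k(b)$ is surjective and $\codim \Im \tau_k(b) = 0$. Matching this with the codimension formula forces $\max(0, -\rank C^*_k(b)) = 0$, hence $\rank C^*_k(b) \geq 0$ for every $(k, b)$, which by Lemma \ref{lem:rankDominantComplex} is precisely the $l$-dominance of $(v, w)$.

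For the converse, assume $(v, w)$ is $l$-dominant, so by Lemma \ref{lem:rankDominantComplex}, $\rank C^*_k(b) \geq 0$ for all $(k, b)$. On the open subset $U$ of Proposition \ref{prop:codim} inside $\grProjQuot(v,w)$, the codimension formula yields $\codim \Im \tau_k(b) = 0$, so $\tau_k(b)$ is surjective and $H^1(C^*_k(b)) = 0$. Combined with $H^{-1}((C^*_k)^\bullet) = 0$ from Proposition \ref{prop:grDominantComplex}(1), Proposition \ref{prop:grDominantComplex}(2) then places every point of $U$ inside $\grRegStratum(v,w)$, proving nonemptiness.

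The delicate step I expect to be the main obstacle is ensuring that $\grProjQuot(v,w)$ is itself nonempty whenever $(v, w)$ is $l$-dominant, since otherwise the open subset $U$ above is vacuous. I would address this by induction on $v$ with respect to the dominance order: the base case $v = 0$ is immediate since $\grProjQuot(0, w)$ reduces to a point, and for the inductive step I would either construct a stable representation directly, using $l$-dominance to arrange $\alpha$ to be injective in the appropriate degrees, or reduce to a smaller $l$-dominant pair via the transversal slice theorem (Theorem \ref{thm:grTransversalSlice}) applied at a previously constructed regular point. The acyclicity of $Q$ is essential here, so as to avoid the pathology flagged in the remark following Proposition \ref{prop:grStratification}.
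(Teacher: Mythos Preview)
Your approach is essentially the paper's: both combine Propositions~\ref{prop:grDominantComplex} and~\ref{prop:codim} with Lemma~\ref{lem:rankDominantComplex}, and the paper's proof is in fact just a one-line citation of these three results. Two comments on the details.

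In the forward direction you work harder than necessary. You do not need Theorem~\ref{thm:connected} or Proposition~\ref{prop:codim} at all: pick \emph{any} point of $\pi^{-1}(\grRegStratum(v,w))$; by Proposition~\ref{prop:grDominantComplex} both $H^{-1}(C_k^*(b))$ and $H^1(C_k^*(b))$ vanish there, so the Euler characteristic satisfies $\rank C_k^*(b) = \dim H^0(C_k^*(b)) \geq 0$. Since this rank is a numerical invariant depending only on the dimension vectors and not on the chosen point, Lemma~\ref{lem:rankDominantComplex} gives $l$-dominance immediately. Intersecting two dense opens is correct but superfluous.

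For the converse, your concern about the nonemptiness of $\grProjQuot(v,w)$ is well placed, but it is not meant to be handled by a separate induction here. The statement of Proposition~\ref{prop:codim} already asserts a \emph{nonempty} open subset, and as imported from \cite[Corollary~5.5.5]{Nakajima01} this carries the nonemptiness of $\grProjQuot(v,w)$ with it: in Nakajima's original treatment the nonemptiness, the generic codimension formula of Proposition~\ref{prop:codim}, and the connectedness of Theorem~\ref{thm:connected} are all established simultaneously by one induction in \cite[\S5.5]{Nakajima01}. The paper is simply taking nonemptiness as part of that cited package rather than reproving it; your proposed separate induction would duplicate that work.
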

\begin{proof}
This follows from Proposition \ref{prop:grDominantComplex}
\ref{prop:codim} and Lemma \ref{lem:rankDominantComplex}.
\end{proof}
Because the restriction of $\pi$ over $\grRegStratum(v,w)$ is a local
homeomorphism, $\dim \grRegStratum(v,w)$ can be calculated by Lemma
\ref{lem:dForm}.

\begin{Rem}\label{rem:finiteDimensional}
For any given vector $w$, by induction on its width with respect to the lexicographical order, one can prove that there are only finitely many $v$ \st $(v,w)$ is $l-dominant$. Then Propositions \ref{prop:nonemptyStratum} and \ref{prop:grStratification} imply that $\grAffQuot(w)=\cup_{v}\grAffQuot(v,w)$ is finite-dimensional.
\end{Rem}

\begin{Cor}\label{cor:degeneratedStratum}
For any $l$-dominant pairs $(v,w)$, $(v^0,w)$, $\grRegStratum(v^0,w)$
is contained in the closure of $\overline{\grRegStratum(v,w)}$ if and
only if $(v^0,w)$ dominates $(v,w)$.
\end{Cor}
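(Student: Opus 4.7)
The plan is to reduce the statement to the stratification in Proposition \ref{prop:grStratification} combined with density of the regular stratum in $\grAffQuot(v,w)$.

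First I would unpack the dominance relation when both pairs share the same coefficient vector $w$. By definition, $(v^0,w)\geq(v,w)$ means that there exists $v''\in\N^{I\times(\Z+\Hf)}$ with $w-C_q v=w-C_q(v^0+v'')$, so $C_q v=C_q(v^0+v'')$; invoking the bijectivity of $C_q$ recorded in the lemma preceding the definition of the dominance order, this simplifies to $v=v^0+v''$, i.e.\ $v^0\leq v$ componentwise.

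Next I would prove $\overline{\grRegStratum(v,w)}=\grAffQuot(v,w)$ whenever $(v,w)$ is $l$-dominant. The graded quiver variety $\grProjQuot(v,w)$ is smooth and is connected by Theorem \ref{thm:connected}, hence irreducible; its image under the surjective projective morphism $\pi$, namely $\grAffQuot(v,w)$, is therefore irreducible as well. By Proposition \ref{prop:nonemptyStratum}, $\grRegStratum(v,w)$ is nonempty; it is also open in $\grAffQuot(v,w)$, since its complement is the finite union (using Remark \ref{rem:finiteDimensional}) of the closed subvarieties $\grAffQuot(v',w)$ for $v'<v$ with $(v',w)$ $l$-dominant, embedded by extending coordinates by zero. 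A nonempty open subset of an irreducible variety is dense, yielding the claim.

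The corollary then follows from Proposition \ref{prop:grStratification}. If $v^0\leq v$, then $\grRegStratum(v^0,w)$ is listed among the strata of $\grAffQuot(v,w)=\overline{\grRegStratum(v,w)}$, giving the inclusion. Conversely, if the inclusion holds, then $\grRegStratum(v^0,w)\subset\grAffQuot(v,w)$; since the regular strata $\grRegStratum(v',w)$ are pairwise disjoint in the ambient $\grAffQuot(w)=\bigcup_{v'}\grAffQuot(v',w)$, the stratum $\grRegStratum(v^0,w)$ must itself appear in the stratification of $\grAffQuot(v,w)$, forcing $v^0\leq v$. The main technical step is the density of the regular stratum, which ultimately rests on the irreducibility coming from Theorem \ref{thm:connected}; everything else is bookkeeping with the stratification.
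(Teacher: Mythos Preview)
Your proof is correct and follows essentially the same route as the paper's: both use Theorem \ref{thm:connected} (together with smoothness) to get irreducibility of $\grAffQuot(v,w)$, invoke Proposition \ref{prop:nonemptyStratum} for nonemptiness of the regular stratum, and then appeal to Proposition \ref{prop:grStratification} and the natural embeddings $\grAffQuot(v^0,w)\hookrightarrow\grAffQuot(v,w)$ for the two directions. Your extra unpacking of the dominance order as $v^0\leq v$ componentwise and your explicit argument for openness of $\grRegStratum(v,w)$ (complement a finite union of closed $\grAffQuot(v',w)$) are helpful clarifications that the paper leaves implicit, but the overall logic is the same.
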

\begin{proof}
Proposition \ref{prop:nonemptyStratum} implies that both regular
strata are non-empty. By Theorem \ref{thm:connected},
$\grAffQuot(v,w)$ is irreducible. Since $\grRegStratum(v,w)$ is a non-empty open subset in $\grAffQuot(v,w)$, its closure equals $\grAffQuot(v,w)$. 

If $(v^0,w)\geq (v,w)$, $\grAffQuot(v^0,w)$ is naturally embedded into $\grAffQuot(v,w)$. Thus the only if part holds.

Conversely, if $\grRegStratum(v^0,w)\subset \overline{\grRegStratum(v,w)}=\grAffQuot(v,w)$, using proposition \ref{prop:grStratification} we obtain the if part.
\end{proof}

Following \cite[(4.9)]{Nakajima04}, for any given pairs of vectors
$(v^1,w^1)$ and $(v^2,w^2)$, we define the following complex of vector
bundles over
$\grProjQuot(v^1,w^1)\times \grProjQuot(v^2,w^2)$:
\begin{align}
  \label{eq:vectorBundleComplex}
  \grEndSp(v^1,v^2)\xra{\sigma^{21}} \opname{E}(v^1,v^2)\oplus  \opname{L}(w^1,v^2)\oplus \opname{L}(v^1,w^2)\xra{\tau^{21}} \grEndSp(v^1,v^2[-1]),
\end{align}
where the middle term has degree $0$, and we denote
\begin{align}
  \label{eq:sigmaTau}
\sigma^{21}(\xi)=(B^2\xi-\xi B^1)\oplus (-\xi \alpha^1) \oplus \beta^2 \xi,\\  
\tau^{21}(C\oplus I\oplus J)=\epsilon B^2 C+\epsilon C B^1+\alpha^2 J+I \beta^1.
\end{align}
  The complex is exact on the left and on the right, \cf the argument
  in \cite[3.10]{Nakajima98}.

\begin{Lem}
    The quotient $\ker \tau^{21}/\Im \sigma^{21}$ is a
    vector bundle over $\grProjQuot(V^1,W^1)\times
    \grProjQuot(V^2,W^2)$.
\end{Lem}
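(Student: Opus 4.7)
The plan is to deduce the vector-bundle structure of $\ker\tau^{21}/\Im\sigma^{21}$ directly from the exactness statement of the previous lemma, since the only remaining issue is to promote pointwise exactness to a locally constant rank condition for the sub and quotient bundles.

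First, I would unpack the previous statement. Exactness on the left means that $\sigma^{21}$ is fiberwise injective at every point $([B^1,\alpha^1,\beta^1],[B^2,\alpha^2,\beta^2])$ of $\grProjQuot(v^1,w^1)\times\grProjQuot(v^2,w^2)$, and exactness on the right means that $\tau^{21}$ is fiberwise surjective. These are the standard consequences of $\chi$-stability of the representatives together with the moment-map equation, and the argument runs exactly parallel to \cite[3.10]{Nakajima98}: injectivity of $\sigma^{21}$ uses the stability of $(B^1,\alpha^1,\beta^1)$, and surjectivity of $\tau^{21}$ (after dualizing) uses the costability of $(B^2,\alpha^2,\beta^2)$, which itself follows from stability plus $\mu=0$. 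So I would simply quote the previous lemma.

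Next, I would invoke the standard fact that a morphism of vector bundles of constant fiberwise rank has image and kernel which are subbundles. Concretely, since $\sigma^{21}$ is a morphism of vector bundles which is injective on every fiber, its fiberwise rank equals $\dim\grEndSp(v^1,v^2)$ everywhere, so $\Im\sigma^{21}$ is a subbundle of the middle term of rank $\dim\grEndSp(v^1,v^2)$. Similarly, since $\tau^{21}$ is surjective on every fiber, $\ker\tau^{21}$ is a subbundle of the middle term of rank
\begin{equation*}
\dim\bigl(\opname{E}(v^1,v^2)\oplus \opname{L}(w^1,v^2)\oplus \opname{L}(v^1,w^2)\bigr)-\dim\grEndSp(v^1,v^2[-1]).
\end{equation*}
Because $\Im\sigma^{21}\subset\ker\tau^{21}$ as subbundles of the middle term (this is the cochain-complex condition $\tau^{21}\sigma^{21}=0$, to be checked by a direct computation from the formulas~\eqref{eq:sigmaTau} using $\mu(B^i,\alpha^i,\beta^i)=0$), the quotient is a vector bundle of rank equal to the Euler characteristic of the complex.

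I do not anticipate a serious obstacle: the content is really a bookkeeping exercise once the pointwise exactness is in hand. The one verification that needs to be written out carefully is $\tau^{21}\circ\sigma^{21}=0$, which requires expanding $\tau^{21}\sigma^{21}(\xi)=\epsilon B^2(B^2\xi-\xi B^1)+\epsilon(B^2\xi-\xi B^1)B^1-\alpha^2\beta^2\xi+\xi\alpha^1\beta^1$ and collecting it into $\mu(B^2,\alpha^2,\beta^2)\xi-\xi\mu(B^1,\alpha^1,\beta^1)=0$. Everything else is a direct application of the pointwise exactness.
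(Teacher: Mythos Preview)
Your proof is correct and follows essentially the same approach as the paper: both arguments reduce to the fact that a vector-bundle morphism of constant fiberwise rank has kernel and image which are subbundles. The paper's presentation differs only cosmetically---it cites \cite{CrawleyBoeveyJensen05} for this fact and obtains the quotient via a dualization step (viewing $(\ker\tau^{21}/\Im\sigma^{21})^*$ as the kernel of the surjection $(\sigma^{21})^*$ restricted to $(\ker\tau^{21})^*$), whereas you argue directly that $\Im\sigma^{21}$ and $\ker\tau^{21}$ are both subbundles and take the quotient.
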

\begin{proof}
By
\cite{CrawleyBoeveyJensen05}, any subvariety of a vector bundle over
the smooth variety $\grProjQuot(v^1,w^1)\times \grProjQuot(v^2,w^2)$, whose intersection with every fibre is a vector subspace of constant
dimension, is a sub-bundle. Therefore, $\ker \tau^{21}$ is a vector bundle. Let $(\sigma^{21})^*$ be the transpose
(or dual) of $\sigma^{21}$ restricted to $\ker \tau^{21}$. Its kernel $(\ker \tau^{21}/\Im \sigma^{21})^*$ is a sub-bundle of $(\ker
\tau^{21})^*$. Therefore, $\ker \tau^{21}/\Im \sigma^{21}$ is
again a vector bundle.
\end{proof}

Denote the rank of the vector bundle $\ker \tau^{21}/\Im
\sigma^{21}$ by $d((v^1,w^1),(v^2,w^2))$.
\begin{Lem}
The rank $d((v^1,w^1),(v^2,w^2))$ is given by
\begin{align}
  \label{eq:dimension}
(w^1-C_qv^1)\cdot v^2[-\Hf]+ v^1\cdot w^2[-\Hf].  
\end{align}
\end{Lem}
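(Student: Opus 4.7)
The plan is to exploit exactness of the complex \eqref{eq:vectorBundleComplex} on both ends, so that the rank of the middle cohomology is the alternating sum of the ranks of the three bundles in the complex. Thus
\[
  d((v^1,w^1),(v^2,w^2)) = \dim\bigl(\opname{E}(v^1,v^2)\bigr) + \dim\opname{L}(w^1,v^2) + \dim\opname{L}(v^1,w^2) - \dim\grEndSp(v^1,v^2) - \dim\grEndSp(v^1,v^2[-1]).
\]
This reduces the problem to a direct computation of ranks of standard graded $\Hom$-spaces together with a rewriting step.

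Next, I would expand each dimension as a pairing of the graded vectors involved. Unpacking the definitions in \secref{sec:gradedQuiverVarieties}, I expect
\[
  \dim \grEndSp(v^1,v^2) = v^1 \cdot v^2, \qquad \dim \grEndSp(v^1,v^2[-1]) = v^1 \cdot v^2[-1],
\]
\[
  \dim \opname{L}(w^1,v^2) = w^1 \cdot v^2[-\Hf], \qquad \dim \opname{L}(v^1,w^2) = v^1 \cdot w^2[-\Hf],
\]
and
\[
  \dim \opname{E}(v^1,v^2) = \sum_{h\in\Omega, b} v^1_{s(h)}(b)\, v^2_{t(h)}(b) + \sum_{\oh\in\oOmega, b} v^1_{s(\oh)}(b)\, v^2_{t(\oh)}(b-1).
\]
Combining these, the first three terms can be collected and compared to $v^1\cdot C_q v^2[-\Hf]$, using the explicit formula for $C_q$ from \defref{def:cartanMatrix}: the term $v^1\cdot v^2[\pm\Hf]$ contributions match the first two summands in the definition of $C_q v^2$, while the sums over arrows $h\in\Omega$ and $\oh\in\oOmega$ match the last two summands (the signs of $b_{ij}$ correspond to arrows pointing from smaller to larger vertices versus the reverse, which is exactly how $\Omega = Q^{op}$ and $\oOmega = Q$ contribute given the acyclic labeling $i<j$ whenever there is an arrow $i\to j$ in $Q$).

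From this bookkeeping I expect to obtain
\[
  d((v^1,w^1),(v^2,w^2)) = w^1 \cdot v^2[-\Hf] + v^1 \cdot w^2[-\Hf] - v^1 \cdot C_q v^2[-\Hf].
\]
To match the stated form, I then apply \lemref{lem:symmetricCartan} (the symmetry of the bilinear form with respect to $C_q$) to replace $v^1 \cdot C_q v^2[-\Hf]$ with $C_q v^1 \cdot v^2[-\Hf]$, which yields
\[
  d((v^1,w^1),(v^2,w^2)) = (w^1 - C_q v^1)\cdot v^2[-\Hf] + v^1 \cdot w^2[-\Hf],
\]
as desired.

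The main obstacle is the bookkeeping: carefully matching the half-integer degree shifts coming from $\opname{L}(w,v)$ and $\opname{L}(v,w)$ with the half-integer shifts built into $C_q$, and tracking the sign conventions of $\Omega$ versus $\oOmega$ relative to the entries of $b_{ij}$ under the acyclic ordering of vertices. Exactness on the left and right of \eqref{eq:vectorBundleComplex} has already been justified (following \cite[3.10]{Nakajima98}), so no further geometric input is needed; the symmetry in \lemref{lem:symmetricCartan} does the final rearrangement.
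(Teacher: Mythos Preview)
Your approach is correct and essentially the same as the paper's: both compute the rank as the Euler characteristic of the complex \eqref{eq:vectorBundleComplex} and expand the dimensions of each term in coordinates. The only minor difference is that the paper groups the $\opname{E}(v^1,v^2)$ contribution by factoring out $v^2$ rather than $v^1$, so that $-C_q v^1 \cdot v^2[-\Hf]$ appears directly and the appeal to \lemref{lem:symmetricCartan} is unnecessary.
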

\begin{proof}
 It suffices to calculate the rank of the complex. We have
  \begin{align*}
    d((v^1,w^1),(v^2,w^2))&=\sum_{k\in I,a\in \Z} v_k^2(a-\Hf)(\sum_{i:i<k}b_{ik}v_i^1(a+\Hf)+\sum_{j:j>k}b_{kj}v_j^1(a-\Hf))\\
&\qquad +\sum_{k\in I,a\in \Z}(w_k^1(a)v_k^2(a-\Hf)+v_k^1(a-\Hf)w_k^2(a-1))\\
&\qquad - \sum_{k\in I,a\in\Z}v_k^2(a-\Hf)(v_k^1(a-\Hf)+v_k^1(a+\Hf))\\
&=\sum_{k,a}(v_k^2(a-\Hf)(-C_qv^1)_k(a)+w_k^1(a)v_k^2(a-\Hf)+w_k^2(a)v_k^1(a+\Hf))\\
&=-v^2[-\Hf]\cdot C_qv^1+w^1\cdot v^2[-\Hf]+w^2 \cdot v^1[\Hf]\\
&=\RHS.
  \end{align*}
\end{proof}

\begin{Rem}
  Using equation \eqref{eq:symmetricForm}, we can easily rewrite the
  quadratic form \eqref{eq:dimension} as
  \begin{align}
     d((v^1,w^1),(v^2,w^2))=v^1[\Hf]\cdot (w^2-C_qv^2)+ w^1[\Hf]\cdot v^2.  
  \end{align}
The reader is invited to compare this expression with \cite[(2.1)]{Nakajima04}.
\end{Rem}

Let $[B,\alpha,\beta]$ be any point in
$\grProjQuot(v,w)=\projQuot([\rho])\subset \projQuot(V,W)$. The
tangent space of $\grProjQuot(v,w)$ at $[B,\alpha,\beta]$ is the
$A$-fixed part of the tangent space of $\projQuot(V,W)$ at the same
point. The $A$-fixed part of complex \eqref{eq:tangentComplex} over
this point is just
\begin{align}
  \label{eq:grTangentComplex}
  \opname{L}(v,v[-1])\xra{\iota} \opname{E}(v,v)\oplus \opname{L}(v,w)\oplus \opname{L}(w,v)\xra{d\mu} \opname{L}(v,v[1]).
\end{align}
Comparing it with the complex \eqref{eq:vectorBundleComplex}, we have
the following result.

\begin{Lem}\label{lem:dForm}
The dimension of $ \grProjQuot(v,w)$ equals $d((v,w),(v,w))$.
\end{Lem}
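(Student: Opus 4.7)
The plan is to identify the tangent complex \eqref{eq:grTangentComplex} at any point $[B,\alpha,\beta]\in\grProjQuot(v,w)$ with the specialization of the complex of vector bundles \eqref{eq:vectorBundleComplex} to the diagonal of $\grProjQuot(v,w)\times\grProjQuot(v,w)$, i.e.\ with $(v^1,w^1)=(v^2,w^2)=(v,w)$ and $(B^1,\alpha^1,\beta^1)=(B^2,\alpha^2,\beta^2)=(B,\alpha,\beta)$. Once this identification is in place, the claim will follow from smoothness of $\grProjQuot(v,w)$ together with the already-computed rank of $\ker\tau^{21}/\Im\sigma^{21}$.

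For the identification, I would compare the defining formulas directly. Setting the two triples equal in \eqref{eq:sigmaTau} gives $\sigma^{21}(\xi)=(B\xi-\xi B)\oplus(-\xi\alpha)\oplus(\beta\xi)$, which is exactly $\iota(\xi)$ of \eqref{eq:iota}; and $\tau^{21}(C\oplus I\oplus J)=\epsilon BC+\epsilon CB+\alpha J+I\beta$, which is the Leibniz differential at $(B,\alpha,\beta)$ of the moment map $\mu(B,\alpha,\beta)=(\epsilon B)B+\alpha\beta$, i.e.\ $d\mu$. So termwise and map-wise, on the diagonal the two complexes coincide, modulo the bookkeeping that matches the various graded-$\Hom$ notations ($\grEndSp$ versus $\opname{L}(v,v[\pm 1])$) with the $A$-fixed part of the Lie algebra of $GL(V)$ and with the image of the graded moment map; this bookkeeping does not affect the total dimensions of the left and right terms.

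Since \eqref{eq:vectorBundleComplex} was asserted to be exact on the left and on the right (extending the ungraded argument of \cite[3.10]{Nakajima98}), the same holds for the tangent complex \eqref{eq:grTangentComplex} at every stable point. Hence the middle cohomology has dimension
\[
\dim(\text{middle})-\dim(\text{left})-\dim(\text{right})=d((v,w),(v,w)),
\]
which is exactly the rank of $\ker\tau^{21}/\Im\sigma^{21}$ on the diagonal. Since $\grProjQuot(v,w)$ is smooth (graded analogue of Proposition \ref{prop:smoothQuot}) and connected (Theorem \ref{thm:connected}), its dimension at any point equals that of the tangent space, which proves $\dim\grProjQuot(v,w)=d((v,w),(v,w))$. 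The only mild obstacle is the aforementioned bookkeeping of the shift conventions in the graded $\Hom$-spaces, but since these are all $A$-weight decompositions of the same ungraded spaces $L(V,V)$ appearing in \eqref{eq:tangentComplex}, the total dimensions match automatically.
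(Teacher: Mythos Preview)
Your proposal is correct and follows exactly the approach the paper intends: the paragraph preceding the lemma already identifies the graded tangent complex \eqref{eq:grTangentComplex} as the $A$-fixed part of \eqref{eq:tangentComplex}, and the paper's entire proof is the phrase ``Comparing it with the complex \eqref{eq:vectorBundleComplex}''. You have simply spelled out that comparison (matching $\sigma^{21}$ with $\iota$ and $\tau^{21}$ with $d\mu$ on the diagonal) and invoked the stated left/right exactness, which is precisely what the paper leaves implicit.
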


\begin{Thm}[{\cite[Theorem 7.4.1]{Nakajima01}}]\label{thm:oddVanish}
The odd homology of $\grLag(v,w)$ vanishes. And we have a
nondegenerate pairing
between $H_*(\grLag(v,w))$ and $H_*(\grProjQuot(v,w))$.
\end{Thm}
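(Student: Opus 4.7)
The plan is to follow the Morse-theoretic/Bialynicki-Birula strategy of \cite[Section 7]{Nakajima01}, carefully adapted to our modified $\C^*$-action. The starting point is the second $\C^*$-action on $\grProjQuot(v,w)$ (distinct from the torus $A$ used to cut out fixed points) which already appeared in the proof that $\grProjQuot(v,w)$ retracts onto $\grLag(v,w)$: it scales $B_{\oh}$ and $\beta$ by $\varepsilon$, commutes with the $A$-action, and has the property that $[x]$ lies in $\grLag(v,w)$ precisely when $\lim_{\varepsilon\to\infty}\varepsilon\cdot[x]$ exists.

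First I would analyze the fixed-point locus $F=\grProjQuot(v,w)^{\C^*}$ inside the smooth variety $\grProjQuot(v,w)$. Using the complex \eqref{eq:tangentComplex} restricted to an $A$-and-$\C^*$-fixed point, I would compute the weights of the $\C^*$-action on the tangent space and show that, for every connected component $F_\alpha$ of $F$, the positive and negative weight subspaces of the tangent bundle have complementary dimensions summing to $\dim\grProjQuot(v,w)$, and in particular the positive part has even complex dimension by the symmetric form \eqref{eq:symmetricForm}. Since $\grProjQuot(v,w)$ is smooth (the graded version of Proposition \ref{prop:smoothQuot}) and connected (Theorem \ref{thm:connected}), the Bialynicki-Birula decomposition gives an algebraic cell decomposition
\[
\grProjQuot(v,w)=\bigsqcup_\alpha \grProjQuot(v,w)^+_\alpha,
\]
where $\grProjQuot(v,w)^+_\alpha$ is the attracting set of $F_\alpha$ and fibers over $F_\alpha$ as an affine bundle whose rank equals the positive-weight dimension.

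Next, using the Slodowy-type characterization of $\grLag(v,w)$ in terms of $\lim_{\varepsilon\to\infty}$, I would identify $\grLag(v,w)$ with the corresponding \emph{repelling} decomposition, i.e.\ a disjoint union of affine bundles over the same components $F_\alpha$ of ranks given by the negative-weight dimensions. Combined with the even-dimensionality of these weight spaces (coming from the quadratic form on $\grEndSp$ together with the symmetric identity of Lemma \ref{lem:symmetricCartan}), and with the fact (consequence of Remark \ref{rem:finiteDimensional} and connectedness of $\grProjQuot$) that each $F_\alpha$ itself has vanishing odd cohomology --- one argues this inductively, since each $F_\alpha$ is a product of smaller $\grProjQuot$'s associated with the eigenspace decomposition --- one deduces by the long exact sequence of the cell filtration that $H_{\mathrm{odd}}(\grLag(v,w))=0$, and similarly for $\grProjQuot(v,w)$.

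Finally, the nondegenerate pairing $H_*(\grLag(v,w))\otimes H_*(\grProjQuot(v,w))\to\Z$ is produced as follows. The attracting and repelling cells $\grProjQuot(v,w)^\pm_\alpha$ are transverse along $F_\alpha$ and their dimensions are complementary; the classes $[\overline{\grLag(v,w)_\alpha}]$ give a basis of $H_*(\grLag(v,w))$ by the Bialynicki-Birula decomposition, and dually the classes $[\overline{\grProjQuot(v,w)^+_\alpha}]$ give a basis of $H_*(\grProjQuot(v,w))$; their intersection pairing is the identity matrix because two cells from opposite decompositions meet transversally in a single point (the fixed point) whenever indexed by the same $\alpha$, and otherwise miss by a dimension count. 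The main technical obstacle is establishing the needed evenness and dimension-complementarity of the weight spaces intrinsically from our modified $\C^*$-action of Section \ref{sec:gradedQuiverVarieties}, since our torus differs from Nakajima's; I expect this to follow from \eqref{eq:symmetricForm} applied to $\xi^1=\xi^2=v$, together with a direct weight-by-weight computation on the complex \eqref{eq:grTangentComplex}.
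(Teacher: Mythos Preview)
Your overall strategy --- a Bialynicki--Birula decomposition coming from a second $\C^*$-action, exactly as in \cite[Section 7]{Nakajima01} --- is the right one, and is precisely what the paper invokes. However, there are two genuine gaps in your execution.

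First, the specific $\C^*$-action you choose (the one scaling $B_{\oh}$ and $\beta$) is not the action used in \cite[Section 7]{Nakajima01} to prove this result, and it does not make your inductive claim work. For that action the fixed locus consists of stable tuples $(b_h,0,\alpha,0)$, i.e.\ framed moduli of representations of an acyclic quiver; there is no ``eigenspace decomposition'' of $W$ here (your action is trivial on $W$), so the components $F_\alpha$ are not products of smaller graded $\grProjQuot$'s, and you have no inductive foothold for their odd-homology vanishing. Nakajima's argument instead passes to a \emph{generic} one-parameter subgroup of the full torus acting on $\grProjQuot(v,w)$ (coming from the maximal torus of $GL(W)$ together with $\C^*$), chosen so that the fixed points are isolated. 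With isolated fixed points the BB cells are honest affine spaces, so odd homology vanishes immediately and no induction on $F_\alpha$ is needed. The retraction $\C^*$-action is only used to identify $\grLag(v,w)$ with a union of the \emph{minus}-cells for the generic action; this identification is the content of \cite[Section 7.2--7.3]{Nakajima01}.

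Second, your pairing argument is internally inconsistent: you first allow $F_\alpha$ to be positive-dimensional, and then assert that the attracting and repelling cells ``meet transversally in a single point (the fixed point)''. If $F_\alpha$ is not a point, the plus- and minus-cells over $F_\alpha$ intersect in all of $F_\alpha$, and the intersection pairing is not the identity matrix on the nose. Once you switch to the generic action with isolated fixed points, this objection disappears and your intersection argument becomes correct. Incidentally, the ``even complex dimension'' remark is a red herring: complex BB cells over $\C$ always have even real dimension, so no appeal to \eqref{eq:symmetricForm} is needed for that step.
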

\begin{proof}
We refer the reader to \cite[Section 7]{Nakajima01} for the proof. This theorem is just a consequence of \cite[Theorem 7.4.1]{Nakajima01}.
\end{proof}

\section{$qt$-characters}
\label{sec:qtCharacters}

In the study of finite-dimensional representations
of quantum affine algebras, Nakajima invented the $t$-analogues of the $q$-characters
($qt$-characters for short) \cite{Nakajima04}, which are defined as the generating series of the Betti numbers of the graded quiver varieties. In this section, we
generalize his constructions for our graded quiver
varieties and introduce a new $t$-deformation.

\subsection{Quadratic forms}
\label{sec:quadraticForm}
For any pairs $(v,w)$, $(v',w')$, recall that we
have the quadratic form $d((v,w),(v',w'))$ given by $\eqref{eq:dimension}$, whose geometric meaning
is explained by Lemma \ref{lem:dForm}. Let us
further define the quadratic forms $\dT((v,w),(v',w'))$, $\dTW(w,w')$
and $\eMatrix(w,w')$ \st
\begin{align*}
\dTW(w,w')=-w[\Hf] \cdot C_q^{-1}w',\\
  \eMatrix(w,w')=-w[\Hf] \cdot C_q^{-1}w'+w'[\Hf] \cdot C_q^{-1}w,\\
  \dT((v,w),(v',w'))=d((v,w),(v',w'))+\dTW(w,w').
\end{align*}
Our definitions are slightly different from those of \cite{Nakajima04}. It follows from our definitions that $\dT((0,w),(0,w'))=\dTW(w,w')$.

\begin{Lem}\label{eq:chopTwistDimension}
The form $\dT((v,w),(v',w'))$ equals $\dT((0,w-C_q v),(0,w'-C_q v'))$.\end{Lem}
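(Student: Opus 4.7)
The plan is to expand both sides using the definitions of $\dT$, $d$, and $\dTW$ and to reduce the equality to a direct manipulation that exploits the self-adjointness of $C_q$ provided by Lemma \ref{lem:symmetricCartan} together with the elementary shift rule $a[\Hf]\cdot b = a\cdot b[-\Hf]$.

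Concretely, I would set $u = w - C_q v$ and $u' = w' - C_q v'$, so the right-hand side is $\dTW(u,u') = -u[\Hf]\cdot C_q^{-1}u'$. Writing $C_q^{-1}u' = C_q^{-1}w' - v'$ and $u = w - C_q v$, and using the shift rule to trade $u[\Hf]\cdot v'$ for $u\cdot v'[-\Hf]$, the task reduces to simplifying the single cross-term $(C_q v)[\Hf]\cdot C_q^{-1}w'$.

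The key step is this cross-term. Since $C_q$ commutes with the degree shift, the shift rule gives $(C_q v)[\Hf]\cdot C_q^{-1}w' = C_q v\cdot C_q^{-1}w'[-\Hf]$. Applying Lemma \ref{lem:symmetricCartan} with $\xi^2=v$ and $\xi^1=C_q^{-1}w'$ moves $C_q$ across the pairing, and since $C_q C_q^{-1}w' = w'$, the term collapses to $v\cdot w'[-\Hf]$.

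Assembling the pieces, the right-hand side becomes
\[
-w[\Hf]\cdot C_q^{-1}w' \;+\; v\cdot w'[-\Hf] \;+\; (w - C_q v)\cdot v'[-\Hf] \;=\; \dTW(w,w') + d((v,w),(v',w')),
\]
which by definition equals $\dT((v,w),(v',w'))$. The argument is purely formal: no new geometric input is needed, and the only mild obstacle is keeping the $\Hf$-shifts and signs consistent, which Lemma \ref{lem:symmetricCartan} and the elementary shift identity handle uniformly.
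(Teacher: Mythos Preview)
Your proof is correct and follows essentially the same route as the paper: both arguments reduce the identity to the single cross-term $(C_qv)[\Hf]\cdot C_q^{-1}w'$ and dispatch it by applying Lemma~\ref{lem:symmetricCartan} with $\xi^2=v$, $\xi^1=C_q^{-1}w'$, together with the shift rule $a[\Hf]\cdot b=a\cdot b[-\Hf]$. The only cosmetic difference is that the paper expands the left-hand side and simplifies toward $\dTW(w-C_qv,w'-C_qv')$, whereas you start from the right-hand side; the algebra is otherwise identical.
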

\begin{proof}
We have
\begin{align*}
    \dT((v,w),(v',w'))&=(w-C_qv)\cdot v'[-\Hf]+v\cdot w'[-\Hf]-w[\Hf]\cdot C_q^{-1}w'\\
    &=(w-C_qv)\cdot v'[-\Hf]+v\cdot C_q C_q^{-1}w'[-\Hf]-w[\Hf]\cdot C_q^{-1}w'\\
    &\stackrel{\eqref{eq:symmetricForm}}{=}(w-C_qv)\cdot v'[-\Hf]+C_qv\cdot C_q^{-1}w'[-\Hf]-w[\Hf]\cdot C_q^{-1}w'\\
    &=-(w-C_qv)[\Hf]\cdot C_q^{-1}(w'-C_q v')\\
&=\dT((0,w-C_q v),(0,w'-C_q v')).
\end{align*}
\end{proof}

Define $e_{k,a}$ to be the unit $I\times \Z$-graded vector concentrated at
the degree $(k,a)$. Define $p_{i,j}=\dim\Hom_{\mod \C Q}(P_i,P_j)$,
where $P_i$, $i\in I$, is the $i$-th projective right module of $\C Q$.

\begin{Lem}\label{lem:epsilon}
1) For any $a\in \Z$ and $i,j\in I$, we have
\begin{align}
  \label{eq:epsilonValue}
 \eMatrix(e_{i,a},e_{j,a})=0,\\
\eMatrix(e_{i,a},e_{j,a-1})=-p_{ji},\\
\eMatrix(e_{i,a-1}+e_{i,a},e_{j,a-1}+e_{j,a})=p_{ij}-p_{ji}.
\end{align}

2) Let $B_{\tQ}$ be the $2n\times 2n$-matrix associated with $\tQ^z_1$ the
level $1$ ice quiver with $z$-pattern (\cf Figure \ref{fig:levelOneQuiver}
for an example and \cite{QinThesis} for the definition) whose principal part is
$Q$. Define $\zLambda$ to be the $2n\times 2n$ matrix \st for $1\leq i,j\leq n$, we have
\begin{align}
  \label{eq:lambdaMatrix}
  \zLambda_{i,j}= -\eMatrix(e_{i,0},e_{j,0})=0,\\
\zLambda_{i+n,j}=-\eMatrix(e_{i,-1}+e_{i,0},e_{j,0})=-p_{ij},\\
\zLambda_{i,j+n}=-\eMatrix(e_{i,0},e_{j,-1}+e_{j,0})=p_{ji},\\
\zLambda_{i+n,j+n}=-\eMatrix(e_{i,-1}+e_{i,0},e_{j,-1}+e_{j,0})=-p_{ij}+p_{ji}.
\end{align}
Then $\zLambda$ is the inverse of $-B_{\tQ}$.
\end{Lem}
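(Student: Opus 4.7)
My plan is first to pin down the leading layer of the bounded-below inverse $C_q^{-1} e_{j,a}$ by a triangular calculation, then to deduce Part~1 from this formula via a support argument, and finally to reduce Part~2 to a block matrix identity.

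The central identification is the following. I claim that $\xi := C_q^{-1} e_{j,a} \in E[\Hf]$ is supported in degrees $\geq a+\Hf$: if $b_0$ is the minimal degree in the support of $\xi$ and $k_0$ the smallest index with $\xi_{k_0}(b_0) \neq 0$, then the equation $C_q \xi = e_{j,a}$ evaluated at $(k_0, b_0 - \Hf)$ reduces, by minimality of $b_0$ and $k_0$, to $\xi_{k_0}(b_0) = (e_{j,a})_{k_0}(b_0 - \Hf)$, which forces $b_0 \geq a+\Hf$. Reading $C_q \xi = e_{j,a}$ at the first nontrivial layer $(k,a)$ then yields the triangular recurrence
\[
\xi_k(a+\Hf) \;=\; \delta_{k,j} + \sum_{m<k} b_{mk}\,\xi_m(a+\Hf),
\]
which is exactly the recurrence satisfied by the number $p_{j,k}$ of paths from $j$ to $k$ in $Q$, obtained by decomposing such a path according to its last arrow (recall that in our labelling $b_{mk} = (b_{mk})^+$ for $m<k$). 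Induction along $1<2<\cdots<n$ gives the key formula $(C_q^{-1} e_{j,a})_k(a+\Hf) = p_{j,k}$.

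With this in hand, the three formulas of Part~1 are immediate. Under the shift convention making $e_{i,a}[\Hf]$ supported at $(i, a-\Hf)$, both summands of $\eMatrix(e_{i,a}, e_{j,a})$ pair $e_\bullet[\Hf]$ against $C_q^{-1} e_\bullet$ at a degree strictly below the latter's support, and so vanish individually. For $\eMatrix(e_{i,a}, e_{j,a-1})$, exactly one summand survives: a shift of the key identification gives $(C_q^{-1} e_{j,a-1})_i(a-\Hf) = p_{j,i}$, producing $-p_{j,i}$. The third identity follows by bilinearity together with the antisymmetry $\eMatrix(w,w') = -\eMatrix(w',w)$.

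For Part~2, I will verify $\zLambda \cdot (-B_\tQ) = I$ block by block. Writing out $B_\tQ$ from the $z$-pattern description---principal-principal block $(b_{ij})$, mixed blocks involving $\pm((b_{kj})^+ - \delta_{j,k})$, frozen-frozen block zero---and substituting the formulas for $\zLambda$ from Part~1 reduces the four block identities, after expanding $b_{jk} = (b_{jk})^+ - (b_{kj})^+$, to the two dual path recurrences
\[
\sum_j p_{i,j}\,(b_{jk})^+ = p_{i,k} - \delta_{i,k}, \qquad \sum_j (b_{kj})^+\,p_{j,i} = p_{k,i} - \delta_{k,i},
\]
corresponding to decomposing a path by its last, respectively first, arrow. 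The principal-principal and frozen-frozen diagonal blocks then collapse to $\delta_{i,k}$, while the two mixed blocks cancel to zero. The main potential obstacle is simply consistent bookkeeping: aligning the shift convention for $[d]$, the directionality implicit in $p_{i,j}$, and the signs of $(b_{ij})^\pm$ in the $z$-pattern quiver throughout both parts. Once the single identification $(C_q^{-1} e_{j,a})_k(a+\Hf) = p_{j,k}$ is in place, every remaining step is elementary combinatorics.
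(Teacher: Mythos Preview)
Your proposal is correct and follows essentially the same route as the paper. For Part~1 the paper simply records ``straightforward computation,'' and your identification $(C_q^{-1} e_{j,a})_k(a+\Hf) = p_{j,k}$ via the triangular recurrence is precisely the natural way to carry that computation out; for Part~2 the paper does exactly what you propose---it writes down the four blocks of $B_{\tQ}$ and verifies $-\zLambda B_{\tQ} = I$ block by block, reducing each entry to the path recurrences $p_{ki} - \sum_j [b_{kj}]_+ p_{ji} = \delta_{ik}$ and its dual.
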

\begin{proof}
1) The claim follows from straightforward computation.

2) Apply the results of 1) for $a=0$. The entries of $B_{\tQ}$ are
given by
\begin{align*}
  (B_{\tQ})_{i,j}=b_{ij},\\
  (B_{\tQ})_{i+n,j}=-\delta_{ij}+[b_{ji}]_{+},\\
  (B_{\tQ})_{i,j+n}=-(B_{\tQ})_{j+n,i},\\
  (B_{\tQ})_{i+n,j+n}=0,
\end{align*}
for any $1\leq i,j\leq n$. Here $[\ ]_{+}$ is defined to be $max\{\
,0\}$.

We have, for any $1\leq i,k\leq n$,
\begin{align*}
  (-\zLambda B_{\tQ})_{i,k}&=\sum_{1\leq j\leq n}\zLambda_{i,j+n}\cdot(-B_{\tQ})_{j+n,k}\\
&=\sum_{1\leq j\leq
    n}p_{ji}\delta_{jk}-\sum_{1\leq j\leq n}p_{ji}[b_{kj}]_{+}\\
  &=p_{ki}-\sum_j[b_{kj}]_{+}p_{ji}\\
  &=\delta_{ik},
\end{align*}
\begin{align*}
  (-\zLambda B_{\tQ})_{i+n,k}&=\sum_{1\leq j\leq
    n}(-\zLambda_{i+n,j})\cdot (B_{\tQ})_{j,k}+\sum_{1\leq j\leq
    n}(-\zLambda_{i+n,j+n})\cdot (B_{\tQ})_{j+n,k}\\
&=\sum_{1\leq j\leq
    n}p_{ij}b_{jk}+\sum_{1\leq j\leq n}(p_{ij}-p_{ji})(-\delta_{jk}+[b_{kj}]_{+})\\
  &=\sum_{j:j>k}p_{ij}(-b_{kj})+\sum_{j:j<k}p_{ij}b_{jk}+(-p_{ik}+p_{ki})\\
&\qquad +\sum_j(p_{ij}-p_{ji})[b_{kj}]_{+}\\
  &=(\sum_{j:j>k}p_{ij}(-b_{kj})+\sum_jp_{ij}[b_{kj}]_{+})+(\sum_{j:j<k}p_{ij}b_{jk}-p_{ik})\\
&\qquad +(p_{ki}-\sum_j[b_{kj}]_{+}p_{ji})\\
  &=0-\delta_{ik}+\delta_{ik}\\
  &=0,\\
  (-\zLambda B_{\tQ})_{i,k+n}&=(\zLambda B_{\tQ})_{k+n,i}=0,\\
  (-\zLambda B_{\tQ})_{i+n,k+n}&=\sum_{1\leq j\leq n}p_{ij}(\delta_{kj}-[b_{jk}]_{+})\\
  &=p_{ik}-\sum_{j}p_{ij}[b_{jk}]_{+}\\
  &=\delta_{ik}.
\end{align*}
\end{proof}

\subsection{Deformed Grothendieck ring}
\label{sec:perverseSheaves}

Recall that, by Proposition \ref{prop:grStratification}, we have the stratification
\begin{align*}
  \grAffQuot(w)=\cup_v \grAffQuot(v,w)=\sqcup_{v:(v,w)\text{ is $l$-dominant}} \grRegStratum(v,w).
\end{align*}

Let
$IC(v,w)$ denote the intersection cohomology sheaf associated with
$\grRegStratum(v,w)$ for $l$-dominant pair $(v,w)$ and
$1_{\grProjQuot(v,w)}$ the perverse sheaf
$\underline{\C}[\dim\grProjQuot(v,w)]$. Applying the decomposition theorem
\cite{BeilinsonBernsteinDeligne82} to the sheaf
$\pi(v,w)=\pi_!(1_{\grProjQuot(v,w)})$, we obtain the following result.
\begin{Thm}\label{thm:decomposition}
We have a decomposition
  \begin{align}
    \label{eq:decomposition}
    \pi(v,w)=\oplus_{v':(v',w)\text{ is
        $l$-dominant},d\in\Z}IC(v',w)[d]^{\oplus a_{v,v';w}^d}.
  \end{align}
Moreover, we have $a_{v,v';w}^d\in\Z_{\geq 0}$,
$a_{v,v';w}^d=a_{v,v';w}^{-d}$, $a_{v,v;w}^d=\delta_{d0}$ for
$l$-dominant $(v,w)$, and
$a_{v,v';w}^d\neq 0$ only if $(v,w)\leq(v',w)$.
\end{Thm}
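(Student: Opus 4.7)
The plan is to apply the Beilinson--Bernstein--Deligne decomposition theorem to the proper morphism $\pi \colon \grProjQuot(v,w) \to \grAffQuot(v,w)$ and then verify the listed constraints on the multiplicities $a^d_{v,v';w}$ using the geometry established earlier in the section.

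First, I would check the hypotheses of the decomposition theorem. The morphism $\pi$ is projective (it is the natural map from the GIT quotient to the affine quotient). By the graded version of Proposition \ref{prop:smoothQuot} (already invoked in \secref{sec:gradedQuiverVarieties}), the source $\grProjQuot(v,w)$ is smooth, and by Theorem \ref{thm:connected} it is irreducible. Hence $1_{\grProjQuot(v,w)} = \underline{\C}[\dim \grProjQuot(v,w)]$ is a simple perverse sheaf (up to a shift), and Verdier self-dual. Because $\grAffQuot(w)$ is finite-dimensional (Remark \ref{rem:finiteDimensional}) and admits the stratification by the regular strata $\grRegStratum(v',w)$ indexed by $l$-dominant $(v',w)$ (Proposition \ref{prop:grStratification}), the decomposition theorem of \cite{BeilinsonBernsteinDeligne82} yields a decomposition of the form \eqref{eq:decomposition} with $a^d_{v,v';w} \in \Z_{\geq 0}$.

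Next I would verify the three constraints:
\begin{itemize}
\item \emph{Support condition.} The support of $\pi(v,w)$ is contained in $\pi(\grProjQuot(v,w)) = \grAffQuot(v,w)$. By Corollary \ref{cor:degeneratedStratum}, the $l$-dominant strata $\grRegStratum(v',w)$ that meet this image are exactly those with $(v',w) \geq (v,w)$ in the dominance order, so $IC(v',w)$ can occur only for such $v'$.
\item \emph{Leading term.} Over the open stratum $\grRegStratum(v,w) \subset \grAffQuot(v,w)$, the morphism $\pi$ restricts to an isomorphism $\pi^{-1}(\grRegStratum(v,w)) \iso \grRegStratum(v,w)$ (as recorded after Theorem \ref{thm:grTransversalSlice}). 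Hence $\pi(v,w)|_{\grRegStratum(v,w)}$ is $\underline{\C}[\dim \grProjQuot(v,w)]$, and by Lemma \ref{lem:dForm} $\dim \grProjQuot(v,w) = d((v,w),(v,w)) = \dim \grRegStratum(v,w)$. Since $IC(v,w)|_{\grRegStratum(v,w)} = \underline{\C}[\dim \grRegStratum(v,w)]$, we conclude $a^d_{v,v;w} = \delta_{d,0}$.
\item \emph{Symmetry in $d$.} Because $\pi$ is proper and $\grProjQuot(v,w)$ is smooth, the constant sheaf $1_{\grProjQuot(v,w)}$ is Verdier self-dual, so $\pi(v,w) = \pi_!(1_{\grProjQuot(v,w)})$ is Verdier self-dual as well. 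As each $IC(v',w)$ is self-dual and Verdier duality takes $[d]$ to $[-d]$, comparing the decomposition of $\pi(v,w)$ with its Verdier dual gives $a^d_{v,v';w} = a^{-d}_{v,v';w}$.
\end{itemize}

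The only subtle point where one must be careful is that, unlike the bipartite setting of \cite{Nakajima04}, our $\C^{*}$-action on the ungraded quiver varieties is ad hoc, so we must rely on Proposition \ref{prop:grStratification} (and therefore on the acyclicity argument in its proof) to guarantee that $\grAffQuot(v,w)$ really is stratified by regular strata of $l$-dominant pairs. Once this is in hand, all four properties of the $a^d_{v,v';w}$ follow formally from standard properties of the decomposition theorem combined with Corollary \ref{cor:degeneratedStratum} and the identification of $\pi$ over the regular stratum.
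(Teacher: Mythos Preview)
Your approach is essentially the same as the paper's, but there is one genuine gap. You assert that applying the BBD decomposition theorem directly yields \eqref{eq:decomposition}, with summands of the form $IC(v',w)[d]$. However, the general decomposition theorem only produces shifts of IC sheaves $IC(\grRegStratum(v',w),\cL)$ for \emph{arbitrary} irreducible local systems $\cL$ on the strata; one must still argue that only the trivial local system $\cL=\underline{\C}$ occurs. The paper handles this via the transversal slice theorem (Theorem \ref{thm:grTransversalSlice}), following the argument of \cite[Theorem~14.3.2]{Nakajima01}: near any point of a stratum $\grRegStratum(v',w)$ the morphism $\pi$ is locally biholomorphic to a product $\id\times\pi^\bot$, so the decomposition of $\pi(v,w)$ along the stratum is constant, which forces the relevant local systems to be trivial. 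You should either invoke this transversal slice argument explicitly or supply an alternative reason (for instance, simple connectedness of the strata---which, however, is not established in the paper). Apart from this missing step, your treatment of the support condition, the leading term, and the Verdier self-duality agrees with the paper's proof.
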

\begin{proof}
 By Proposition \ref{prop:grStratification}, Proposition
 \ref{prop:nonemptyStratum}, Corollary \ref{cor:degeneratedStratum},
 the summands appearing in the decomposition of
 $\pi_!(1_{\grProjQuot(v,w)})$ are shifts of simple perverse sheaves
 generated by local systems over $\grRegStratum(v',w)$ for
 $l$-dominant pairs $(v',w)$.

The argument in the proof of \cite[Theorem 14.3.2]{Nakajima01} implies the following results:
 \begin{enumerate}
 \item $a_{v,v;w}^d=\delta_{d0}$ if $(v,w)$ is $l$-dominant, since $\pi$ is an
   isomorphism between $\pi^{-1}\grRegStratum(v,w)\subset \grProjQuot(v,w)$ and $\grRegStratum(v,w)$;

\item Because we have Theorem \ref{thm:grTransversalSlice}, the
  transversal slice technique in the proof of \cite[Theorem
  14.3.2]{Nakajima01} is effective. It follows that the decomposition
  summands are shifts of simple perverse sheaves $IC(v',w)$ generated by trivial local systems.
 \end{enumerate}

Finally, since $\id_{\grProjQuot(v,w)}$ is invariant under the Verdier duality $D$, the sheaf $\pi(v,w)$ is invariant as well, and therefore we have $a_{v,v';w}^d=a_{v,v';w}^{-d}$.
\end{proof}

We construct a positive Laurent polynomial from the coefficients $a_{v,v';w}$: 
\begin{align*}
  a_{v,v';w}(t)=\sum_{d\in\Z}a_{v,v';w}^d t^d\in\N[t^\pm].
\end{align*}
Theorem \ref{thm:grTransversalSlice} implies that we have
$a_{v,v';w}(t)=a_{v-v',0;w-C_q v}(t)$.

For any given $w$, we have a finite set by Remark \ref{rem:finiteDimensional}: 
\begin{align*}
  \cP_w=\{IC(v,w)|\text{the pair}\ (v,w)\text{ is $l$-dominant, }\}.
\end{align*}

As usual, we consider the bounded derived category
$\cD_c(\grAffQuot(w))$ of
constructible sheaves on $\grAffQuot(w)$ and its full subcategory
$\cQ_w$ whose objects are the direct sums of $IC(v,w)[d]$,
$IC(v,w)\in\cP_w$, $d\in\Z$. Let $\KGp_w$ be the Grothendieck group of
$\cQ_w$. It is naturally a free $\tBase$-module whose module structure
is given by $t(L)=(L[1])$, $L\in\KGp_w$. The Verdier duality $D$
induces an involution $\overline{(\ )}$ on $\KGp_w$.

The set $\{IC(v,w)\}$ is a $\tBase$-basis of the Grothendieck group
$\KGp_w$. Notice that the sheaves $IC(v,w)$ are defined only for
the $l$-dominant pairs $(v,w)$. By
Theorem \ref{thm:decomposition}, $\KGp_w$ has another $\tBase$-basis $\{\pi(v,w)|(v,w)\text{ is
  $l$-dominant}\}$. It follows that the group
$\dualKGp_w=\Hom_\tBase(\KGp_w,\tBase)$ has two dual bases
$\{\can(v,w)\}$ and $\{\chi(v,w)\}$ respectively. Define the third basis
$$\{\pbw(v,w)|(v,w)\text{ is $l$-dominant}\}$$ whose elements are
given by
\begin{align*}
\langle \pbw(v,w),L\rangle=\sum_k t^{\dim \grRegStratum(v,w)-k}\dim H^k(i^!_{x_{v,w}}L),
\end{align*}
where $x_{v,w}$ is any regular point in $\grRegStratum(v,w)$, $i_{x_{v,w}}$
the inclusion, $L\in\cQ_w$, and $\langle\ ,\ \rangle$ the natural pairing. Notice
that, by Theorem \ref{thm:grTransversalSlice}, $\pbw(v,w)$ does not
depend on the choice of $x_{v,w}$. Furthermore, we have
\begin{align*}
  \langle \pbw(v',w),IC(v,w)\rangle=\langle
  \pbw({v'}^\bot,w^\bot),IC(v^\bot,w^\bot)\rangle.
\end{align*}
The properties of perverse sheaves imply 
\begin{align}
  \label{eq:canToPbw}
  \can(v,w)\in\pbw(v,w)+\sum_{(v',w)<(v,w)}t^{-1}\Z[t^{-1}]\pbw(v',w).
\end{align}
In particular, $\{\pbw(v,w)\}$ is a $\tBase$-basis.

Following \cite[Section 8]{Nakajima04},
let us denote the inverse basis transform by
\begin{align}\label{eq:pbwToCan}
  \pbw(v,w)=\can(v,w)+\sum_{(v',w)<(v,w)}Z_{v,v';w}(t)\can(v',w),
\end{align}
where $Z_{v,v';w}(t)\in t^{-1}\Z[t^{-1}]$.

As in \cite[3.3]{Nakajima09}, we define 
\begin{align}
  \quotKGp=\{f=(f_w)\in\prod_w\dualKGp_w|\langle
    f_w,IC(v,w)\rangle=\langle
    f_{w^\bot},IC(v^\bot,w^\bot)\rangle,\forall IC(v,w)\in\cP_w\}.
\end{align}
It is a free $\tBase$-module. Define $M(w)=(f_{w'})\in \prod_{w'}\dualKGp_{w'}$ to be the element in
$\quotKGp$ such that $f_w=M(0,w)$ and
$L(w)=(f_{w'})$ the element in $\quotKGp$ such that $f_w=L(0,w)$. Then $\{M(w)\}$ and
$\{L(w)\}$ are $\tBase$-bases of $\quotKGp$.

Next, we endow $\quotKGp$ with an associative multiplication $\otimes$. By \cite{VaragnoloVasserot03} (\cf also \cite[Section
3.5]{Nakajima09}), there is a restriction functor for any given
decomposition $w=w^1+w^2$:
\begin{align*}
  \tRes_{w^1,w^2}^w:\cD_c(\grAffQuot(w))\ra
\cD_c(\grAffQuot(w^1)\times \grAffQuot(w^2)).
\end{align*}
In particular, $\tRes^w_{w^1,w^2}(\pi(v,w))$ equals 
\begin{align*}
\oplus_{v^1+v^2=v}\pi(v^1,w^1)\boxtimes  \pi(v^2,w^2)[d((v^2,w^2),(v^1,w^1))-d((v^1,w^1),(v^2,w^2))].
\end{align*}

The shifted functor $\res^w_{w^1,w^2}[-\eMatrix(w^1,w^2)]$ induces a homomorphism from $\KGp_w$ to
$\KGp_{w^1}\otimes_{\tBase} \KGp_{w^2}$, which we also denote by
$\res^w_{w^1,w^2}[-\eMatrix(w^1,w^2)]$. For any given $w$, define the
homomorphism $\res^w$ from $\KGp_w$ to
$\oplus_{w^1+w^2=w}(\KGp_{w^1}\otimes_{\tBase} \KGp_{w^2})$ to be the direct sum
$\sum_{w^1+w^2=w}\tRes^w_{w^1,w^2}[-\eMatrix(w^1,w^2)]$. By Theorem
\ref{thm:grTransversalSlice}, the homomorphisms $\res^w$ induce a
multiplication of $\quotKGp$, which we denote by $\otimes$.
\begin{Thm}
The structure constants of the
  basis $\{\can(w)\}$ are positive:
  \begin{align}
    \label{eq:otimes}
    \can(w^1)\otimes
    \can(w^2)=\sum_{w^3}\canStr^{w^3}_{w^1,w^2}(t)\can(w^3),\ \canStr^{w^3}_{w^1,w^2}(t)\in\N[t^\pm].
  \end{align}
\end{Thm}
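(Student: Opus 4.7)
The plan is to derive positivity from the fact that $\otimes$ is defined via the restriction functor $\res^w$ on the geometric side. Unwinding the dualities that define $\quotKGp$, the structure constant $\canStr^{w^3}_{w^1,w^2}(t)$ computes the multiplicity of $IC(0,w^1)\boxtimes IC(0,w^2)$ inside $\res^{w^3}(IC(0,w^3))$, where the formal variable $t$ records the cohomological shift via $t = [1]$. It therefore suffices to show that $\tRes^{w^3}_{w^1,w^2}(IC(0,w^3))$, after the normalising shift $[-\eMatrix(w^1,w^2)]$, decomposes as a direct sum of shifts of simple perverse sheaves of the form $IC(v^1,w^1)\boxtimes IC(v^2,w^2)$ with non-negative integer multiplicities.

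First I would invoke the BBD decomposition theorem \cite{BeilinsonBernsteinDeligne82}: as in \cite{VaragnoloVasserot03}, the functor $\tRes$ is realised as $p_!\,q^*$ along a correspondence diagram involving proper morphisms between the appropriate tensor-product analogues of our graded quiver varieties. Because $IC(0,w^3)$ is a simple perverse sheaf of geometric origin, Deligne's purity theorem guarantees that $\tRes(IC(0,w^3))$ is a pure, hence semisimple, complex on $\grAffQuot(w^1)\times\grAffQuot(w^2)$. Combined with Theorem \ref{thm:decomposition} and Proposition \ref{prop:grStratification}, its simple summands must be shifts of $IC(v^1,w^1)\boxtimes IC(v^2,w^2)$ for $l$-dominant pairs $(v^i,w^i)$.

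Second, the normalising shift $[-\eMatrix(w^1,w^2)]$ in the definition of $\res^w$ is chosen precisely so that the resulting complex sits in the correct perverse degree, so the cohomological shifts translate faithfully into $t$-powers. Pairing the resulting decomposition against $\can(0,w^1)\otimes\can(0,w^2)$ extracts the multiplicity of $IC(0,w^1)\boxtimes IC(0,w^2)$, yielding $\canStr^{w^3}_{w^1,w^2}(t)\in\N[t^\pm]$, as required.

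The main obstacle is the semisimplicity step: one must verify that in our nonbipartite framework $\tRes$ can indeed be presented as $p_!\,q^*$ along a proper correspondence between smooth varieties, so that purity applies. In Nakajima's bipartite setting this was established by a direct analysis of tensor product varieties; here I would adapt that analysis to the modified $\C^*$-action introduced in Section \ref{sec:quiverVarieties}, using Theorem \ref{thm:grTransversalSlice} to reduce the key local statements to transversal slices and Proposition \ref{prop:grStratification} to ensure that the stratification of $\grAffQuot(w)$ is compatible with the restriction correspondence. Once these geometric ingredients are confirmed, positivity follows from the decomposition theorem formally.
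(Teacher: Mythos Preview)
Your proposal is correct and is precisely the approach the paper intends: the paper's own proof is a one-line reference to \cite[Section 5.1 Lemma 1(b)]{VaragnoloVasserot03}, and what you have written is an accurate unpacking of that argument. In particular, you correctly identify that, under the duality defining $\quotKGp$, the coefficient $\canStr^{w^3}_{w^1,w^2}(t)$ records the graded multiplicity of $IC(0,w^1)\boxtimes IC(0,w^2)$ in $\res^{w^3}(IC(0,w^3))$, and that positivity then follows from the decomposition theorem applied to the geometric realisation of $\tRes$.

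One minor sharpening: the semisimplicity in Varagnolo--Vasserot (and in Lusztig's earlier argument for the restriction functor on nilpotent-type varieties) is not deduced by invoking purity abstractly for $p_!q^*$, but rather by rewriting $\tRes^{w}_{w^1,w^2}(IC(0,w))$ as a proper pushforward from an auxiliary smooth variety (a graded analogue of Nakajima's tensor product variety), to which the decomposition theorem applies directly. Your ``obstacle'' paragraph already points in this direction; the paper simply asserts that the construction of \cite{VaragnoloVasserot03} and \cite[Section 3.5]{Nakajima09} carries over verbatim to the new $\C^*$-action, which it does because the definition of $\tRes$ and the tensor-product diagram are insensitive to the particular grading chosen.
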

\begin{proof}
  The statement follows from the argument of \cite[Section 5.1 Lemma 1(b)]{VaragnoloVasserot03}.
\end{proof}

\subsection{Rings of characters}
\label{sec:targSpace}
Define the ring
  \begin{align}
    \targSpace=\tBase[[W_i(a),V_i(b)]]_{i\in \vtx, a\in \Z,
    b\in (\Z+\Hf)},
  \end{align}
  where $t$, $W_i(a)$, $V_i(b)$ are indeterminates. We denote its product
  by $\cdot$, and often omit this notation.

For any $(v,w)$, we let $m(v,w)$ denote the
monomial $$m=m(v,w)=\prod_{i,a}
W_i(a)^{w_i(a)}\prod_{i,b}V_i(b)^{v_i(b)}.$$
Endow $\targSpace$ with the twisted product $*$ and the bar involution
$\overline{(\ )}$ such that we have
\begin{align}
\overline{t}=t^{-1},\ \overline{m}=m,\\
  m^1*m^2=t^{-\dT(m^1,m^2)+\dT(m^2,m^1)}m^1m^2,
\end{align}
where $m$, $m^1$, $m^2$ are any monomials, and we define
\begin{align*}
  \dT(m^1(v^1,w^1),m^2(v^2,w^2))=\dT((v^1,w^1),(v^2,w^2)).
\end{align*}

We follow the notation of Section
\ref{sec:perverseSheaves}. Moreover, as in Theorem
\ref{thm:grTransversalSlice}, we write $\grFib_x(v,w)$ for the fibre
$\pi^{-1}(x)$ of a point $x$ under $\pi:\grProjQuot(v,w)\ra \grAffQuot(v,w)$. Notice
that the homology of the fibre $H_*(\grFib_{x_{v,w}}(v',w),\C)$ is isomorphic to
 $H^{\dim\grProjQuot(v',w)-*}(i_{x_{v,w}}^!\pi(v',w))$, \cf \cite[8.5.4]{ChrissGinzburg97}. Therefore, Theorem
\ref{thm:grTransversalSlice} implies that
\begin{align*}
  \langle \pbw(v,w),\pi(v',w) \rangle=\langle
  \pbw(v^\bot,w^\bot),\pi(v'-v^0,w^\bot) \rangle.
\end{align*}

\begin{Def}[\emph{$t$-analogue of $q$-characters}]For any given $w$, define
  $\qtChar(\ )$ to be
 the $\tBase$-linear map from
  $\dualKGp_w$ to $\targSpace$ \st
  \begin{align}
    \label{eq:qtChar}
    \qtChar(\ )=\sum_v \langle \ ,\pi(v,w)\rangle
W^wV^v.
  \end{align}
\end{Def}

We can compute
\begin{align*}
  \qtChar(L(v,w))&=\sum_{v'}a_{v',v;w}(t)W^wV^{v'}\\
&=\sum_{v'-v}a_{v'-v,0;w}(t)W^wV^{v'}.
\end{align*}
Since $W^wV^{v'}$ and $a_{v',v;w}(t)$ are
bar--invariant, $\qtChar(\ )$ is bar-invariant as well. Similarly, we
define $\pbwTarg(v,w)$ to be
\begin{align*}\tqChar(\pbw(v,w))
    &=\sum_{v',k}t^{\dim\grProjQuot(v,w)-k}\dim H^k(i_{x_{v,w}}^!\pi(v',w),\C)W^wV^{v'}\\
    &=\sum_{v',k}t^{\dim\grProjQuot(v,w)-\dim\grProjQuot(v',w)}t^{\dim\grProjQuot(v',w)-k}\dim
    H^k(i_{x_{v,w}}^!\pi(v',w),\C)W^wV^{v'}\\
    &=\sum_{v',l}t^{\dim\grProjQuot(v,w)-\dim\grProjQuot(v',w)}t^{l}\dim
    H_{l}(\grFib_{x_{v,w}}(v',w),\C)W^wV^{v'}\\
    &\stackrel{\ref{thm:oddVanish}}{=}\sum_{v'}t^{\dim\grProjQuot(v,w)-\dim\grProjQuot(v',w)}P_t(\grFib_{x_{v,w}}(v',w))W^wV^{v'}\\
    &=\sum_{v'-v}t^{-\dim\grProjQuot(v'-v,w-C_q v)}P_t(\grLag(v'-v,w-C_qv))W^wV^{v'}.
\end{align*}
Here $P_t(\ )$ is the twisted Poincar\'{e} polynomial $\sum_k (-t)^k \dim
  H_k(\ )$.

\begin{Def}[dual PBW elements]\label{def:pbwTarg}
  For any given $w$, define $\pbwTarg(w)$ to be the generating series 
  \begin{align}
    \label{eq:pbwTarg}
    \pbwTarg(w)=\qtChar(\pbw(0,w))=\sum_{v}P_t(\grLag(v,w))t^{-d((v,w),(v,w))}W^wV^v.
  \end{align}
\end{Def}
 Notice that the summation runs over all $v$ and $(v,w)$ is not
 necessarily $l$-dominant.

\begin{Rem}
  By Theorem \ref{thm:oddVanish}, the parameter $-t$ in $P_t(\ )$ in Definition \ref{def:pbwTarg}
  can be replaced by $t$.
\end{Rem}

Let $E(\ ;x,y)\in\Z[x,y]$ be the virtual Hodge Polynomial, \cf \eg \cite{HauselVillegas08}. Define
$p_t(\ )$ to be the virtual Poincar\'{e} polynomial (called the $E$-polynomial in
\cite{Qin10}), \ie $p_t(\grLag(v,w))=E(\grLag(v,w);t,t)$. It is
additive with respect to $\alpha$-partitions in the sense of \cite[Section 7.1]{Nakajima01}, and multiplicative with
respect to fibre products.

\begin{Prop}[{\cite[Lemma 5.2]{Nakajima04}}]
We have $p_t(\grLag(v,w))=P_t(\grLag(v,w))$ and also $p_t(\grProjQuot(v,w))=P_t(\grProjQuot(v,w))$.
\end{Prop}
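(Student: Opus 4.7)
The plan is to adapt Nakajima's original argument for \cite[Lemma 5.2]{Nakajima04} to the new graded quiver varieties of Section~\ref{sec:quiverVarieties}, taking advantage of the fact that none of the geometric inputs to that argument relied on bipartiteness in an essential way. The two polynomials $P_t$ and $p_t$ differ in general on non-compact varieties (e.g.\ $P_t(\C^n)=1$ while $p_t(\C^n)=t^{2n}$), so the content of the proposition is that the graded quiver varieties $\grLag(v,w)$ and $\grProjQuot(v,w)$ have a Hodge-theoretic structure pure enough to force the two polynomials to agree.

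First I would treat $\grLag(v,w)$. The recipe is to exhibit an $\alpha$-partition whose strata are affine bundles over smaller varieties for which the equality is already known inductively; this uses the $\C^*$-action appearing in the proof of the homotopy equivalence $\grProjQuot\simeq \grLag$ (the one with $\varepsilon(B_h,B_{\oh},\alpha,\beta)=(B_h,\varepsilon B_{\oh},\alpha,\varepsilon\beta)$), restricted to $\grLag$. The Bia{\l}ynicki-Birula decomposition associated to this action partitions $\grLag(v,w)$ into locally closed pieces, each a vector bundle over a connected component of the fixed point locus. These fixed components are identified, as in \cite{Nakajima01}, with products of strictly smaller graded quiver varieties, so one may induct on $\dim v$. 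Additivity of $p_t$ with respect to $\alpha$-partitions and multiplicativity on affine bundles give a formula for $p_t(\grLag(v,w))$ in terms of the fixed components; the crucial point is that Theorem~\ref{thm:oddVanish} (vanishing of odd homology of $\grLag$) forces the analogous formula to hold for $P_t$ without any cancellation, so the two polynomials agree term by term.

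For $\grProjQuot(v,w)$ I would run the same Bia{\l}ynicki-Birula argument, now for the full smooth variety rather than just the attracting set. The fixed components are the same as before, but the fibre dimensions of the affine bundles change; since $\grProjQuot(v,w)$ is smooth and homotopy equivalent to $\grLag(v,w)$ by the proposition proved above, its odd cohomology also vanishes, and the same additivity/multiplicativity argument gives $p_t(\grProjQuot(v,w))=P_t(\grProjQuot(v,w))$.

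The main obstacle to simply quoting \cite{Nakajima04} wholesale is that our $GL(W)\times\C^*$-action differs from the one used there, so I need to verify that the $\C^*$-action giving the Bia{\l}ynicki-Birula decomposition still commutes with our $A$-action (which it does by construction, as already noted in the proof of the homotopy equivalence) and that the fixed loci are still products of graded quiver varieties of the same class we are considering. Once these compatibilities are checked, Nakajima's inductive bookkeeping goes through unchanged.
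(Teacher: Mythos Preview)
The paper does not supply its own proof here; it simply records the statement with the citation to \cite[Lemma 5.2]{Nakajima04}. Your plan is precisely the adaptation of Nakajima's argument that the citation is meant to invoke, and the ingredients you list (the auxiliary $\C^*$-action commuting with the $A$-action, the resulting $\alpha$-partition/Bia\l ynicki--Birula decomposition, odd vanishing from Theorem~\ref{thm:oddVanish}, and induction on the fixed loci) are exactly the ones used in \cite{Nakajima04}.
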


Recall that, in $\dualKGp_w$, we have
$M(v,w)=\sum_{v'}Z_{(v,w)(v',w)}(t)L(v',w)$, where the matrix of coefficients
$(Z_{(v,w)(v',w)})$ is an upper uni-triangular matrix with respect to
the dominance order. Therefore, we can solve the following equation recursively
\begin{align*}
  \overline{M(v,w)}=\sum_{v':(v',w)\leq (v,w)} u_{(v,w),(v',w)}M(v',w),
\end{align*}
where the coefficients $u_{(v,w),(v',w)}\in \tBase$.

Finally, we apply $\qtChar$ to the above equation and obtain
  \begin{align}
    \label{eq:bar_pbwTarg}
    \overline{\pbwTarg(v,w)}=\sum_{v':(v',w)\leq (v,w)} u_{(v,w),(v',w)}\pbwTarg(v',w),
  \end{align}
\begin{Rem}
  In fact, we can compute the coefficient matrix
  $(u_{(v,w)(v',w)})$ and the basis transition matrix $(Z_{(v,w)(v',w)})$
   from $P_t(\grLag(v,w))$, \cf \cite[Section
  8]{Nakajima04}.
\end{Rem}

 \begin{Prop} [{\cite[Proposition 6.2]{Nakajima04}}]\label{prop:mult_pbwTarg}
If for all $i$, $j\in I$, $a$, $b\in\Z$, $a>b$, either $ (w^1)_i(a)$ or
$(w^2)_j(b)$ vanishes, 
then we have
\begin{align}
  \label{eq:mult_pbwTarg}
\pbwTarg(w^2)*\pbwTarg(w^1)= t^{\eMatrix(w^1,w^2)}\pbwTarg(w^1+w^2). 
\end{align}
\end{Prop}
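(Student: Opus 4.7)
The plan is to expand both sides of \eqref{eq:mult_pbwTarg} in the monomial basis $\{W^{w^1+w^2}V^v\}$ of $\targSpace$ and reduce the identity to a single statement about Poincar\'{e} polynomials of graded Lagrangians. Using Definition \ref{def:pbwTarg} and the definition of the twisted product, the left-hand side takes the form
\begin{align*}
\pbwTarg(w^2)*\pbwTarg(w^1)
=\sum_{v^1,v^2} t^{F(v^1,v^2)} P_t(\grLag(v^1,w^1))P_t(\grLag(v^2,w^2))\, W^{w^1+w^2}V^{v^1+v^2},
\end{align*}
with
\begin{align*}
F(v^1,v^2)=-d((v^1,w^1),(v^1,w^1))-d((v^2,w^2),(v^2,w^2))-\dT((v^2,w^2),(v^1,w^1))+\dT((v^1,w^1),(v^2,w^2)),
\end{align*}
while the right-hand side is
\begin{align*}
t^{\eMatrix(w^1,w^2)}\pbwTarg(w^1+w^2)
=\sum_{v} t^{\eMatrix(w^1,w^2)-d((v,w^1+w^2),(v,w^1+w^2))}P_t(\grLag(v,w^1+w^2))\,W^{w^1+w^2}V^v.
\end{align*}
Regrouping the LHS by $v=v^1+v^2$ reduces the proposition to verifying, for each $v$,
\begin{align*}
P_t(\grLag(v,w^1+w^2))
=\sum_{v^1+v^2=v} t^{G(v^1,v^2)}P_t(\grLag(v^1,w^1))P_t(\grLag(v^2,w^2)),
\end{align*}
where $G(v^1,v^2)=F(v^1,v^2)-\eMatrix(w^1,w^2)+d((v,w^1+w^2),(v,w^1+w^2))$.

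The crux is a geometric decomposition of $\grLag(v,w^1+w^2)$. The hypothesis $(w^1)_i(a)(w^2)_j(b)=0$ for $a>b$ means that the framing $W^1$ sits only at degrees at most those of $W^2$, so that under our modified $\C^*$-action every arrow in the graded framed quiver which could potentially send a vector from the $W^2$-component into the $W^1$-component is forbidden by degree considerations. Adapting the argument used by Nakajima in the proof of \cite[Proposition 6.2]{Nakajima04}, I would construct an $\alpha$-partition of $\grLag(v,w^1+w^2)$ (in the sense of \cite[Section 7.1]{Nakajima01}) indexed by decompositions $v=v^1+v^2$, each stratum being an affine bundle over $\grLag(v^1,w^1)\times \grLag(v^2,w^2)$ whose fibre dimension is a bilinear quantity in $(v^1,w^1)$ and $(v^2,w^2)$. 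Additivity of $P_t$ over $\alpha$-partitions and its multiplicativity on fibre products would then give the desired factorisation of Poincar\'{e} polynomials.

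The final step is to verify that the two $t$-exponents agree. This is a direct computation: I would expand $d$ using \eqref{eq:dimension}, $\eMatrix$ using the definitions in Section \ref{sec:quadraticForm}, apply Lemma \ref{eq:chopTwistDimension} to rewrite each $\dT((v^i,w^i),(v^j,w^j))$ in terms of $w^i-C_q v^i$, and invoke Lemma \ref{lem:symmetricCartan} to handle the symmetric cross terms of the form $v^i\cdot C_q v^j$. The main obstacle is the geometric step: the moment map relation $\mu=0$ together with $\chi$-stability at the central fibre mix the $W^1$- and $W^2$-components a priori, and one must check that the support-separation hypothesis, combined with our chosen torus action, forces the off-diagonal contributions in $\mu$ to vanish, so that the stratification is well-defined and the induced fibration over $\grLag(v^1,w^1)\times \grLag(v^2,w^2)$ really is an affine bundle of the expected rank. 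Once this geometric input is secured, the remaining exponent-matching is the bookkeeping indicated above, patterned on Nakajima's argument but adapted to the new graded quiver varieties of Section \ref{sec:quiverVarieties}.
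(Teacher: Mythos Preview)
Your approach matches the paper's: the geometric input is precisely the $\alpha$-partition you describe, where each stratum is the vector bundle $\ker\tau^{21}/\Im\sigma^{21}$ of \eqref{eq:vectorBundleComplex} over $\grLag(v^1,w^1)\times\grLag(v^2,w^2)$ (of rank exactly $d((v^1,w^1),(v^2,w^2))$, cf.\ \cite[6.12]{Nakajima01b}), and one concludes by additivity and multiplicativity of $p_t$. The exponent match is simpler than you anticipate---the paper uses only $\dT=d+\dTW$ and the bilinearity of $d$ (no need for Lemmas~\ref{eq:chopTwistDimension} or~\ref{lem:symmetricCartan}) to obtain $G(v^1,v^2)=2d((v^1,w^1),(v^2,w^2))$ in one line.
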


\begin{proof}
As in the proof of \cite[Proposition 6.2]{Nakajima04}, for any
monomials $m^1=W^{w^1}V^{v^1}$, $m^2=W^{w^2}V^{v^2}$, we have a
complex vector bundle $\grVb(v^1,w^1;v^2,w^2)$ of rank given by 
$\Ker \tau^{21}/\Im \sigma^{21}=d((v^1,w^1),(v^2,w^2))$ over each 
$\grLag(v^1,w^1)\times \grLag(v^2,w^2)$. The set of $\grVb(v^1,w^1;v^2,w^2)$ for various $v^1$,
$v^2$, gives a stratification of
$\grLag(v^1+v^2,w^1+w^2)$, \cf \cite[6.12]{Nakajima01b}. Using the additivity and
multiplicativity of $p_t(\ )$, we deduce
\begin{align*}
&  \pbwTarg(w^2)*\pbwTarg(w^1)\\
&=\sum_{v^1,v^2}p_t(\grLag(v^1,w^1))\times
  p_t(\grLag(v^2,w^2))t^{-d(m^1,m^1)-d(m^2,m^2)}m^2*m^1\\
&=\sum_{v^1,v^2}p_t(\grLag(v^1,w^1)\times
  \grLag(v^2,w^2))t^{-d(m^1,m^1)-d(m^2,m^2)}\\&\qquad 
t^{\dT(m^1,m^2)-\dT(m^2,m^1)}m^1m^2\\
&=\sum_{v^1,v^2}p_t(\grVb(v^1,w^1;v^2,w^2))t^{-2d(m^1,m^2)}t^{-d(m^1,m^1)-d(m^2,m^2)}\\&\qquad 
t^{\eMatrix(w^1,w^2)+d(m^1,m^2)-d(m^2,m^1)}m^1m^2\\
&=\sum_{v^1,v^2}p_t(\grVb(v^1,w^1;v^2,w^2))t^{\eMatrix(w^1,w^2)}t^{-d(m^1m^2,m^1m^2)}m^1m^2\\
&=\RHS.
\end{align*}
\end{proof}

In order to study quantum cluster algebras, we
also need the following ring of formal power series
  \begin{align}
    \redTargSpace=\tBase[[Y_i(a)^\pm]]_{i\in \vtx, a\in \Z},
  \end{align}
  where $t$, $Y_i(a)$ are indeterminates. We often omit its usual
  product $\cdot$. We associate the monomial $m(v,w)=Y^{w-C_q v}$ to
  any given pair $(v,w)$. Then similar to the case of $\targSpace$, we endow $\redTargSpace$ with a bar involution $\bar{(\ )}$ and a twisted product $*$ arising
  from the bilinear form $\dT$.

Define the $\tBase$-linear map $\contr$ from $\targSpace$ to
$\redTargSpace$ \st for any $(v,w)$, it sends any monomial
$m(v,w)\in\targSpace$ to the monomial $m(v,w)=Y^{w-C_q
  v}\in\redTargSpace$. Then it is a ring homomorphism. Define the $t$-analogue of the
$q$-character map to be the $\tBase$-linear map $\tChar(\ )$ from
  $\quotKGp$ to $\redTargSpace$ given by
  \begin{align}
    \label{eq:redQtChar}
    \tChar(\ )=\sum_v \langle\  ,\pi(v,w)\rangle Y^{w-C_qv}.
  \end{align}

\begin{Thm}\label{thm:injectiveHom}
1) $\qtChar(\ )$ is injective.

2) $\tChar(\ )$ is an injective algebra homomorphism from $\quotKGp$ to $\redTargSpace$.
\end{Thm}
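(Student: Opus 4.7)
The plan is as follows. For part 1, observe that the coefficient of $W^w V^v$ in $\qtChar(f)$ equals the pairing $\langle f, \pi(v,w) \rangle$. By Theorem \ref{thm:decomposition} combined with the upper uni-triangular relation between $\{\pi(v,w)\}$ and $\{IC(v,w)\}$, the family $\{\pi(v,w) : (v,w) \text{ is } l\text{-dominant}\}$ forms a $\tBase$-basis of $\KGp_w$. Since the pairing between $\dualKGp_w$ and $\KGp_w$ is non-degenerate, the vanishing of $\langle f, \pi(v,w) \rangle$ for all $l$-dominant $(v,w)$ forces $f = 0$.

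For part 2, I first verify that $\tChar$ is an algebra homomorphism. The product on $\quotKGp$ is induced by $\res^w = \bigoplus_{w^1 + w^2 = w} \tRes^w_{w^1, w^2}[-\eMatrix(w^1, w^2)]$, and the explicit formula for $\tRes^w_{w^1, w^2}(\pi(v, w))$ given in Section \ref{sec:perverseSheaves} yields, by dualization,
\begin{equation*}
\langle (M_1 \otimes M_2)_w, \pi(v, w) \rangle = \sum_{\substack{w^1 + w^2 = w \\ v^1 + v^2 = v}} t^{\Delta}\, \langle M_{1, w^1}, \pi(v^1, w^1) \rangle\, \langle M_{2, w^2}, \pi(v^2, w^2) \rangle,
\end{equation*}
where $\Delta = d((v^2, w^2), (v^1, w^1)) - d((v^1, w^1), (v^2, w^2)) - \eMatrix(w^1, w^2)$. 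Expanding $\tChar(M_1) * \tChar(M_2)$ using the twisted product of $\redTargSpace$, it suffices to match $\Delta$ with $-\dT(m^1, m^2) + \dT(m^2, m^1)$. This follows directly from $\dT = d + \dTW$ together with $\eMatrix(w^1, w^2) = \dTW(w^1, w^2) - \dTW(w^2, w^1)$ (and the symmetry of $d$ via Lemma \ref{eq:chopTwistDimension}).

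For the injectivity of $\tChar$, I use the basis $\{M(w)\}$ of $\quotKGp$. Since $M(w)_w = \pbw(0, w)$ and $\grLag(0, w)$ reduces to a point,
\begin{equation*}
\tChar(M(w)) = Y^w + \sum_{\substack{v > 0 \\ (v, w) \text{ l-dom}}} P_t(\grLag(v, w))\, t^{-d((v, w), (v, w))}\, Y^{w - C_q v},
\end{equation*}
so $Y^w$ is the leading term (with coefficient $1$) with respect to the dominance order on exponents (where $\mu \leq w$ iff $w - \mu \in C_q \N^{I \times (\Z + \Hf)}$), while every other monomial in $\tChar(M(w))$ has strictly smaller exponent. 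Given a nonzero $f = \sum c_w M(w)$, pick $w_0$ maximal in the support of $f$; any $M(w')$ with $w' > w_0$ vanishes from the sum by maximality, while for $w' \neq w_0$ with $w' \leq w_0$ or incomparable, the exponents appearing in $\tChar(M(w'))$ are dominated by $w'$, hence cannot equal $w_0$. Thus the coefficient of $Y^{w_0}$ in $\tChar(f)$ is exactly $c_{w_0} \neq 0$, so $\tChar(f) \neq 0$.

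The main obstacle will be the bookkeeping of the twisted-product identity in the algebra-homomorphism step, in particular tracking the three distinct contributions $d$, $\eMatrix$, and $\dTW$ to the shift and showing they conspire via Lemma \ref{eq:chopTwistDimension}. A secondary subtlety is ensuring that a maximal element of the support exists in the injectivity argument; for finite supports this is automatic, and for general elements of $\quotKGp$ one can either restrict via a weight filtration or argue iteratively using the Noetherian-type property that for fixed $w_0$, only finitely many $w \leq w_0$ occur.
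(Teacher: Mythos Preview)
Your proposal is correct and spells out precisely the standard arguments that the paper invokes by citation to \cite{VaragnoloVasserot03} and \cite[Theorem~3.5]{Nakajima04} rather than reproducing. One minor imprecision: in your expression for $\tChar(M(w))$ the sum over $v$ should run over \emph{all} $v$ (not only $l$-dominant ones), as in Definition~\ref{def:pbwTarg}; this does not affect your leading-term argument, since $\grLag(0,w)$ is a point and for every $v\neq 0$ the exponent $w-C_q v$ is strictly below $w$ in the dominance order.
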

\begin{proof}
  The statements follow from the same arguments in\cite{VaragnoloVasserot03} and \cite[Theorem 3.5]{Nakajima04}.
\end{proof}

\section{Results on generic characters}\label{sec:pseudoModules}

From this section, let us restrict to level $1$ case. Namely, we always
assume $w\in\N^{I\times\{-1,0\}}$. Furthermore, we consider only those
$v$ contained in $\N^{I\times\{-\Hf\}}\simeq \N^I$. By putting these restrictions on $w$, $v$
appearing in $\qtChar$
and $\tChar$, we arrive at the \emph{truncated character} maps $\qtChar\trunc$
and $\tChar\trunc$. Proposition
  \ref{prop:mult_pbwTarg} and Theorem \ref{thm:injectiveHom} are still
  valid after truncations, \cf \cite[Proposition 6.1]{HernandezLeclerc09}.

By default, we consider $Q\op$-representations, and the ice quiver $\tQ$ is of level $1$ with $z$-pattern.


For any $m=(m_i)_{i\in I}\in\N^I$, we define $I^m=\oplus_{i\in I} I_i^{\oplus
  m_i}$ (resp. $P^m=\oplus_{i\in I} P_i^{\oplus
  m_i}$) to be the corresponding injective (resp. projective) $Q\op$-representation.



  For any $w,w'\in\N^{I\times\{-1,0\}}$, we define the integer
  $r_{ww'}$ to be given as in \cite[3.2.1]{KimuraQin11}, which are
  determined by Fourier-Deligne-Sato transformations. Then we have 
  \begin{itemize}
  \item $r_{w,w}=1$,
\item for any given $w$, only finitely
    many of the $r_{w,w'}$ are nonzero, 
\item the matrix $(r_{w,w'})$ is
    upper unitriangular with respect to the dominance order.
  \end{itemize}

We define the \emph{almost simple pseudo-module} $\gen(w)$ to be the
element in the Grothendieck group $\quotKGp$ \st we have
\begin{align}
  \gen(w)=\sum_{w'}r_{w,w'}\can(w').
\end{align}

Denote the truncated $qt$-characters $\tChar\trunc(\gen(w))$ by
$\genRedTarg(w)$.

The following is the main theorem of the first part of \cite{KimuraQin11}.

\begin{Thm}[{\cite{KimuraQin11}}]\label{thm:genChar} \label{thm:z_coeff}
  The truncated $qt$-characters of the almost simple pseudo-modules in
  $\redTargSpace$ are given by
  \begin{align}\label{eq:genChar}
    \genRedTarg(w)=\sum_{v}t^{-\dim (\Gr_v \kerMod W)}P_t(\Gr_v \kerMod
    W)Y^{w-C_q v},
  \end{align}
where $\Gr_v \kerMod W$ is the quiver Grassmannian determined by $Q$,
$v$, $w$.
\end{Thm}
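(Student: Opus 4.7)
My plan is to reduce the theorem to a statement about Fourier--Deligne--Sato transformations on representation varieties, then extract the character formula from the geometry of quiver Grassmannians.

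First, I would unwind the definition: by construction $\gen(w)=\sum_{w'}r_{w,w'}\can(w')$, and the coefficients $r_{w,w'}$ are the entries of the transition matrix implemented by the Fourier--Deligne transform $\cF$ between (shifted) IC sheaves on the graded quiver variety $\grAffQuot(w)$ and their counterparts on the dual side, namely on representation spaces of the quiver $Q$ where one can form the Lusztig-type nilpotent variety. The point of the definition is that $\gen(w)$ should pair under $\cF$ with a sheaf that has a transparent geometric character, so one expects
\[
\genRedTarg(w)=\sum_{v}t^{-\alpha(v,w)}P_t\bigl(\text{fibre of a natural map over a point of }\grRegStratum\bigr)Y^{w-C_q v}
\]
where, on the Fourier-dual side, the ``fibre'' is precisely the quiver Grassmannian $\Gr_v\kerMod W$ of $v$-dimensional subrepresentations of the $Q\op$-module $\kerMod W$ determined by $w$.

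Second, I would carry out the character computation in two steps. (a) For the dual PBW elements we already have the formula
\[
\pbwTarg(w)=\sum_v t^{-d((v,w),(v,w))}P_t(\grLag(v,w))Y^{w-C_q v}
\]
from Definition \ref{def:pbwTarg}. (b) Replacing the IC basis $\can(w')$ by $\gen(w)$ via the $r_{w,w'}$ amounts, at the level of virtual Poincar\'e polynomials (which are additive and multiplicative), to replacing the Lagrangian fibres $\grLag(v',w)$ by the strata appearing on the Fourier-dual side. A standard stratification argument -- stratifying the dual representation space by isomorphism type of the cokernel of a generic map built from $(\alpha,\beta,B)$, and identifying each stratum with a quiver Grassmannian bundle over the base -- shows that the resulting generating series collects exactly $P_t(\Gr_v\kerMod W)$. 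The degree shift $t^{-\dim\Gr_v\kerMod W}$ is then forced by bar-invariance of $\genRedTarg(w)$ together with the leading term normalization $\gen(w)=\can(w)+\cdots$.

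The main obstacle is the geometric identification in step (b): one must show that the Fourier--Deligne transform on the category $\cQ_w$ really does intertwine the Lagrangian fibre geometry of Nakajima's graded quiver varieties with the Grassmannian geometry on $\kerMod W$, including the correct cohomological grading shifts. This requires (i) building the correct ``kernel module'' $\kerMod W$ from the framed data $W$, (ii) checking that the truncation $\trunc$ (restricting to $w\in\N^{I\times\{-1,0\}}$ and $v\in\N^{I\times\{-\Hf\}}$) is compatible with the Fourier transform, so that only the Grassmannian contributions survive, and (iii) reconciling the shift $t^{-\dim\Gr_v\kerMod W}$ with the conventions $t^{-d((v,w),(v,w))}$ appearing in $\pbwTarg$ via Lemma \ref{lem:dForm} and Lemma \ref{eq:chopTwistDimension}. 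Once these geometric inputs from \cite{KimuraQin11} are in place, formula \eqref{eq:genChar} follows by a direct comparison of generating series, using the injectivity of $\tChar\trunc$ (Theorem \ref{thm:injectiveHom}) to promote the equality of characters to an equality in $\quotKGp$.
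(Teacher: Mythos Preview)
The paper does not supply a proof of this theorem; it is quoted as ``the main theorem of the first part of \cite{KimuraQin11}'' and used as a black box. There is therefore no argument in the present paper to compare your sketch against.

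Your outline correctly identifies the Fourier--Deligne--Sato transform as the governing mechanism --- this matches the paper's own remark that the integers $r_{w,w'}$ are ``determined by Fourier--Deligne--Sato transformations'' in \cite[3.2.1]{KimuraQin11}, so at the level of strategy you are aligned with the cited source. However, your proposal is a roadmap rather than a proof. You explicitly defer the three substantive points --- the construction of the kernel module $\kerMod W$ from the framed data, the compatibility of the truncation $\trunc$ with the Fourier transform, and the reconciliation of the shift $t^{-\dim\Gr_v\kerMod W}$ with $t^{-d((v,w),(v,w))}$ --- to ``geometric inputs from \cite{KimuraQin11}''. This is circular: the theorem you are trying to prove \emph{is} that result. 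The phrase ``a standard stratification argument'' in step (b) is exactly where the content of \cite{KimuraQin11} lives, and asserting it is not the same as carrying it out. In short, you have correctly located the proof but not given one.
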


\begin{Def}\label{def:pureCoeff}
  For any given $w\in\N^{I\times\{-1,0\}}$, its \emph{pure
    coefficient part} $\pureCoeff w$ is the maximal element in $
  span_{\N}\{e_{i,-1}+e_{i,0}|i\in I\}$  such that $w-\pureCoeff
  w\in\N^{I\times \{-1,0\}}$. We denote the difference $w-\pureCoeff w$
  by $\coeffFree w$ and call it the \emph{coefficient-free part} of
  $w$. Let $\redWSet$ denote the set $\{w|\pureCoeff w=0\}$.
\end{Def}

The truncated characters of almost simple pseudo-modules,
 which are also called the \emph{generic characters}, have the following properties.

\begin{Prop}[{\cite{KimuraQin11}}]\label{prop:clusterMonomial}
1) If $\genRedTarg(w)$ is a quantum cluster monomial, then we have $\gen(w)=\can(w)$.

2) In the notation of Definition \ref{def:pureCoeff}, we have a factorization
  \begin{align}\label{eq:coeffFactorization}
    \genRedTarg(w)=\genRedTarg(\coeffFree w)\cdot\genRedTarg(\pureCoeff w).
  \end{align}
\end{Prop}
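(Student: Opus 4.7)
The plan is to prove both parts by working with the explicit generic character formula \eqref{eq:genChar} and the injectivity of the truncated character map $\tChar\trunc$ from Theorem~\ref{thm:injectiveHom}.

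For Part~(2), I would first analyze $\genRedTarg(\pureCoeff w)$ for $\pureCoeff w = \sum_{i} n_i(e_{i,-1}+e_{i,0})$. The module $\kerMod W$ associated with the pure-coefficient part should vanish (its defining kernel is zero because injective and shifted projective pieces cancel in the Fourier-Deligne-Sato picture), so the quiver Grassmannian $\Gr_v \kerMod W$ is a single point exactly when $v = 0$, yielding $\genRedTarg(\pureCoeff w) = Y^{\pureCoeff w}$. Next, I would establish that $\kerMod$ is compatible with the splitting $w = \coeffFree w + \pureCoeff w$, i.e.\ $\kerMod W_{w} \cong \kerMod W_{\coeffFree w}$ as $Q$-representations, so that the quiver Grassmannians appearing in \eqref{eq:genChar} for $w$ and $\coeffFree w$ coincide. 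Combined with the monomial identity $Y^{w - C_q v} = Y^{\coeffFree w - C_q v} \cdot Y^{\pureCoeff w}$ (immediate from the linearity of $C_q$) and the observation via Lemma~\ref{lem:epsilon} that the $t$-twist between $\coeffFree w - C_q v$ and $\pureCoeff w$ vanishes (because $\eMatrix(\,\cdot\,, e_{i,-1} + e_{i,0})$ cancels as $p_{ij} - p_{ji}$ contributions match), the factorization follows.

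For Part~(1), I would use the characterization of $\can(w)$ by bar-invariance together with a leading-term condition. Writing $\gen(w) = \sum_{w'} r_{w,w'}\can(w')$ with $r_{w,w} = 1$ and $(r_{w,w'})$ upper unitriangular with respect to the dominance order, it suffices to show $r_{w,w'} = 0$ whenever $w' \neq w$. Each $\tChar\trunc(\can(w'))$ admits a unique dominant monomial $Y^{w'}$ (by the nature of the IC expansion \eqref{eq:canToPbw}). If $\genRedTarg(w)$ is a quantum cluster monomial, then by the quantum Laurent phenomenon together with the $g$-vector parametrization (\cf Theorem~\ref{thm:normalizedQCCFormula}) it has a unique dominant monomial, necessarily $Y^w$; this dominance is incompatible with any nonzero contribution from $\can(w')$ with $w' < w$ once positivity of cluster-monomial expansions in the basis $\{\can(w')\}$ (which holds in the acyclic case via Theorem~\ref{thm:otimes} and the monoidal pseudo-categorification) is invoked, forcing $r_{w,w'} = 0$.

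The main obstacle is controlling the subtle interaction between the $\kerMod$ construction (defined via a Fourier-Deligne-Sato transform) and the decomposition $w = \coeffFree w + \pureCoeff w$: one must verify that this transform respects the direct-sum splitting, which is essentially the statement that the frozen cluster variables decouple from the mutable ones at the character level. For Part~(1), the delicate point is that the integers $r_{w,w'}$ are not a priori nonnegative, so one cannot invoke positivity of cluster monomials directly; instead one must combine the upper unitriangular structure of $(r_{w,w'})$ with the bar-invariance of $\genRedTarg(w)$ and the known positivity of cluster monomials in $\{\can(w')\}$ to extract the required cancellation.
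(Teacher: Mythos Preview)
The paper does not give its own proof of this proposition: it is stated in Section~\ref{sec:pseudoModules} as a result imported from \cite{KimuraQin11}, so there is no in-paper argument to compare against. I will therefore assess your proposal on its own merits.

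For Part~(2), your strategy is sound but one step is both unnecessary and incorrect. The factorization \eqref{eq:coeffFactorization} is stated for the \emph{commutative} product~$\cdot$ on $\redTargSpace$, not the twisted product~$*$, so no twist cancellation is needed. Your claim that $\eMatrix(\,\cdot\,,e_{i,-1}+e_{i,0})$ vanishes is in fact false: from Lemma~\ref{lem:epsilon} one computes for instance $\eMatrix(e_{j,0},e_{i,-1}+e_{i,0})=-p_{ij}$, which need not be zero. Once you drop that step, the remaining ingredients (that $\kerMod W_{\pureCoeff w}=0$ because the generic map $I^{n}\to I^{n}$ is an isomorphism, and that $\kerMod W_w\cong\kerMod W_{\coeffFree w}$) give the factorization directly via the formula \eqref{eq:genChar}.

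For Part~(1), there is a genuine gap. Your positivity argument does not close: knowing that a cluster monomial expands with nonnegative coefficients in $\{\canRedTarg(w')\}$ and has a unique dominant monomial does \emph{not} by itself force the expansion to be a single term. The missing ingredient is the stronger statement, established in \cite{KimuraQin11} as the core of the monoidal categorification, that every quantum cluster monomial \emph{equals} some $\canRedTarg(w'')$. Granting this, the proof is immediate: if $\genRedTarg(w)=\canRedTarg(w'')$, comparison of dominant monomials gives $w''=w$, and then $\tChar\trunc(\gen(w)-\can(w))=0$ together with the injectivity of $\tChar\trunc$ (Theorem~\ref{thm:injectiveHom}, which survives truncation) yields $\gen(w)=\can(w)$. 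Your attempt to bypass this via bar-invariance and positivity of the $r_{w,w'}$ is circular, since the $r_{w,w'}$ are not known a priori to be nonnegative and bar-invariance of $\genRedTarg(w)$ alone does not pin it down among the $\can(w')$.
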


\section{From characters to the quantum torus}\label{sec:failure}
In this section, we compare the ring of formal
power series $\targSpace$ in which $qt$-characters live with the
quantum tori $\dTorus$ and $\torus$ in which quantum cluster algebras
live. Although our construction fails to preserve some properties of the products of $qt$-characters, we find that the failures are bearable and controllable. This observation will play a crucial role in later sections.

\subsection{Dual PBW basis elements}

As in Section \ref{sec:frozenQuiver}, let $\tQ$ be an ice quiver
whose principal part $Q$ is acyclic. We construct the cluster category
$\cC_{(\tQ,\tW)}$ (denoted by $\cC_\tQ$ for simplicity), whose shift
functor will be denoted by $[1]$.


Notice that we have the indecomposable 
objects $M_i(t)$, $1\leq i\leq m$, $t\in \sT_n$, in the cluster
category $\cC_\tQ$,
corresponding to the $x$-variables $x_i(t)$, \cf
\cite{Plamondon10b}. We denote the object $M_i(t_0)$ by $T_i$. By
applying a sequence of mutations to $t_0$ corresponding to a sink sequence of the
principal part $Q$, we obtain some vertex $\Sigma t_0\in\sT_n$ together with the new
objects $ M_i(\Sigma t_0)$, $1\leq i\leq n$, which we shall denote by
$I_i$. For any $i\in I$, the object $I_i$ is characterized by the
property that the right $\C \tQ$-module $\Ext^1_{\cC_\tQ}(\oplus_{1\leq j\leq m}T_j,I_i)$ is
supported at the vertices of $Q$ and it can be viewed as the $i$-th
injective right module of $\C Q$ (which is also denoted by $I_i$). 

Following the convention of section \ref{sec:normalization}, for any
$M$ in the cluster category which appears in a triangle 
  \begin{align*}
    T^m\ra T^{m'}\ra M \ra T^m[1]
  \end{align*}
for some for some $m,m'\in\N^m$, its index $\ind M$ is $m'-m\in\Z^m$, \cf \cite{Plamondon10b}.

By abuse of notation, we use
$\ind(\ )$ to denote the map from $\N^{I\times \{-1,0\}}$ to $\Z^n\oplus
\N^{m-n}$ which sends $w=(w_{i,a})\in\N^{I\times \{-1,0\}}$ to
$\ind(\oplus_i I_i^{w_{i,-1}})+\ind(\oplus_i T_i^{w_{i,0}})$.

When the ice quiver $\tQ$ is of level $1$ with $z$-pattern in the sense of \cite{KimuraQin11}, the
index map $\ind(\ )$ is denoted by $\zInd(\ )$ and the compatible pair
by $(\ztB,\zLambda)$. In this section, we shall see how the variation of the
coefficients attached to the principal part $Q$ affects our computation.

\begin{Lem}\label{lem:zPatternInd}
  We have, for $1\leq k\leq n$,
  \begin{align}
    \zInd(e_{k,0})=e_k,\\
\zInd(e_{k,-1})=e_{k+n}-e_k.
  \end{align}
\end{Lem}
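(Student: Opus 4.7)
The plan is to read off $\zInd$ directly from the definition, by exhibiting in $\cC_\tQ$ triangles of the required form $I^m[-1] \to I^{m'}[-1] \to M \to I^m$ and computing the associated $g$-vector as $m' - m$.

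For the first identity, the trivial triangle
\[ 0 \to T_k \xrightarrow{\id} T_k \to 0 \]
matches this form with $m=0$, $m'=e_k$, $M=T_k$ (using $T_k = I_k[-1] = I^{e_k}[-1]$), hence $\zInd(e_{k,0}) = \ind T_k = e_k$ at once.

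For the second identity, the plan is to exhibit the triangle
\[ T_k \to T_{k+n} \to T_k[-1] \to T_k[1] \]
in $\cC_\tQ$. Since $T_i = I_i[-1]$ and $T_k[1] = I_k = I^{e_k}$, this matches the required form with $m=e_k$ and $m'=e_{k+n}$, yielding $\zInd(e_{k,-1}) = m' - m = e_{k+n} - e_k$. To construct the triangle, I would exploit the defining feature of the level-$1$ $z$-pattern: at the mutable vertex $k$ there is a unique arrow $k \to k+n$ to a frozen vertex, and this arrow corresponds to a distinguished morphism $I_k \to I_{k+n}$ in the module category of the Jacobian algebra of $(\tQ, W)$ from \cite{KimuraQin11}. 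A standard analysis of this morphism produces a short exact sequence $0 \to S_k \to I_k \to I_{k+n} \to 0$, where $S_k$ is the simple module at $k$. Shifting by $[-1]$ and passing to $\cC_\tQ$ converts this to a triangle of the form $S_k[-1] \to T_k \to T_{k+n} \to S_k$. Finally, using the cluster-category periodicity $\tau_{\cC_\tQ} = [1]$ together with the Nakayama-Auslander-Reiten relation $\tau S_k = I_k[-1] = T_k$ valid for the simple at the mutable vertex $k$, one identifies $S_k = T_k[-1]$ in $\cC_\tQ$, bringing the triangle into the stated form.

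The main obstacle will be to verify rigorously both the short exact sequence $0 \to S_k \to I_k \to I_{k+n} \to 0$ and the identification $S_k = T_k[-1]$ in $\cC_\tQ$; both rely on the specific construction of the $z$-pattern quiver with its potential in \cite{KimuraQin11}. Alternatively, the module-category calculation can be bypassed by working intrinsically in $\cC_\tQ$: it suffices to check that the cone of a distinguished morphism $T_k \to T_{k+n}$ (corresponding to the unique arrow $k\to k+n$ in $\tQ$ under the identification $\End_{\cC_\tQ}(T) \cong$ path algebra of $\tQ$ modulo relations) equals $T_k[-1]$. This can be confirmed either via a direct $\Hom$-space computation using the $2$-Calabi-Yau structure, or by appealing to Plamondon's $g$-vector formulas describing how the index transforms under the AR-translation $[-1]$.
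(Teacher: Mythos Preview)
Your overall strategy matches the paper's: both use the triangle $T_k \xrightarrow{f} T_{k+n} \to M \to \Sigma T_k$ with $f \neq 0$ and read off $\zInd(e_{k,-1})$ from it as $e_{k+n}-e_k$. The difference lies in how the cone $M$ is identified with the object corresponding to $e_{k,-1}$.

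The paper does not attempt to compute $M$ via a short exact sequence in the Jacobian algebra. Instead it observes that, for the $z$-pattern, any nonzero $f:T_k\to T_{k+n}$ is automatically a \emph{left $\add(T_{n+1},\ldots,T_{2n})$-approximation} of $T_k$; Iyama--Yoshino mutation \cite{IyamaYoshino08} then identifies $M$ as the ``coefficient-free'' mutated object, i.e.\ the lift to $\cC_\tQ$ of the shifted object in the coefficient-free cluster category $\cC_Q$, which is exactly the object indexed by $e_{k,-1}$. This bypasses any explicit module computation.

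Your proposed Method~1 is more fragile. The short exact sequence $0\to S_k\to I_k\to I_{k+n}\to 0$ in the Jacobian algebra generally fails: in the $z$-pattern one has $(B_\tQ)_{k+n,j}=[b_{jk}]_+$ for $j\neq k$, so whenever there is an arrow $j\to k$ in $Q$ there are additional arrows $j\to k+n$ in $\tQ$, and $I_k/S_k$ need not coincide with $I_{k+n}$. Likewise the asserted identity $\tau S_k = I_k[-1]$ is not a standard Auslander--Reiten relation in the generalized cluster category $\cC_\tQ$ and would itself require a nontrivial argument depending on the potential. You correctly flag these as obstacles, but they are not merely technical: the SES claim is typically false as stated. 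The approximation argument in the paper avoids both issues in one stroke, since it only needs the one-dimensionality of $\Hom_{\cC_\tQ}(T_k,\oplus_i T_{n+i})$, which is immediate from the shape of the $z$-pattern.
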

\begin{proof}
The first equation is clear. 

In order to calculate the indices, we can choose the potential $\tW$ to
be the sum of the $3$-cycles in the ice quiver $\tQ$ (in the sense of
\cite{DerksenWeymanZelevinsky07}, $\tW$ is a ``nondegenerate'' potential for the ice
  quiver $\tQ$, \cf \cite{BuanIyamaReitenSmith08}).

The index of $I_i$ can be checked directly by applying the mutation
rules to the $g$-vectors along the mutation sequence from $t_0$ to $\Sigma
t_0$. Alternatively, we give a proof using cluster categories as the
follows.

Consider a triangle
\begin{align*}
  T_k\xra{f} T_{k+n}\ra M\ra T_k[1]
\end{align*}
in $\cC_{\tQ}$, where $f$ is non-zero. 

We first show that $M$ satisfies
$\Ext^1_{\cC_\tQ}(M,T_{j+n})=0$, $\forall 1\leq j\leq n$. Applying $\Hom_{\cC_{\tQ}}(\
,T_{j+n})$ to the above triangle, we obtain a long exact sequence
\begin{align*}
  \Hom_{\cC_{\tQ}}(T_{k+n},T_{j+n})\xra{f^*}  \Hom_{\cC_{\tQ}}(T_k,T_{j+n})\ra
  \Hom_{\cC_{\tQ}}(M,T_{j+n}[1])\ra \Hom_{\cC_{\tQ}}(T_{k+n},T_{j+n}[1]).
\end{align*}
The last term is zero since $\oplus_{1\leq i\leq 2n}T_i$ is
rigid. With our choice of the potential, it is easy to check that the map $f^*$ is surjective (in other words, $f$ is a left approximation of $T_k$ in
$\add (T_{n+i},1\leq i\leq n)$ in the sense of
\cite{IyamaYoshino08}). Therefore, we have checked 
$\Ext^1_{\cC_\tQ}(M,T_{j+n})=\Ext^1_{\cC_\tQ}(T_{j+n},M)=0$. 

The
Calabi-Yau reduction (\cf \cite{IyamaYoshino08}) from $\cC_{\tQ}$ to the cluster category $\cC_Q$
of $Q$ sends $M$ to the object with index $-e_k$. It follows that $\Ext^1_{\cC_\tQ}(\oplus_{1\leq i\leq 2n}T_i,M)$ is the
$i$-th injective right module of $\C Q$ and $M$ is isomorphic to the object
$I_i=M_i(\Sigma t_0)$. So we have $\zInd(e_{k,-1})=\ind M=e_{k+n}-e_k$.
\end{proof}


Let $\pr_n$ denote the projection of $\Z^m$ (respectively $\Z^{2n}$)
to the first $n$ coordinates. Then $\pr_n\ind$ is independent of the
choice of the coefficient $\tQ-Q$.

\begin{Lem}\label{lem:zReduction}
We have
\begin{align}
  \zInd(w-C_qv)=\zInd(w)+\ztB v.
\end{align}
\end{Lem}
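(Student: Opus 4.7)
The plan is to reduce the identity to a linear-algebra check on basis vectors. First I would observe that $\zInd$ is defined by Lemma~\ref{lem:zPatternInd} only on $\N^{I\times\{-1,0\}}$, but since the values $\zInd(e_{k,0})=e_k$ and $\zInd(e_{k,-1})=e_{k+n}-e_k$ are $\Z$-linearly independent of each other, there is a unique $\Z$-linear extension $\zInd\colon\Z^{I\times\{-1,0\}}\to\Z^{2n}$ which I take as the convention. Before proceeding I would check that $C_q v$ actually lives in $\Z^{I\times\{-1,0\}}$ for $v\in\N^{I\times\{-\Hf\}}$: this is immediate from the definition of $C_q$, since $v$ is supported at degree $-\Hf$ and $C_q$ shifts by $\pm\Hf$.

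With the extension in place, both sides of $\zInd(w-C_qv)=\zInd(w)+\ztB v$ are $\Z$-linear in each of $w$ and $v$, and the $\zInd(w)$ pieces cancel. Hence it suffices to prove $\zInd(C_q e_{l,-\Hf})=-\ztB e_l$ for each $l\in I$. I would expand $C_q e_{l,-\Hf}$ using the defining formula of $C_q$: its only nonzero components are
\begin{align*}
(C_q e_{l,-\Hf})_{k,0}&=\delta_{kl}-b_{kl}\cdot[k<l],\\
(C_q e_{l,-\Hf})_{k,-1}&=\delta_{kl}-b_{lk}\cdot[k>l],
\end{align*}
where $[\,\cdot\,]$ denotes the Iverson bracket.

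Applying the linear extension of $\zInd$ gives first-half coordinate
$(C_q e_{l,-\Hf})_{k,0}-(C_q e_{l,-\Hf})_{k,-1}$ and second-half coordinate $(C_q e_{l,-\Hf})_{k,-1}$. The first-half simplifies, using $b_{kl}=-b_{lk}$ and $b_{ll}=0$, to $-b_{kl}$, which matches $-(\ztB e_l)_k=-b_{kl}$. For the second half, the labeling convention (no arrows $i\to j$ when $i\geq j$) gives $b_{lk}\geq 0$ when $k>l$, $b_{lk}\leq 0$ when $k<l$, so $b_{lk}\cdot[k>l]=[b_{lk}]_+$ in all cases. Therefore the $(k+n)$-th coordinate is $\delta_{kl}-[b_{lk}]_+$, which equals $-(\ztB e_l)_{k+n}$ by the formula for $\ztB$ recorded in Lemma~\ref{lem:epsilon}(2).

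I do not expect any conceptual obstacle: this is purely a bookkeeping exercise comparing the action of $C_q$ on the standard generators with the entries of $\ztB$. The only point requiring mild care is the sign/bracket manipulation in the second-half coordinate, where the acyclic labeling convention has to be invoked to turn $b_{lk}\cdot[k>l]$ into $[b_{lk}]_+$.
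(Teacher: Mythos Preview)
Your proof is correct and follows essentially the same approach as the paper: both reduce by linearity to the case $w=0$, $v=e_{l,-\Hf}$ and verify $\zInd(-C_q e_{l,-\Hf})=\ztB e_l$ by direct expansion. The only difference is cosmetic---you work coordinate-by-coordinate with Iverson brackets while the paper computes the vector sum directly---and your explicit remark about the linear extension of $\zInd$ and the support check for $C_q v$ are reasonable points the paper leaves implicit.
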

\begin{proof}
It suffices to check the case of $w=0$ and $v=e_{k,-\Hf}$, $1\leq k\leq
n$. We have
\begin{align*}
  \zInd(-C_qv)&=-\zInd(e_{k,-1}+e_{k,0})+\sum_{i:1\leq i<k}b_{ik}\zInd(e_{i,0})\\& \qquad+\sum_{j: k< j\leq
    n}b_{kj}\zInd(e_{j,-1})\\
&=-e_{k+n}+\sum_{1\leq i<k}b_{ik}e_i+\sum_{k<j\leq n}b_{kj}(e_{j+n}-e_j)\\
&=b_{k+n,k}e_{k+n}+\sum_{1\leq i<k}b_{ik}e_i+\sum_{k<j\leq
  n}b_{j+n,k}e_{j+n}+\sum_{k<j\leq n}b_{jk}e_j\\
&=\ztB e_k.
\end{align*}
\end{proof}

Let $\delta$ be a strictly positive integer. Denote the product of $\delta$ with the rank $m$ identity matrix $I_m$ by $D$. Let
$(\tB,\Lambda)$ be a compatible pair such that $\Lambda(-\tB)=\begin{bmatrix}D\\0 \end{bmatrix}$. 

Define $N=max\{2n,m\}$. Extend $\Lambda$ into an $N\times N$ matrix by putting zero entries
in the extra rows and columns indexed by either $\{m+1,\ldots,N\}$ or
$\{n+1,\ldots,N\}$. We extend the $m\times
n$ matrix $\tB$ and the $2n\times n$ matrix $\ztB$ to $N\times N$
matrices similarly. Notice that
$\Lambda(e_{i},\tB v)=0$, for any $n+1\leq i\leq N$ and
$v\in\N^n$.

\begin{Def}
  The \emph{double quantum torus} is the Laurent polynomial ring
  \begin{align}
    \label{eq:dTorus}
    \dTorus=\qBase[x_1^\pm,\ldots,x_N^\pm,y_1^\pm,\ldots,y_n^\pm],
  \end{align}
together with the twisted product $*$ \st for any $g^1$, $g^2\in
\Z^N$, $v^1$, $v^2\in \Z^n$, we have
\begin{align*}
  x^{g^1}y^{v^1}* x^{g^2}y^{v^2}=q^{\Hf \Lambda(g^1+\tB v^1,g^2+\tB v^2)}x^{g^1+g^2}y^{v^1+v^2}.
\end{align*}
It has the bar involution $\overline{(\ )}$ given by $\overline{(q^{\Hf}x^gy^v)}=q^{-\Hf}x^gy^v$.
\end{Def}

Define the quantum torus $\cT$ to be the $\qBase$-subalgebra of $\dTorus(+,*)$
generated by $x_{1}^\pm$, \ldots, $x_{N}^\pm$. Recall that the coefficient ring is $\qBaseCoeff=\qBase[x_{n+1}^\pm,\ldots,x_{m}^\pm]$. Let $\ZCoeff$ denote its semi-classical limit
under the specialization $q^\Hf\mapsto 1$.

Next, we construct a map from $\targSpace$ to $\dTorus$.

\begin{Def}[Correspondence map]\label{def:correction_map}
The $\Z$-linear map $\cor$ from $\targSpace$ to $\dTorus$ is given by
  \begin{align}
    \label{eq:cor}
    \cor(t^{\lambda}W^wV^v)=q^{\frac{\diag}{2}\lambda}x^{\ind(w)}y^v,
  \end{align}
for any $w$, $v$, and integer $\lambda$.
\end{Def}

Notice that $\cor$ commutes with the bar involutions. Although $\cor$ does \emph{not} commute
with twisted products
$*$, we can measure the failure as below.
\begin{Lem}[Failure \textbf{1}] We have, for any monomials $m^i=W^{w^i}V^{v^i}$, $i=1$, $2$,
  \begin{align}\label{eq:failureCor}
    \begin{split}
      \cor(W^{w^1}
      V^{v^1}*W^{w^2}V^{v^2})&=q^{\frac{\diag}{2}\zLambda(\zInd(w^1),\zInd(w^2))-\Hf\Lambda(\ind(w^1),\ind(w^2))}\\
      &\qquad \cor(W^{w^1}V^{v^1})*\cor(W^{w^2}V^{v^2}).
    \end{split}
  \end{align}
In particular, the $q$-power does not depend on $v^1$, $v^2$.
\end{Lem}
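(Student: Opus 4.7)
The plan is to expand both sides of \eqref{eq:failureCor} directly from the definitions of the twisted products and of $\cor$, and then to reduce the comparison of the resulting exponents to a small set of identities between the compatible pairs $(\Lambda,\tB)$ and $(\zLambda,\ztB)$. The key observation that will drive everything is that $\pr_n\ind(w)$ does not depend on the choice of coefficient pattern $\tQ-Q$, so one can translate freely between the two pairs on their ``principal'' columns.

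First I would unfold the left-hand side. The twisted product on $\targSpace$ gives
\[
W^{w^1}V^{v^1}*W^{w^2}V^{v^2}=t^{\dT((v^2,w^2),(v^1,w^1))-\dT((v^1,w^1),(v^2,w^2))}W^{w^1+w^2}V^{v^1+v^2},
\]
and applying $\cor$ (using the $\Z$-linearity of $\ind$) produces $x^{\ind(w^1)+\ind(w^2)}y^{v^1+v^2}$ times a scalar $q^{\frac{\diag}{2}\Delta}$. Then Lemma~\ref{eq:chopTwistDimension}, the definition of $\eMatrix$, part~(2) of Lemma~\ref{lem:epsilon} (which, extended by bilinearity, identifies $-\eMatrix(w,w')=\zLambda(\zInd(w),\zInd(w'))$), and the skew-symmetry of $\zLambda$ together give
\[
\Delta=\zLambda(\zInd(w^1-C_qv^1),\zInd(w^2-C_qv^2))=\zLambda(\zInd(w^1)+\ztB v^1,\zInd(w^2)+\ztB v^2),
\]
where the second equality uses Lemma~\ref{lem:zReduction}. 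Expanding the right-hand side of \eqref{eq:failureCor} via the definition of $*$ on $\dTorus$ gives a $q$-power with exponent
\[
\tfrac{\diag}{2}\zLambda(\zInd(w^1),\zInd(w^2))-\Hf\Lambda(\ind(w^1),\ind(w^2))+\Hf\Lambda(\ind(w^1)+\tB v^1,\ind(w^2)+\tB v^2).
\]
Expanding both bilinear forms $\zLambda(\zInd(w^1)+\ztB v^1,\zInd(w^2)+\ztB v^2)$ and $\Lambda(\ind(w^1)+\tB v^1,\ind(w^2)+\tB v^2)$ by bilinearity, the desired matching of the two exponents reduces to the three identities
\begin{align*}
\diag\cdot\zLambda(\zInd(w^1),\ztB v^2)&=\Lambda(\ind(w^1),\tB v^2),\\
\diag\cdot\zLambda(\ztB v^1,\zInd(w^2))&=\Lambda(\tB v^1,\ind(w^2)),\\
\diag\cdot\zLambda(\ztB v^1,\ztB v^2)&=\Lambda(\tB v^1,\tB v^2).
\end{align*}

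The step I expect to be the crux is verifying these three matching identities from the compatibility data, and this is where the coefficient-independence of $\pr_n\ind$ enters essentially. The relations $\zLambda(-\ztB)=\bigl[\begin{smallmatrix}I_n\\0\end{smallmatrix}\bigr]$ and $\Lambda(-\tB)=\bigl[\begin{smallmatrix}D\\0\end{smallmatrix}\bigr]$ with $D=\diag\cdot I_n$ yield $\zLambda(u,\ztB v)=-\pr_n u\cdot v$ and $\Lambda(u',\tB v)=-\diag\cdot\pr_n u'\cdot v$. The first identity then reduces to $\pr_n\zInd(w^1)=\pr_n\ind(w^1)$, which is exactly the coefficient-independence of $\pr_n\ind$ recorded just after Lemma~\ref{lem:zPatternInd}; the second follows from the first by the skew-symmetry of $\zLambda$ and of $\Lambda$; and the third reduces to the equality of the principal $n\times n$ submatrices of $\ztB$ and $\tB$, both of which equal $B_Q$ by construction. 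Once the three are in place, the $v^i$-dependent contributions on the two sides cancel term by term, leaving exactly the $v$-independent exponent of \eqref{eq:failureCor}; this simultaneously verifies the identity and the concluding claim that the $q$-power does not depend on $v^1$ or $v^2$.
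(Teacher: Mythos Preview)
Your proof is correct and follows essentially the same route as the paper's: expand the twisted product in $\targSpace$, use Lemma~\ref{eq:chopTwistDimension} and Lemma~\ref{lem:epsilon}(2) to rewrite the exponent as $\zLambda(\zInd(w^1-C_qv^1),\zInd(w^2-C_qv^2))$, apply Lemma~\ref{lem:zReduction}, and then compare with the twisted product in $\dTorus$. The only difference is presentational: the paper records the formulas $\Lambda(\ind w,\tB v)=-\diag\,\pr_n\ind w\cdot v$ and $\Lambda(\tB v^1,\tB v^2)=\diag\langle v^1,v^2\rangle-\diag\langle v^2,v^1\rangle$ upfront (citing \cite[5.2.1]{Qin10}) and then cancels the $v$-dependent terms in a single chain of equalities, whereas you isolate the cancellation into three explicit identities and verify each from the compatibility relations and the equality $\pr_n\zInd=\pr_n\ind$.
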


\begin{proof}
Notice that we always have
$\Lambda(\ind w,\tB v)=-\diag \pr_n \ind w\cdot v$, and $\Lambda(\tB v^1,\tB v^2)=\diag\langle v^1,v^2\rangle-\diag\langle v^2,v^1\rangle$, \cf
\cite[5.2.1]{Qin10}. Using Lemma \ref{lem:epsilon} and \ref{lem:zReduction}, we obtain
\begin{align*}
\LHS&=\cor(t^{-\dT(m^1,m^2)+\dT(m^2,m^1)}m^1m^2)\\
&=\cor(t^{-\eMatrix(w^1-C_q v^1,w^2-C_q v^2)}m^1m^2)\\
&=q^{\frac{\diag}{2}\zLambda(\zInd(w^1-C_q v^1),\zInd(w^2-C_q
  v^2))}X^{\ind w^1}Y^{v^1}X^{\ind w^2}Y^{v^2}\\
&=q^{\frac{\diag}{2}\zLambda(\zInd w^1+\ztB v_1,\zInd w^2+\ztB
  v^2)}q^{-\Hf\Lambda(\ind w^1+\tB v^1,\ind w^2+\tB v^2)}\\
&\qquad X^{\ind
  w^1}Y^{v^1}*X^{\ind w^2}Y^{v^2}\\
&=q^{\frac{\diag}{2}\zLambda(\zInd w^1,\zInd w^2)}q^{-\Hf\Lambda(\ind
  w^1,\ind w^2)}X^{\ind w^1}Y^{v^1}*X^{\ind w^2}Y^{v^2}\\
&=\RHS.
\end{align*}

\end{proof}

\begin{Def}
We define the $\qBase$-linear map $\contr$ from $\dTorus$ to $\torus$ \st it sends $x^gy^v$ to $x^{g+\tB v}$.
\end{Def}
Notice that $\contr$ is an algebra homomorphism with respect to both
the usual products and the twisted products.

Let us define
\begin{align}
  \pbwDTorus(v,w)=\cor\qtChar\trunc(\pbw(v,w)),\\
\pbwTorus(v,w)=\contr\pbwDTorus(v,w).
\end{align}
When $v=0$, we also
denote them by $\pbwDTorus(w)$ and $\pbwTorus(w)$. Explicitly, we have
  \begin{align*}
    \pbwDTorus(w)&=\sum_vP_{q^{\frac{\diag}{2}}}(\lag(v,w))q^{-\frac{\diag}{2}
      \dim\grProjQuot(v,w)}x^{\ind(w)}y^v\\
&=\sum_v\cor(\langle M(0,w),\pi(v,w)\rangle) x^{\ind(w)}y^v.
  \end{align*}

Recall that we have a similar map from $\targSpace$ to
$\redTargSpace$, which is also denoted by $\contr$. In general, we
do \emph{not} have $\contr\qtChar=\tChar\contr$. This failure is measured by the following result.
\begin{Lem}[Failure \textbf{2}]
We have
  \begin{align}\label{eq:failureContr}
    \pbwTorus(v,w)=\pbwTorus(w-C_qv)x^{\tB v+\ind C_q v}.
  \end{align}

\end{Lem}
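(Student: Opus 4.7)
The plan is to unfold both sides in terms of the underlying geometric data, reindex the summation, and then reduce to a short identity about the index map $\ind$.

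First, I would start from the explicit computation of $\pbwTarg(v,w) = \tqChar(\pbw(v,w))$ already displayed in Section \ref{sec:qtCharacters}, namely
\begin{align*}
\pbwTarg(v,w) = \sum_{v'-v} t^{-\dim\grProjQuot(v'-v,\,w-C_q v)}\, P_t(\grLag(v'-v,\,w-C_q v))\, W^w V^{v'},
\end{align*}
restrict attention to the truncated part ($v'\in\N^{I\times\{-\Hf\}}$), and then apply $\cor$ (Definition \ref{def:correction_map}) to land in $\dTorus$. By definition $\cor$ sends $t$ to $q^{\diag/2}$ and $W^w V^{v'}$ to $x^{\ind(w)} y^{v'}$, so after substituting $v'' = v'-v$ we obtain
\begin{align*}
\pbwDTorus(v,w) = \sum_{v''\geq 0} q^{-\frac{\diag}{2}\dim\grProjQuot(v'',\,w-C_qv)} P_{q^{\diag/2}}(\grLag(v'',\,w-C_qv))\, x^{\ind(w)}\, y^{v+v''}.
\end{align*}
Applying $\contr$, which sends $x^g y^\alpha$ to $x^{g+\tB\alpha}$, turns each term into $x^{\ind(w)+\tB v+\tB v''}$.

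Next I would carry out the same procedure for $\pbwTorus(w-C_qv) = \contr\, \cor\, \qtChar\trunc(\pbw(0,w-C_qv))$, which just yields the same sum but with monomials $x^{\ind(w-C_qv)+\tB v''}$; the coefficients are identical because both depend only on the data $(v'',w-C_qv)$. Comparing term by term, the identity $\pbwTorus(v,w)=\pbwTorus(w-C_qv)\, x^{\tB v + \ind C_q v}$ reduces to the single exponent equation
\begin{align*}
\ind(w) + \tB v - \ind(w-C_qv) = \tB v + \ind(C_q v),
\end{align*}
i.e.\ $\ind(w)-\ind(w-C_qv)=\ind(C_qv)$. This is exactly the $\Z$-linearity of $\ind$, extended from $\N^{I\times\{-1,0\}}$ to $\Z^{I\times\{-1,0\}}$ via $\ind(e_{i,0})=e_i$ and $\ind(e_{i,-1})=\ind T_i$; since $C_q v\in\Z^{I\times\{-1,0\}}$, the extended $\ind$ applies and the identity is immediate.

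There is one small point worth checking, namely whether the product on the right side is meant as the ordinary product $\cdot$ or the twisted product $*$ of $\torus$. The derivation above gives the identity with $\cdot$ directly. To see it also holds with $*$, I would verify that the twist cancels: writing $g_0 = \tB v + \ind C_qv$, the formula $\Lambda(\gamma,\tB v) = -\diag \cdot \pr_n(\gamma)\cdot v$ and the coefficient-freeness of $\pr_n\ind$ (so $\pr_n\ind C_qv = \pr_n\zInd C_qv = -Bv$ by Lemma \ref{lem:zReduction}) give $\pr_n(g_0) = Bv - Bv = 0$; combined with the standard skew-symmetry of $\Lambda$ this kills the relevant $q$-powers appearing in each term, so $*$ and $\cdot$ agree. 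The main (minor) obstacle is precisely this bookkeeping between the two products and the linear extension of $\ind$; everything else is a mechanical unfolding of Definitions \ref{def:pbwTarg} and \ref{def:correction_map}.
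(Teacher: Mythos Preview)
Your argument is correct and follows essentially the same route as the paper's proof: expand $\pbwTorus(v,w)$ via the transversal-slice identity $\langle M(v,w),\pi(v',w)\rangle=\langle M(0,w-C_qv),\pi(v'-v,w-C_qv)\rangle$, reindex by $v''=v'-v$, and factor out the monomial $x^{\tB v+\ind(w)-\ind(w-C_qv)}=x^{\tB v+\ind C_q v}$ using the linearity of $\ind$. The paper does this in three lines using the abstract pairing notation rather than unpacking the Poincar\'{e} polynomials, but the content is identical.

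Your final paragraph on $\cdot$ versus $*$ is unnecessary: the statement and the paper's proof both use the ordinary product, and the subsequent remark that the correction factor lies in $\qBaseCoeff$ is what matters for later use. Your sketch that the twist vanishes is also not quite complete as stated---knowing $\pr_n(g_0)=0$ tells you $g_0$ lives in the coefficient part, but $\Lambda(\gamma,g_0)$ need not vanish for an arbitrary compatible $\Lambda$ (Lemma \ref{lem:coeff_commute} only gives quasi-commutation, not commutation). Simply drop that paragraph.
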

\begin{proof}Straightforward calculation shows
  \begin{align*}
    \pbwTorus(v,w)&=\sum_{v':v'-v\in\N^I}\contr\cor(\langle M(v,w),\pi(v',w)\rangle)
    x^{\ind(w)}x^{\tB v'}.\\
    &=\sum_{v'-v}\contr\cor(\langle M(0,w-C_qv),\pi(v'-v,w)\rangle)
    x^{\ind(w-C_q v)}x^{\tB (v'-v)}\\
    &\qquad x^{\tB v+\ind(w)-\ind(w-C_qv)}.\\
    &=\pbwTorus(w-C_qv)x^{\tB v+\ind C_q v}.
  \end{align*}
\end{proof}
Notice that the \emph{correction factor} $x^{\tB v+\ind C_q v}$ is contained
in $\qBaseCoeff$.

\begin{Prop}
  \label{prop:bar_inv}
We have
\begin{align}
  \label{eq:bar_pbwTorus}
  \overline{\pbwTorus(w)}=\sum_{(v,w)\leq
    (0,w)}u_{(0,w),(v,w)}x^{\tB v+\ind C_qv}\pbwTorus(w-C_qv).
\end{align}
\end{Prop}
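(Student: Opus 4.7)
The plan is to push the bar-invariance identity \eqref{eq:bar_pbwTarg} from the character ring $\targSpace$ down to the quantum torus $\torus$ by applying $\cor$ followed by $\contr$, and then to rewrite each resulting summand using Failure 2.

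Specifically, I would start by specializing \eqref{eq:bar_pbwTarg} at $v=0$ to obtain
$$\overline{\pbwTarg(0,w)}=\sum_{(v,w)\leq(0,w)}u_{(0,w),(v,w)}\,\pbwTarg(v,w)$$
in $\targSpace$. The map $\cor$ of Definition \ref{def:correction_map} sends $t^\lambda W^wV^v$ to $q^{\diag\lambda/2}x^{\ind(w)}y^v$, hence intertwines the two bar involutions (which invert $t$ and $q^{\Hf}$ respectively), and the same is true of $\contr$ since it fixes $q^{\Hf}$ and only rewrites monomial degrees. Applying $\contr\circ\cor$ to both sides therefore yields
$$\overline{\pbwTorus(w)}=\sum_{(v,w)\leq(0,w)}u_{(0,w),(v,w)}\,\pbwTorus(v,w)\quad\text{in }\torus.$$
Substituting the Failure 2 identity \eqref{eq:failureContr} into each summand rewrites $\pbwTorus(v,w)$ as $\pbwTorus(w-C_q v)$ multiplied by the correction factor $x^{\tB v+\ind C_q v}$, and after commuting the correction factor to the left one obtains the claimed identity.

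The subtle point is the last commutation. The correction factor lies in $\qBaseCoeff$ because $\tB v+\ind C_q v$ is supported on the frozen indices $\{n+1,\ldots,m\}$, as noted just after \eqref{eq:failureContr}. Combined with the compatibility equation $\Lambda(-\tB)=\begin{bmatrix}D\\0\end{bmatrix}$, which yields $\Lambda(e_j,\tB v'')=0$ for all frozen $j$ and all $v''\in\Z^n$, this shows that the twist $\Lambda(\tB v+\ind C_q v,\,\ind(w-C_q v)+\tB v'')$ is independent of the summation variable $v''$ appearing in the expansion of $\pbwTorus(w-C_q v)$. The main obstacle I expect is therefore this bookkeeping: one must verify that the uniform $q$-power produced by the left-right commutation cancels against the $q$-power implicit in Failure 2, so that no extra factor appears in front of $u_{(0,w),(v,w)}$. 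This is a finite linear-algebra check using the block structure of $\Lambda$ and the support properties of $\tB v+\ind C_q v$ already established in Section \ref{sec:failure}.
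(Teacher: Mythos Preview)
Your approach is correct and is exactly the paper's proof, which reads in full: ``If we apply $\contr\cor$ to \eqref{eq:bar_pbwTarg}, the statement follows from \eqref{eq:failureContr}.'' Your added remarks on why $\cor$ and $\contr$ intertwine the bar involutions are a welcome elaboration of what the paper leaves implicit.

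The only point to correct is your ``subtle point'' at the end: there is no commutation issue and no $q$-power bookkeeping to do. The products appearing both in \eqref{eq:failureContr} and in the statement \eqref{eq:bar_pbwTorus} are the ordinary commutative product $\cdot$ of the Laurent polynomial ring $\torus$ (note the absence of the symbol $*$), not the twisted product. Hence moving the correction factor $x^{\tB v+\ind C_q v}$ from the right of $\pbwTorus(w-C_qv)$ to its left is literally the identity, and no cancellation argument is needed.
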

\begin{proof}
If we apply $\contr\cor$ to \eqref{eq:bar_pbwTarg}, the statement follows
from \eqref{eq:failureContr}.
\end{proof}

\begin{Prop}
  \label{prop:mult_pbw}
Fix $w^1$ and $w^2$. If for all $i,j\in I$ and
$a>b\in\Z$, either $(w^1)_i(a)$ or $(w^2)_j(b)$ vanishes, then the multiplicative property holds:
\begin{align*}
\pbwTorus(w^2)*\pbwTorus(w^1)&=q^{\Hf
  \Lambda(\ind(w^2),\ind(w^1))-\frac{\diag}{2}\zLambda(\zInd(w^2),\zInd(w^1))}\\
&\qquad q^{\frac{\diag}{2}\eMatrix(w^1,w^2)} \pbwTorus(w^1+w^2).
\end{align*}
\end{Prop}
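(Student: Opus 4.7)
The plan is to deduce this from the already-established multiplicativity of the dual PBW characters in the power-series ring $\targSpace$ (Proposition~\ref{prop:mult_pbwTarg}, valid after truncation) by pushing the identity forward through the two failure-controlled maps $\cor$ and $\contr$. Recall that by definition $\pbwDTorus(w)=\cor\,\qtChar\trunc(\pbw(0,w))$ and $\pbwTorus(w)=\contr\,\pbwDTorus(w)$, so it is enough to track what happens to the identity
\[
\pbwTarg(w^2)*\pbwTarg(w^1)=t^{\eMatrix(w^1,w^2)}\,\pbwTarg(w^1+w^2)
\]
under $\cor$ and then under $\contr$.

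First I would apply $\cor$ to both sides. The key observation is Failure~\textbf{1} (equation~\eqref{eq:failureCor}): the discrepancy between $\cor(m_1*m_2)$ and $\cor(m_1)*\cor(m_2)$ is the scalar $q^{\frac{\diag}{2}\zLambda(\zInd(w^1),\zInd(w^2))-\Hf\Lambda(\ind(w^1),\ind(w^2))}$, which depends only on the $W$-weights and not on $V$-degrees. Since $\cor$ is $\Z$-linear and this scalar factors out of every monomial in the support of $\pbwTarg(w^i)$, the formula extends to the series $\pbwTarg(w^i)$ and yields
\[
\pbwDTorus(w^2)*\pbwDTorus(w^1)=q^{\Hf\Lambda(\ind w^2,\ind w^1)-\frac{\diag}{2}\zLambda(\zInd w^2,\zInd w^1)}\,\cor\bigl(\pbwTarg(w^2)*\pbwTarg(w^1)\bigr).
\]
Substituting in Proposition~\ref{prop:mult_pbwTarg} and using that $\cor$ sends $t$ to $q^{\diag/2}$ gives the identity in $\dTorus$:
\[
\pbwDTorus(w^2)*\pbwDTorus(w^1)=q^{\Hf\Lambda(\ind w^2,\ind w^1)-\frac{\diag}{2}\zLambda(\zInd w^2,\zInd w^1)}\,q^{\frac{\diag}{2}\eMatrix(w^1,w^2)}\,\pbwDTorus(w^1+w^2).
\]

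Finally I would apply $\contr\colon\dTorus\to\torus$, which is by construction an algebra homomorphism for the twisted product $*$; it therefore commutes with the scalar $q$-factor on the right and turns the left-hand side into $\pbwTorus(w^2)*\pbwTorus(w^1)$ and the last factor into $\pbwTorus(w^1+w^2)$. This yields exactly the claimed formula. The main (quite mild) point to verify is that $\cor$ genuinely intertwines $t^{\eMatrix(w^1,w^2)}$ with the scalar $q^{\frac{\diag}{2}\eMatrix(w^1,w^2)}$ at the series level, which follows from $\Z$-linearity and Definition~\ref{def:correction_map}; no genuine obstacle arises because the ``failure'' scalar in Lemma~\eqref{eq:failureCor} is uniform across $v^1,v^2$.
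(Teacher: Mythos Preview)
Your proof is correct and follows essentially the same route as the paper: both arguments push the multiplicativity identity of Proposition~\ref{prop:mult_pbwTarg} through $\cor$ using Failure~\textbf{1} (the key point being that the correction scalar depends only on $w^1,w^2$ and not on the $v$-degrees), and then apply the $*$-algebra homomorphism $\contr$. The paper's write-up is terser and starts from $\pbwTorus(w^2)*\pbwTorus(w^1)$ rather than from the $\targSpace$-identity, but the logical content is identical.
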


\begin{proof}
Using \eqref{eq:failureCor}, \eqref{eq:failureContr}, and Proposition \ref{prop:mult_pbwTarg}, we obtain
  \begin{align*}
    \pbwTorus(w^2)*\pbwTorus(w^1)&=\contr(\cor\qtChar\trunc(\pbw(w^2))*\cor\qtChar\trunc(\pbw(w^1)))\\
    &=q^{\Hf\Lambda(\ind(w^2),\ind(w^1))-\frac{\diag}{2}\zLambda(\zInd(w^2),\zInd(w^1))}\\&\qquad
    \contr\cor(\qtChar\trunc(\pbw(w^2
    ))*\qtChar\trunc(\pbw(w^1)))\\
&=\RHS.
  \end{align*}
\end{proof}

Let $\subQClAlg$ denote the vector space spanned by the standard
basis elements $\pbwTorus(w)$ over $\qBaseCoeff$. The following is the main result of this section.

\begin{Prop}\label{prop:mult_closed}
The vector space $\subQClAlg$ is closed under
the involution $\overline{(\ )}$ and the twisted
  multiplication $*$.
\end{Prop}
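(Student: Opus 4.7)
I would split the proof into bar closure and closure under $*$.

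For bar closure, apply Proposition~\ref{prop:bar_inv}. Three checks are needed: (i) the scalars $u_{(0,w),(v,w)} \in \tBase$ specialize into $\qBase \subset \qBaseCoeff$ via the $t \mapsto q^{\delta/2}$ substitution implicit in $\cor$; (ii) the correction factor $x^{\tB v + \ind(C_q v)}$ lies in $\qBaseCoeff$ because the first $n$ components of its exponent vanish, which follows by combining $\pr_n\ind = \pr_n\zInd$ (independence of the coefficient type for the principal projection of the index) with Lemma~\ref{lem:zReduction} to get $\pr_n\ind(C_q v) = -Bv = -\pr_n(\tB v)$; (iii) the target weight $w - C_q v$ remains in $\N^{I\times\{-1,0\}}$ by $l$-dominance together with the support restriction on $v$, so $\pbwTorus(w - C_q v)$ is itself a standard basis element.

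The second and more delicate part is closure under $*$. A key auxiliary fact, itself a direct consequence of bar closure, is that $\pbwTorus(w)$ is bar-invariant whenever $w$ is concentrated at a single degree in $\{-1,0\}$. Indeed, for $w$ at $0$, the $l$-dominance requirement $(C_q v)_k(-1) \leq 0$ combined with the acyclic ordering ($b_{ik} \geq 0$ for $i < k$) forces $v_k = 0$ inductively from $k=1$ upwards, leaving only the $v=0$ term in the bar expansion; the case $w$ at $-1$ is dual. Since $\grProjQuot(0,w)$ is a single point, $M(0,w)$ corresponds to a self-dual constant sheaf and the sole coefficient is $1$, yielding $\overline{\pbwTorus(w)} = \pbwTorus(w)$.

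With this in hand, split $w_i = w_i^{(0)} + w_i^{(-1)}$ and factor $\pbwTorus(w_i) = q^{\alpha_i}\,\pbwTorus(w_i^{(0)}) * \pbwTorus(w_i^{(-1)})$ using Proposition~\ref{prop:mult_pbw} (hypothesis vacuous since $-1 < 0$). The only obstacle in the resulting four-fold expansion of $\pbwTorus(w_1) * \pbwTorus(w_2)$ is the wrong-order factor $P := \pbwTorus(w_1^{(-1)}) * \pbwTorus(w_2^{(0)})$. By single-degree bar-invariance, $\overline{P} = \pbwTorus(w_2^{(0)}) * \pbwTorus(w_1^{(-1)}) = q^{\gamma}\,\pbwTorus(w_1^{(-1)} + w_2^{(0)})$ via a correctly-ordered application of Proposition~\ref{prop:mult_pbw}; bar-conjugating once more and invoking bar closure, $P = q^{-\gamma}\,\overline{\pbwTorus(w_1^{(-1)} + w_2^{(0)})} \in \subQClAlg$, so one may write $P = \sum_u c_u\,\pbwTorus(u)$ with $c_u \in \qBaseCoeff$ and $u \in \N^{I\times\{-1,0\}}$. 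Substituting converts the four-fold product into a sum of triple products $\pbwTorus(w_1^{(0)}) * \pbwTorus(u) * \pbwTorus(w_2^{(-1)})$, and each triple collapses to a single $q^*\,\pbwTorus(w_1^{(0)} + u + w_2^{(-1)})$ by two correctly-ordered applications of Proposition~\ref{prop:mult_pbw} (first merging $\pbwTorus(w_1^{(0)})$ with $\pbwTorus(u)$, then combining with $\pbwTorus(w_2^{(-1)})$; both hypotheses hold vacuously). Hence $\pbwTorus(w_1) * \pbwTorus(w_2) \in \subQClAlg$.

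The main obstacle is precisely the wrong-order factor $P$: Proposition~\ref{prop:mult_pbw} is unidirectional and does not apply to it directly. Its resolution via bar conjugation hinges on the single-degree bar-invariance, which in turn is a consequence of the acyclicity assumption on $Q$ together with the bar-closure statement just established.
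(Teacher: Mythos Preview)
Your proof is correct and follows essentially the same architecture as the paper's: bar closure via Proposition~\ref{prop:bar_inv}, then for $*$-closure split each $\pbwTorus(w_i)$ into single-degree factors, resolve the wrong-order middle pair by bar-conjugating it into a correctly-ordered product (Proposition~\ref{prop:mult_pbw}) and invoking bar closure, and finally recombine. The one minor variation is your justification of single-degree bar-invariance: the paper observes that $\pbw(w_i(a))=\can(w_i(a))$ at unit vectors and then builds up $\pbwTorus(w(a))$ via Proposition~\ref{prop:mult_pbw}, whereas you argue directly from the bar expansion that $l$-dominance forces $v=0$ by induction along the acyclic ordering --- a self-contained alternative that avoids appealing to the $M=L$ identification.
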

\begin{proof}
The first assertion follows from Proposition \ref{prop:bar_inv}. It
remains to verify the second one.

Recall that $w_i(a)$ is the $(i,a)$-th component of $w$, and
$w(a)=\oplus_i w_i(a)$, where $i\in
I$, $a\in\{-1,0\}$. We have $\pbw(w_i(a))=\can(w_i(a))$. Consequently,
$\pbwTorus(w_i(a))=\contr\cor\qtChar\trunc(\can(w_i(a)))$ is
bar-invariant. Applying Proposition \ref{prop:mult_pbw}, we obtain that
$\pbwTorus(w(a))$ is bar-invariant.

Use Propositions \ref{prop:bar_inv} and \ref{prop:mult_pbw}. For any two elements $\pbwTorus(w^1)$, $\pbwTorus(w^2)$, up to
specified invertible elements in $\qBase$,
$\pbwTorus(w^1)*\pbwTorus(w^2)$ becomes
\begin{align*}
&\pbwTorus(w^1(0))*\pbwTorus(w^1(-1))*\pbwTorus(w^2(0))*\pbwTorus(w^2(-1))\\
=&\pbwTorus(w^1(0))*\overline{\overline{\pbwTorus(w^2(0))}*\overline{\pbwTorus(w^1(-1))}}*\pbwTorus(w^2(-1))\\
=&\pbwTorus(w^1(0))*\overline{\pbwTorus(w^2(0))*\pbwTorus(w^1(-1))}*\pbwTorus(w^2(-1))\\
=&\pbwTorus(w^1(0))*\overline{\pbwTorus(w^2(0)+w^1(-1))}*\pbwTorus(w^2(-1))\\
=&\pbwTorus(w^1(0))*(\sum_{w'}u_{w,w'}x^{\tB
  v+\ind C_q v}\pbwTorus(w'))*\pbwTorus(w^2(-1)),
\end{align*}
where we write $w=w^2(0)+w^1(-1)$, $w'=w-C_qv$,
$u_{w,w'}=u_{(0,w),(v,w)}$. The monomial $x^{\tB v+\ind C_q v}$ is contained in
$\qBaseCoeff$ and quasi-commutes with the other factors. Therefore, up to
specified invertible elements in $\qBase$, the above result becomes
\begin{align*}
&\sum_{w'}u_{w,w'}x^{\tB v+\ind C_q
  v}\pbwTorus(w^1(0))*\pbwTorus(w'(0))*\pbwTorus(w'(-1))\\
&\qquad *\pbwTorus(w^2(-1))\\
=&\sum_{w'}u_{w,w'}x^{\tB v+\ind C_q v}\pbwTorus(w^1(0)+w'(0)+w'(-1)+w^2(-1)).
\end{align*}
\end{proof}
\begin{Rem}
  \begin{enumerate}
\item The above proof shows that the twisted
  product $*$ of the standard basis elements is determined by
  Proposition \ref{prop:mult_pbwTarg} and \ref{prop:bar_inv}.
  \item Notice that the map $\contr\cor\qtChar\trunc$ is algebraic when $\tQ$
    is of $z$-pattern. So Theorem \ref{thm:correction} below provides
    another proof to Proposition \ref{prop:mult_closed}.
  \end{enumerate}
\end{Rem}

\subsection{Dual canonical basis elements}
For any $l$-dominant pair $(v,w)$, define $\canTorus(v,w)=\contr\cor\qtChar\trunc(\can(v,w))$ and
denote it by $\canTorus(w)$ when $v=0$. Then $\canTorus(w)$ is given by
  \begin{align*}
    \canTorus(w)=\sum_{v}a_{v,0;w}(q^{\frac{\diag}{2}}) x^{\ind(w)+\tB v},
  \end{align*}
where the Laurent polynomials $a_{v,0;w}(t)=\sum_{d\in\Z}a_{v,0;w}^dt^d$ are given by \ref{eq:decomposition}.
We shall see all the quantum cluster monomials essentially take this form.

\begin{Lem} We have
  \begin{align*}
    \canTorus(v,w)=\canTorus(w-C_qv)x^{\tB v+\ind C_q v}.
  \end{align*}
\end{Lem}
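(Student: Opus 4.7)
The plan is to follow exactly the template established by the proof of \eqref{eq:failureContr} for $\pbwTorus$; the argument for $\canTorus$ is structurally identical once the definitions are unfolded. First I would unravel the definition: since $\can(v,w)$ is the $\tBase$-dual of $IC(v,w)$ and $\pi(v',w)=\bigoplus_{v''} IC(v'',w)[d]^{\oplus a^d_{v',v'';w}}$, one reads off $\langle \can(v,w),\pi(v',w)\rangle = a_{v',v;w}(t)$. Applying $\contr\cor\qtChar\trunc$ then gives
\begin{equation*}
\canTorus(v,w) \;=\; \sum_{v'\,:\,v'-v\in\N^I} a_{v',v;w}(q^{\diag/2})\, x^{\ind(w)+\tB v'},
\end{equation*}
the range of summation being forced by the truncation to level one.

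Next I would invoke the transversal-slice identity $a_{v',v;w}(t)=a_{v'-v,\,0;\,w-C_q v}(t)$, which is the consequence of \thmref{thm:grTransversalSlice} recorded right after \thmref{thm:decomposition}, and re-index with $v''=v'-v\in\N^I$. This converts the expansion into
\begin{equation*}
\canTorus(v,w) \;=\; \sum_{v''\in\N^I} a_{v'',\,0;\,w-C_q v}(q^{\diag/2})\, x^{\ind(w)+\tB v+\tB v''}.
\end{equation*}
By additivity of the index map, extended $\Z$-linearly to $\Z^{I\times\{-1,0\}}$ (a lattice containing $C_q v$ whenever $v\in\N^I$), one has $\ind(w)=\ind(w-C_q v)+\ind(C_q v)$. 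Hence the monomial factors as $x^{\ind(w-C_q v)+\tB v''}\cdot x^{\tB v+\ind(C_q v)}$. Because the product here is the ordinary (commutative) one, the second factor, which lies in $\qBaseCoeff$ as observed after \eqref{eq:failureContr}, pulls out of the sum; what remains is precisely the defining expansion of $\canTorus(w-C_q v)$, giving the claim.

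The only delicate step is justifying that the reindexing preserves the truncation range and that additivity of $\ind$ legitimately applies to $C_q v$; both points were already verified inside the proof of \eqref{eq:failureContr}, so no new ingredient beyond the tools assembled in Section \ref{sec:failure} is required, and the argument transfers verbatim from $\pbwTorus$ to $\canTorus$.
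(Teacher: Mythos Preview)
Your proof is correct and follows essentially the same approach as the paper's own argument. The paper likewise unwinds the definition of $\canTorus(v,w)$ as a sum over $v'$ of $\contr\cor\bigl(\langle L(v,w),\pi(v',w)\rangle\bigr)x^{\ind(w)+\tB v'}$, invokes the transversal-slice identity to rewrite the pairing as $\langle L(0,w-C_qv),\pi(v'-v,w-C_qv)\rangle$, re-indexes by $v'-v$, and splits off the correction factor $x^{\tB v+\ind(w)-\ind(w-C_qv)}=x^{\tB v+\ind C_qv}$; your explicit identification of the pairing with $a_{v',v;w}(t)$ is simply a notational unpacking of the same step.
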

\begin{proof}
1) Similar to \eqref{eq:failureContr}, we compute
  \begin{align*}
    \canTorus(v,w)&=\sum_{v'}\contr\cor(\langle L(v,w),\pi(v',w) \rangle) x^{\ind(w)}x^{\tB v'}\\
&=\sum_{v'-v}\contr\cor(\langle L(0,w-C_q v), \pi(v'-v,w- C_q v)\rangle) x^{\ind(w-C_q v)}x^{\tB (v'-v)}x^{\tB v+\ind
  (w)-\ind(w-C_q v)}\\
&=\canTorus(w-C_qv)x^{\tB v+\ind C_q v}.
  \end{align*}
\end{proof}

We already know that
$\pbw(0,w)=\sum_{v}Z_{(0,w)(v,w)}\can(v,w)$. If we apply
$\contr\cor\qtChar\trunc$ to it, we obtain
\begin{align*}
\pbwTorus(w)=\sum_{v}Z_{(0,w)(v,w)}\canTorus(w-C_qv)x^{\tB v+\ind C_q v}.
\end{align*}

\subsection{Generic basis elements}
\label{sec:genericBasis}

The generic basis elements $\genTorus(w)$ are defined to be
\begin{align}\label{eq:genTorusElem}
  \genTorus(w)=\sum_{v:(v,w)\text{ is $l$-dominant }}r_{w,w'}x^{\tB v+\ind C_q
    v}\canTorus(w-C_qv),
\end{align}
where $w'=w-C_qv$, and the integer $r_{w,w'}$ is given in \cite[3.2.1]{KimuraQin11}. We have
\begin{align*}
  \genTorus(w)&=\sum_{v}r_{w,w-C_qv}\canTorus(w-C_qv)x^{\tB
    v+\ind C_q v}\\
&=\sum_{v,v'}r_{w,w-C_q v}a_{v',0;w-C_q v}(q^{\frac{\diag}{2}})x^{\ind (w-C_q v)+\tB v'}x^{\tB
    v+\ind C_q v}\\
&=\sum_{v,v'} r_{w,w-C_q v}a_{v'+v,v;w}(q^{\frac{\diag}{2}})x^{\ind (w)}x^{\tB (v+v')}\\
&=\sum_{v+v'=v''}b_{v''}(q^{\frac{\diag}{2}})x^{\ind (w)}x^{\tB v''},
\end{align*}
where the coefficient $b_{v''}=\sum_{(v,w)\text{ is
    $l$-dominant}} r_{w,w-C_q v}\cdot a_{v'',v;w}$.

In particular, $b_{v''}$ is independent of the
frozen pattern. 

\section{Bases of acyclic quantum cluster algebras}\label{sec:bases}

We keep the assumptions on the ice quiver $\tQ$ and the vectors $v$, $w$ as
in Section \ref{sec:failure}.

\subsection{Generic basis}
\label{sec:genBasis}
The results in Section \ref{sec:pseudoModules} and \ref{sec:genBasis} imply the following theorem.

\begin{Thm}\label{thm:anyPattern}
1) The generic basis elements have the following expansion
\begin{align}
  \genTorus(w)=\sum_{v}P_{q^{\frac{\diag}{2}}}(\Gr_v (\kerMod W))q^{-\frac{\diag}{2}\dim
    \Gr_v(\kerMod W)}x^{\ind w+\tB v}.
\end{align}

2) We have a factorization
  \begin{align}\label{eq:factorization}
    \genTorus(w)=\genTorus(\coeffFree w)\cdot\genTorus(\pureCoeff w).  
  \end{align}

3) If $\genTorus(w)$ becomes a quantum cluster monomial when we choose
$\tQ$ to be of $z$-pattern, then we have $\genTorus(w)=\canTorus(w)$. 
\end{Thm}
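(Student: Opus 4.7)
The plan is to treat the three parts independently, reducing each to material already assembled in the paper. For Part~1, I would compare two independent expressions for the coefficients of $\genTorus(w)$. The computation at the end of Section~\ref{sec:genericBasis} already gives
\[
\genTorus(w) = \sum_{v''} b_{v''}(q^{\delta/2})\, x^{\ind(w) + \tB v''},
\qquad b_{v''}(t) = \sum_{v\,:\,(v,w)\text{ $l$-dom}} r_{w,\,w-C_q v}\, a_{v'',v;w}(t),
\]
where the rewriting uses the transversal-slice identity $a_{v'',v;w} = a_{v''-v,\,0;\,w-C_q v}$ furnished by Theorem~\ref{thm:grTransversalSlice}. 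On the other hand, applying $\tChar\trunc$ termwise to $\gen(w) = \sum_{w'} r_{w,w'}\can(w')$ and re-indexing $w' = w - C_q v$ shows that $\genRedTarg(w)$ equals $\sum_{v''} b_{v''}(t)\, Y^{w - C_q v''}$ with precisely the same coefficients. Matching this with Theorem~\ref{thm:z_coeff} forces
\[
b_v(t) = t^{-\dim \Gr_v(\kerMod W)}\, P_t(\Gr_v(\kerMod W)),
\]
and specializing $t \mapsto q^{\delta/2}$ reads off the stated formula.

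For Part~2, I would transport the factorization $\genRedTarg(w) = \genRedTarg(\coeffFree w) \cdot \genRedTarg(\pureCoeff w)$ of Proposition~\ref{prop:clusterMonomial}(2) from $\redTargSpace$ to the Laurent polynomial ring via the formula of Part~1. Expanding the right-hand side as a product of two $Y$-series produces the convolution relation $b_v = \sum_{v_1 + v_2 = v} b_{v_1}^{(1)}\, b_{v_2}^{(2)}$, with the superscripts referring to $\coeffFree w$ and $\pureCoeff w$. Passing to $x$-monomials, the decisive compatibility is the additivity $\ind(w) = \ind(\coeffFree w) + \ind(\pureCoeff w)$, coming from additivity of the index on direct sums in the cluster category, together with $\tB v = \tB v_1 + \tB v_2$; assembling the double sum in the commutative Laurent ring then delivers the factorization.

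For Part~3, I would pass to the $z$-pattern setting, where by \cite{KimuraQin11} the character map $\tChar$ identifies $\quotKGp$ with the quantum cluster algebra and carries $\gen(w)$ to $\genTorus(w)$. If $\genTorus(w)$ is a quantum cluster monomial there, Proposition~\ref{prop:clusterMonomial}(1) gives $\gen(w) = \can(w)$, and the upper unitriangularity of the matrix $(r_{w,w'})$ forces $r_{w,w} = 1$ and $r_{w,w'} = 0$ for $w' \neq w$. Since these integers depend only on the principal part $Q$ (and not on the ambient coefficient pattern), feeding them into the defining expression \eqref{eq:genTorusElem} of $\genTorus(w)$ for \emph{any} choice of $\tQ$ leaves only the $v = 0$ summand, which is $\canTorus(w)$.

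The part I expect to require the most care is Part~2: although the factorization in $\redTargSpace$ is a clean product of $Y$-series, its image on the $x$-side involves the correction monomials $x^{\tB v + \ind C_q v}$ coming from the failure maps of Section~\ref{sec:failure}, and one must check that these split correctly under $v = v_1 + v_2$. The verification ultimately reduces to additivity of $\ind$ on direct sums, but the check must be performed in the setting of an arbitrary $\tQ$, where the explicit formulas of Lemma~\ref{lem:zReduction} do not apply verbatim.
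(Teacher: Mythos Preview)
Your proposal is correct and matches the paper's approach: the paper merely asserts that the theorem follows from the results of Section~\ref{sec:pseudoModules} and Section~\ref{sec:genericBasis}, and you have unpacked exactly that implication. Your concern in the final paragraph is overblown---once Part~1 is in hand, Part~2 does not touch the correction monomials at all: since $\pureCoeff w$ lies in $\operatorname{span}_{\N}\{e_{i,-1}+e_{i,0}\}$, the associated $\kerMod W$ vanishes, so $\genTorus(\pureCoeff w)=x^{\ind \pureCoeff w}$ is a single monomial, and the factorization reduces immediately to $\ind w=\ind(\coeffFree w)+\ind(\pureCoeff w)$ together with the observation that $\kerMod W$ (and hence each $b_v$) is unchanged when passing from $w$ to $\coeffFree w$.
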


For any given $w\in\redWSet$, we choose a generic map $f:I^{w(-1)}\ra
I^{w(0)}$ and define the object $O(w)$ to be $\mathrm{Cone}(f)[-1]$ in the
cluster category. So we
  have the following triangle
  \begin{align*}
    O(w)\ra I^{w(-1)}\xra{f} I^{w(0)}\ra O(w)[1].
  \end{align*}
We define the associated \emph{generic quantum
  cluster characters} to be
\begin{align}
  \begin{split}
    \genCl(w)=&\genTorus(w)\cdot x^{\ind O(w)-\ind w}\\
=&\sum_{v}P_{q^{\Hf\diag}}(\Gr_v (\kerMod W))q^{-\Hf\dim \Gr_v(\kerMod W)}x^{\ind O(w)+\tB v}.
  \end{split}
\end{align}

\begin{Rem}
We can denote $\genCl(w)$ by $X_{O(w)}$. This definition naturally
  generalizes the quantum cluster character formula in
  \cite[Definition 1.2.1]{Qin10} to generic objects.

It is not clear if one can extend this
quantum character to arbitrary objects of the presentable cluster
category in a reasonable way, for example, \st its image is still contained
in $\qClAlg$. Some results for Dynkin quivers are discussed in \cite{Ding10}.
\end{Rem}

\begin{Prop}\label{prop:compareIndices}
For any $w\in\redWSet$, we have $\pr_n\ind O(w)=\pr_n\ind w$. When the
ice quiver $\tQ$ is the level $1$ ice quiver with $z$-pattern, we have $\ind O(w)=\ind w$.
\end{Prop}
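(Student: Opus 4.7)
My plan is to first prove the stronger $z$-pattern identity $\ind O(w) = \ind w$, and then deduce the general statement $\pr_n \ind O(w) = \pr_n \ind w$ from the invariance of $\pr_n \ind$ under changes of the coefficient pattern $\tQ - Q$ noted right after Lemma \ref{lem:zPatternInd}.

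To prove the $z$-pattern identity, I will verify it on the basis vectors $e_{i,0}$ and $e_{i,-1}$ of $\N^{I \times \{-1,0\}}$ and then extend to arbitrary $w \in \redWSet$. For $w = e_{i,0}$ the defining triangle of $O$ degenerates to $O(e_{i,0}) \to 0 \to I_i \to \Sigma O(e_{i,0})$, forcing $O(e_{i,0}) \simeq I_i[-1] = T_i$; by the first equation of Lemma \ref{lem:zPatternInd} this has $\zInd T_i = e_i = \zInd(e_{i,0})$. For $w = e_{i,-1}$ the triangle degenerates to $O(e_{i,-1}) \to I_i \to 0 \to \Sigma O(e_{i,-1})$, identifying $O(e_{i,-1})$ with the coefficient-free lift of $I_i$ in $\cC_\tQ$; its $\add T$-presentation is precisely the triangle $T_i \to T_{i+n} \to I_i \to \Sigma T_i$ exhibited in the proof of the second equation of Lemma \ref{lem:zPatternInd}, yielding $\ind O(e_{i,-1}) = e_{i+n} - e_i = \zInd(e_{i,-1})$. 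For general $w \in \redWSet$ the condition $\pureCoeff w = 0$ forces $\min(w_i(0), w_i(-1)) = 0$ at every principal vertex, so $w$ decomposes as $w = w^{+} + w^{-}$ with $w^{+}$ supported in slice $0$ and $w^{-}$ in slice $-1$, these summands having disjoint principal-vertex supports; taking the direct sum of the basis-case $\add T$-presentations produces an $\add T$-presentation whose index is $\sum_i w_i(0) e_i + \sum_i w_i(-1)(e_{i+n} - e_i) = \zInd w$ by Lemma \ref{lem:zPatternInd}.

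The main obstacle will be showing that this index is actually $\ind O(w)$ for the generic (possibly non-zero) defining map $f : I^{w(-1)} \to I^{w(0)}$, rather than just for the split object $O(w^{+}) \oplus O(w^{-})$ corresponding to $f = 0$. I plan to argue that the class $[O(w)]$ in the relevant Grothendieck group of $\cC_\tQ$ is fixed by the defining triangle $O(w) \to I^{w(-1)} \to I^{w(0)} \to \Sigma O(w)$ and hence is independent of $f$, so it agrees with the class of $O(w^{+}) \oplus O(w^{-})$ obtained at $f = 0$, while the index descends to this class on the presentable subcategory. The disjointness enforced by $w \in \redWSet$ is essential here: it prevents an ``auto-endomorphism'' of any single $I_i$ from entering $f$, which would otherwise perturb the object off the connected parameter family that carries a constant index.
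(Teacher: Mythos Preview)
Your overall outline---first establishing $\ind O(w)=\zInd w$ in the $z$-pattern and then deducing the principal-part equality from coefficient-independence of $\pr_n\ind$---matches the paper's. The basis cases $w=e_{i,0}$ and $w=e_{i,-1}$ are also handled as in the paper (the latter via Lemma~\ref{lem:zPatternInd}).

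The gap is in the passage to general $w\in\redWSet$. Your proposed justification, that ``the class $[O(w)]$ in the relevant Grothendieck group \dots\ is independent of $f$'' and that ``the index descends to this class,'' does not go through: the index $\ind_T(-)\in K_0(\add T)$ is \emph{not} additive on triangles in the cluster category $\cC_{\tQ}$, hence does not factor through the triangulated $K_0$. Palu's formula (see \cite{Palu08a}) shows that for a triangle $X\to Y\to Z\xrightarrow{h}\Sigma X$ one has $\ind Y=\ind X+\ind Z$ only up to a correction term built from $\Hom_{\cC_{\tQ}}(T,h)$. Thus comparing the generic $O(w)$ to the split object at $f=0$ via $K_0$ alone is illegitimate; one would have to control that correction term, which your argument does not do. The remark about disjoint supports preventing ``auto-endomorphisms'' of a single $I_i$ is true but beside the point: even with disjoint supports there are generally nonzero maps $I^{w(-1)}\to I^{w(0)}$, and these do change the isomorphism class of $O(w)$.

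The paper avoids this issue entirely. It observes that $\ind O(w)$ is additive over the canonical decomposition of the \emph{generic} object (direct sum of generic modules and shifted projectives $P_i[1]$), so it suffices to treat the case where $O(w)=M$ is a $\C Q^{op}$-module with minimal injective resolution $0\to M\to I^{w(-1)}\to I^{w(0)}\to 0$. It then computes $(\ind M)_j$ directly via $\Hom_{B^{op}}(S_j,-)$ and $\Ext^1_{B^{op}}(S_j,-)$ over the Jacobi algebra $B$ of $(\tQ,\tW)$, using Palu's description $(\ind M)_j=-\dim\Hom_{B^{op}}(S_j,M)+\dim\Ext^1_{B^{op}}(S_j,M)$. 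Minimality of the resolution forces the connecting maps in the long exact sequence to be injective/surjective at the right places (this is precisely where the $z$-pattern shape is used for the frozen indices $j>n$), yielding $(\ind M)_j=(\zInd w)_j$. If you want to salvage your additive approach, you would need an independent argument that Palu's correction term vanishes for the specific triangles $T^{w(0)}\to O(w)\to \Sigma T^{w(-1)}$; absent that, the homological computation is the route that works.
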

\begin{proof}

Because $\pr_n\ind O(w)$ and $\pr_n\ind w$ do not depend on the
coefficients $\tQ-Q$, it suffices to prove the second statement.

    First, the index $\ind w$ is linear in $w$.

    Second, the index $\ind O(w)$ is linear with respects to the
    components in the canonical decomposition of the generic objects
    $O(w)$, which are either generic modules or the object $P_i[1]$,
    $i\in I$.

    Therefore, it suffices to study the indices of modules. Let $O(w)=M$ be
    any $Q\op$-module with the minimal injective resolution
    \begin{align}\label{eq:minInjResolution}
      0\ra M\xra{f} I^{w(-1)}\xra{g} I^{w(0)}\ra 0.
    \end{align}
Let $B$ denote the Jacobi algebra of $(\tQ,\tW)$. View the resolution as a short exact sequence in the category of
$B\op$-modules. Denote the simple $B\op$-modules by $S_j$, $1\leq
j\leq 2n$. If we apply $\Hom_{B\op-\mod}({S_j},\ )$ to the
above short exact sequence, we obtain a long exact sequence
\begin{align*}
\ldots  &\ra \Hom_{B\op-\mod}(S_j,M)\xra{f_0} \Hom_{B\op-\mod}(S_j, I^{w(-1)})\ra
\Hom_{B\op-\mod}(S_j, I^{w(0)})\\
&\xra{w_0} \Ext^1_{B\op-\mod}(S_j, M)\xra{g_1} \Ext^1_{B\op-\mod}(S_j, I^{w(-1)})\ra \Ext^1_{B\op-\mod}(S_j,I^{w(0)} )\ra\ldots.
\end{align*}
Then $(\ind M)_j=-\dim\Hom_{B\op-\mod}(S_j,M)+
\dim\Ext^1_{B\op-\mod}(S_j, M)$, \cf \cite{Palu08a}.

Because $M$ is supported on the principal
part $Q$, $\dim\Hom_{B\op-\mod}(S_j,M)$ vanishes unless $1\leq j\leq
n$. The fact that \eqref{eq:minInjResolution} is the minimal injective
resolution of $Q\op$-modules implies $f_0$ is an isomorphism. It
follows that $w_0$ is injective. 

When the quiver is of $z$-pattern, we have $\ind I_i=-e_i+e_{i+n}$,
\cf Lemma \ref{lem:zPatternInd}. Therefore
$\Ext^1_{B\op-\mod}(S_j,I^{w(-1)} )$ equals $w_{j-n}(-1)$ if $j>n$ and
vanishes elsewhere. Furthermore, the fact that \eqref{eq:minInjResolution} is the minimal injective
resolution of the $Q\op$-module $M$ implies that for any $i\in I$,
either $\dim\Ext^1_{B\op-\mod}(S_{i+n},I^{w(-1)} )=w_i(-1)$ or
$\dim\Ext^1_{B\op-\mod}(S_{i+n},I^{w(0)} )=w_i(0)$ is zero. Therefore, $g_1$ is surjective.

Therefore, we have
\begin{align*}
  (\ind M)_j&=-\dim\Hom_{B\op-\mod}(S_j, I^{w(-1)})+\dim
  \Hom_{B\op-\mod}(S_j,I^{w(0)})\\
&\qquad +\dim\Ext^1_{B\op-\mod}(S_j,I^{w(0)} )\\
&=\left\{
\begin{array}{ll}
-w_j(-1)+w_j(0) & \textrm{if $j\leq n$}\\
w_j(0)+w_{j-n}(-1) & \textrm{if $j>n$},
\end{array} \right.\\
&=(\ind w)_j.
\end{align*}
\end{proof}

\begin{Rem}[Failure \textbf{3}]
  For general $\tQ$, we have 
  \begin{align}\label{eq:failureInd}
    \ind w\neq\ind O(w).
  \end{align}

\end{Rem}

\begin{Lem}\label{lem:coeff_commute}
The elements in the coefficient ring $\qBaseCoeff$ quasi-commute with
$\pbwTorus(w)$, $\canTorus(w)$, $\genTorus(w)$, $w\in\N^{I\times \{-1,0\}}$.
\end{Lem}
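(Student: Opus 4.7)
The plan is to reduce the lemma to the compatibility condition $\Lambda(-\tB) = \begin{bmatrix} D\\ 0\end{bmatrix}$ on the pair $(\Lambda,\tB)$, and then observe that all three families of basis elements have the same ``shape'' as Laurent polynomials in the $x_i$'s.

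First I would note that each of the elements in question is a $\qBase$-linear combination of monomials of the form $x^{\ind(w)+\tB v}$ with $v$ ranging over $\N^n$: this is immediate for $\canTorus(w)$ from its explicit expansion $\sum_v a_{v,0;w}(q^{\frac{\diag}{2}})\,x^{\ind(w)+\tB v}$, for $\pbwTorus(w)$ from the formula $\pbwTorus(w)=\contr\pbwDTorus(w)$ together with $\contr(x^{g}y^v)=x^{g+\tB v}$, and for $\genTorus(w)$ from Theorem \ref{thm:anyPattern}(1). So it suffices, by $\qBase$-linearity, to show that every generator $x_j^{\pm 1}$ with $n<j\leq m$ quasi-commutes with each single monomial $x^{\ind(w)+\tB v}$ by a factor that does \emph{not} depend on $v$.

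Next I would compute this quasi-commutation factor. By definition of the twisted product on $\cT$ we have
\[
x^{e_j}\,*\,x^{\ind(w)+\tB v}=q^{\Lambda(e_j,\,\ind(w)+\tB v)}\,x^{\ind(w)+\tB v}\,*\,x^{e_j}.
\]
The compatibility condition $\Lambda(-\tB)=\begin{bmatrix}D\\ 0\end{bmatrix}$ says that the rows of the $m\times n$ matrix $\Lambda\tB$ indexed by $j>n$ vanish identically. Hence $\Lambda(e_j,\tB v)=0$ for every $v\in\Z^n$ and every $j>n$, and the quasi-commutation factor reduces to $q^{\Lambda(e_j,\ind(w))}$, which is independent of $v$.

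The main (and only) step is therefore this observation about the compatibility condition; once it is in place, linearity in $v$ collects the factor outside the sum and yields
\[
x_j\,*\,\pbwTorus(w)=q^{\Lambda(e_j,\ind(w))}\,\pbwTorus(w)\,*\,x_j,
\]
and identically for $\canTorus(w)$ and $\genTorus(w)$. Extending to general elements of $\qBaseCoeff=\qBase[x_{n+1}^{\pm},\dots,x_m^{\pm}]$ by $\qBase$-linearity and by iterating in the $x_j$'s completes the proof. There is no real obstacle; the content of the lemma is that the exponent pattern $\ind(w)+\tB v$ lies entirely in the ``quasi-commuting direction'' for the frozen variables, which is exactly what the compatibility of $(\Lambda,\tB)$ encodes.
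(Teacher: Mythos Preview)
Your proof is correct and takes essentially the same approach as the paper: the paper's one-line proof simply states that it follows from the fact that $x_i$, $n<i\leq N$, commutes with $x^{\tB v}$ for any $v\in\N^n$, which is exactly the consequence of the compatibility condition you spell out. Your version is just a more detailed unpacking of this same idea, including the explicit quasi-commutation factor $q^{\Lambda(e_j,\ind(w))}$ and the verification that each of the three families has the required monomial shape.
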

\begin{proof}
It follows from the fact that $x_i$, $n<i\leq N$, commutes with $x^{\tB
v}$ for any $v\in\N^n$.
\end{proof}

\begin{Prop}\label{prop:differentIndex}
The set $\redGenBasis$ is a $\qBaseCoeff$-basis of the
  vector space $\subQClAlg$.
\end{Prop}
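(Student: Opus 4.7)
The proof naturally splits into a spanning claim and a linear-independence claim. For the spanning, I would first argue that the full family $\{\genTorus(w):w\in\N^{I\times\{-1,0\}}\}$ generates $\subQClAlg$ as a $\qBaseCoeff$-module. The uni-triangularity \eqref{eq:canToPbw}/\eqref{eq:pbwToCan} of the transition between dual PBW and dual canonical elements in $\dualKGp_w$, the triangular definition $\gen(w)=\sum_{w'\le w}r_{w,w'}\can(w')$, and the correction formula \eqref{eq:failureContr} together yield triangular relations with $\qBaseCoeff$-coefficients between $\pbwTorus(w)$, $\canTorus(w)$ and $\genTorus(w)$; the correction factor $x^{\tB v+\ind C_q v}$ indeed lies in $\qBaseCoeff$ because its principal part vanishes (Lemma \ref{lem:zReduction}). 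To pass from $\N^{I\times\{-1,0\}}$ to $\redWSet$, I invoke the factorization \eqref{eq:factorization} of Theorem \ref{thm:anyPattern}(2): $\genTorus(w)=\genTorus(\coeffFree w)\cdot\genTorus(\pureCoeff w)$. For $w''=\pureCoeff w$ the associated $Q\op$-module $\kerMod W$ is trivial, so Theorem \ref{thm:anyPattern}(1) collapses to $\genTorus(w'')=x^{\ind w''}$; combining Lemma \ref{lem:zPatternInd} with the coefficient-pattern independence of $\pr_n\ind$ (Proposition \ref{prop:compareIndices}) gives $\pr_n\ind w''=0$, so $x^{\ind w''}$ is a monomial in the frozen variables $x_{n+1},\ldots,x_N$ and hence a unit of $\qBaseCoeff$. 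This yields $\subQClAlg=\sum_{w'\in\redWSet}\qBaseCoeff\cdot\genTorus(w')$.

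For the independence part, the key observation I plan to exploit is that $\pr_n\ind$ is injective on $\redWSet$: from $(\pr_n\ind w)_k=w_k(0)-w_k(-1)$ (Lemma \ref{lem:zPatternInd} and Proposition \ref{prop:compareIndices}) together with the condition $w_k(0)\,w_k(-1)=0$ forced by membership in $\redWSet$, the pair $(w_k(0),w_k(-1))$ is recovered from the sign of $(\pr_n\ind w)_k$. Suppose a nontrivial relation $\sum_{w'\in S}c_{w'}\genTorus(w')=0$ holds with $c_{w'}\in\qBaseCoeff$ and $S\subset\redWSet$ finite. Equipping $\cT$ with the $\Z^n$-grading by principal exponent, under which $\qBaseCoeff$ is concentrated in degree $0$, multiplication by $c_{w'}$ is grading-preserving; from the expansion $\genTorus(w')=x^{\ind w'}+\sum_{v\in\N^n\setminus\{0\}}(\ldots)x^{\ind w'+\tB v}$ of Theorem \ref{thm:anyPattern}(1) I would pick $w'_0\in S$ with $\pr_n\ind w'_0$ minimal in a suitable partial order refining $\alpha\preceq\beta\iff\beta-\alpha\in B(\N^n)$. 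The principal-grade-$\pr_n\ind w'_0$ component of the relation then reduces to $c_{w'_0}\cdot x^{\ind w'_0}\cdot P_{w'_0}=0$, where $P_{w'_0}\in\qBaseCoeff$ has constant term $1$ (contributions from $v\in\ker B\cap\N^n$ only shift exponents in the frozen direction since $\tB$ has full column rank). Integrality of $\qBaseCoeff$ forces $c_{w'_0}=0$, and induction finishes the job.

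The main technical obstacle is choosing the correct partial order on $\Z^n$: when $\ker B\cap\N^n\neq\{0\}$, which happens already for $Q$ of type $A_3$ with standard orientation, the naive relation $\alpha\preceq\beta\iff\beta-\alpha\in B(\N^n)$ is only a preorder and one cannot simply pick a minimum. The remedy I anticipate is to work at the level of $\Z^m$ using $\alpha\preceq_{\tB}\beta\iff\beta-\alpha\in\tB(\N^n)$, which is genuinely antisymmetric because $\tB$ has full column rank; combined with the injectivity of $\pr_n\ind$ on $\redWSet$, this provides enough separation to isolate the desired leading $\qBaseCoeff$-coefficient. Once this bookkeeping is set up, the extraction of a nonzero leading monomial is routine.
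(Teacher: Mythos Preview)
Your proposal is correct and follows the same two-step strategy as the paper: spanning via triangularity plus the factorization \eqref{eq:factorization}, and linear independence via the injectivity of $\pr_n\ind$ on $\redWSet$ together with the full rank of $\tB$. The paper's own proof is extremely terse on the independence step, essentially asserting that ``$\ind w$ is the leading term of $\genTorus(w)$ and $\tB$ is of full rank'' suffices, and citing \cite{Plamondon10a} for the distinctness of the $\pr_n\ind w$.

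Where you add value is in making explicit what the paper leaves implicit. Your elementary derivation of the injectivity of $\pr_n\ind$ on $\redWSet$ (from $(\pr_n\ind w)_k=w_k(0)-w_k(-1)$ and the sign condition) is cleaner than invoking a general result about indices of generic objects. You also correctly flag that the preorder $\alpha\preceq\beta\Leftrightarrow\beta-\alpha\in B(\N^n)$ on $\Z^n$ can fail to be antisymmetric (your $A_3$ example is apt), so one cannot naively pick a minimal $w'_0$ at the level of principal degrees.

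One small caution: your first formulation, ``pick $w'_0$ with $\pr_n\ind w'_0$ minimal in a suitable partial order refining $\preceq$,'' is literally impossible when $\preceq$ is not antisymmetric---no partial order can refine a non-antisymmetric preorder. Your proposed remedy via $\preceq_{\tB}$ on $\Z^m$ is the right move, but as written it is a bit underspecified, since multiplying by $c_{w'}\in\qBaseCoeff$ shifts exponents in the frozen directions and $\preceq_{\tB}$ alone does not control this. The clean way to finish is: choose any monomial total order $<$ on $\Z^m$ with $\tB e_i>0$ for all $i$ (possible because the cone $\tB(\N^n)$ is pointed); for each $w'$ with $c_{w'}\neq 0$ let $h_{w'}$ be the $<$-minimal exponent in $c_{w'}$; then the exponents $h_{w'}+\ind w'$ are pairwise distinct (equality would force $\pr_n\ind w'=\pr_n\ind w''$, contradicting injectivity), so there is a unique $<$-minimum $g_0=h_{w'_0}+\ind w'_0$. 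Any contribution to $x^{g_0}$ from $(w',v,h)$ with $v\neq 0$ would require $h<h_{w'}$ in the order, contradicting minimality of $h_{w'}$; and any with $v=0$ forces $w'=w'_0$, $h=h_{w'_0}$. Hence the $x^{g_0}$-coefficient is $(c_{w'_0})_{h_{w'_0}}\neq 0$, a contradiction. This is presumably the ``routine bookkeeping'' you had in mind; spelling it out removes the only soft spot in your argument.
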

\begin{proof}
Notice that the matrix $(r_{w,w'})$ is upper unitriangular. Furthermore, each element $\genTorus(w)$ divides into the composition of the
  coefficient free part $\genTorus(\coeffFree w)$ and the pure coefficient
  part $\genTorus(\pureCoeff w)\in\qBaseCoeff$. Therefore, $\subQClAlg$
  is spanned by $\redGenBasis$ over $\qBaseCoeff$.

Take any vectors $w$, $w'$ in $\redWSet$, \st $w\neq w'$. By proposition
\ref{prop:compareIndices}, the vector $\pr_n \zInd w$ is the
index of a generic object in the cluster
category of $Q$. Furthermore, we
have $\pr_n \ind w\neq \pr_n \ind w'$, \cf the proof of Proposition
\ref{prop:compareIndices} or \cite{Plamondon10a}. Because $\ind w$ is the leading term of
$\genTorus(w)$ and $\tB$ is of full rank, the set $\redGenBasis$ is $\qBaseCoeff$-linearly independent.
\end{proof}

\begin{Prop}\label{prop:subRing}
  $\subQClAlg$ equals $\qClAlg$.
\end{Prop}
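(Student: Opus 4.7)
The plan is to establish both inclusions $\qClAlg \subseteq \subQClAlg$ and $\subQClAlg \subseteq \qClAlg$ separately, treating the second as the harder direction.

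For the forward inclusion $\qClAlg \subseteq \subQClAlg$: by Proposition \ref{prop:mult_closed}, $\subQClAlg$ is a $\qBaseCoeff$-subalgebra of the quantum torus, and $\qBaseCoeff$ already contains the frozen inverses $x_j^{\pm 1}$, $j>n$. Hence it suffices to show that every quantum cluster variable $x_i(t)$ lies in $\subQClAlg$. Theorem \ref{thm:normalizedQCCFormula} expresses $x_i(t) = \sum_v c_{i,v}(q^{\diag/2})\, x^{\ind(M_i(t))+\tB v}$ with bar-invariant coefficients, while Theorem \ref{thm:anyPattern}(1) gives the same structural shape for the generic basis element $\genTorus(w)$, with coefficients coming from Poincar\'e polynomials of quiver Grassmannians. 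Combining this with Theorem \ref{thm:anyPattern}(3) and the identification in \cite{Qin10} of quantum cluster characters with truncated $q$-characters, one recognizes $x_i(t)$, up to a monomial in $\qBaseCoeff$, as a generic basis element $\genTorus(w_{i,t})$ for an appropriate $w_{i,t}\in\redWSet$. Closure of $\subQClAlg$ under $*$ then yields the inclusion.

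For the reverse inclusion $\subQClAlg \subseteq \qClAlg$: by Proposition \ref{prop:differentIndex}, $\subQClAlg$ is the $\qBaseCoeff$-span of $\{\genTorus(w) : w\in\redWSet\}$, so it suffices to show each $\genTorus(w)\in\qClAlg$. Using the factorization \eqref{eq:factorization}, one splits $\genTorus(w) = \genTorus(\coeffFree w)\cdot \genTorus(\pureCoeff w)$; the pure-coefficient factor is a product of elements built from injective/projective pairs and hence lies in $\qBaseCoeff\subseteq\qClAlg$. For the coefficient-free factor, by construction $\genCl(\coeffFree w) = X_{O(\coeffFree w)}$ is the quantum cluster character of a generic object in the cluster category. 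When $O(\coeffFree w)$ is rigid, this is a quantum cluster monomial by Theorem \ref{thm:anyPattern}(3). The general case is reduced to this rigid case either by filtering $O(\coeffFree w)$ through rigid subquotients and using cluster multiplication identities, or, more robustly, by transferring the result from the $z$-pattern case (where $\contr\cor\qtChar\trunc$ is already an algebra homomorphism and the matching with cluster monomials is direct) to the general case via the correction technique of Section \ref{sec:correction}. Multiplying back by the correction factor $x^{\ind w - \ind O(w)}\in\qBaseCoeff$ from the proof of Proposition \ref{prop:compareIndices} finishes the argument.

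The main obstacle is the non-rigid range of the second inclusion: showing $X_{O(w)}\in\qClAlg$ when $O(w)$ is not rigid. Rigid objects give quantum cluster monomials for free, but generic non-rigid objects do not correspond to cluster monomials, and one must express the corresponding generic character as a genuine polynomial in cluster variables with $\qBaseCoeff$-coefficients. This is exactly where the pseudo-categorification philosophy pays off: the failures of the correspondence $\cor$ recorded in Lemmas \eqref{eq:failureCor} and \eqref{eq:failureContr} are shown to be controlled by factors in $\qBaseCoeff$, so that decompositions valid in the $z$-pattern case can be transported to arbitrary coefficient patterns. The rest of the argument is bookkeeping of these correction factors.
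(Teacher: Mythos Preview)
You have inverted the difficulty of the two inclusions, and your treatment of the inclusion $\subQClAlg \subseteq \qClAlg$ contains a genuine gap.

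The paper dispatches $\subQClAlg \subseteq \qClAlg$ in one line: by \emph{definition}, $\subQClAlg$ is the $\qBaseCoeff$-span of the dual PBW elements $\pbwTorus(w)$, not of the generic elements $\genTorus(w)$. By the multiplicativity of Proposition~\ref{prop:mult_pbw}, each $\pbwTorus(w)$ is, up to a power of $q$, an ordered product of the elements $\pbwTorus(e_{i,a})=\canTorus(e_{i,a})$, and these are quantum cluster variables (initial ones for $a=0$, once-mutated ones for $a=-1$). Hence every $\pbwTorus(w)$ manifestly lies in $\qClAlg$, and the inclusion follows immediately. Your choice to span $\subQClAlg$ by the generic basis forces you to prove $\genTorus(w)\in\qClAlg$ for non-rigid $w$, which is precisely the hard content of Theorem~\ref{thm:genBasis}; that theorem is deduced \emph{from} this proposition, so your argument is circular. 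Neither of your proposed reductions works: ``filtering through rigid subquotients'' is not an available tool here, and the correction-technique transfer from the $z$-pattern assumes the result is already known in the $z$-pattern, where the same non-rigid obstacle recurs.

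For the other inclusion $\qClAlg \subseteq \subQClAlg$ your outline is essentially the paper's: $\subQClAlg$ is a $*$-subalgebra by Proposition~\ref{prop:mult_closed}, it contains $\qBaseCoeff$, and it contains all quantum cluster variables (these being among the $\canTorus(w)=\genTorus(w)$ for rigid $w$, which lie in $\subQClAlg$ via the unitriangular transition to $\{\pbwTorus(w)\}$). That part is fine.
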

\begin{proof}
$\subQClAlg$ is a subspace of $\qClAlg$ because
the latter contains $\pbwBasis$. If we take any two elements $\genTorus(\coeffFree w^1)* f_1$,
$\genTorus(\coeffFree w^2)* f_2$, where $f_1,f_2\in\qBaseCoeff$, it
follows from Proposition
\ref{prop:mult_closed} that their twisted product still belongs to
$\subQClAlg$. Therefore, $\subQClAlg$ is a subalgebra of $\qClAlg$
with respect to the twisted product. But it contains all the quantum cluster variables. Therefore, the two algebras must agree.
\end{proof}

\begin{Thm}\label{thm:genBasis}
    $\clGenBasis$ is a $\qBaseCoeff$-basis of $\qClAlg$. It is called
    the \emph{generic basis}. Furthermore,
    it contains all the quantum cluster monomials.
\end{Thm}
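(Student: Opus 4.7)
The plan is to deduce the two assertions from what has already been set up: Proposition \ref{prop:differentIndex} gives that $\redGenBasis$ is a $\qBaseCoeff$-basis of $\subQClAlg$, Proposition \ref{prop:subRing} identifies $\subQClAlg$ with $\qClAlg$, and Theorem \ref{thm:anyPattern} expresses $\genTorus(w)$ explicitly in terms of quiver Grassmannians. The bridge from $\genTorus(w)$ to $\genCl(w)$ will be the correction factor $x^{\ind O(w) - \ind w}$.

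First I would verify that $\clGenBasis$ is a $\qBaseCoeff$-basis of $\qClAlg$. The key point is that, by Proposition \ref{prop:compareIndices}, the difference $\ind O(w) - \ind w$ has vanishing projection to the first $n$ coordinates, so the Laurent monomial $x^{\ind O(w) - \ind w}$ lies in $\qBaseCoeff$ and is a unit there. The definition $\genCl(w) = \genTorus(w) \cdot x^{\ind O(w) - \ind w}$ therefore expresses each $\genCl(w)$ as an invertible $\qBaseCoeff$-multiple of $\genTorus(w)$, and multiplying the elements of a $\qBaseCoeff$-basis by units of $\qBaseCoeff$ again yields a $\qBaseCoeff$-basis (using that these scalar factors quasi-commute with everything, by Lemma \ref{lem:coeff_commute}). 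Combined with Propositions \ref{prop:differentIndex} and \ref{prop:subRing}, this gives the basis assertion.

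Next I would show that every quantum cluster monomial $m$ of $\qClAlg$ appears in $\clGenBasis$. Given such an $m$, I would first invoke the normalization Theorem \ref{thm:normalizedQCCFormula} to write $m$ in the form $\sum_v c_{i,v}(q^{\diag/2}) x^{\ind M + \tB v}$, where $M$ is the rigid object in the cluster category whose associated quantum cluster character is $m$, and the $c_{i,v}$ are the bar-invariant Laurent polynomials determined by the principal part $B$ of $\tB$ independently of $\tQ - Q$. Then I would choose $w \in \redWSet$ so that $O(w) = M$ (which exists and is unique, as every rigid object arises as the middle term of a generic triangle between objects in $\add I$). For this $w$, Theorem \ref{thm:anyPattern}(1) combined with the definition of $\genCl(w)$ yields
\[ \genCl(w) \;=\; \sum_{v} P_{q^{\diag/2}}(\Gr_v \kerMod W)\, q^{-(\diag/2)\dim \Gr_v \kerMod W}\, x^{\ind O(w) + \tB v}. \]
To identify this with $m$, I would reduce to the $z$-pattern case: there Proposition \ref{prop:compareIndices} gives $\ind O(w) = \ind w$, Proposition \ref{prop:clusterMonomial}(1) identifies $\gen(w)$ with $\can(w)$, and hence $\genCl(w) = \canTorus(w)$ equals the corresponding quantum cluster monomial, so that the expansion coefficients are forced to be $c_{i,v}(q^{\diag/2}) = P_{q^{\diag/2}}(\Gr_v \kerMod W)\, q^{-(\diag/2)\dim \Gr_v \kerMod W}$. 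Since the coefficients $c_{i,v}$ do not depend on the ambient coefficient pattern, applying Theorem \ref{thm:normalizedQCCFormula} back to the given $\tQ$ and reassembling gives $m = \genCl(w)$, completing the inclusion.

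The delicate step, and the main obstacle, will be the coefficient-pattern-independence of the $c_{i,v}$: one must carefully match, on the two sides, the index shifts (the index $\ind M$ varies with $\tQ - Q$) with the $F$-polynomial factor (which does not). This is exactly where Proposition \ref{prop:compareIndices} together with the machinery of Section \ref{sec:normalization} removes the coefficient dependence, so that the identification achieved in the $z$-pattern setting transports to general $(\tB, \Lambda)$.
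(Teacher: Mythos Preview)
Your proposal is correct and, for the basis assertion, follows exactly the paper's approach: invoke Propositions \ref{prop:differentIndex} and \ref{prop:subRing}, together with the observation (which the paper leaves implicit) that $x^{\ind O(w)-\ind w}$ is a unit in $\qBaseCoeff$ by Proposition \ref{prop:compareIndices}.

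For the claim that all quantum cluster monomials lie in $\clGenBasis$, the paper's proof again cites only Propositions \ref{prop:differentIndex} and \ref{prop:subRing}, which strictly speaking show only that cluster monomials lie in the $\qBaseCoeff$-span of the basis, not that each one equals a specific $\genCl(w)$. Your reduction to the $z$-pattern case via Theorem \ref{thm:normalizedQCCFormula}, Theorem \ref{thm:anyPattern}(1), and Proposition \ref{prop:clusterMonomial}(1) is a more explicit route: you match the cluster monomial expansion with the quiver-Grassmannian formula in the $z$-pattern setting (where the identification is part of \cite{KimuraQin11}), then transport it back using that the $F$-polynomial coefficients $c_{i,v}$ depend only on the principal part $B$. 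The paper treats this identification as already established by the surrounding machinery (the results of \cite{Qin10} and \cite{KimuraQin11} plus Theorem \ref{thm:anyPattern}), whereas you spell out the coefficient-independence step. Both approaches lead to the same conclusion; yours has the advantage of making the dependence on Section \ref{sec:normalization} visible.
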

\begin{proof}
The statement follows from Proposition \ref{prop:differentIndex}
and \ref{prop:subRing}. 
\end{proof}

\subsection{Dual canonical basis and dual PBW basis}
\label{sec:otherBases}

Similar to the treatment of generic quantum cluster characters, for each
$w\in\redWSet$, we normalize the quantum torus elements $\pbwTorus(w)$ and
$\canTorus(w)$ by defining
\begin{align}
      \pbwCl(w)=&\pbwTorus(w)\cdot x^{\ind O(w)-\ind w},\\
    \canCl(w)=&\canTorus(w)\cdot x^{\ind O(w)-\ind w}.\\
\end{align}

\begin{Thm}\label{thm:otherBases}
The sets $\clPbwBasis$, $\clCanBasis$ are $\qBaseCoeff$-bases of
$\qClAlg$. They are called the \emph{dual PBW basis }and \emph{the dual canonical
basis} of the quantum cluster algebra $\qClAlg$
respectively. Furthermore, all the quantum cluster monomials are
contained in $\clCanBasis$.
\end{Thm}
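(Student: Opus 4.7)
The plan is to derive both basis statements from Theorem \ref{thm:genBasis} by transporting triangular transition matrices from the deformed Grothendieck ring $\quotKGp$ to the quantum cluster algebra $\qClAlg$, and then to identify the quantum cluster monomials via Theorem \ref{thm:anyPattern}(3).

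First I would record the triangular relations between the three families in $\quotKGp$. By \eqref{eq:canToPbw} and \eqref{eq:pbwToCan} the bases $\{\pbw(v,w)\}$ and $\{\can(v,w)\}$ are related by a unitriangular transition matrix with respect to the dominance order, and by definition of the almost simple pseudo-modules $\gen(w)=\sum_{w'}r_{w,w'}\can(w')$ the matrix $(r_{w,w'})$ is unitriangular in the dominance order as well. Applying $\contr\cor\qtChar\trunc$ and using Failure \textbf{2} \eqref{eq:failureContr}, these relations descend to equations in $\torus$ of the shape
\begin{align*}
\pbwTorus(w) &= \canTorus(w) + \sum_{(v,w)<(0,w)} Z_{(0,w),(v,w)}(q^{\frac{\diag}{2}}) \, x^{\tB v + \ind C_q v}\canTorus(w-C_qv),\\
\genTorus(w) &= \sum_{w'} r_{w,w'}\, x^{\tB v + \ind C_q v}\canTorus(w'),
\end{align*}
where all correction factors $x^{\tB v+\ind C_qv}$ lie in $\qBaseCoeff$.

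Next I would multiply each of the three families by the common normalization $x^{\ind O(w)-\ind w}$ for $w\in\redWSet$. Since by Lemma \ref{lem:coeff_commute} any element of $\qBaseCoeff$ quasi-commutes with all three families of torus elements, the above triangular relations transfer to relations between $\pbwCl(w)$, $\canCl(w)$ and $\genCl(w)$ in $\qClAlg$ where the transition coefficients belong to $\qBaseCoeff$. By Theorem \ref{thm:genBasis}, $\clGenBasis$ is a $\qBaseCoeff$-basis of $\qClAlg$. Since the matrices taking $\clGenBasis$ to $\clCanBasis$ and $\clCanBasis$ to $\clPbwBasis$ are unitriangular over $\qBaseCoeff$ (hence invertible), both $\clCanBasis$ and $\clPbwBasis$ are also $\qBaseCoeff$-bases of $\qClAlg$.

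For the last assertion, I would argue as follows. By Theorem \ref{thm:anyPattern}(3), whenever $\genTorus(w)$ becomes a quantum cluster monomial for the $z$-pattern ice quiver, one has $\genTorus(w)=\canTorus(w)$, hence $\genCl(w)=\canCl(w)$. Thanks to Theorem \ref{thm:normalizedQCCFormula}, the cluster expansion of a quantum cluster monomial depends on the coefficient pattern only through the normalization $q^{\frac{\diag}{2}}$ and the index vector, so the collection of $w\in\redWSet$ whose $\genCl(w)$ is a cluster monomial is independent of the choice of $\tQ-Q$. Combining this with the factorization \eqref{eq:factorization} and Theorem \ref{thm:genBasis}, every quantum cluster monomial (including the frozen variables encoded by $\pureCoeff w$) is of the form $\genCl(w)$ and hence equals $\canCl(w)$.

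The main obstacle will be bookkeeping the non-algebraic behaviour of $\cor$ measured by Failures \textbf{1}--\textbf{3}. One must check that the correction factors $x^{\tB v+\ind C_qv}$ that spoil $\contr\qtChar=\tChar\contr$ really land in $\qBaseCoeff$ (not merely in $\torus$) and that the quasi-commutation provided by Lemma \ref{lem:coeff_commute} is enough to preserve the triangularity after passing from twisted products in $\targSpace$ to twisted products in $\torus$; this is where Propositions \ref{prop:bar_inv} and \ref{prop:mult_closed} do the real work. The other delicate point is transferring the ``is a cluster monomial'' property between a $z$-pattern seed and an arbitrary compatible pair, for which Theorem \ref{thm:normalizedQCCFormula} is essential.
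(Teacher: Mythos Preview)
Your approach is the same as the paper's: deduce the two new bases from the generic basis of Theorem~\ref{thm:genBasis} via unitriangular transition matrices coming from~\eqref{eq:pbwToCan} and the matrix $(r_{w,w'})$, and deduce the cluster-monomial statement from Theorem~\ref{thm:anyPattern}(3). There is, however, one bookkeeping step you skip that the paper carries out explicitly, and as written your argument does not quite close.

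The issue is the passage from the index set $\N^{I\times\{-1,0\}}$ to $\redWSet$. In your displayed relations, for $w\in\redWSet$ the elements $\canTorus(w-C_qv)$ on the right-hand side will typically have $w-C_qv\notin\redWSet$, so you cannot yet speak of ``the matrix taking $\clGenBasis$ to $\clCanBasis$'' as a $\redWSet\times\redWSet$ matrix. The paper fixes this by expressing $\canTorus(w)$ and $\pbwTorus(w)$ in terms of the $\genTorus(w'')$, then using the factorization $\genTorus(w'')=\genTorus(\coeffFree w'')\cdot\genTorus(\pureCoeff w'')$ with $\genTorus(\pureCoeff w'')=x^{\ind(\pureCoeff w'')}\in\qBaseCoeff$ to collect terms with the same $\coeffFree w''$. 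This produces genuine $\redWSet\times\redWSet$ matrices $\coeffFree R^{-1}$ and $\coeffFree S(q^{\frac{\diag}{2}})$ over $\qBaseCoeff$, and one must then check that they remain upper \emph{uni}triangular: the diagonal entry at $w\in\redWSet$ picks up contributions from all $w''\leq w$ with $\coeffFree w''=w$, and one needs to observe that this forces $w''=w$. Once you insert this step (you already cite~\eqref{eq:coeffFactorization}, just in the wrong place), the rest of your argument goes through exactly as in the paper.
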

\begin{proof}
It suffices to show that $\redPbwBasis$, $\redCanBasis$ are $\qBaseCoeff$-bases of
$\qClAlg$. Applying the truncated character map to
\eqref{eq:pbwToCan}, we obtain
\begin{align}
  \pbwTorus(w)=\canTorus(w)+\sum_{(v,w)\text{ is
      $l$-dominant}}Z_{w,w'}(q^{\frac{\diag}{2}})x^{\tB v+\ind C_q v}\canTorus(w'),
\end{align}
where $w'=w-C_q v$ and $Z_{w,w'}(t)=Z_{0,v;w}(t)\in
t^{-1}\Z[t^{-1}]$. Denote the matrix of the coefficients
$Z_{w,w'}(q^{\frac{\diag}{2}})x^{\tB v+\ind C_q v}$ by $\tilde{Z}(q^{\frac{\diag}{2}})$.

Similarly, denote the matrix of the coefficients $r_{w,w'}x^{\tB
  v+\ind C_q v}$ in \eqref{eq:genTorusElem} by $R$ and its inverse
by $R^{-1}$.

Define the $\redWSet\times \redWSet$
matrix $\coeffFree R^{-1}$ \st its entry in position $(w,w')$ is
\begin{align*}
(\coeffFree R^{-1})_{w,w'}=\sum_{w''\in \N^{I\times
    \{-1,0\}}:\coeffFree w''=w'}R^{-1}_{w,w''}x^{\ind(\pureCoeff w'')}.
\end{align*}
Denote the product $\tilde{Z}(q^{\frac{\diag}{2}})R^{-1}$ by
$S(q^{\frac{\diag}{2}})$. Similarly, the $\redWSet\times \redWSet$
matrix $\coeffFree S(q^{\frac{\diag}{2}})$ is defined
\st its entry in position $(w,w')$ is
\begin{align*}
(\coeffFree S(q^{\frac{\diag}{2}}))_{w,w'}=\sum_{w''\in \N^{I\times
    \{-1,0\}}:\coeffFree w''=w'}S(q^{\frac{\diag}{2}})_{w,w''}x^{\ind(\pureCoeff w'')}.
\end{align*}

   The matrix transition between $\redGenBasis$ and
$\redCanBasis$ is given by the matrix $\coeffFree R^{-1}$. The matrix
transition between $\redGenBasis$ and $\redPbwBasis$ is given by
the matrix $\coeffFree S(q^{\frac{\diag}{2}})$.

With respect to the dominance order, the matrices $R^{-1}$ and $S$ are
upper unitriangular. It follows that $\coeffFree R^{-1}$ and
$\coeffFree S$ are upper triangular. Notice that, if for $w$, $w'$ in $\N^{I\times \{-1,0\}}$, we have $w'\leq w$ with
respect to the dominance order, then $w'$ must be equal to $w$. It follows that $\coeffFree R^{-1}$ and
$\coeffFree S$ are upper unitriangular. Therefore, we obtain the statements from Theorem \ref{thm:genBasis}.
\end{proof}

\begin{Rem}\label{rem:transitionChange} The transition matrices vary little when the choice of the
  coefficients and quantization change:

1) The matrix $Z$ can be solved recursively by a
combinatorial algorithm,
  \cf \cite[7.10]{Lusztig90} or \cite[Section 8]{Nakajima04}. $Z$ only
  depends on $Q$. Also, the
  geometrically defined integer matrix $(r_{w,w'})$ only depends on $Q$. The set
  $\{w':\coeffFree w'=w''\}$ for given $w''$ can be calculated easily.

2) We have $\tilde{Z}(q^{\frac{\diag}{2}})=Z(q^{\frac{\diag}{2}})$ and $R=(r_{w,w'})$ if the quiver is of
$z$-pattern.
\end{Rem}

\subsection{Structure constants}
\label{sec:structureConstant}

In \cite{Kimura10}, Kimura studied the factorization of the dual canonical basis $\mathbf{B}^{\mathrm{up}}$
of a certain quantum unipotent subgroup. Because in \cite{KimuraQin11}, the authors identified $\can(w)$ with
elements in $\mathbf{B}^{\mathrm{up}}$, we can translate his result into
our setting.

\begin{Thm}[{\cite[Theorem 6.21]{Kimura10}}]\label{thm:canFactorization}
Up to $t$-powers, we have the factorization of simples
\begin{align}\label{eq:canFactorization}
  \can(w)=\can(\coeffFree w)\cdot \can(\pureCoeff w).
\end{align}
\end{Thm}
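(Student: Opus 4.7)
My proof plan is to reduce the statement to Kimura's factorization theorem in \cite{Kimura10} via the identification of dual canonical basis elements established in \cite{KimuraQin11}. The strategy is essentially a translation argument, with the main work being to make the dictionary between the two setups precise and to track normalization factors.

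First, I would invoke the main identification of \cite{KimuraQin11}: for the specific choice of compatible pair coming from the level $1$ ice quiver with $z$-pattern, the map $\tChar\trunc$ identifies the deformed Grothendieck ring $\quotKGp$ with a quantum unipotent subgroup, and under this identification the basis $\{\can(w)\}$ is sent (up to normalization) to a subset of Lusztig's dual canonical basis $\mathbf{B}^{\mathrm{up}}$. The labeling parameter $w \in \N^{I \times \{-1,0\}}$ corresponds, via Lemma \ref{lem:zPatternInd} and the index map $\zInd$, to the appropriate weight/index data that Kimura uses to index $\mathbf{B}^{\mathrm{up}}$; in particular the decomposition $w = \coeffFree w + \pureCoeff w$ in Definition \ref{def:pureCoeff} matches Kimura's decomposition of a weight into a ``body'' part and a ``frozen/pure coefficient'' part.

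Next, I would cite Kimura's Theorem 6.21 from \cite{Kimura10}, which states that dual canonical basis elements factor as a product of the coefficient-free part and the pure-coefficient part up to a bar-invariant scalar in $\Z[t^{\pm}]$ — i.e.\ up to a $t$-power, because both factors and their product are bar-invariant and the factorization happens in a quantum torus. Translating this equation through the identification $\can(w) \leftrightarrow \mathbf{B}^{\mathrm{up}}$ yields
\[
\can(w) = t^{\alpha(w)} \, \can(\coeffFree w) \cdot \can(\pureCoeff w)
\]
for some integer $\alpha(w) \in \Z$. The twisted product $\otimes$ on $\quotKGp$ matches the quantum group multiplication (up to the bilinear form $\eMatrix$), which is where any residual $t$-power comes from; since the statement is only asserted up to $t$-powers, this is enough.

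The main obstacle, and the only place genuine care is required, is verifying compatibility of the two sets of conventions: Kimura's ``pure coefficient'' notion is defined in terms of frozen variables in a quantum unipotent subgroup, whereas here $\pureCoeff w$ is defined combinatorially via the cone $\mathrm{span}_{\N}\{e_{i,-1}+e_{i,0}\}$. To bridge this, I would use Lemma \ref{lem:zPatternInd} to observe that $\zInd(e_{i,-1}+e_{i,0}) = e_{i+n}$, so elements of $\pureCoeff w$-type land exactly on the frozen indices $\{n+1,\dots,2n\}$ of the $z$-pattern quiver $\tQ^z_1$. Under $\contr\cor\qtChar\trunc$ these correspond to monomials in the coefficient ring $\qBaseCoeff$, matching Kimura's frozen variables. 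With this dictionary in place the translation is immediate, and the ``up to $t$-powers'' qualifier absorbs both the quantization normalization of $\mathbf{B}^{\mathrm{up}}$ and the twist $q^{\Hf\Lambda(-,-)}$ appearing in Proposition \ref{prop:mult_pbw}.
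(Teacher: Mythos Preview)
Your proposal is correct and matches the paper's approach exactly: the paper does not give a proof of this theorem at all, but simply cites \cite[Theorem 6.21]{Kimura10} after remarking that the identification of $\can(w)$ with elements of $\mathbf{B}^{\mathrm{up}}$ established in \cite{KimuraQin11} allows one to translate Kimura's factorization result into the present setting. Your detailed dictionary (matching $\pureCoeff w$ with frozen indices via Lemma~\ref{lem:zPatternInd}) spells out the translation that the paper leaves implicit.
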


For any $w^1$, $w^2$, $w^3$ in $\redWSet$, define an element in $\qBaseCoeff$
  \begin{align}
    \redCanStr_{w^1,w^2;\tB}^{w^3}=x^{\ind O(w^1)+\ind O(w^2)-\ind
      O(w^3)-\ind (w^1+w^2-w^3)}\sum_{w:\coeffFree
      w=w^3}\canStr_{w^1,w^2}^{w}x^{\ind
      \pureCoeff w}x^{\tB v+\ind C_q v}
  \end{align}
where $v$ is determined by 
\begin{align}\label{eq:u}
  w=w^1+w^2-C_q v
\end{align}
and the integers $b_{w^1,w^2}^w$ by \eqref{eq:otimes}.

\begin{Thm}[Positive basis]\label{thm:positiveBase}
  The structure constants of the dual canonical basis $\redCanBasis$ are
  given by, for any $w^1$, $w^2$ in $\redWSet$,
\begin{align*}
    \canCl(w^1)*\canCl(w^2)&=q^{\Hf\Lambda(\ind(w^1),\ind(w^2))-\frac{\diag}{2}\zLambda(\zInd(w^1),\zInd(w^2))}\\
&\qquad
\cdot\sum_{w^3\in\redWSet}\redCanStr_{w^1,w^2;\tB}^{w^3}(q^{\frac{\diag}{2}})\canCl(w^3).
\end{align*} In particular, they are contained in $\N[q^{\pm\frac{1}{2}}][x_{n+1}^\pm,\ldots,x_m^\pm]$.
\end{Thm}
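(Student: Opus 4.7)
The plan is to transport the canonical basis multiplication identity from the Grothendieck ring $\quotKGp$ down to the quantum torus $\torus$ along the composition $\contr\cdot\cor\cdot\qtChar\trunc$, quantifying the three failures of this composition to be a ring homomorphism. The starting point is the positive multiplication
\begin{align*}
\can(w^1)\otimes\can(w^2)=\sum_{w}\canStr^{w}_{w^1,w^2}(t)\can(w),\qquad \canStr^{w}_{w^1,w^2}(t)\in\N[t^{\pm}],
\end{align*}
supplied by \eqref{eq:otimes}. Each $w$ in the sum determines $v=v(w)$ via $w=w^1+w^2-C_q v$, and decomposes as $w=\coeffFree w+\pureCoeff w$.

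Next, by Theorem \ref{thm:canFactorization} each summand factors (up to a $t$-power) as $\can(w)=\can(\coeffFree w)\otimes\can(\pureCoeff w)$. Under $\contr\cdot\cor\cdot\qtChar\trunc$, the pure-coefficient factor becomes the monomial $x^{\ind\pureCoeff w}$ in the coefficient ring $\qBaseCoeff$, while the coefficient-free factor becomes $\canTorus(\coeffFree w)=\canTorus(w^3)$ with $w^3=\coeffFree w$. Transporting the whole identity to $\torus$ then incurs two controlled corrections. Failure \textbf{1} (equation \eqref{eq:failureCor}) intertwines the twisted product in $\targSpace$ with the one in $\dTorus$ up to the scalar $q^{\Hf\Lambda(\ind w^1,\ind w^2)-\frac{\diag}{2}\zLambda(\zInd w^1,\zInd w^2)}$, which depends only on $w^1,w^2$ (not on the $V$-degrees) and is exactly the leading $q$-power of the theorem. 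Failure \textbf{2} (equation \eqref{eq:failureContr}) rewrites $\canTorus(v,w)$ as $\canTorus(w-C_q v)\cdot x^{\tB v+\ind C_q v}$, producing the correction monomial $x^{\tB v+\ind C_q v}$ appearing inside $\redCanStr^{w^3}_{w^1,w^2;\tB}$.

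Finally, convert each $\canTorus$ into $\canCl$ using $\canCl(w)=\canTorus(w)\cdot x^{\ind O(w)-\ind w}$. Collecting the normalization monomials on both sides of the identity produces the front factor $x^{\ind O(w^1)+\ind O(w^2)-\ind O(w^3)-\ind(w^1+w^2-w^3)}$ of $\redCanStr^{w^3}_{w^1,w^2;\tB}$, and regrouping the sum according to $w^3=\coeffFree w$ matches the announced formula. For the positivity statement $\redCanStr\in\N[q^{\pm\Hf}][x_{n+1}^{\pm},\ldots,x_m^{\pm}]$, I would verify that every correction monomial lies in $\qBaseCoeff$, i.e.\ is supported on the frozen indices: for the normalization factor this is Proposition \ref{prop:compareIndices}, which gives $\pr_n(\ind O(w)-\ind w)=0$ for $w\in\redWSet$; for $\tB v+\ind C_q v$ it is a short direct computation using Lemma \ref{lem:zPatternInd} showing that $\pr_n\ind C_q v=-Bv$. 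Positivity of the $\canStr^{w}(t)$ in $\N[t^\pm]$ then upgrades, after the specialization $t=q^{\diag/2}$, to positivity of $\redCanStr$.

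The main obstacle will be the bookkeeping of the various $t$- and $q$-powers. In particular, the $t$-power ambiguity in Theorem \ref{thm:canFactorization} must combine cleanly with the $q$-power from Failure \textbf{1} to give exactly the stated scalar with no residue; and Failure \textbf{3} ($\ind w\neq\ind O(w)$, equation \eqref{eq:failureInd}) must be packaged precisely into the normalization monomial $x^{\ind O(w^1)+\ind O(w^2)-\ind O(w^3)-\ind(w^1+w^2-w^3)}$. The fact that this is at all possible hinges on the projection identity $\pr_n\ind O(w)=\pr_n\ind w$ for $w\in\redWSet$, which guarantees that the index discrepancies live in the frozen directions.
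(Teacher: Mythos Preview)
Your proposal is correct and rests on the same underlying machinery as the paper --- the three failures \eqref{eq:failureCor}, \eqref{eq:failureContr}, \eqref{eq:failureInd}, the factorization of Theorem \ref{thm:canFactorization}, and the positivity of the $\canStr^{w}_{w^1,w^2}$ --- but the paper organizes the argument differently. Rather than tracking the three failures simultaneously for general $(\tB,\Lambda)$ as you do, the paper first specializes to the $z$-pattern $(\ztB,\zLambda)$, where the composite $\contr\cor\qtChar\trunc$ is a genuine algebra homomorphism and all failures vanish; in that case the multiplication identity in $\quotKGp$ transports verbatim to $\torus$, giving the formula with no correction monomials at all. The passage to arbitrary $(\tB,\Lambda)$ is then a single invocation of the correction technique (Theorem \ref{thm:correction}), which packages all three failures at once. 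Your direct approach has the advantage of being self-contained and making the origin of each correction term transparent, while the paper's two-step route avoids the bookkeeping you flag as the main obstacle by isolating the algebraic content in the $z$-pattern case and relegating all $q$-power and monomial adjustments to the already-proved Theorem \ref{thm:correction}.
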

\begin{proof}
  First consider the quantum cluster algebra $\qClAlg$ associated with
  $(\ztB,\zLambda)$. Then we have
  $\contr\qtChar=\tChar\contr$. The truncated character $\tChar\trunc$ is algebraic from $R_t$
  to $\dTorus$. Therefore, the $\tBase$-linear independent set
  $\canBasis$ in $\torus$ is closed under the twisted product $*$ and
  the corresponding positive structure constants are $b_{w^1,w^2}^{w}$
  as given by
  \eqref{eq:otimes}. Using Theorem \ref{thm:canFactorization}, we
  obtain the structure constants of the dual canonical basis
$\redCanBasis$.

Finally, the correction technique in Theorem
\ref{thm:correction} implies the above result for any acyclic quantum
cluster algebra.
\end{proof}

  We obtain that, whatever the coefficient pattern $\tQ-Q$ and the
  quantization $\Lambda$ we choose, the quantum cluster algebras containing acyclic
  seeds are \emph{strongly positive}: they admit a basis in which
  the structure constants are positive.

The following important consequence of the deformed monoidal
categorification is an easy generalization of the first main result of the author's joint work with Kimura
\cite{KimuraQin11} for arbitrary choice of coefficients and quantization.

\begin{Cor}(Quantum positivity {\cite{KimuraQin11}})\label{cor:positivity}
Any quantum cluster monomial $m$ can be written as a Laurent
polynomial of the quantum cluster variables $x_i$, $1\leq i\leq n$, in
any given seed with coefficients in $\N[q^{\pm\frac{\diag}{2}},x_{n+1}^\pm,\ldots, x_{m}^\pm]$.
\end{Cor}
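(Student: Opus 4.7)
The plan is to identify any quantum cluster monomial $m$ with a concrete element $\canCl(w)$ of the dual canonical basis constructed in Section~\ref{sec:otherBases}, and then read off positivity directly from the explicit formula. By Theorem~\ref{thm:otherBases}, the set $\clCanBasis$ is a $\qBaseCoeff$-basis of $\qClAlg$ that contains every quantum cluster monomial, so there is a unique $w \in \redWSet$ with $m = \canCl(w)$. Unpacking the definition,
\begin{equation*}
\canCl(w) \;=\; \canTorus(w) \cdot x^{\ind O(w)-\ind w} \;=\; \Bigl(\sum_{v} a_{v,0;w}\bigl(q^{\frac{\diag}{2}}\bigr)\, x^{\ind(w)+\tB v}\Bigr)\cdot x^{\ind O(w)-\ind w}.
\end{equation*}

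Next I would extract positivity of the Laurent polynomial $\canTorus(w)$. The coefficients $a_{v,0;w}(t) \in \N[t^{\pm}]$ are precisely the graded multiplicities of intersection cohomology sheaves obtained in Theorem~\ref{thm:decomposition}, which in turn rests on applying the decomposition theorem to the proper morphism $\pi \colon \grProjQuot(v,w) \to \grAffQuot(v,w)$ and the self-dual semisimple complex $1_{\grProjQuot(v,w)}$; smoothness and connectedness of $\grProjQuot(v,w)$ were established in Proposition~\ref{prop:smoothQuot} and Theorem~\ref{thm:connected}. Consequently $\canTorus(w)$ is a positive Laurent polynomial in $x_1,\dots,x_m$ with coefficients in $\N[q^{\pm\frac{\diag}{2}}]$.

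Then I would absorb the correction factor $x^{\ind O(w)-\ind w}$ into the coefficient ring. This is exactly where the hypothesis $w \in \redWSet$ is used: Proposition~\ref{prop:compareIndices} gives $\pr_n \ind O(w) = \pr_n \ind w$, so the vector $\ind O(w) - \ind w$ is supported on the frozen indices $n+1,\dots,m$, and hence $x^{\ind O(w)-\ind w} \in \qBaseCoeff$. Multiplying the positive expansion of $\canTorus(w)$ by this monomial preserves positivity and keeps the coefficients inside $\N[q^{\pm\frac{\diag}{2}},x_{n+1}^{\pm},\dots,x_{m}^{\pm}]$, yielding the desired expansion with respect to the fixed acyclic seed. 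For a different target seed, the same argument applies verbatim if it is itself acyclic (take it as the initial datum when constructing the basis); to cover the remaining cases one invokes the correction technique of Theorem~\ref{thm:correction} to transfer positivity from the acyclic situation to general nondegenerate compatible pairs reachable by mutations.

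The main obstacle is not formal: it is the underlying input $a_{v,0;w}(t) \in \N[t^{\pm}]$, which depends on the entire geometric package developed in Sections~\ref{sec:quiverVarieties}–\ref{sec:qtCharacters} (the new torus action on the acyclic graded quiver varieties, the stratification in Proposition~\ref{prop:grStratification}, and the transversal slice theorem~\ref{thm:grTransversalSlice}). Once this is in hand, the reduction of Corollary~\ref{cor:positivity} to Theorem~\ref{thm:positiveBase} via $m = \canCl(w)$ and the index identity of Proposition~\ref{prop:compareIndices} is essentially bookkeeping.
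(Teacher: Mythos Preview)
Your argument correctly establishes positivity of the Laurent expansion of $m=\canCl(w)$ with respect to the initial acyclic seed: the coefficients $a_{v,0;w}(t)\in\N[t^{\pm}]$ by Theorem~\ref{thm:decomposition}, and the correction factor $x^{\ind O(w)-\ind w}$ lies in $\qBaseCoeff$ by Proposition~\ref{prop:compareIndices}. The gap is in your treatment of the remaining seeds. Theorem~\ref{thm:correction} does \emph{not} transfer positivity between seeds of a fixed cluster algebra; it compares two quantum cluster algebras whose $B$-matrices share the same principal part $B$ but differ in their frozen rows and $\Lambda$-matrices. Under mutation the principal part $B$ itself changes, so once you mutate away from an acyclic seed, Theorem~\ref{thm:correction} has nothing to say. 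Your phrase ``general nondegenerate compatible pairs reachable by mutations'' conflates two unrelated operations.

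The paper closes this gap by a different mechanism. It clears denominators in the quantum Laurent expansion $m=(\sum_{m_*} c_{m_*}\prod_i x_i^{m_i})/(\prod_i x_i^{d_i})$, observing that the $x_i$ of the \emph{target} seed are themselves quantum cluster monomials, hence dual canonical basis elements $\canCl(w_i)$. The resulting identity expresses (a $q$-shift of) $\canCl(\sum_i d_i w_i)*\canCl(w)$ as the $\qBaseCoeff$-linear combination $\sum_{m_*} c_{m_*}\canCl(\sum_i m_i w_i)$, and positivity of the structure constants in Theorem~\ref{thm:positiveBase} then forces $c_{m_*}\in\N[q^{\pm\frac{\diag}{2}},x_{n+1}^{\pm},\ldots,x_m^{\pm}]$. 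The key point is that Theorem~\ref{thm:positiveBase} holds uniformly over the whole basis $\clCanBasis$, so no seed-by-seed reduction is needed.
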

\begin{proof}
By the quantum Laurent phenomenon, we have
\begin{align*}
  m=\frac{\sum_{m_*=(m_i)} c_{m_*}\prod_{1\leq i\leq n}
    x_i^{m_i}}{\prod_i x_i^{d_i}},
\end{align*}
where $m_*=(m_i)_{i\in I}$, $d_*=(d_i)_{i\in I}$ are sequences of nonnegative integers and the
coefficients $c_{m_*}$ are contained in $\qBaseCoeff$. Notice that we
use the usual product $\cdot$ in this expression.

The quantum cluster monomial $m$ equals $\canCl(w)$ for some
$w$. Also, the
quantum $X$-variable $x_i$, $1\leq i\leq m$, equals $\canCl(w_i)$ for
some $w_i$. We can rewrite the above equation as
\begin{align*}
\sum_{m_*=(m_i)} c_{m_*}\canCl(\sum_i m_iw_i)&=\prod_i\canCl(w_i)^{d_i} \cdot \canCl(w)\\
&=q^{-\Hf\Lambda(\ind(\sum_i d_iw^i),\ind(w))}\canCl(\sum_i d_iw_i) * \canCl(w).
\end{align*}

The statement follows from Theorem \eqref{thm:positiveBase}.
\end{proof}

The almost
simple pseudo-modules $\gen(w)$ introduced in Section
\ref{sec:pseudoModules} form a $\qBase$-basis of $\quotKGp$. Notice that, when the ice quiver is
of $z$-pattern, we have
$\contr\cor\tChar\trunc\gen(w)=\genTorus(w)=\genCl(w)$. Denote the structure constants of this basis
by $\genStr_{w^1,w^2}^{w}(t)$: for any $w^1$, $w^2$ in
$\N^{I\times\{-1,0\}}$, we have
\begin{align}\label{eq:genStr}
  \gen(w^1)\otimes\gen(w^2)=\sum_{w}\genStr_{w^1,w^2}^w\gen(w).
\end{align}
Because the matrix $(r_{w,w'})$ and the matrix of the structure constants of $\canBasis$
are upper unitriangular, the matrix $(\genStr_{w^1,w^2}^w)$ is upper
unitriangular as well. Therefore, for every nonzero term in the sum, we always have $w=w^1+w^2-C_q v$ for
some $v$.

For any $w^1$, $w^2$, $w^3$ in $\redWSet$, define an element in $\qBaseCoeff$
  \begin{align}
    \redGenStr_{w^1,w^2;\tB}^{w^3}=x^{\ind O(w^1)+\ind O(w^2)-\ind
      O(w^3)-\ind (w^1+w^2-w^3)}\sum_{w:\coeffFree
      w=w^3}\genStr_{w^1,w^2}^{w}x^{\ind
      \pureCoeff w}x^{\tB v+\ind C_q v}
  \end{align}
where $v$ is determined by $w=w^1+w^2-C_q v$. 

\begin{Thm}
  The structure constants of the generic basis $\redGenBasis$ are
  given by, for any $w^1$, $w^2$ in $\redWSet$,
\begin{align*}
    \genCl(w^1)*\genCl(w^2)&=q^{\Hf\Lambda(\ind(w^1),\ind(w^2))-\frac{\diag}{2}\zLambda(\zInd(w^1),\zInd(w^2))}\\
&\qquad \cdot\sum_{w^3\in\redWSet}\redGenStr_{w^1,w^2;\tB}^{w^3}(q^{\frac{\diag}{2}})\genCl(w^3).
\end{align*}
\end{Thm}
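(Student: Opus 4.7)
The strategy is to mirror the proof of Theorem \ref{thm:positiveBase}, substituting the generic basis $\{\gen(w)\}$ of the deformed Grothendieck ring $\quotKGp$ for the dual canonical basis $\{\can(w)\}$. The plan is to proceed in two steps: first establish the formula when $\tQ$ is the level-$1$ ice quiver with $z$-pattern (so that $(\Lambda,\tB)=(\zLambda,\ztB)$), and then invoke the correction technique of Theorem \ref{thm:correction} to transport the identity to an arbitrary acyclic ice quiver and compatible pair.

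In the $z$-pattern case, Lemma \ref{lem:epsilon} yields $\Lambda=\zLambda$ and the index maps $\ind$ and $\zInd$ coincide, so the prefactor $q^{\Hf\Lambda(\ind w^1,\ind w^2)-\frac{\diag}{2}\zLambda(\zInd w^1,\zInd w^2)}$ collapses to $1$, and Proposition \ref{prop:compareIndices} gives $\genCl(w)=\genTorus(w)$ for $w\in\redWSet$. Moreover, in this setting both failures \eqref{eq:failureCor} and \eqref{eq:failureContr} are trivial, so the composition $\contr\circ\cor\circ\qtChar\trunc$ restricts to an algebra homomorphism (using Theorem \ref{thm:injectiveHom} and the fact that $\tChar\trunc$ is multiplicative). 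Applying this homomorphism to the identity $\gen(w^1)\otimes\gen(w^2)=\sum_w \genStr_{w^1,w^2}^w\gen(w)$ in $\quotKGp$ yields $\genTorus(w^1)*\genTorus(w^2)=\sum_w \genStr_{w^1,w^2}^w(q^{\frac{\diag}{2}})\genTorus(w)$ in $\torus$. Since the matrix $(\genStr_{w^1,w^2}^w)$ is upper unitriangular with respect to the dominance order, every nonzero term satisfies $w=w^1+w^2-C_q v$ for a unique $v\in\N^I$. Invoking the factorization Theorem \ref{thm:anyPattern}(2) to write $\genTorus(w)=\genTorus(\coeffFree w)\cdot\genTorus(\pureCoeff w)$, where $\genTorus(\pureCoeff w)$ is the coefficient monomial $x^{\ind\pureCoeff w}$, and regrouping the sum by $w^3:=\coeffFree w\in\redWSet$ (using Lemma \ref{lem:coeff_commute} to quasi-commute the coefficient monomial past the rest), one matches the coefficient of $\genCl(w^3)$ with the formula defining $\redGenStr_{w^1,w^2;\ztB}^{w^3}$; the $x^{\tB v+\ind C_q v}$ correction factor is trivial here and the index shift $x^{\ind O(\cdot)-\ind(\cdot)}$ vanishes by Proposition \ref{prop:compareIndices}.

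For the general case, Theorem \ref{thm:correction} describes precisely how the multiplicative structure of the basis $\redGenBasis$ changes under modification of the frozen pattern $\tQ-Q$ and of the compatible matrix $\Lambda$. The prefactor $q^{\Hf\Lambda(\ind w^1,\ind w^2)-\frac{\diag}{2}\zLambda(\zInd w^1,\zInd w^2)}$ accounts for the twist \eqref{eq:failureCor} between the two twisted products; the monomial $x^{\tB v+\ind C_q v}$ records the correction factor from \eqref{eq:failureContr}; and the factor $x^{\ind O(w^1)+\ind O(w^2)-\ind O(w^3)-\ind(w^1+w^2-w^3)}$ absorbs the discrepancy $\ind w\neq \ind O(w)$ flagged as Failure \textbf{3}. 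The main obstacle I anticipate is the simultaneous bookkeeping of these three failures together with the coefficient factorization \eqref{eq:factorization}. The crucial consistency check is that the vector $v$ arising from $w=w^1+w^2-C_q v$ in the summand indexed by $w$ coincides with the $v$ appearing in the correction monomial of $\redGenStr_{w^1,w^2;\tB}^{w^3}$; this should follow from Lemma \ref{lem:zReduction} combined with the upper-unitriangular shape of $(\genStr_{w^1,w^2}^w)$, but the algebra is where the real work lies.
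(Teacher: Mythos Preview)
Your proposal is correct and follows essentially the same approach as the paper: the paper's proof simply states that the theorem is a consequence of the factorization \eqref{eq:factorization} and the correction technique of Theorem \ref{thm:correction}, with the argument being analogous to that of Theorem \ref{thm:positiveBase}. Your expansion of this sketch---first handling the $z$-pattern case via the algebraicity of $\contr\cor\qtChar\trunc$ and the factorization of Theorem \ref{thm:anyPattern}(2), then transporting to the general case by Theorem \ref{thm:correction}---is exactly what the paper intends.
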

\begin{proof}
This theorem is a consequence of
\eqref{eq:factorization} and Theorem \ref{thm:correction}. The
proof is similar to that of Theorem \ref{thm:positiveBase}.
\end{proof}

\section{Correction technique}\label{sec:correction}

\subsection{Corrections of algebraic relations}

    Given any (quantum) cluster algebra, if some basis of it is known,
    we want to ask what happens if we change the coefficients and the
    quantization. As we have seen in Theorem \ref{thm:genBasis}
    Theorem \ref{thm:otherBases} and Remark
    \ref{rem:transitionChange}, the bases and their transition
    matrices vary little when the coefficient type $\tQ-Q$ and the
    quantization $\Lambda$ change. We shall
    show that this phenomenon is true in general, by generalizing the correction factors in the previous
    failures \eqref{eq:failureCor} \eqref{eq:failureContr} \eqref{eq:failureInd}.

Let $n$, $m\one$, $m\two$
    be three integers, \st $0<n\leq m\one,m\two$. For $i=1$, $2$, let
    $\tB\typeI$ be an $m\typeI\times n$ matrix. Let $\Lambda\typeI$ be
    an $m\typeI\times m\typeI$ skew-symmetric integer matrix.
    \begin{Def}[weakly compatible pair]
      The pair $(\tB\typeI,\Lambda\typeI)$ is called \emph{weakly
  compatible} if there
    exists an $n\times n$ integer matrix $D\typeI$ \st we have
    \begin{align}\label{eq:weaklyCompatible}
      \Lambda\typeI(-\tB\typeI)=\begin{bmatrix}D\typeI\\0 \end{bmatrix}.
    \end{align}
    \end{Def}

 Assume the matrices $\tB\typeI$ have the common principal part $B$,
 the pairs $(\tB\typeI,\Lambda\typeI)$ are weakly compatible, and
 $D\two=\diag D\one$ for some integer $\diag$. A priori, $\rank D\one$ is
 no less than $\rank D\two$.

Define the associated quantum tori
    \begin{align*}
      \torus\typeI=\torus\typeI(\Lambda\typeI)=\qBase[x_1^\pm,\ldots,x_{m\typeI}^\pm]
    \end{align*}
    \st twisted product is determined by $\Lambda\typeI$.

    Let $max(m\one,m\two)$ be denoted by $m$. As in \eqref{eq:dTorus},
    enlarge $\tB\one$, $\tB\two$ into $m\times n$ matrices and $\Lambda\two$
    into an $m\times m$ matrix by adding zero entries, and define the
    associated  enlarged quantum torus
    \begin{align*}
      \torus=\torus(\Lambda\two)=\qBase[x_1^\pm,\ldots,x_{m}^\pm],
    \end{align*}
    \st its twisted product is determined by the enlarged matrix
    $\Lambda\two$.

    For each $i$, and for any integer $s\geq 3$, integer $1\leq j\leq
    s$, vector $g\typeI_j\in\Z^{n_i}$, and polynomial
    \begin{align*}
      F_j(t;y_1,\ldots,y_n)=\sum_{v\in\N^n}b_j(t;v)y^v\in\tBase[y_1,\ldots,y_n],
    \end{align*}
    where $t$, $y_1$, \ldots, $y_n$ are indeterminates,
    $b_j(t;v)\in\tBase$, we define the following element in the quantum
    torus $\torus\one$:
    \begin{align*}
      M\one_j&=x^{g\one_j}F_j(q^{\Hf};x^{\tB\one
        e_1},\ldots,x^{\tB\one e_n})\\
      &=x^{g\one_j}\sum_{v\in\N^n}b_j(q^{\Hf};v)x^{\tB\one
        v},
    \end{align*}
    where $e_k$ is the $k$-th unite vector in $\Z^n$,  and a similar element in $\torus\two$:
    \begin{align*}
      M\two_j&=x^{g\two_j}F_j(q^{\frac{\diag}{2}};x^{\tB\two
        e_1},\ldots,x^{\tB\two e_n})\\
&=x^{g\two_j}\sum_{v\in\N^n}b_j(q^{\frac{\diag}{2}};v)x^{\tB\two
        v}.
    \end{align*}
Assume the first $n$ coordinates of $g\one_j$ and $g\two_j$ are equal,
\ie we have $\pr_n g\one_j=\pr_n g\two_j=g_j$
for some vector $g_j\in\Z^n$.

\begin{Thm}\label{thm:correction}
Assume that $\tB\one$ is of full rank and $b_j(t;0)$ does not vanish
for any $3\leq j\leq s$. If the following
  equation holds in $\torus\one$:
  \begin{align}\label{eq:oldRelation}
    q^{-\Hf\Lambda\one(g\one_1,g\one_2)} M\one_1*M\one_2=\sum_{3\leq j\leq s}
    c\one_j(q^{\Hf})M\one_j
  \end{align}
  for some coefficients
  $c\one_j(t)\in\tBase[x_{n+1},\ldots,x_{m\one}^\pm]$, then there
  exist unique vectors $u_j\in\N^n$ \st $g\one_j=g\one_1+g\one_2+\tB\one
  u_j$, and we have
  \begin{align}\label{eq:newRelation}
    q^{-\Hf\Lambda\two(g\two_1,g\two_2)} M\two_1*M\two_2=\sum_{3\leq j\leq s}
    c\two_j(q^{\frac{\diag}{2}})M\two_j
  \end{align}
\st the coefficients in $\tBase[x_{n+1},\ldots,x_{m\two}^\pm]$ are given by
  \begin{align}\label{eq:correctionFactor}
    c\two_j(q^{\frac{\diag}{2}})=c\one_j(q^{\frac{\diag}{2}})x^{g\two_1+g\two_2+\tB\two u_j-g\two_j}.
  \end{align}
\end{Thm}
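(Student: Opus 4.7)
The plan is to expand both sides of \eqref{eq:oldRelation} as $\qBase$-linear combinations of monomials $x^g$ in $\torus\one$, separate the contributions coming from the \emph{principal data} $(B = \pr_n \tB\typeI, D\typeI, g_j = \pr_n g\typeI_j)$ from those coming from the \emph{frozen data} (the remaining coordinates of $g\typeI_j$ and the Laurent monomial inside $c\typeI_j$), and then translate the resulting identity to $\torus\two$. The essential algebraic tools, both immediate from \eqref{eq:weaklyCompatible} together with $\pr_n \tB\typeI = B$, are the identities
\begin{align*}
\Lambda\typeI(g, \tB\typeI v) = -(\pr_n g)^T D\typeI v, \qquad \Lambda\typeI(\tB\typeI v, \tB\typeI w) = -v^T B^T D\typeI w.
\end{align*}
These show that every $q$-power generated by twisted products among the factors $x^{g\typeI}$ and $x^{\tB\typeI v}$ depends only on the principal data, and in particular is unchanged when one passes from $(1)$ to $(2)$ under the substitution $q^\Hf \mapsto q^{\diag/2}$ (since $D\two = \diag D\one$ and $\Hf \diag = \diag/2$).

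Concretely, a direct expansion gives
\begin{align*}
q^{-\Hf\Lambda\one(g\one_1,g\one_2)} M\one_1 * M\one_2 = \sum_{u\in\N^n} A(q^\Hf;u)\, x^{g\one_1+g\one_2+\tB\one u},
\end{align*}
where
\begin{align*}
A(t;u) = \sum_{v+w=u} b_1(t;v) b_2(t;w)\, t^{-g_1^T D\one w + g_2^T D\one v - v^T B^T D\one w}
\end{align*}
depends only on $B,D\one,g_1,g_2$ and on the polynomials $b_1,b_2$. The same computation in $\torus\two$ yields the analogous expansion with $\tB\one\to\tB\two$, $g\one_i\to g\two_i$, and $A(q^\Hf;u)\to A(q^{\diag/2};u)$; in particular the principal part of every monomial exponent agrees between $(1)$ and $(2)$.

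For the right-hand side, write $c\one_j(q^\Hf) = \sum_f c\one_{j,f}(q^\Hf) x^f$ with $f$ supported on the frozen coordinates, and expand each $c\one_j * M\one_j$ into monomials of the form $x^{g\one_j+f+\tB\one v}$. Comparing supports with the LHS forces the principal part $\pr_n g\one_j$ to lie in $g_1+g_2+B\Z^n$; the hypothesis $b_j(t;0)\neq 0$ guarantees that $x^{g\one_j}$ actually appears in $M\one_j$ with nonzero coefficient, so by matching the monomial of smallest $\tB\one$-shift, combined with full rank of $\tB\one$, one obtains a unique $u_j\in\N^n$ with $g\one_j = g\one_1+g\one_2+\tB\one u_j$. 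Once $u_j$ is pinned down, the coefficients $c\one_j(q^\Hf)$ can be solved recursively (in the dominance order on $u$) from the expansion of $A(q^\Hf;u)$ and the quasi-commutations governed by $\Lambda\one$.

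Finally, repeating the same recursion in $\torus\two$ produces coefficients $c\two_j$ whose \emph{principal} $q$-content matches that of $c\one_j$ after the substitution $q^\Hf \mapsto q^{\diag/2}$, while the \emph{frozen} factors differ by precisely $x^{g\two_1+g\two_2+\tB\two u_j - g\two_j}$; this vector has vanishing principal part (by $g_j = g_1+g_2+Bu_j = \pr_n g\two_j$) so it is a Laurent monomial in $x_{n+1}^\pm,\ldots,x_{m\two}^\pm$, and the identity \eqref{eq:newRelation} with correction \eqref{eq:correctionFactor} follows. The main technical obstacle will be the bookkeeping of $q$-powers produced by quasi-commutation of the frozen monomials inside $c\typeI_j$ with the surrounding principal factors $x^{\tB\typeI v}$: these powers involve the frozen blocks of $\Lambda\typeI$, which are \emph{not} assumed to agree between $(1)$ and $(2)$ even after rescaling, and one must verify that the excess $q$-factors recombine cleanly with the change of frozen coordinates in $g\typeI_j$ so that the correction ends up being a pure monomial, with no stray $q$-factors surviving. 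This will rely on the fact that the correction is supported entirely on frozen directions and that twisted multiplication by elements of $\qBaseCoeff$ preserves the expansions in the spirit of Lemma \ref{lem:coeff_commute}.
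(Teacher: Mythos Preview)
Your expansion of the left-hand side and the observation that all $q$-powers arising from $M_1*M_2$ depend only on the principal data are correct and match the paper. There are, however, two points where you diverge from what is needed.

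First, the product on the right of \eqref{eq:oldRelation} is the \emph{usual} commutative product $c^{(1)}_j(q^{1/2})\cdot M^{(1)}_j$, not the twisted one; the only $*$ in the statement is between $M_1$ and $M_2$. Hence no $q$-powers are produced by placing the frozen monomials of $c^{(i)}_j$ next to $x^{\tB^{(i)}v}$, and the ``main technical obstacle'' you describe---involving the frozen block of $\Lambda^{(i)}$---never arises.

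Second, your plan to ``solve for $c^{(1)}_j$ recursively'' and then ``repeat the recursion in $\torus^{(2)}$'' does not match the statement and will not prove it. The $c^{(1)}_j$ are \emph{given}; the theorem then \emph{defines} $c^{(2)}_j$ by \eqref{eq:correctionFactor} and asserts that \eqref{eq:newRelation} holds for those specific coefficients. Nothing forbids several indices $j$ from sharing the same $u_j$ or the right-hand decomposition from being redundant, so the $c^{(1)}_j$ cannot in general be reconstructed from the equation, and a recursion would at best produce some other solution. The paper's argument bypasses this entirely: after establishing the $u_j$, expand both sides of \eqref{eq:oldRelation} as commutative Laurent polynomials and observe that every term (indexed by $v=v_1+v_2$ on the left, $v=u_j+v_j$ on the right) factors as its $\torus^{(2)}$-counterpart times the frozen monomial
\[
x^{g^{(1)}_1+g^{(1)}_2-g^{(2)}_1-g^{(2)}_2}\,x^{(\tB^{(1)}-\tB^{(2)})v}.
\]
One then substitutes $q^{1/2}\mapsto q^{\delta/2}$ and strips this frozen factor from each term. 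Since the $c^{(1)}_j$ sit in front as opaque coefficients throughout, they are carried over unchanged (apart from the $q$-substitution), and regrouping the $j$-th summand around $x^{g^{(2)}_j}$ produces exactly the extra monomial $x^{g^{(2)}_1+g^{(2)}_2+\tB^{(2)}u_j-g^{(2)}_j}$ in \eqref{eq:correctionFactor}.
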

\begin{proof}
  Expanding $\LHS$ of \eqref{eq:oldRelation}, we obtain
  \begin{align*}
    \LHS=&\sum_{v_1,v_2:v_1+v_2=v}q^{-\Hf\Lambda\one(g\one_1,g\one_2)+\Hf\Lambda\one(g\one_1+\tB\one
      v_1,g\one_2+\tB\one v_2)}\\
    &\qquad\qquad \cdot
    b_1(q^{\Hf};v_1)b_2(q^{\Hf};v_2)x^{g\one_1+g\one_2+\tB\one v}\\
=&\sum_{v_1,v_2:v_1+v_2=v}q^{\Hf(-g_1^T D\one v_2 +g_2^TD\one v_1+v_1^T BD\one v_2)}\\
    &\qquad\qquad \cdot
    b_1(q^{\Hf};v_1)b_2(q^{\Hf};v_2)x^{g\one_1+g\one_2+\tB\one v}.\\
=&\sum_{v_1,v_2:v_1+v_2=v}q^{\frac{1}{2\diag}(-g_1^T D\two v_2 +g_2^TD\two v_1+v_1^T BD\two v_2)}\\
    &\qquad\qquad \cdot
    b_1(q^{\Hf};v_1)b_2(q^{\Hf};v_2)x^{g\one_1+g\one_2+\tB\one v}.
\end{align*}
In $\eqref{eq:oldRelation}$, since $b_j(t;0)$ is nonzero, the monomial $x^{g\one_j}$ in $\RHS$ must be killed by either another monomial $x^{g\one_{j'}+\tB\one
    v_{j'}}$ in $\RHS$ or a monomial in $\LHS$. In the former case,
  repeat this argument for $x^{g\one_{j'}}$. Then, after finite steps, we
  obtain that for some $u_j\in\N^n$, $x^{g\one_j}$ must equal the
  monomial $x^{g\one_1+g\one_2+\tB\one u_j}$ in $\LHS$. Therefore, we can
  rewrite $\RHS$ as
  \begin{align*}
    \RHS=&\sum_jc\one_j(q^\Hf)\sum_{v_j}b_j(q^\Hf;v_j)x^{g\one_1+g\one_2+\tB\one
      (u_j+v_j)}.
  \end{align*}

View the both sides as usual Laurent polynomial and embed them into
the enlarged quantum torus $\torus$. We can rewrite them as
\begin{align*}
\LHS=&\sum_{v_1,v_2:v_1+v_2=v}(q^{\frac{1}{2\diag}})^{-\Lambda\two(g\two_1,g\two_2)+\Lambda\two(g\two_1+\tB\two v_1,g\two_2+\tB\two v_2)}\\
    &\qquad\qquad \cdot
    b_1(q^{\Hf};v_1)b_2(q^{\Hf};v_2)x^{g\two_1+g\two_2+\tB\two v}\\
&\qquad\qquad\cdot x^{g\one_1+g\one_2-g\two_1-g\two_2}x^{(\tB\one-\tB\two) v},\\
\RHS=&\sum_{j,v_j:u_j+v_j=v}c\one_j(q^\Hf)b_j(q^\Hf;v_j)x^{g\two_1+g\two_2+\tB\two
      (u_j+v_j)}\\
&\qquad\qquad \cdot x^{g\one_1+g\one_2-g\two_1-g\two_2}x^{(\tB\one-\tB\two)(u_j+v_j)}.
\end{align*}

In the both sides, we replace the indeterminate $q^\Hf$ by
$q^{\frac{\diag}{2}}$ and delete the factor
\begin{align*}
  x^{g\one_1+g\one_2-g\two_1-g\two_2}x^{(\tB\one-\tB\two)v}
\end{align*}
in
each monomial. Then we still have $\LHS=\RHS$, which now become
\begin{align*}
  \LHS&=q^{-\Hf \Lambda\two(g\two_1,g\two_2)}M\two_1*M\two_2\\
\RHS&=\sum_j\sum_{v_j}c\one_j(q^{\frac{\diag}{2}})b_j(q^{\frac{\diag}{2}};v_j)x^{g\two_1+g\two_2+\tB\two u_j-g\two_j}x^{g\two_j+\tB\two
  v_j}\\
&=\sum_jc\one_j(q^{\frac{\diag}{2}})x^{g\two_1+g\two_2+\tB\two u_j-g\two_j}M_j.
\end{align*}
Thus, \eqref{eq:newRelation} is proved.
\end{proof}

\begin{Rem}
  The quotient between the $q$-powers on the left of \eqref{eq:oldRelation}
  and \eqref{eq:newRelation} should be viewed as the generalization of
  the $q$-power correction factor in Failure \textbf{1}
  \eqref{eq:failureCor}. The correction factor
  $x^{g\two_1+g\two_2+\tB\two u_j-g\two_j}$ should be viewed as the
  generalization of the factor in Failure \textbf{2}
  \eqref{eq:failureContr}. The difference $g\two_j-g\one_j$ should be
  viewed as the generalization of the difference of the two sides of \eqref{eq:failureInd}.
\end{Rem}

\subsection{Structure constants}


\begin{Thm}\label{thm:quantumClusterAlgebraChange}
For $i=1$, $2$, assume that the pair $(\tB\typeI,\Lambda\typeI)$, $i=1,2$, in
Theorem \ref{thm:correction} is either compatible or the
matrix $\Lambda\typeI$ is zero. Denote the
corresponding quantum or classical cluster algebras by $\qClAlg\typeI$.



For $i=1$, $2$, let $\basis\typeI$ be a $\Z[q^{\pm\Hf}][x_{n+1}^\pm,\ldots,x_{m\typeI}^\pm]$-basis of $\qClAlg\typeI$ \st its elements take the form $M\typeI_j$. If
the structure constants of $\basis\one$ are described by
\eqref{eq:oldRelation}, then the structure constants of $\basis\two$ are described by the
corresponding equation \eqref{eq:newRelation}.
\end{Thm}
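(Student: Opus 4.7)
The plan is to deduce this theorem as a direct application of the correction technique (Theorem~\ref{thm:correction}), with the main content being a book-keeping verification that the transported coefficients still lie in the coefficient ring of $\torus^{(2)}$, so that the right-hand side of \eqref{eq:newRelation} really does exhibit the structure constants of $\basis^{(2)}$.

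First, I would fix any two basis elements $M^{(1)}_{1}, M^{(1)}_{2} \in \basis^{(1)}$. Since $\basis^{(1)}$ is a $\Z[q^{\pm\Hf}][x_{n+1}^\pm,\ldots,x_{m^{(1)}}^\pm]$-basis of $\qClAlg^{(1)}$, the product $q^{-\Hf\Lambda^{(1)}(g^{(1)}_1,g^{(1)}_2)} M^{(1)}_1 * M^{(1)}_2$ has a unique expansion of the form \eqref{eq:oldRelation}. Theorem~\ref{thm:correction} then applies verbatim and produces the companion identity \eqref{eq:newRelation} in $\torus^{(2)}$, with $u_j \in \N^n$ uniquely determined by $g^{(1)}_j = g^{(1)}_1 + g^{(1)}_2 + \tB^{(1)} u_j$ and transported coefficients $c^{(2)}_j(q^{\frac{\diag}{2}}) = c^{(1)}_j(q^{\frac{\diag}{2}}) \cdot x^{g^{(2)}_1+g^{(2)}_2+\tB^{(2)} u_j - g^{(2)}_j}$.

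The first thing to check is that each $c^{(2)}_j$ actually lies in $\Z[q^{\pm\Hf}][x_{n+1}^\pm,\ldots,x_{m^{(2)}}^\pm]$, i.e.\ that the correction factor $x^{g^{(2)}_1+g^{(2)}_2+\tB^{(2)} u_j - g^{(2)}_j}$ has vanishing first $n$ coordinates. Applying $\pr_n$ to the defining relation $g^{(1)}_j = g^{(1)}_1 + g^{(1)}_2 + \tB^{(1)} u_j$ and using the hypothesis $\pr_n g^{(1)}_k = \pr_n g^{(2)}_k = g_k$ together with the common principal part $B$, one obtains $g_j = g_1 + g_2 + B u_j$, so
\begin{align*}
\pr_n\bigl(g^{(2)}_1 + g^{(2)}_2 + \tB^{(2)} u_j - g^{(2)}_j\bigr) = g_1 + g_2 + B u_j - g_j = 0,
\end{align*}
which gives the desired containment in the coefficient ring of $\torus^{(2)}$.

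It remains to observe that the identity \eqref{eq:newRelation} indeed exhibits the structure constants of $\basis^{(2)}$: because $\basis^{(2)}$ is a basis of $\qClAlg^{(2)}$ over its coefficient ring, the expansion of $q^{-\Hf\Lambda^{(2)}(g^{(2)}_1,g^{(2)}_2)} M^{(2)}_1 * M^{(2)}_2$ in $\basis^{(2)}$ is unique, and the $c^{(2)}_j$ produced above must be those unique coefficients. The main (but modest) obstacle is the coefficient-ring verification in the previous paragraph, since it uses both the rank/weak-compatibility assumptions and the matching of principal parts to ensure that the correction monomials involve only frozen indices; once this is in place, the theorem follows formally from Theorem~\ref{thm:correction} applied pair-by-pair.
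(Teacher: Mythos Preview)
Your proposal is correct and follows the same approach as the paper, which simply states that the result is obtained by taking \eqref{eq:oldRelation} to be the multiplication of basis elements. You have filled in more detail than the paper does---in particular, your verification that the correction monomial $x^{g^{(2)}_1+g^{(2)}_2+\tB^{(2)}u_j-g^{(2)}_j}$ has vanishing first $n$ coordinates is a useful explicit check (this containment is asserted in Theorem~\ref{thm:correction} but not spelled out there), and your uniqueness observation makes the logic transparent---but the underlying argument is identical.
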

\begin{proof}

The statement is obtained
  by taking \eqref{eq:oldRelation} to be the multiplication of basis elements.
\end{proof}

\begin{Rem}
By this theorem, in order to study the structure constants of the bases, it suffices to choose a special coefficient pattern with
special quantizations, \eg the
principal coefficients with the canonical quantization.


\end{Rem}

\subsection{Acyclic case}
\label{sec:classical}

Now we can apply the correction techniques to the results of Section
\ref{sec:bases}. We simplify the proofs of previous results on bases
of acyclic quantum cluster algebras, and we also present results about the
bases of acyclic classical cluster algebras.

Let the compatible pairs $(\tB\typeI,\Lambda\typeI)$, $i=1,2$,
be given as in Theorem \ref{thm:quantumClusterAlgebraChange}.
Further assume that $B=B_Q$ for some acyclic
quiver, and choose $(\tB\one,\Lambda\one)$ to be
$(\ztB,\zLambda)$. Then we can choose the $\qBase$-linearly independent
subset\footnote{The subset $\basis\one$ is a basis of the subalgebra
of $\qClAlg\one$ generated by the quantum cluster variables and the
frozen variables $x_{n+1},\ldots,x_{2n}$.} $\basis\one$ of the
elements of $\qClAlg\one$
to be the set $\genClBasis$, the set $\pbwClBasis$, or the set
$\canClBasis$. Notice that the elements of $\basis\one$ take the form $M\one_j$ of
Theorem \ref{thm:correction}. Define $\basis\two$ to be
the subset $\clGenBasis$, $\clPbwBasis$, $\clCanBasis$ in $\torus\two$
respectively if
$\Lambda\two$ is nonzero, or the corresponding classical limit if
$\Lambda\two=0$. Then the elements of $\basis\two$ take the form $M\two_j$.

\begin{Thm}\label{thm:acyclicBasis}
If $\tB\two$ is of full rank, then $\basis\two$ is a $\qBaseCoeff$-basis of
$\qClAlg\two$. Furthermore, if $\basis\two$ is $\clGenBasis$ or
$\clCanBasis$, then the structure constants of $\basis\two$ can be
deduced from \eqref{eq:newRelation} which corresponds to the structure
constants equation \eqref{eq:oldRelation}.
\end{Thm}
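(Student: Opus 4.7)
My plan is to leverage Theorem \ref{thm:correction} as a transport mechanism that carries every algebraic relation known in $\qClAlg^{(1)} = \qClAlg(\ztB,\zLambda)$ over to the new algebra $\qClAlg^{(2)}$, and then simply check that (a) the transported set still spans, (b) it remains $\qBaseCoeff$-linearly independent, and (c) the structure constants fall out of the correction formula \eqref{eq:correctionFactor} for free.

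First I would observe that each element of $\basis^{(1)}$ (whether $\genClBasis$, $\pbwClBasis$, or $\canClBasis$) has by construction the form
\[
M^{(1)}_j = x^{g^{(1)}_j}\sum_{v\in\N^n} b_j(q^{\Hf};v)\,x^{\tB^{(1)} v},
\]
where $g^{(1)}_j = \zInd(w_j) + (\ind O(w_j) - \ind w_j)$ and the polynomials $b_j(t;v)$ record Betti numbers of quiver Grassmannians or IC-stalks, hence depend only on $Q$ and not on the coefficient pattern. The corresponding $\basis^{(2)}$ is built from the \emph{same} polynomials $b_j$ but with the index data $g^{(2)}_j$ computed in the new pattern; crucially, $\pr_n g^{(1)}_j = \pr_n g^{(2)}_j = \pr_n \ind w_j$, by Proposition \ref{prop:compareIndices} (applied to both sides). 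Thus the pairs $(M^{(1)}_j, M^{(2)}_j)$ are exactly of the shape to which Theorem \ref{thm:correction} applies.

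Next I would prove linear independence of $\basis^{(2)}$ over $\qBaseCoeff$. For $w \neq w'$ in $\redWSet$, the common first $n$ coordinates $\pr_n g_j$ are distinct (again by the argument in Proposition \ref{prop:differentIndex}), so the leading monomials $x^{g^{(2)}_j}$ of distinct basis elements have distinct projections onto the first $n$ coordinates. Since $\tB^{(2)}$ is of full rank by hypothesis, the shifts $\tB^{(2)} v$ ($v \in \N^n$) cannot move one leading principal index onto another, and the elements are $\qBaseCoeff$-independent. For spanning, I would argue that $\basis^{(2)}$ contains (or, after the usual normalization, generates along with $x_{n+1}^{\pm},\ldots,x_{m^{(2)}}^{\pm}$) every quantum cluster variable of $\qClAlg^{(2)}$: by Theorem \ref{thm:normalizedQCCFormula}, the quantum cluster variables $x_i(t) \in \qClAlg^{(2)}$ take the shape $\sum_v c_{i,v}(q^{\frac{\diag}{2}}) x^{\ind(M_i(t)) + \tB^{(2)} v}$ with the \emph{same} polynomials $c_{i,v}$ as in $\qClAlg^{(1)}$; thus they are exactly the correction-images of the corresponding quantum cluster variables of $\qClAlg^{(1)}$, which lie in the $\qBaseCoeff$-span of $\basis^{(1)}$. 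The classical limit ($\Lambda^{(2)} = 0$) is handled identically by setting $q^{\Hf} \mapsto 1$, since Theorem \ref{thm:correction} does not use any positivity of $\Lambda^{(2)}$.

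Finally, for the structure constants in the generic and dual-canonical cases, I would take the multiplication identity
\[
q^{-\Hf\Lambda^{(1)}(g^{(1)}_1,g^{(1)}_2)}\,M^{(1)}_1 * M^{(1)}_2 = \sum_{j\geq 3} c^{(1)}_j(q^{\Hf})\,M^{(1)}_j
\]
(which is \eqref{eq:oldRelation}, coming from the structure constants formulas of Theorem \ref{thm:positiveBase} and of Section \ref{sec:structureConstant}) and apply Theorem \ref{thm:correction} verbatim to obtain the new identity \eqref{eq:newRelation} with the explicit correction factor $x^{g^{(2)}_1 + g^{(2)}_2 + \tB^{(2)} u_j - g^{(2)}_j} \in \qBaseCoeff$. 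The main obstacle, and the one place where care is needed, is verifying that the hypothesis $b_j(t;0) \neq 0$ of Theorem \ref{thm:correction} really holds for the three bases in question: this amounts to knowing that the leading monomial of each $M^{(1)}_j$ is indeed $x^{g^{(1)}_j}$ (no cancellation at $v=0$), which is built into the construction of $\gen(w)$, $\pbw(0,w)$, and $\can(0,w)$ in Sections \ref{sec:perverseSheaves}--\ref{sec:pseudoModules} via the upper uni-triangularity of the basis transitions. With this verified, the full-rank hypothesis on $\tB^{(2)}$ guarantees uniqueness of the exponents $u_j$, and the theorem follows.
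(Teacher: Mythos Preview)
Your approach is essentially the paper's approach: transport everything from the $z$-pattern via Theorem~\ref{thm:correction}, check independence via distinct leading indices plus full rank of $\tB^{(2)}$, and read off structure constants. Two points need tightening.

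First, your spanning argument in the second paragraph is incomplete as stated. Knowing that each individual quantum cluster variable $x_i(t)^{(2)}$ lies in the $\qBaseCoeff$-span of $\basis^{(2)}$ does not yet show that arbitrary \emph{products} of them do; for that you need the span to be closed under $*$. You do establish this closure in your third paragraph (for $\genCl$ and $\canCl$) by transporting the structure-constant identities, so the ingredients are there, but the logical order must be reversed: first show the span is a $\qBaseCoeff$-subalgebra of $\torus^{(2)}$ via Theorem~\ref{thm:correction} together with the factorizations \eqref{eq:coeffFactorization} or \eqref{eq:canFactorization}, then observe it contains the quantum cluster variables and the $x_j^{\pm}$ for $j>n$, hence contains $\qClAlg^{(2)}$. (You also need the reverse inclusion $\basis^{(2)}\subset\qClAlg^{(2)}$; the paper gets this directly from Theorem~\ref{thm:correction} applied to the relations witnessing $\basis^{(1)}\subset\qClAlg^{(1)}$.)

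Second, and more seriously, you do not treat the dual PBW case $\clPbwBasis$ at all. Your spanning argument uses that $\basis^{(2)}$ contains the cluster monomials, which is a feature of $\genCl$ and $\canCl$ (via Proposition~\ref{prop:clusterMonomial} and Theorem~\ref{thm:anyPattern}) but not of $\pbwCl$. Likewise your closure argument explicitly restricts to those two bases. The paper closes this gap by a separate step: once $\clGenBasis$ (or $\clCanBasis$) is known to be a basis, the upper-unitriangular transition matrices $\coeffFree S(q^{\diag/2})$ from Theorem~\ref{thm:otherBases} carry over unchanged (their entries lie in $\qBaseCoeff$ and depend only on $Q$ and the coefficient data), so $\clPbwBasis$ is a basis as well. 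You should add this step.
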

\begin{proof}
By the proof of \ref{prop:differentIndex}, the leading terms (terms with
$v=0$) of the
elements in $\basis\two$ are different. Because $\tB\two$ is
of full rank, in each $M\two_j$, the leading term cannot be killed by the
other terms. Therefore, $\basis\two$ is linearly independent. Because $\basis\one$ is contained in $\qClAlg\one$, by Theorem
\ref{thm:correction}, $\basis\two$ is contained in
$\qClAlg\two$. 

Assume $\basis\two$ is either $\clGenBasis$ or
$\clCanBasis$. Then it contains all the quantum cluster monomials by section
\ref{sec:pseudoModules}. Moreover, it generates a $\qBaseCoeff$-subalgebra of
$\qClAlg\two$ by Theorem \ref{thm:correction} and
\eqref{eq:coeffFactorization} or \eqref{eq:canFactorization}. The statement concerning its structure constants also follows from Theorem
\ref{thm:correction}.

Using the upper unitriangular basis transition matrices
studied before, we deduce that $\clPbwBasis$ is also a basis.
\end{proof}

\begin{Rem}
If we take $\Lambda\two=0$, then $\qClAlg\two$ is a classical cluster
algebra, which we denote by $\clAlg\two$. 

1) If we take $\basis\one$ to be the dual canonical basis, then we
obtain that $\basis\two$ is the dual canonical basis of $\clAlg\two$
with positive structure constants. All cluster monomials are contained
in $\basis\two$.

2) If we take $\basis\one$ to be the generic basis
and the dual PBW basis, then this theorem generalizes the results of the
dual semicanonical basis and the dual PBW basis in
\cite[16.1]{GeissLeclercSchroeer10} to the integral form and general coefficient type.

Recently, another proof of the generic basis for $\clAlg\two$ is
obtained in \cite{GeissLeclercSchroeer10b} (see also \cite{Plamondon10c}),
which is based on cluster categories and a combinatorial
result of \cite{BerensteinFominZelevinsky05}.
\end{Rem}

\section*{Acknowledgments}\label{sec:ack}
  The author would like to express his sincere thanks to his thesis
  advisor Bernhard Keller for the encouragement and discussions. He
  also thanks Yoshiyuki Kimura for many useful discussions, in
  particular for the explanations of his previous work on the factorization of dual
  canonical basis and of Nakajima's quiver varieties. He is grateful
  to the referee for many helpful suggestions for improving the
  readability of this article.




\def\cprime{$'$}
\providecommand{\bysame}{\leavevmode\hbox to3em{\hrulefill}\thinspace}
\providecommand{\MR}{\relax\ifhmode\unskip\space\fi MR }
\providecommand{\MRhref}[2]{%
  \href{http://www.ams.org/mathscinet-getitem?mr=#1}{#2}
}
\providecommand{\href}[2]{#2}


\begin{thebibliography}{BIRS11}

\bibitem[Ami09]{Amiot09}
Claire Amiot, \emph{Cluster categories for algebras of global
  dimension 2 and quivers with potential}, Annales de l'institut
Fourier \textbf{59} (2009), no. 6, 2525-2590.


\bibitem[BBD82]{BeilinsonBernsteinDeligne82}
Alexander~A. Beilinson, Joseph Bernstein, and Pierre Deligne, \emph{Analyse et
  topologie sur les espaces singuliers}, Ast{\'e}risque, vol. 100, Soc. Math.
  France, 1982 (French).




\bibitem[BFZ05]{BerensteinFominZelevinsky05}
Arkady Berenstein, Sergey Fomin, and Andrei Zelevinsky, \emph{Cluster algebras.
  {III}. {U}pper bounds and double {B}ruhat cells}, Duke Math. J. \textbf{126}
  (2005), no.~1, 1--52.

\bibitem[BZ05]{BerensteinZelevinsky05}
Arkady Berenstein and Andrei Zelevinsky, \emph{Quantum cluster algebras}, Adv.
  Math. \textbf{195} (2005), no.~2, 405--455, \href
  {http://arxiv.org/abs/math/0404446v2} {\path{arXiv:math/0404446v2}}.

\bibitem[BZ11]{Zelevinsky11talk}
\bysame, \emph{Quantum cluster algebras}, lectures in Osaka, 2011.

\bibitem[BZ12]{BerensteinZelevinsky2012}
\bysame, \emph{Triangular bases in quantum cluster algebras}, Int. Math. Res. Not. 2012, doi: 10.1093/imrn/rns268, \href
  {http://arxiv.org/abs/1206.3586} {\path{arXiv:1206.3586}}.


\bibitem[BIRS11]{BuanIyamaReitenSmith08}
Aslak Bakke Buan, Osamu Iyama, Idun Reiten, and David Smith, \emph{Mutation of
  cluster-tilting objects and potentials}, American journal of mathematics
  \textbf{133} (2011), no.~4, 835--887, \href
  {http://arxiv.org/abs/0804.3813} {\path{arXiv:0804.3813}}.



\bibitem[CG97]{ChrissGinzburg97}
N.~Chriss and V.~Ginzburg, \emph{Representation theory and complex geometry},
  Progress in Math., Birkh\"{a}user, 1997.

\bibitem[CBJ05]{CrawleyBoeveyJensen05}
William Crawley-Boevey and Bernt~Tore Jensen, \emph{A note on sub-bundles of
  vector bundles}, Glasgow Math. J. \textbf{48} (2006), 459-462, \href {http://arxiv.org/abs/math/0505149v2}
  {\path{arXiv:math/0505149v2}}.

\bibitem[DWZ08]{DerksenWeymanZelevinsky07}
Harm Derksen, Jerzy Weyman, and Andrei Zelevinsky, \emph{Quivers with
  potentials and their representations {I}: Mutations}, Selecta Mathematica
  \textbf{14} (2008), no.~1, 59--119.



\bibitem[Din11]{Ding10}
Ming Ding, \emph{On quantum cluster algebras of finite type}, Front. Math.
  China \textbf{6} (2011), no.~2, 231--240, \href
  {http://arxiv.org/abs/1011.1606} {\path{arXiv:1011.1606}}.

\bibitem[DX12]{DingXu11}
Ming Ding and Fan Xu, \emph{A quantum analogue of generic bases for affine
  cluster algebras}, Sci. China Math. \textbf{55} (2012), no.~10, 2045--2066,
  \href {http://arxiv.org/abs/1105.2421v1} {\path{arXiv:1105.2421v1}}.

\bibitem[Efi11]{Efimov11}
Alexander~Ivanovich Efimov, \emph{Quantum cluster variables via vanishing
  cycles}, 2011, \href {http://arxiv.org/abs/1112.3601v2}
  {\path{arXiv:1112.3601v2}}.

\bibitem[FZ02]{FominZelevinsky02}
Sergey Fomin and Andrei Zelevinsky, \emph{Cluster algebras. {I}.
  {F}oundations}, J. Amer. Math. Soc. \textbf{15} (2002), no.~2, 497--529
  (electronic), \href {http://arxiv.org/abs/math/0104151v1}
  {\path{arXiv:math/0104151v1}}.

\bibitem[FZ07]{FominZelevinsky07}
\bysame, \emph{Cluster algebras {IV}: Coefficients}, Compositio Mathematica
  \textbf{143} (2007), 112--164, \href {http://arxiv.org/abs/math/0602259v3}
  {\path{arXiv:math/0602259v3}}.

\bibitem[GLS11]{GeissLeclercSchroeer10}
Christof Gei\ss, Bernard Leclerc, and Jan Schr{\"o}er, \emph{Kac-{M}oody groups
  and cluster algebras}, Advances in Mathematics \textbf{228} (2011), no.~1,
  329--433, \href {http://arxiv.org/abs/1001.3545v2}
  {\path{arXiv:1001.3545v2}}.

\bibitem[GLS12]{GeissLeclercSchroeer10b}
\bysame, \emph{Generic bases for cluster algebras and the {C}hamber {A}nsatz},
  J. Amer. Math. Soc. \textbf{25} (2012), no.~1, 21--76, \href
  {http://arxiv.org/abs/1004.2781v3} {\path{arXiv:1004.2781v3}}.

\bibitem[HV08]{HauselVillegas08}
Tam\'{a}s Hausel and Fernando~Rodriguez Villegas, \emph{Mixed {H}odge
  polynomials of character varieties}, Invent. Math. \textbf{174} (2008),
  no.~3, 552--624, with an appendix by Nicholas M. Katz, \href
  {http://arxiv.org/abs/math/0612668v3} {\path{arXiv:math/0612668v3}}.


\bibitem[HL10]{HernandezLeclerc09}
David Hernandez and Bernard Leclerc, \emph{Cluster algebras and quantum affine
  algebras}, Duke Math. J. \textbf{154} (2010), no.~2, 265--341, \href
  {http://arxiv.org/abs/0903.1452} {\path{arXiv:0903.1452}}.

\bibitem[HL11]{HernandezLeclerc11}
\bysame, \emph{Quantum {G}rothendieck rings and derived {H}all
  algebras}, 2011, to appear in J. Reine Angew. Math. \href {http://arxiv.org/abs/1109.0862v1} {\path{arXiv:1109.0862v1}}.


\bibitem[IY08]{IyamaYoshino08}
Osamu Iyama and Yuji Yoshino, \emph{{Mutations in triangulated categories and
  rigid Cohen-Macaulay modules}}, Inv. Math. \textbf{172} (2008), 117--168.

\bibitem[Kel12]{Keller12}
Bernhard Keller, \emph{Cluster algebras and derived categories}, 2012, \href
  {http://arxiv.org/abs/1202.4161} {\path{arXiv:1202.4161}}.

\bibitem[KS13]{KellerScherotzke2013}
Bernhard Keller and Sarah Scherotzke, \emph{Graded quiver varieties and derived
  categories}, arXiv:1303.2318.


\bibitem[Kim12]{Kimura10}
Yoshiyuki Kimura, \emph{Quantum unipotent subgroup and dual canonical basis},
  Kyoto J. Math. \textbf{52} (2012), no.~2, 277--331, \href
  {http://arxiv.org/abs/1010.4242} {\path{arXiv:1010.4242}}, \href
  {http://dx.doi.org/10.1215/21562261-1550976}
  {\path{doi:10.1215/21562261-1550976}}.

\bibitem[KQ12]{KimuraQin11}
Yoshiyuki Kimura and Fan Qin, \emph{Graded quiver varieties, quantum cluster
  algebras and dual canonical basis}, 2012, \href
  {http://arxiv.org/abs/1205.2066} {\path{arXiv:1205.2066}}.

\bibitem[KS08]{KontsevichSoibelman08}
Maxim Kontsevich and Yan Soibelman, \emph{Stability structures,
  {D}onaldson-{T}homas invariants and cluster transformations}, 2008, \href
  {http://arxiv.org/abs/0811.2435v1} {\path{arXiv:0811.2435v1}}.


\bibitem[Lam11a]{Lamp10}
Philipp Lampe, \emph{A quantum cluster algebra of {K}ronecker type and the dual
  canonical basis}, Int. Math. Res. Not. IMRN (2011), no.~13, 2970--3005, \href
  {http://arxiv.org/abs/1002.2762v2} {\path{arXiv:1002.2762v2}}, \href
  {http://dx.doi.org/10.1093/imrn/rnq162} {\path{doi:10.1093/imrn/rnq162}}.

\bibitem[Lam11b]{Lamp11}
\bysame, \emph{Quantum cluster algebras of type {A} and the dual canonical
  basis}, 2011, to appear in Proc. London Math. Soc. \href {http://arxiv.org/abs/1101.0580v1}
  {\path{arXiv:1101.0580v1}}.

\bibitem[LP13]{LeclercPlamondon13}
Bernard Leclerc and Pierre-Guy Plamondon, \emph{Nakajima varieties and
  repetitive algebras}, 2013, to appear in Publications of the
Research Institute for Mathematical Sciences, \href {http://arxiv.org/abs/1208.3910}
  {\path{arXiv:1208.3910}}.

\bibitem[Lus90]{Lusztig90}
G.~Lusztig, \emph{Canonical bases arising from quantized enveloping algebras},
  J. Amer. Math. Soc. \textbf{3} (1990), no.~2, 447--498.

\bibitem[Lus00]{Lusztig00}
\bysame, \emph{Semicanonical bases arising from enveloping algebras}, Adv.
  Math. \textbf{151} (2000), no.~2, 129--139.

\bibitem[MSW11]{MusikerSchifflerWilliams11}
Gregg Musiker, Ralf Schiffler, and Lauren Williams, \emph{Bases of cluster
  algebras from surfaces}, 2011, to appear in Compos. Math. \href {http://arxiv.org/abs/1110.4364v2}
  {\path{arXiv:1110.4364v2}}.

\bibitem[Nag10]{Nagao10}
Kentaro Nagao, \emph{Donaldson-{T}homas theory and cluster algebras}, 2010,
  \href {http://arxiv.org/abs/1002.4884} {\path{arXiv:1002.4884}}.

\bibitem[Nak94]{Nakajima94}
Hiraku Nakajima, \emph{Instantons on {ALE} spaces, quiver varieties, and
  {K}ac-{M}oody algebras}, Duke Math. J. \textbf{76} (1994), no.~2, 365--416.

\bibitem[Nak98]{Nakajima98}
\bysame, \emph{Quiver varieties and {K}ac-{M}oody algebras}, Duke Math. J.
  \textbf{91} (1998), no.~3, 515--560.

\bibitem[Nak01a]{Nakajima01}
\bysame, \emph{Quiver varieties and finite-dimensional representations of
  quantum affine algebras}, J. Amer. Math. Soc. \textbf{14} (2001), no.~1,
  145--238 (electronic), \href {http://arxiv.org/abs/math/9912158}
  {\path{arXiv:math/9912158}}.

\bibitem[Nak01b]{Nakajima01b}
\bysame, \emph{Quiver varieties and tensor products}, Invent. Math.
  \textbf{146} (2001), no.~2, 399--449, \href
  {http://arxiv.org/abs/math.QA/0103008} {\path{arXiv:math.QA/0103008}}.

\bibitem[Nak04]{Nakajima04}
\bysame, \emph{Quiver varieties and {$t$}-analogs of {$q$}-characters of
  quantum affine algebras}, Ann. of Math. (2) \textbf{160} (2004), no.~3,
  1057--1097, \href {http://arxiv.org/abs/math/0105173v2}
  {\path{arXiv:math/0105173v2}}.

\bibitem[Nak11]{Nakajima09}
\bysame, \emph{Quiver varieties and cluster algebras}, Kyoto J. Math.
  \textbf{51} (2011), no.~1, 71--126, \href {http://arxiv.org/abs/0905.0002v5}
  {\path{arXiv:0905.0002v5}}.

\bibitem[Pal08]{Palu08a}
Yann Palu, \emph{Cluster characters for 2-{C}alabi-{Y}au triangulated
  categories}, Ann. Inst. Fourier (Grenoble) \textbf{58} (2008), no.~6,
  2221--2248, \href {http://arxiv.org/abs/math/0703540v2}
  {\path{arXiv:math/0703540v2}}.

\bibitem[Pla11a]{Plamondon10c}
Pierre-Guy Plamondon, \emph{Cat{\'e}gories amass{\'e}es aux espaces de
  morphismes de dimension infinie, applications}, Ph.D. thesis, Universit{\'e}
  Paris Diderot - Paris 7, 2011,
  \url{http://people.math.jussieu.fr/~plamondon/plamondon_these.pdf}.

\bibitem[Pla11b]{Plamondon10b}
\bysame, \emph{Cluster algebras via cluster categories with
  in\-fi\-ni\-te-\-di\-men\-sional morphism spaces}, Compos. Math. \textbf{147}
  (2011), no.~6, 1921--1934, \href {http://arxiv.org/abs/1004.0830v1}
  {\path{arXiv:1004.0830v1}}.

\bibitem[Pla11c]{Plamondon10a}
\bysame, \emph{Cluster characters for cluster categories with
  in\-fi\-ni\-te-\-di\-men\-sional morphism spaces}, Adv. in Math. \textbf{227}
  (2011), no.~1, 1--39, \href {http://arxiv.org/abs/1002.4956v2}
  {\path{arXiv:1002.4956v2}}.

\bibitem[Qin12a]{QinThesis}
Fan Qin, \emph{Alg{\`e}bres amass{\'e}es quantiques acycliques}, Ph.D. thesis,
  Universit{\'e} Paris Diderot - Paris 7, 2012,
  \url{http://www.math.jussieu.fr/~qinfan/doc/QIN_Thesis_manuscript.pdf}.

\bibitem[Qin12b]{Qin10}
\bysame, \emph{Quantum cluster variables via {S}erre polynomials}, J. Reine
  Angew. Math. \textbf{2012} (2012), no.~668, 149--190, with an appendix by
  Bernhard Keller, \href {http://arxiv.org/abs/1004.4171}
  {\path{arXiv:1004.4171}}, \href {http://dx.doi.org/10.1515/CRELLE.2011.129}
  {\path{doi:10.1515/CRELLE.2011.129}}.

\bibitem[Slo80]{Slodowy1980}
P.~Slodowy, \emph{Four lectures on simple groups and singularities},
  Communications of the Mathematical Institute, Mathematical Institute,
  Rijksuniversitet Utrecht, 1980.

\bibitem[Tra11]{Tran09}
Thao Tran, \emph{F-polynomials in quantum cluster algebras}, Algebr. Represent.
  Theory \textbf{14} (2011), no.~6, 1025--1061, \href
  {http://arxiv.org/abs/0904.3291v1} {\path{arXiv:0904.3291v1}}.

\bibitem[VV03]{VaragnoloVasserot03}
M.~Varagnolo and E.~Vasserot, \emph{Perverse sheaves and quantum {G}rothendieck
  rings}, Studies in memory of {I}ssai {S}chur ({C}hevaleret/{R}ehovot, 2000),
  Progr. Math., vol. 210, Birkh\"auser Boston, Boston, MA, 2003, pp.~345--365,
  \href {http://arxiv.org/abs/math/0103182v3} {\path{arXiv:math/0103182v3}}.
  \MR{MR1985732 (2004d:17023)}

\end{thebibliography}

\end{document}